\numberwithin{equation}{section}
\DeclarePairedDelimiter{\norm}{\lVert}{\rVert}
\newcommand\ep{\varepsilon}
\newcommand\nn{\mathbb{N}} 
\newcommand\zz{\mathbb{Z}}
\newcommand\rr{\mathbb{R}}
\newcommand\cc{\mathbb{C}}
\newcommand{\broy}{B_r ( \overline{y} ) }
\newcommand\hh{\mathbb H}
\newcommand\cmm{\mathcal{M}} 
\newcommand\cdd{\mathcal{D}} 
\newcommand\chh{\mathcal{H}} %
\newcommand\css{\mathcal{S}}
\newcommand\cbb{\mathcal{B}}
\newcommand\cxx{\mathcal{X}}
\DeclareMathOperator*{\supp}{supp}
\theoremstyle{plain}
\newtheorem{theorem}{Theorem}[section]
\newtheorem{lemma}[theorem]{Lemma}
\newtheorem{corollary}[theorem]{Corollary}
\newtheorem{proposition}[theorem]{Proposition}
\theoremstyle{definition}
\newtheorem{assumption}[theorem]{Assumption}
\theoremstyle{remark}
\newtheorem{remark}[theorem]{Remark}
\begin{document}
\title[Fractional wave equations with VMO coefficients]{Weighted mixed norm estimates for fractional wave equations with VMO coefficients}

\author[H. Dong]{Hongjie Dong}
\address[H. Dong]{Division of Applied Mathematics, Brown University, 182 George Street, Providence, RI 02912, USA}

\email{Hongjie\_Dong@brown.edu}
\thanks{H. Dong was partially supported by the Simons Foundation, grant \# 709545.}

\author[Y. Liu]{Yanze Liu}
\address[Y. Liu]{Department of Mathematics, Brown University, 151 Thayer Street, Providence, RI 02912, USA}

\email{Yanze\_Liu@brown.edu}

\subjclass[2020]{45K05, 35B65, 35R11, 45D05}
\keywords{time fractional wave equations, the Mittag-Leffler function, mean oscillation estimates, VMO coefficients, mixed-norm estimates, Muckenhoupt weights}

\begin{abstract}
This paper is a comprehensive study of $L_p$ estimates for time fractional wave equations of order $\alpha \in (1,2)$ in the whole space, a half space, or a cylindrical domain.
We obtain weighted mixed-norm estimates and solvability of the equations in both non-divergence form and divergence form when the leading coefficients have small mean oscillation in small cylinders.
\end{abstract}
\maketitle
\section{Introduction}

This paper is devoted to the study of $L_p$ (and $L_{p,q}$) estimates for non-divergence form fractional wave equations \eqref{nondiv} and divergence form fractional wave equations \eqref{div} with a non-local type time derivative term of the forms
\begin{align*}
   \partial_t^\alpha u - a^{ij}(t,x) D_{ij} u - b^i(t,x) D_i u - c(t,x) u = f(t,x), \stepcounter{equation}\tag{\theequation}\label{nondiv}
\end{align*}
\begin{align*}
    \partial_t^\alpha u
- D_i (a^{ij} D_j u + a^i u ) - b^i D_i u - cu = D_i g_i + f,\stepcounter{equation}\tag{\theequation}\label{div}
\end{align*}
with zero initial conditions $u(0,\cdot) = 0 $ and $\partial_t u(0,\cdot) = 0 $ in {a} cylindrical domain $(0,T) \times \Omega$, where $\partial_t^\alpha u$ is the Caputo fractional derivative of order $\alpha \in (1,2)$ and $\Omega$ is the whole space $\rr^d$, a half space $\rr_+^d:=\{(x^1,\ldots,x^d)\in \rr^d:x^1>0\}$, or a domain in $\rr^d$.
See the precise definition of $\partial_t^\alpha u$ in Section \ref{2}. When $\Omega = \rr^d_+$ or a domain, we also impose the zero Dirichlet boundary condition: $u(t,x) = 0$ for $t \in (0,T)$ and $x \in \partial \Omega$.

Fractional wave equations have many applications in mechanics and probability. For example, they govern the propagation of mechanical diffusive waves in viscoelastic media \cite{Mainardi}, and they also appear in the probability theory related to non-Markovian diffusion processes with a memory \cite{Metzler}. {There is a vast literature about} fractional parabolic and wave equations. Equation \eqref{nondiv} can be considered as a type of Volterra equation \cite{Pruss}
$$
u(t,x) + \int_0^t a(t-s)Au(s,x)\, ds = f(t,x),
$$
{where $A$ is a time-independent $\mathcal{R}$-sectorial operator.}
Using this interpretation, a purely operator-theoretic approach was applied in \cite{Zacher} to prove the existence of unique solutions under the assumptions that $a^{ij} = \delta^{ij}$, $\alpha \in (1,2)$, $p=q$, and that $\alpha$ and $p$ satisfy the algebraic condition $$
\alpha \not\in \big\{2/(2p-1), 2/(p-1),1/p+1,3/(2p-1)+1\big\}.
$$
In \cite{Kochubei}, {for} $\alpha\in(1,2)$, {assuming that} the coefficients depend only on $x$, and are bounded and uniformly H\"older continuous with a H\"older exponent $\gamma\in (2-2/\alpha,1]$, the author established the unique solvability of $\eqref{nondiv}$ by using Levi's method together with the fundamental solution of $\partial_t^{\alpha} - \Delta$ constructed earlier in \cite{Pskhu}.

There are also many work on $L_p$ ({or} $L_{p,q}$) estimates for fractional parabolic and wave equations. In \cite{P1}, under the assumptions that $\alpha\in (0,2)$, and $a^{ij}(t,x)$ are piecewise continuous in $t$ and uniformly continuous in $x$, the authors proved a priori $L_{p,q}$ estimates and the existence of unique solutions to \eqref{nondiv} {in the whole space}. Their proofs are based on estimates of the fundamental solution of $\partial_t^{\alpha} - \Delta$ and classical tools from the PDE theory, in particular the Fefferman-Stein sharp function theorem, the Hardy-Littlewood maximal function theorem, and the Calder\'on-Zygmund theorem. Quite recently, the results were extended in \cite{Han}, where mixed norms with Muckenhoupt weights were considered under the assumption that the leading coefficients are constants. In \cite{dong19}, the authors derived $L_p$ estimates and solvability of \eqref{nondiv} when $\alpha \in (0,1)$ under the weaker assumption that the leading coefficients have small mean oscillations with respect to the spatial variables only. See also \cite{dong20a} for results about divergence form equations with $\alpha\in (0,1)$. Later in \cite{dong20}, by using a mean oscillation argument, they further obtained weighted mixed-norm estimates for \eqref{nondiv} when $\alpha \in (0,1)$ under the same assumption on the coefficients. We note the proofs in \cite{dong19, dong20a} do not rely on the estimates of fundamental solutions. Instead, the argument is based on integration by parts, Sobolev type embeddings with fractional derivatives, a bootstrap argument, and a level set argument. However, {this} method fails for the case $\alpha\in(1,2)$ {{because} when $\alpha>1$, the integral
$$
\int_0^t (\partial_t^\alpha u(s,\cdot))|u(s,\cdot)|^{p-2}u(s,\cdot)\,ds
$$
may not always be nonnegative. Therefore, here} we use the results from \cite{P1}{, which are} derived from the fundamental solution mentioned above.

In this paper, we consider the equations \eqref{nondiv} and \eqref{div} when $\alpha\in(1,2)$, and generalize the previous results in \cite{P1,Han,Kochubei} by only assuming that the leading coefficients $a^{ij} =a^{ij}(t,x) $ locally have small mean oscillations as functions of $(t,x)$. See Assumption \ref{ass2} $(\gamma_0)$ for details. Furthermore, we obtain the results to equations in the half space and domains, and also consider the equations in the whole space with non-zero initial conditions. Our main theorem for non-divergence form equations reads that for any $p,q \in (1, \infty)$, Muckenhoupt weights $w_1\in A_p (\rr), w_2(x) \in A_q(\Omega)$, and $a^{ij}$'s with small mean oscillations, if $u$ satisfies (\ref{nondiv})  with the zero initial conditions (and the zero Dirichlet boundary condition in the cases of the half space and $C^{1,1}$ domains), then we have
$$
\sum_{i=0}^2\norm{D^i u}_{L_{p,q,w}(\Omega_T)} + \norm{ \partial_t^\alpha u}_{L_{p,q,w}(\Omega_T)}
\le N \norm{f}_{L_{p,q,w}(\Omega_T)},
$$
where $w(t,x) = w_1(t)w_2(x)$, $\Omega_T = (0,T) \times \Omega$, $N$ is independent of $u$ and $f$, and the definition of the $L_{p,q,w}$ norm is given in Section \ref{2}. Furthermore, for any $f \in L_{p,q,w}((0,T) \times \Omega)$, there exists a unique solution $u$ to (\ref{nondiv}) in the appropriate weighted mixed-norm Sobolev space defined in Section \ref{2}  with {the} zero initial (and boundary) conditions. Our main theorem for divergence form equations reads that for any $p,q \in (1, \infty)$, Muckenhoupt weights $w_1\in A_p (\rr), w_2(x) \in A_q(\Omega)$, and $a^{ij}$'s with small mean oscillations, if $u$ satisfies (\ref{div}) with the zero initial conditions (and the zero boundary condition in the cases of the half space and Lipschitz domains), then it holds that
$$
\sum_{i=0}^1\norm{D^i u}_{L_{p,q,w}(\Omega_T)} + \norm{ \partial_t^\alpha u}_{\hh^{-1}_{p,q,w}(\Omega_T)}
\le N \norm{f}_{L_{p,q,w}(\Omega_T)} + N \norm{g}_{L_{p,q,w}(\Omega_T)},
$$
where $N$ is independent of $u$, $f$ and $g$, and the definitions of the norms are given in Section \ref{2}. Furthermore, for any $f, g \in L_{p,q,w}((0,T) \times \Omega)$, there exists a unique solution $u$ to (\ref{div}) in the appropriate space defined in Section \ref{2} with the zero initial (and boundary) conditions.

For the proof of the a priori estimate {for} \eqref{nondiv} in the whole space, we adapt the mean oscillation argument in \cite{dong20} by establishing a H\"older estimate of $D^2u$, where $u$ {is a strong} solution to \eqref{nondiv}. For this, we first consider the case when $a^{ij} = \delta^{ij}$ without lower-order terms, and decompose $u = w + v$, where $w$ is constructed using Fourier series in rectangular {cylindrical} domains, and $v = u-w$ satisfies a homogeneous equation. To estimate $v$ and $w$, we consider the infinite cylinder $(-\infty,t_0)\times B_r(x_0)$ instead of the {usual} parabolic cylinder $Q_r(t_0,x_0)$. We also apply a delicate bootstrap argument with Sobolev embedding similar to that of \cite{dong20}. However, compared to the case {considered} in \cite{dong20}, where $\alpha\in(0,1)$, and only the first-order derivative in time is taken in the definition of $\partial_t^\alpha$, the estimate of $v$ for $\alpha\in(1,2)$ is more involved due to the presence of an extra first-order term in the fractional derivative when we take the cutoff function in time to apply the bootstrap argument. To overcome this difficulty, we take a sequence of cutoff functions in time and apply an interpolation inequality for the time derivatives. For non-divergence form equations in the half space, the results follow from taking extensions, and in smooth domains by a partition of unity argument together with S. Agmon's idea, adapted from the proofs in \cite[Ch. 8]{k}. It is also worth noting that the results for divergence form equations follow from the results for non-divergence form equations with a similar perturbation argument.

The remaining part of the paper is organized as follows. In Section \ref{2}, we introduce notation, definitions, and the main results of the paper. Equations in non-divergence form (\ref{nondiv}) are studied in Sections \ref{3}, \ref{4}, and \ref{5}. Particularly, in Section \ref{3}, we estimate the mean oscillation by using a decomposition mentioned above. In Section \ref{4}, we complete the proof {of} the theorem for non-divergence form equations in the whole space. In Section \ref{5}, we prove the results for non-divergence form equations in {the} half space and smooth domains. Finally, the results for divergence form equations (\ref{div}) are proved in Section \ref{6}. In Appendix \ref{A}, we prove miscellaneous lemmas used in the main proofs. In Appendix \ref{B}, we give some details about the proofs for equations in domains, such as the extension of weights. In Appendix \ref{C}, we prove the results for equations in the whole space with non-zero initial conditions.

\section{Notation and Main Results}\label{2}
We first introduce some notation used throughout the paper. For $\alpha \in (0,2)$ and $S \in \rr$, we denote
\[
I^\alpha_S u
= \frac{1}{\Gamma(\alpha)}
\int_S^t (t - s)^{\alpha - 1} u (s,x) \, ds,
\]
and for $\alpha \in (1,2)$
\[
\partial_t^\alpha u
= \frac{1}{\Gamma(2 - \alpha)}
\int_0^t (t-s)^{1-\alpha} \partial_t^2 u (s,x) \, ds.
\]
Note that we have $\partial_t^\alpha u = \partial^2_t I_0^{2-\alpha} u$ for a sufficiently smooth $u$ with $u(0,x) = 0$ and $\partial_t u(0,x) = 0$. In some places, without assuming $S=0$, we still use $\partial_t^\alpha u$ to indicate $\partial_t^2 I_S^{2-\alpha} u$ when $u(S,x) = 0$ and $\partial_t u(S,x) = 0$. Moreover, for $\beta \in (0,1)$, we use $\partial_t^\beta u $ to indicate $\partial_t I_S^{1-\beta} u$ when $u(S,x) = 0$.

For $\alpha \in (1,2)$, $r_1,r_2 >0$, and $(t,x)\in \rr^{d+1}$, we denote the parabolic cylinder by
\[
Q_{r_1, r_2} (t,x) = (t-r_1^{2/\alpha}, t) \times B_{r_2} (x)
\quad \text{and}\quad
Q_r(t,x) = Q_{r,r}(t,x),
\]
where $B_{r_2} (x) = \{ y \in \rr^d: |y - x| < r_2 \}$.
We often write $B_r$ and $Q_r$ for $B_r(0)$ and $Q_r(0,0)$. Furthermore, for $x\in\rr^d$ and $r>0$, we denote $x' = (x^2,\ldots,x^d)\in\rr^{d-1}$ and $B^{'}_{r} (x') := \{ y' \in \rr^{d-1} : |x'-y'| < {r}\}$.
Next, for $p \in (1, \infty)$, $k \in \{ 1,2,\ldots \}$ and a domain $\Omega \subset \rr^{k}$, let $A_p (\Omega, dx)$ be the set of all non-negative functions $w$ on $\Omega$ such that
\[
[w]_{A_p(\Omega)}
:= \sup_{x_0 \in \rr^k \cap \Omega, r> 0}
\biggl(
\fint_{B_r(x_0)\cap \Omega} w(x) \, dx
\biggr) \biggl(
\fint_{B_r(x_0)\cap \Omega} (w(x))^{\frac{-1}{p-1}} \, dx
\biggr)^{p-1}
< \infty,
\]
where $B_r(x_0) = \{ x \in \rr^k: |x - x_0| < r \}$. Recall that $[w]_{A_p} \geq 1$. Furthermore, for a constant $K_1 > 0$, we write $[w]_{p,q,\Omega} \leq K_1$ and $[w]_{p,q} \le K_1$ when $\Omega= \rr^d$ if $w = w_1(t) w_2(x)$ for some $w_1$ and $w_2$ satisfying
\[
w_1(t) \in A_p (\rr, dt),
\quad
w_2(x) \in A_q(\Omega, dx),
\quad
[w_1]_{A_p} \leq K_1,
\quad \text{and}\quad
[w_2]_{A_q(\Omega)} \leq K_1.
\]

For a domain $\Omega \subset \rr^d$,  let $(0,T) \times \Omega :=\Omega_T$ and particularly, $(0,T) \times \rr^d :=\rr^d_T $.
For a given $w(t,x) = w_1(t) w_2 (x)$, where $(t,x) \in \rr \times \Omega$, $w_1 \in A_p (\rr, dt)$, and $w_2 \in A_q (\Omega, dx)$, we {denote} $L_{p,q,w} ( \Omega_T )$ to be the set of all measurable functions $f$ defined on $\Omega_T$ satisfying
\[
\norm{f}_{L_{p,q,w} ( \Omega_T )}:=
\biggl( \int_0^T
\biggl( \int_{\Omega}
| f(t,x)|^q w_2(x) \, dx
\biggr)^{p/q}
w_1(t) \, dt
\biggr)^{1/p}
< \infty.
\]
When $p=q$ and $w=1$, $L_{p,q,w} ( \Omega_T )$ becomes the usual Lebesgue space $L_p ( \Omega_T )$. We write $u \in \hh_{p,q,w,0}^{\alpha, 2} ( \Omega_T ) $
if there exists a sequence of smooth functions $\{ u_n \}$ such that  $u_n \in C^\infty \bigl( [0,T] \times \Omega \bigr)$ vanishing for large $|x|$,  $u_n(0,x) = 0,  \partial_t u_n(0,x) = 0$, and
\[
\norm{u_n - u}_{\hh_{p,q,w}^{\alpha, 2} ( \Omega_T )}
:= \sum_{i=0}^2\norm{D^i u_n - D^i u}_{p,q,w}
+ \norm{\partial_t^\alpha u_n - \partial_t^\alpha u}_{p,q,w}
\to 0
\]
as $n \to \infty$,
where $\norm{\cdot}_{p,q,w} = \norm{\cdot}_{L_{p,q,w} ( \Omega_T )}$.
We often write $\hh_{p,w,0}^{\alpha, 2} ( \Omega_T )$ if $p=q$ and use the notation $u \in  \hh_{p,0,\mathrm{loc}}^{\alpha, 2} ( \rr^d_T )$ to indicate the function satisfying
\[
u \in  \hh_{p,0}^{\alpha, 2} ( (0,T) \times B_R )  := \hh_{p,1,0}^{\alpha, 2} ( (0,T) \times B_R )
\]
for any $R>0$. Furthermore, we denote
$u \in \mathring{\hh}^{\alpha, 2}_{p,q,w,0} (\Omega_T )$ if $u \in \hh^{\alpha, 2}_{p,q,w,0} (\Omega_T)$ satisfies $u=0$ on $(0,T) \times \partial \Omega$.
In other words, the defining sequence $u_n \to u$ in $\hh_{p,q,w}^{\alpha, 2}(\Omega_T )$ can be taken to satisfy the following conditions:
\[
u_n \in C_0^\infty ([0,T] \times \overline{\Omega}),
\quad
u_n (0, \cdot) = 0,
\quad
\partial_t u_n(0, \cdot) = 0,
\quad
u_n(\cdot, x) = 0 \quad \text{for}\quad  x \in \partial \Omega.
\]

In Section \ref{6}, we consider equations in divergence form. Therefore, we define the following. For $u\in L_{p,q,w}( (S,T) \times \Omega)$, we say $\partial_t^\alpha u \in \hh_{p,q,w}^{-1}( (S,T) \times \Omega)$ if there exist $f,g_i \in  L_{p,q,w}( (S,T) \times \Omega)$, $i= 1,\ldots,d$, such that for any $\phi \in C_0^\infty ([S,T) \times \Omega)$,
\[
\int_S^T\int_\Omega \partial_t^\alpha u  \phi
=  \int_S^T\int_\Omega f \phi-\int_S^T\int_\Omega g_i D_i \phi, \stepcounter{equation}\tag{\theequation}\label{6def}
\]
and we write
\begin{align*}
    &\norm{\partial_t^\alpha u}_{\hh^{-1}_{p,q,w}( (S,T) \times \Omega)} \\
    &= \inf \Big\{ \sum_{i=1}^d \norm{g_i}_{L_{p,q,w}( (S,T) \times \Omega)} + \norm{f}_{L_{p,q,w}( (S,T) \times \Omega)}: (\ref{6def}) \,\text{is satisfied} \Big\}.
\end{align*}
Let
$\chh_{p,q,w}^{\alpha, 1} ((S,T) \times \Omega) $ be the collection of functions $u \in L_{p,q,w}( (S,T) \times \Omega)$ such that
$Du \in L_{p,q,w}( (S,T) \times \Omega)$ and $\partial_t^\alpha u \in \hh_{p,q,w}^{-1}( (S,T) \times \Omega)$, and denote
$$
\norm{u}_{\chh_{p,q,w}^{\alpha, 1}( (S,T) \times \Omega)}=
\norm{\partial_t^\alpha u}_{\hh^{-1}_{p,q,w}( (S,T) \times \Omega)}
+ \sum_{i=0}^1\norm{D^i u}_{L_{p,q,w}( (S,T) \times \Omega)}.$$
Then, we define $u\in \chh_{p,q,w,0}^{\alpha, 1} ((S,T) \times \Omega)$ if $u\in\chh_{p,q,w}^{\alpha, 1} ((S,T) \times \Omega)$, and there exists a sequence of $\{u_n\} \subset C^\infty ([S,T] \times \Omega)$ vanishing for large $|x|$, $u_n(S,x) = 0$, $\partial_t u_n(S,x) = 0$ and
\[
\norm{u_n - u}_{\chh^{\alpha,1}_{p,q,w}( (S,T) \times \Omega)} \to 0 \quad \text{as}\quad  n \to \infty.
\]
We denote
$u\in\mathring{\chh}^{\alpha, 1}_{p,q,w,0} ((S, T) \times \Omega )$ if $u\in \chh^{\alpha, 1}_{p,q,w,0} ((S, T) \times \Omega )$ satisfies $u=0$ on $(0,T) \times \partial \Omega$.
In other words, the defining sequence $u_n \to u$ in $\chh_{p,q,w}^{\alpha, 1}((S, T) \times \Omega )$ can be taken to satisfy the following conditions:
\[
u_n \in C_0^\infty ([S,T] \times \overline{\Omega}),
\quad
u_n (S, \cdot) = 0,
\quad
\partial_t u_n(S, \cdot) = 0,
\quad
u_n(\cdot, x) = 0 \quad \text{for}\quad  x \in \partial \Omega.
\]
Moreover, we say that $u \in \chh_{p,q,w,0}^{\alpha, 1} ((S,T) \times \Omega) $
satisfies the divergence form equation
\[
\partial_t^\alpha u
- D_i (a^{ij} D_j u + a^i u ) - b^i D_i u - cu = D_i g_i + f
\]
for $f, g_i\in L_{p,q,w} ((S,T) \times \Omega)$ if
\begin{align*}
&\int_S^T \int_\Omega I_S^{2-\alpha}u
\partial_t^2\phi
+\int_S^T\int_\Omega a^{ij} D_j u D_i \phi
+a^i u D_i \phi-b^i D_i u \phi
- cu\phi \\
&= \int_S^T\int_\Omega f \phi - g_i D_i \phi
\end{align*}
for any $\phi \in C_0^\infty ([S,T) \times \Omega)$.

In this paper, we use maximal functions and strong maximal functions defined, respectively, by
\[
\cmm f (t,x) = \sup_{Q_r (s,y) \ni (t,x)}
\fint_{Q_r (s,y)}
|f(r,z) | \chi_\cdd \, dz \, dr
\]
and
\[
(\css \cmm f ) (t,x)= \sup_{Q_{r_1, r_2} (s,y) \ni (t,x)}
\fint_{Q_{r_1, r_2} (s,y)}
|f(r,z) | \chi_\cdd \, dz \, dr
\]
for $f\in L_{1,\text{loc}}$ defined on $\cdd \subset \rr^{d+1}$ and $(t,x) \in \cdd$.

In order to consider equations with non-zero initial conditions, we introduce the trace space as follows. For $p, q \in (1, \infty)$, a non-integer ${\theta} > 0$ with $k$ being the integer part of ${\theta}$, and $w_2\in A_q(\rr^d)$, we define
\[
\cbb^{\theta}_{p,q,w_2} (\rr^d)
:= \{ u \in L_{q, w_2} (\rr^d ) : \norm{u}_{ \cbb^{\theta}_{p,q,w_2} (\rr^d) } < \infty \},
\]
where
\[
\norm{u}_{ \cbb^{\theta}_{p,q,w_2} (\rr^d)}
= [u]_{\cbb^{\theta}_{p,q,w_2} (\rr^d)}
+ \sum_{|\alpha| \leq k} \norm{D^\alpha u}_{L_{q,w_2} (\rr^d)}
\]
and
\[
[u]_{\cbb^{\theta}_{p,q,w_2} (\rr^d)}
= \left(
\int_{\rr^d}
\norm{D^k u (\cdot + y) - D^k u (\cdot)}^p_{L_{q,w_2}}
|y|^{-d - p({\theta} -k )} \, dy
\right)^{1/p}.
\]
Note that when $p=q$ and $w_2 \equiv 1$, $\cbb^{\theta}_{p,q,w_2} (\rr^d) = \cbb^{\theta}_{p,p} (\rr^d)$, the usual Besov space.
For ${\theta_0} \in \rr$,
we denote $\cxx_0 = L_{q,w_2}$ when ${\theta_0} < 0, \cxx_0 =\cbb^{\theta_0}_{p,q,w_2} (\rr^d) $ when ${\theta_0} > 0$ is a non-integer, and $\cxx_0 = \cbb^{{\theta_0}+\ep}_{p,q,w_2} (\rr^d)$ for arbitrary $\ep > 0$ when ${\theta_0}$ is a nonnegative integer. For a different $\theta_1 \in \rr$, we define $\cxx_1$ similarly.

Finally, we write $N = N (\ldots)$ if the constant $N$ depends only on the parameters in the parentheses.

Next, we present the assumptions for the coefficients and domains. Assumption \ref{ass1} for a fixed $\delta$ is imposed throughout the paper.
\begin{assumption}\label{ass1}
There exists $\delta \in (0,1)$ such that for any $(t,x) \in \rr^{d+1}$ and $\xi \in \rr^d$,
\[
    a^{ij} (t,x) \xi^i \xi^j \geq \delta |\xi|^2 \quad \text{and}\quad  |a^{ij}| ,|b^i|, |c|  \le \delta^{-1}.
\]
\end{assumption}

\begin{assumption}[$\gamma_0$]\label{ass2}
There exists $R_0 \in (0, 1]$ such that for any $(t_0, x_0) \in \rr^{d+1}$ and $0<r\le R_0$,
\[
    \sup_{i,j} \fint_{Q_r (t_0, x_0)} |a^{ij} - \overline{a}^{ij} | \leq \gamma_0,
\]
where
  \[
    \overline{a}^{ij} = \fint_{Q_r (t_0, x_0)} a^{ij} \, dx dt.
  \]
\end{assumption}

\begin{remark}\label{ass2.3}
If we take $x_0\in \Omega = \rr^d_+$ and define
 \[
    \overline{a}^{ij} = \fint_{Q_r (t_0, x_0)\cap \Omega_T} a^{ij} \, dx dt,
\]
then by Assumption \ref{ass2} $(\gamma_0)$, we have
\[
    \sup_{i,j} \fint_{Q_r (t_0, x_0)\cap\Omega_T} |a^{ij} - \overline{a}^{ij} | \leq 4\gamma_0.
\]
\end{remark}
\begin{assumption}[$\theta$]\label{ass3}
    There exists $R_2 \in (0,1]$ such that for any $z \in \partial \Omega$, there exists a Lipschitz function $\phi: \rr^{d-1} \to \rr$ {satisfying}
    $$\Omega \cap B_{R_2}(z) = \{x\in B_{R_2}(z) : x^1 > \phi(x')\}$$
    and
    \begin{align*}
        \sup_{x',y'\in B^{'}_{R_2} (z'), x'\neq y'} \frac{|\phi(x') - \phi(y')|}{|x'-y'|} \le \theta
    \end{align*}
    in a new coordinate system.
\end{assumption}

We need a stronger assumption on the domains when non-divergence form equations are considered.
\begin{assumption}[$\theta$]\label{ass4}
    There exists $R_2 \in (0,1]$ such that for any $z \in \partial \Omega$, there exists a {$C^{1,1}$} function $\phi: \rr^{d-1} \to \rr$ satisfying
    $$\Omega \cap B_{R_2}(z) = \{x\in B_{R_2}(z) : x^1 > \phi(x')\},$$
    \begin{align*}
        \sup_{x',y'\in B^{'}_{R_2} (z'), x'\neq y'} \frac{|\phi(x') - \phi(y')|}{|x'-y'|} \le \theta,\quad\text{and}\,\,\sup_{B'_{R_2}(z')}|D^2 \phi|\le \delta^{-1}
    \end{align*}
in a new coordinate system.
\end{assumption}

With the assumptions {above}, we are ready to state the main theorems of this paper.
\begin{theorem}\label{main}
Let $\alpha \in (1,2)$, $T \in (0, \infty)$, $p,q \in (1, \infty)$, $ K_1 \in [1, \infty)$, and $[w]_{p,q} \le K_1$.
There exists $\gamma_0 = \gamma_0 (d, \delta, \alpha, p, q, K_1 ) \in (0,1)$ such that under Assumption \ref{ass2} $(\gamma_0)$, the following holds.
For any $u \in \hh_{p,q,w,0}^{\alpha, 2} ( \rr^d_T )$ satisfying
\[
\partial_t^\alpha u - a^{ij} D_{ij} u - b^i D_i u - cu = f \quad \text{in}\quad  \rr^d_T,
\stepcounter{equation}\tag{\theequation}\label{main estimate}
\]
we have
\[
\norm{\partial_t^\alpha u}_{p,q,w}
+ \norm{u}_{p,q,w}
+ \norm{Du}_{p,q,w}
+ \norm{D^2 u}_{p,q,w}
\leq N \norm{f}_{p,q,w},\stepcounter{equation}\tag{\theequation}\label{99999}
\]
where
$\norm{\cdot}_{p,q,w}
= \norm{\cdot}_{L_{p,q,w} (\rr^d_T) }$ and
$
N = N(d, \delta, \alpha, p, q, K_1, R_0, T).$
Moreover, for any $f \in L_{p,q,w} ( \rr^d_T )$, there exists a unique solution $u \in \hh_{p,q,w,0}^{\alpha, 2} ( \rr^d_T )$ to (\ref{main estimate}).
\end{theorem}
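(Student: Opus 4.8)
The plan is to establish the a priori estimate \eqref{99999} first, and then deduce existence and uniqueness by the method of continuity. By the definition of $\hh_{p,q,w,0}^{\alpha,2}(\rr^d_T)$ it suffices to prove \eqref{99999} for smooth $u$ vanishing for large $|x|$ with $u(0,\cdot)=0$ and $\partial_t u(0,\cdot)=0$, and then pass to the limit; for such $u$ all of $D^iu$, $i=0,1,2$, and $\partial_t^\alpha u$ belong to $L_{p,q,w}(\rr^d_T)$, so any of these norms may legitimately be absorbed into the left-hand side later in the argument. For such $u$, the estimate will follow from a mean oscillation bound for $D^2u$ and $\partial_t^\alpha u$ on small parabolic cylinders, combined with the Fefferman--Stein sharp function theorem and the Hardy--Littlewood maximal function theorem in weighted mixed-norm spaces.

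\emph{Mean oscillation estimate.} Fix $(t_0,x_0)\in\rr^{d+1}$ and $0<r\le R_0$, freeze the leading coefficients at their average $\overline{a}^{ij}$ over $Q_r(t_0,x_0)$, and after a linear change of the spatial variables reduce the leading part to $\partial_t^\alpha-\Delta$. On the infinite cylinder $(-\infty,t_0)\times B_r(x_0)$, decompose $u=w+v$, where $w$ solves, with zero data, the constant-coefficient equation with right-hand side $f+(\overline{a}^{ij}-a^{ij})D_{ij}u+b^iD_iu+cu$, and $v=u-w$ solves the homogeneous equation $\partial_t^\alpha v-\Delta v=0$. For $w$, the estimate for the constant-coefficient equation (available from \cite{P1}) bounds $\norm{D^2w}$ on the cylinder by $\norm{f}+\gamma_0\norm{D^2u}+\norm{Du}+\norm{u}$; for $v$, the interior H\"older estimate of $D^2v$ proved in Section \ref{3} yields decay of the oscillation of $D^2v$ as the cylinder shrinks. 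Combining these, one obtains, for some $s>1$ close to $1$ (depending on $p$, $q$, and the weight constants) and some $\mu>0$, a bound of the form
\[
\fint_{Q_r(t_0,x_0)}\big|D^2u-(D^2u)_{Q_r(t_0,x_0)}\big|
\le N\big(\css\cmm(|f|^s)(t_0,x_0)\big)^{1/s}
+N\gamma_0^{\mu}\big(\css\cmm(|D^2u|^s+|Du|^s+|u|^s)(t_0,x_0)\big)^{1/s},
\]
and, using the equation, a similar bound with $D^2u$ replaced by $\partial_t^\alpha u$.

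\emph{From mean oscillation to the weighted estimate and solvability.} Applying the weighted mixed-norm Fefferman--Stein sharp function and Hardy--Littlewood maximal function theorems (with $w=w_1w_2$, $w_1\in A_p(\rr)$, $w_2\in A_q(\rr^d)$, in the parabolic geometry given by the cylinders $Q_{r_1,r_2}$ adapted to the scaling of $\partial_t^\alpha$), one gets
\[
\norm{D^2u}_{p,q,w}+\norm{\partial_t^\alpha u}_{p,q,w}
\le N\norm{f}_{p,q,w}+N\gamma_0^{\mu}\big(\norm{D^2u}_{p,q,w}+\norm{Du}_{p,q,w}+\norm{u}_{p,q,w}\big).
\]
Choosing $\gamma_0=\gamma_0(d,\delta,\alpha,p,q,K_1)$ small absorbs the $\gamma_0$ term; the remaining lower-order norms are controlled by an interpolation inequality in the spatial variables together with the finiteness of $T$ (for instance, by first proving the estimate on a short time interval using $u(0,\cdot)=0$ and then iterating, which produces the dependence of $N$ on $T$), giving \eqref{99999}. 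Existence and uniqueness then follow from the method of continuity applied to the family $\partial_t^\alpha-((1-\lambda)\delta^{ij}+\lambda a^{ij})D_{ij}-\lambda(b^iD_i+c)$, $\lambda\in[0,1]$: these operators satisfy Assumptions \ref{ass1} and \ref{ass2} $(\gamma_0)$ uniformly in $\lambda$, hence \eqref{99999} with a constant independent of $\lambda$, and at $\lambda=0$ the equation $\partial_t^\alpha u-\Delta u=f$ is solvable in $\hh_{p,q,w,0}^{\alpha,2}(\rr^d_T)$ (weighted mixed-norm solvability for constant leading coefficients being available from \cite{Han}, or from the fundamental solution of $\partial_t^\alpha-\Delta$ in \cite{P1} for smooth $f$ followed by density). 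Uniqueness is immediate from \eqref{99999}.

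\emph{Main obstacle.} The crux is the interior H\"older estimate of $D^2v$ for the homogeneous equation, carried out in Section \ref{3}. Since $\alpha\in(1,2)$, the operator $\partial_t^\alpha$ contains $\partial_t^2$, so multiplying $v$ by a smooth cutoff in time — which is needed to run the bootstrap/Sobolev-embedding scheme similar to that of \cite{dong20} — produces an extra first-order-in-time term that does not appear when $\alpha\in(0,1)$. To control it one takes a sequence of time cutoffs and applies an interpolation inequality for the time derivatives. Once this estimate is in place, the remainder of the proof of Theorem \ref{main} is a fairly standard assembly of the mean oscillation machinery.
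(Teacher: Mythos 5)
The overall architecture you propose --- mean oscillation for $D^2u$ via the decomposition $u=v+w$ in cylinders, then Fefferman--Stein/Hardy--Littlewood in weighted mixed norms, then absorption of the lower-order terms by iterating in time and invoking $\|u\|\le NT^\alpha\|\partial_t^\alpha u\|$, and finally the method of continuity --- is the same as the paper's. You also correctly identify the main novelty for $\alpha\in(1,2)$, namely the extra first-order-in-time term created by time cutoffs and the need for the interpolation inequality for the time derivatives. However, two ingredients are missing or wrong in a way that would break the argument as written.

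First, the mean oscillation estimate you state cannot be correct even for constant coefficients: with $\gamma_0=0$ it would assert that
\[
\fint_{Q_r}\big|D^2u-(D^2u)_{Q_r}\big|\le N\big(\css\cmm|f|^s\big)^{1/s},
\]
which is false. The correct form (Proposition \ref{3.9} and Proposition \ref{4.1}) measures the oscillation on a shrunk cylinder $Q_{\kappa r}$ and carries a factor $\kappa^\sigma$ in front of the top-order term as well as a factor $\kappa^{-(d+2/\alpha)/p_0}$ in front of $f$; the absorption into the left-hand side relies on first taking $\kappa$ small, \emph{and only then} $\gamma_0$ small. Your sketch suppresses the $\kappa$-dependence entirely. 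This is not merely cosmetic: it is what makes the sharp-function absorption work.

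Second, you never address the range of large $r$. Assumption \ref{ass2} only controls the oscillation of $a^{ij}$ on cylinders with $r\le R_0$, and once one feeds the cylinder $Q_{\kappa r}$ into the sharp function theorem, one must bound the oscillation on dyadic cylinders of \emph{all} sizes. The paper deals with this by first assuming $u$ is supported in $(0,T)\times B_{\xi R_0}(x_1)$ (Propositions \ref{4.1} and \ref{4.2}), which gives the trivial bound
\[
\fint_{Q_{\kappa r}}|D^2u-(D^2u)_{Q_{\kappa r}}|\le N\xi^{d/q_0}\kappa^{-d/q_0}\big(\css\cmm|D^2u|^{p_0}\big)^{1/p_0}
\]
for $r\ge R_1\delta/\sqrt{d}$, and then removes the support restriction via a spatial partition of unity (Corollary \ref{withl}), at the cost of the extra $R_0^{-2}\|u\|_{p,q,w}$ term that you mention. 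Without this localize-then-patch step, the sharp function theorem cannot be applied. You should add this to your plan.

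A smaller point: the time-cutoff difficulty you isolate in your ``Main obstacle'' paragraph is indeed present in the bootstrap for $v$ (Proposition \ref{3.3}), but it reappears with equal force in the very last step, when one cuts $[0,T]$ into short intervals to absorb $\|u\|_{p,q,w}$. There $\partial_t^\alpha(u\eta_j)$ again differs from $\eta_j\partial_t^\alpha u$ by a term involving $\partial_t^\beta(\eta_{j-1}u)$, and the same interpolation in time (Lemmas \ref{3.1}, \ref{3.2}, \ref{4.4}) is invoked. Your phrase ``first proving the estimate on a short time interval using $u(0,\cdot)=0$ and then iterating'' elides this and would need to be fleshed out along the lines of the paper's proof of Theorem \ref{main}.
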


Next, we introduce the results for the half space and smooth domains under the zero initial conditions and the zero boundary condition.
\begin{theorem}\label{main2}
Let  $\Omega = \rr^d_+$, $\alpha \in (1,2)$, $T \in (0, \infty)$, $p,q \in (1, \infty)$, $K_1 \in [1, \infty)$ and  $[w]_{p,q, \Omega} \le K_1$.
There exists $\gamma_0 = \gamma_0 (d, \delta, \alpha, p, q, K_1 ) \in (0,1)$ such that under Assumption \ref{ass2} $(\gamma_0)$, the following holds.
For any $u \in \mathring{\hh}_{p,q,w,0}^{\alpha, 2} ( \Omega_T )$ satisfying
\[
\partial_t^\alpha u - a^{ij} D_{ij} u - b^i D_i u - cu = f \quad \text{in}\quad  \Omega_T,
\stepcounter{equation}\tag{\theequation}\label{main estimate2}
\]
 we have
\[
\norm{\partial_t^\alpha u}_{p,q,w}
+ \norm{u}_{p,q,w}
+ \norm{Du}_{p,q,w}
+ \norm{D^2 u}_{p,q,w}
\leq N \norm{f}_{p,q,w},
\]
where
$\norm{\cdot}_{p,q,w}
= \norm{\cdot}_{L_{p,q,w} (\Omega_T) }$ and $N = N(d, \delta, \alpha, p, q, K_1, R_0, T).$
Moreover, for any $f \in L_{p,q,w} ( \Omega_T)$, there exists a unique solution
$u \in \mathring{\hh}_{p,q,w,0}^{\alpha, 2} ( \Omega_T)$ to (\ref{main estimate2}).
\end{theorem}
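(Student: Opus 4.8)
The plan is to reduce the half-space problem to the whole-space Theorem \ref{main} by an odd reflection across $\{x^1=0\}$, carried out so that the reflected data still falls under the hypotheses of the whole-space results. I would keep the same architecture as the proof of Theorem \ref{main}: first establish the a priori estimate via a mean oscillation / Fefferman--Stein sharp function argument in the weighted mixed-norm setting, which reduces matters to a pointwise oscillation (H\"older) bound for $D^2u$ when $u$ solves a homogeneous, frozen-coefficient model equation; then obtain solvability by the method of continuity. The genuinely new ingredient compared to the whole space is the boundary version of the model estimate.

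For the model estimate near the boundary, fix $(t_0,x_0)$ with $x_0^1$ small and let $u$ solve $\partial_t^\alpha u-\bar a^{ij}D_{ij}u=0$ in a half-cylinder $Q_r^+(t_0,x_0):=Q_r(t_0,x_0)\cap\{x^1>0\}$ with $u=0$ on $\{x^1=0\}$, where $\bar a^{ij}$ is constant. The difficulty is that the odd extension $\tilde u(t,x^1,x'):=-u(t,-x^1,x')$ for $x^1<0$ solves an equation whose leading coefficients are the even extensions of $\bar a^{11}$ and $\bar a^{kl}$ ($k,l\ge 2$) but the \emph{odd} extensions of the cross terms $\bar a^{1k}$ ($k\ge 2$); the latter are discontinuous across $\{x^1=0\}$ and hence violate Assumption \ref{ass2} $(\gamma_0)$ in cylinders straddling the hyperplane. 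To remove the cross terms I would first apply the volume-preserving linear change of variables $y^1=x^1$, $y^k=x^k-(\bar a^{1k}/\bar a^{11})x^1$ ($k\ge 2$), which fixes $\{x^1=0\}$, preserves the half space, and (by completing the square in $\xi_1$) turns the operator into the block-diagonal form $\partial_t^\alpha u-\bar a^{11}D_{11}u-\sum_{k,l\ge 2}\hat a^{kl}D_{kl}u=0$ with $\hat a^{kl}$ constant and still uniformly elliptic. Now the odd extension in $y^1$ introduces no cross terms: $\tilde u$ solves a constant-coefficient equation in the full cylinder, $\tilde u\in\hh_{p,0,\mathrm{loc}}^{\alpha, 2}$, and the interior H\"older estimate for $D^2\tilde u$ established in the proof of Theorem \ref{main} (Sections \ref{3}--\ref{4}) applies; undoing the change of variables gives the desired oscillation estimate for $D^2u$ up to the flat boundary, with constants depending only on $d,\delta,\alpha$. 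As in the whole-space case one must handle the cutoff functions in time and the extra first-order term in $\partial_t^\alpha$, but since the time interval and the Caputo derivative are untouched by the spatial reflection, that part of the argument—including the interpolation inequality for the time derivatives—carries over verbatim.

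With the interior estimate (from Theorem \ref{main}) and this boundary estimate in hand, I would run the level-set/filtration argument exactly as in the proof of Theorem \ref{main}, now over a dyadic decomposition of $\Omega_T$, to obtain
\[
\norm{\partial_t^\alpha u}_{p,q,w}+\norm{u}_{p,q,w}+\norm{Du}_{p,q,w}+\norm{D^2u}_{p,q,w}\le N\norm{f}_{p,q,w}
\]
for $u\in\mathring{\hh}_{p,q,w,0}^{\alpha, 2}(\Omega_T)$ solving \eqref{main estimate2}, first for small $T$ and then for all finite $T$ by a standard iteration in time. Here one uses that the even extension in $x^1$ of $w_2\in A_q(\rr^d_+)$ is an $A_q(\rr^d)$ weight with comparable characteristic (the ``extension of weights'' treated in Appendix \ref{B}), and that the odd extension of $u$ lies in $\hh_{p,q,\tilde w,0}^{\alpha, 2}(\rr^d_T)$.

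Finally, for solvability I would use the method of continuity, connecting the given operator to $\partial_t^\alpha-\Delta$ through $L_\lambda=\lambda(\partial_t^\alpha-\Delta)+(1-\lambda)(\partial_t^\alpha-a^{ij}D_{ij}-b^iD_i-c)$, $\lambda\in[0,1]$; all $L_\lambda$ satisfy Assumptions \ref{ass1}--\ref{ass2} with the same constants, so the a priori estimate holds uniformly in $\lambda$ and it suffices to solve $\partial_t^\alpha u-\Delta u=f$ in $\Omega_T$ with zero initial and Dirichlet conditions. For this I would odd-extend $f$ to $\rr^d_T$, invoke Theorem \ref{main} to get $\tilde u\in\hh_{p,q,\tilde w,0}^{\alpha, 2}(\rr^d_T)$, and note that $-\tilde u(t,-x^1,x')$ solves the same whole-space equation (since $\Delta$ has no cross terms and $f$ was extended oddly); by uniqueness in Theorem \ref{main}, $\tilde u$ is odd in $x^1$, hence $\tilde u=0$ on $\{x^1=0\}$, and $u:=\tilde u|_{\Omega_T}$ is the desired solution, uniqueness coming from the a priori estimate. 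I expect the main obstacle to be precisely the removal of the cross terms in the boundary model problem, together with checking that the reflection leaves all the function spaces, weights, and the bootstrap/interpolation machinery intact; once the boundary H\"older estimate is secured, the remainder is a routine adaptation of the whole-space proof.
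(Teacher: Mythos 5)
Your proposal is correct and follows the same overall route as the paper: extend by odd reflection across $\{x^1=0\}$, obtain a boundary mean oscillation estimate from the whole-space one, and run the weighted sharp function / maximal function machinery, then solve by the method of continuity using the Laplacian case and symmetry/uniqueness. The one place where you diverge from the paper's argument is in how the cross terms $\bar a^{1k}$ are removed so that the odd reflection is compatible with the frozen coefficients (Proposition \ref{5.2}, Case 2). You use a tangential shear $y^1=x^1$, $y^k=x^k-(\bar a^{1k}/\bar a^{11})x^1$, which fixes $\{x^1=0\}$ and block-diagonalizes the leading matrix into $\bar a^{11}$ and the Schur complement, and then invoke the whole-space interior estimate for general constant (block-diagonal) coefficients. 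The paper instead replaces $A$ by its symmetric part, sets $\widehat u(t,x)=u(t,A^{1/2}O^{-1}x)$ for an orthogonal $O$ with $OA^{-1/2}(\rr^d_+)=\rr^d_+$, and observes that $\Delta\widehat u=a^{ij}D_{ij}u$, thereby reducing to the Laplacian before reflecting. Both changes of variables are volume-preserving up to a constant, keep the half space invariant, leave the Caputo derivative untouched, and give the same dependence of constants on $d,\delta,\alpha$; your shear is slightly more elementary, while the paper's device has the aesthetic advantage of reducing cleanly to $\partial_t^\alpha-\Delta$. Either way, the subsequent Fefferman--Stein argument in the half space (using the restricted strong maximal function of \eqref{SM} and the even extension of $w_2$) and the iteration in time to absorb $\norm{u}_{p,q,w}$ go through exactly as you describe, and your solvability argument via odd extension of $f$ and uniqueness in $\rr^d_T$ matches the paper's use of Lemma \ref{ext} together with Theorem \ref{main}.
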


\begin{theorem}\label{nondivdomain}
Let $\Omega $ be a domain, $\alpha \in (1,2)$, $T \in (0, \infty)$, $p,q \in (1, \infty)$, $K_1 \in [1, \infty)$ and $[w]_{p,q, \Omega} \le K_1$.
There exist constants ${\gamma_0} = \gamma_0 (d, \delta, \alpha, p, q, K_1 ) \in (0,1)$ and ${\theta=} \theta(d, \delta, \alpha, p, q, K_1) \in (0,{1/3})$ such that under Assumption \ref{ass2} $(\gamma_0)$ and Assumption {\ref{ass4}} $(\theta)$, the following holds.
For any $u \in \mathring{\hh}_{p,q,w,0}^{\alpha, 2} ( \Omega_T )$ satisfying
\[
\partial_t^\alpha u -a^{ij} D_{ij} u - b^i D_i u - cu = f \quad \text{in}\quad  \Omega_T,
\stepcounter{equation}\tag{\theequation}\label{10041}
\]
we have
\[
\norm{\partial_t^\alpha u}_{p,q,w}
+ \norm{u}_{p,q,w}
+ \norm{Du}_{p,q,w}
+ \norm{D^2 u}_{p,q,w}
\leq N \norm{f}_{p,q,w},\stepcounter{equation}\tag{\theequation}\label{jj1}
\]
where
$\norm{\cdot}_{p,q,w}
= \norm{\cdot}_{L_{p,q,w} (\Omega_T) }$ and $N = N(d, \delta, \alpha, p, q, K_1, R_0, R_2, T).$
Moreover, for any $f \in L_{p,q,w} ( \Omega_T)$, there exists a unique solution
$u \in \mathring{\hh}_{p,q,w,0}^{\alpha, 2} ( \Omega_T)$ to (\ref{10041}).
\end{theorem}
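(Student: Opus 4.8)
The plan is to deduce the a priori estimate \eqref{jj1} and the solvability from the whole space result (Theorem \ref{main}) and the half space result (Theorem \ref{main2}) by a partition of unity combined with a $C^{1,1}$ boundary flattening, in the spirit of \cite[Ch.~8]{k}. I would prove the a priori estimate first. Fix $u\in\mathring{\hh}_{p,q,w,0}^{\alpha,2}(\Omega_T)$ solving \eqref{10041}, cover $\overline\Omega$ by balls of a fixed radius comparable to $R_2$ — interior balls $B(z_k)$ with $\operatorname{dist}(z_k,\partial\Omega)$ bounded below by the radius, and boundary balls $B(z_k)$ with $z_k\in\partial\Omega$ and radius small enough that $\Omega\cap B(z_k)$ is, after a rotation, the region above the graph of the $C^{1,1}$ function $\phi_k$ from Assumption \ref{ass4} — and fix a subordinate partition of unity $\{\zeta_k\}$ with $\zeta_k=\zeta_k(x)$, $|D^m\zeta_k|\le N R_2^{-m}$ for $m=1,2$, and bounded overlap. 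Since $\zeta_k$ does not depend on $t$, one has $\partial_t^\alpha(\zeta_k u)=\zeta_k\,\partial_t^\alpha u$, and $\zeta_k u$ solves, in the appropriate model domain, an equation of the same form with right-hand side $\zeta_k f+h_k$, where
\[
h_k=-a^{ij}\big(2D_i\zeta_k\,D_j u+u\,D_{ij}\zeta_k\big)-b^i u\,D_i\zeta_k
\]
is lower order with $\norm{h_k}_{p,q,w}\le N(R_2)\big(\norm{Du}_{p,q,w}+\norm{u}_{p,q,w}\big)$.

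On an interior ball I would extend $a^{ij}$ off $B(z_k)$ to coefficients on $\rr^d$ keeping Assumption \ref{ass1} and Assumption \ref{ass2} with $\gamma_0$ replaced by $N\gamma_0$, extend $w_2$ to an $A_q(\rr^d)$ weight with controlled characteristic (Appendix \ref{B}), and apply Theorem \ref{main} to $\zeta_k u$. On a boundary ball I would flatten $\partial\Omega\cap B(z_k)$ by a $C^{1,1}$ change of variables $\Psi_k$; since $\Psi_k$ leaves $t$ untouched, the term $\partial_t^\alpha$ is unchanged, while $a^{ij}D_{ij}$ turns into $\tilde a^{kl}D_{kl}$ plus first-order terms whose coefficients are bounded by $N(d,\delta)$ (using $|D^2\phi_k|\le\delta^{-1}$), the matrix $\tilde a^{kl}$ is uniformly elliptic once $\theta$ is small, and, because $D\Psi_k$ is Lipschitz with a constant controlled by $\delta^{-1}$, one gets $\fint_{Q_r}|\tilde a-\overline{\tilde a}|\le N\gamma_0+N\delta^{-2}r$ for $r\le R_0$; thus $\tilde a$ satisfies Assumption \ref{ass2} at the smaller scale $\min(R_0,N^{-1}\gamma_0\delta^2)$ with constant $N\gamma_0$. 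After extending $\tilde a$ and the transformed weight to $\rr^d_+$ and choosing $\gamma_0$ and $\theta$ small so that $N\gamma_0$ is at most the admissible mean-oscillation constant of Theorem \ref{main2}, I would apply that theorem to $(\zeta_k u)\circ\Psi_k^{-1}\in\mathring{\hh}_{p,q,w,0}^{\alpha,2}$ and pull back at a cost $N(d,\delta)$.

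Summing the local estimates, using the bounded overlap together with $\norm{\zeta_k D^2 u}_{p,q,w}\le\norm{D^2(\zeta_k u)}_{p,q,w}+N(R_2)(\norm{Du}_{p,q,w}+\norm{u}_{p,q,w})$ and the analogous lower-order bounds, I expect to arrive at
\[
\norm{\partial_t^\alpha u}_{p,q,w}+\norm{D^2 u}_{p,q,w}\le N\norm{f}_{p,q,w}+N\norm{Du}_{p,q,w}+N\norm{u}_{p,q,w}
\]
with $N=N(d,\delta,\alpha,p,q,K_1,R_0,R_2,T)$. The interpolation inequality $\norm{Du}_{p,q,w}\le\ep\norm{D^2 u}_{p,q,w}+N_\ep\norm{u}_{p,q,w}$ removes the gradient term, and the remaining $\norm{u}_{p,q,w}$ is absorbed as in the whole space proof: from $u=I_0^\alpha\partial_t^\alpha u$ and a weighted bound for the fractional integral in time, $\norm{u}_{L_{p,q,w}(\Omega_{T'})}\le N(T')^{\alpha}\norm{\partial_t^\alpha u}_{L_{p,q,w}(\Omega_{T'})}$, so \eqref{jj1} holds for $T'$ small, and the restriction on $T'$ is lifted by the usual finite iteration over consecutive time intervals exploiting the zero initial conditions. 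For solvability, I would glue the model-problem solution operators of Theorems \ref{main} and \ref{main2} with $\{\zeta_k\}$ into an approximate right inverse with lower-order remainder, which is a contraction on a short time interval and removable after iterating in time; together with the uniqueness furnished by \eqref{jj1} this produces the unique solution in $\mathring{\hh}_{p,q,w,0}^{\alpha,2}(\Omega_T)$. Alternatively one may run the method of continuity, joining the operator to $\partial_t^\alpha-\Delta$.

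The hard part will be the coefficient bookkeeping under the flattening: verifying that $\tilde a^{ij}$ keeps a small-mean-oscillation property — only at a scale shrunk by $\gamma_0$ and $\delta$ — and stays uniformly elliptic, which is precisely what forces both $\gamma_0$ and $\theta$ to be taken small, and carefully tracking the new lower-order terms produced by the change of variables and by the partition-of-unity commutators, as well as checking that the weights transform to $A_q$ weights with controlled characteristic (Appendix \ref{B}). By contrast, the nonlocality in time of $\partial_t^\alpha$ is harmless here, since the cutoffs depend only on $x$ and $\partial_t^\alpha$ commutes with the spatial localization; localizing in time instead — as is needed in Section \ref{3} — would bring back the extra first-order term in $\partial_t^\alpha$, but that difficulty stays within the whole space part of the argument.
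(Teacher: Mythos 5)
Your route is correct but genuinely different from the paper's. The paper does not apply Theorems \ref{main} and \ref{main2} directly to $\zeta_k u$; instead it first proves, via S.~Agmon's idea of adding an extra dimension (Lemma \ref{5.3}), that $L+\lambda$ satisfies the strong estimate
\[
\norm{u}_{\hh^{\alpha,2}_{p,q,w}}+\lambda\norm{u}_{p,q,w}\le N\norm{Lu+\lambda u}_{p,q,w}
\]
in the whole space and half space for $\lambda\ge\lambda_0$, and then in Proposition \ref{5.4} glues the corresponding local solution operators $R_\lambda^k$ into an approximate inverse whose error term, involving first-order and zeroth-order commutators, becomes a contraction because of the factor $1/\sqrt{\lambda}$. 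This yields solvability of $L+\lambda$ on $\Omega_T$ and the estimate \eqref{gg} with an extra $\norm{u}$, which is then removed by the same time iteration as in the proof of Theorem \ref{main}. Solvability of $L$ itself follows by the method of continuity joining $L+\lambda_1$ to $L$.

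Your proposal gets the smallness from a different mechanism: you sum the local estimates, absorb $\norm{Du}$ by interpolation, and absorb $\norm{u}$ by shrinking the time interval and iterating, starting from $\norm{u}_{L_{p,q,w}(\Omega_{T'})}\le N(T')^\alpha\norm{\partial_t^\alpha u}_{L_{p,q,w}(\Omega_{T'})}$. That chain of steps does close, and it buys you the a priori estimate without introducing $\lambda$. Two caveats are worth registering, though. First, you deferred solvability to ``one may run the method of continuity, joining the operator to $\partial_t^\alpha-\Delta$''; that requires knowing that $\partial_t^\alpha-\Delta$ is solvable in $\Omega_T$, which is itself a non-trivial domain result, so you would either have to run the same local-operator contraction you sketched or borrow the $L+\lambda$ device at that point anyway. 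The paper's S.~Agmon route gives this model solvability and the a priori estimate in one stroke via the $R_\lambda^k$'s. Second, the time-localization step you invoke at the end is not entirely free of the $\alpha\in(1,2)$ difficulty you attribute to Section \ref{3}: cutting $u\eta_j$ in time produces the same extra $\eta_j'\,\partial_t I^{2-\alpha}$ term handled in the proof of \eqref{99999}, and this occurs inside the domain argument as well (the paper's proof of Theorem \ref{nondivdomain} explicitly ``follows the proof of \eqref{99999}'' for exactly this reason); the spatial cutoffs commuting with $\partial_t^\alpha$ does not spare you the time cutoffs. With these two points filled in, your proof is sound.
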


Next we state the results for equations in divergence form under the zero initial conditions and the zero boundary condition.
\begin{theorem}\label{6main}
Let $\alpha \in (1,2)$, $T \in (0, \infty)$, $p,q \in (1, \infty)$,  $K_1 \in [1, \infty)$, and $[w]_{p,q} \le K_1$.
There exists $\gamma_0 = \gamma_0 (d, \delta, \alpha, p, q, K_1 ) \in (0,1)$ such that under Assumption \ref{ass2} $(\gamma_0)$, the following holds.
For any $u \in \chh_{p,q,w,0}^{\alpha, 1} ( \rr^d_T )$ satisfying
\[
\partial_t^\alpha u
- D_i (a^{ij} D_j u + a^i u ) - b^i D_i u - cu = D_i g_i + f \quad \text{in}\quad \rr^d_T,
\stepcounter{equation}\tag{\theequation}\label{6main estimate}
\]
we have
\[
\norm{\partial_t^\alpha u}_{\hh^{-1}_{p,q,w}}
+ \norm{u}_{p,q,w}
+ \norm{Du}_{p,q,w}
\leq N (\norm{f}_{p,q,w}+\norm{g}_{p,q,w}), \stepcounter{equation}\tag{\theequation}\label{6gg}
\]
where $\norm{\cdot}_{p,q,w}
= \norm{\cdot}_{L_{p,q,w} (\rr^d_T) }$ and $N = N(d, \delta, \alpha, p, q, K_1, R_0, T).$
{Moreover}, for any $f,g_i \in L_{p,q,w} ( \rr^d_T )$, there exists a unique solution $u \in \chh_{p,q,w,0}^{\alpha, 1} ( \rr^d_T )$ to (\ref{6main estimate}).
\end{theorem}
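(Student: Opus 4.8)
The plan is to derive Theorem~\ref{6main} from the non-divergence result, Theorem~\ref{main}, by running the mean-oscillation/perturbation scheme of Sections~\ref{3}--\ref{4} one derivative lower, that is, for $Du$ in place of $D^2u$. First I would reduce to the model equation
\[
\partial_t^\alpha u - D_i\big(a^{ij}D_j u\big) = D_i g_i \qquad \text{in }\rr^d_T,
\]
with no lower-order terms and $f\equiv 0$. The terms $a^iu$, $b^iD_iu$, $cu$ are moved to the right-hand side (the first into $g$, the other two into $f$) and absorbed by a standard argument using $T<\infty$, exactly as the lower-order terms are handled for the non-divergence equation; the inhomogeneity $f$ is removed by solving $\partial_t^\alpha h-\Delta h=f$ in $\rr^d_T$ with zero initial conditions — Theorem~\ref{main} with $a^{ij}=\delta^{ij}$ gives $\norm{h}_{p,q,w}+\norm{Dh}_{p,q,w}\le N\norm{f}_{p,q,w}$ — after which $u-h\in\chh^{\alpha,1}_{p,q,w,0}(\rr^d_T)$ solves the model equation with $g_i$ replaced by $g_i+a^{ij}D_jh-D_ih$, whose $L_{p,q,w}$-norm is $\le N(\norm{f}_{p,q,w}+\norm{g}_{p,q,w})$. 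Finally, once $\norm{u}_{p,q,w}$ and $\norm{Du}_{p,q,w}$ are under control, the equation itself gives $\partial_t^\alpha u=D_i(a^{ij}D_ju+g_i)$, hence $\norm{\partial_t^\alpha u}_{\hh^{-1}_{p,q,w}}\le N(\norm{Du}_{p,q,w}+\norm{g}_{p,q,w})$, which supplies the last term in \eqref{6gg}.

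For the model equation the core is a mean oscillation estimate for the pair $(u,Du)$. I would fix a small cylinder $Q_r(t_0,x_0)$ with $r\le R_0$, let $\bar a^{ij}$ denote the average of $a^{ij}$ over it, and decompose $u=u_1+u_2$ where, after an appropriate cutoff, $u_1$ solves the frozen-coefficient equation
\[
\partial_t^\alpha u_1-\bar a^{ij}D_{ij}u_1 = D_i\big(g_i+(\bar a^{ij}-a^{ij})D_ju\big),
\]
and $u_2=u-u_1$ solves a homogeneous constant-coefficient equation — which, as in Section~\ref{3}, it is convenient to place on an infinite-in-time cylinder. Since for constant coefficients the divergence and non-divergence operators coincide, $u_1=\sum_jD_jv_j$ with $v_j$ solving a non-divergence equation, so Theorem~\ref{main} bounds $\norm{u_1}+\norm{Du_1}$ on the enlarged cylinder by an $L_{s_0}$-average of $|g|$ plus $\norm{(\bar a-a)Du}_{L_{s_0}}$; since $\bar a-a$ is small only in an integral sense, Hölder's inequality together with the $L^m$-smallness of $\bar a-a$ implied by Assumption~\ref{ass2}$(\gamma_0)$ (for a suitable $m$, using also the bound $|a^{ij}|\le\delta^{-1}$) bounds this last quantity by $N\gamma_0^{1/m}$ times an $L_{s_1}$-average of $Du$ on a larger cylinder, for fixed exponents $1<s_0<s_1<p\wedge q$. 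For $u_2$ the interior Hölder estimates of Section~\ref{3}, applied to $u_2$ and to $D_ku_2$ (which again solve the equation), give oscillation decay of order $\kappa^{\gamma}$ at scale $\kappa r$. Combining these, I would obtain a pointwise bound for the sharp function of $(u,Du)$ of the form
\[
N\kappa^{\gamma}\big(\css\cmm(|u|^{s_1}+|Du|^{s_1})\big)^{1/s_1}
+N_\kappa\big(\css\cmm(|g|^{s_1})\big)^{1/s_1}
+N_\kappa\gamma_0^{1/m}\big(\css\cmm(|Du|^{s_1})\big)^{1/s_1},
\]
then feed it into the weighted mixed-norm Fefferman--Stein and Hardy--Littlewood estimates of Section~\ref{4}, and choose first $\kappa$ and then $\gamma_0$ small so as to absorb, arriving at $\norm{u}_{p,q,w}+\norm{Du}_{p,q,w}\le N\norm{g}_{p,q,w}$; together with the previous paragraph this is \eqref{6gg}.

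It remains to treat solvability. Uniqueness is immediate from \eqref{6gg} applied to the difference of two solutions. For existence I would use the method of continuity along $a_\lambda^{ij}=\lambda a^{ij}+(1-\lambda)\delta^{ij}$, $\lambda\in[0,1]$: each $a_\lambda$ satisfies Assumptions~\ref{ass1} and \ref{ass2}$(\gamma_0)$ with the same constants, so \eqref{6gg} holds uniformly in $\lambda$ for the bounded operators $\partial_t^\alpha-D_i(a_\lambda^{ij}D_j\cdot)\colon\chh^{\alpha,1}_{p,q,w,0}(\rr^d_T)\to\hh^{-1}_{p,q,w}(\rr^d_T)$; at $\lambda=0$ the equation $\partial_t^\alpha u-\Delta u=D_ig_i+f$ is solvable by the explicit representation $u=h+\sum_jD_jv_j$, with $h$ solving $\partial_t^\alpha h-\Delta h=f$ and $v_j$ solving $\partial_t^\alpha v_j-\Delta v_j=g_j$, both given by Theorem~\ref{main} (a direct weak-form computation shows this $u$ indeed solves the divergence equation and lies in $\chh^{\alpha,1}_{p,q,w,0}(\rr^d_T)$). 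Hence the case $\lambda=1$ is solvable.

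The hard part will be the mean oscillation estimate for $Du$: because the VMO coefficients are close to their local averages only in an integral sense, $\norm{(\bar a-a)Du}$ cannot be absorbed directly but must be handled via the Hölder/higher-integrability split above together with interior regularity for the homogeneous constant-coefficient problem — all carried out in the anisotropic cylinders with the scaling $r\mapsto r^{2/\alpha}$, and with the extra first-order-in-time term present in $\partial_t^\alpha$ for $\alpha\in(1,2)$, already responsible for the delicate multi-cutoff and interpolation arguments of Section~\ref{3}, reappearing in the decomposition step.
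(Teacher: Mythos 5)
Your broad plan — run the mean--oscillation scheme of Sections~\ref{3}--\ref{4} one derivative lower, using the representation $u=\sum_j D_j v_j$ with $v_j$ solving the non-divergence model problem, and then glue pieces together — is indeed what the paper does (Propositions~\ref{8.29}, \ref{63.9}, \ref{12291}, \ref{64.2}). But there are two genuine gaps where you wave your hand exactly over the part of the argument that is specific to the divergence form and nontrivial.

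First, your reduction ``move $a^iu$, $b^iD_iu$, $cu$ to the right-hand side and absorb by a standard argument using $T<\infty$, exactly as in the non-divergence case'' does not work as stated. In the non-divergence equation the lower-order terms $b^iD_iu$ and $cu$ are genuinely of lower order than the $D^2u$ controlled on the left, and they are absorbed by a spatial interpolation inequality; that is what makes the argument ``standard.'' In the divergence form, $Du$ is the top-order quantity on the left, and moving $b^iD_iu$ (or $a^iu$ under $D_i$) to the right produces $\delta^{-1}\norm{Du}_{p,q,w}$ with no smallness factor, which cannot be absorbed by making $T$ small (smallness in $T$ helps with $\norm{u}$, not $\norm{Du}$). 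The paper handles precisely this by the S.~Agmon penalty $Lu+\lambda u$ and a separate scaling in $\sqrt\lambda$ (Corollaries~\ref{6small} and \ref{6par}), which places a factor $\lambda^{-1/2}$ in front of the ``$f$-type'' contributions and thereby makes $\lambda^{-1/2}\delta^{-1}\norm{Du}$ absorbable. Your proposal never introduces the $\lambda$-penalty in the a priori estimate, only in the method of continuity, so this absorption is missing. Your alternative of subtracting a particular solution $h$ for $f$ also runs into this: after moving $b^iD_iu+cu$ into $f$ the right-hand side depends on the unknown $u$, so one cannot prescribe $h$ as a fixed particular solution before $u$ is controlled.

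Second, and more seriously, you never address the difficulty that is the real point of the divergence-form argument: once \eqref{6hh} is obtained (i.e. $\norm{u}+\norm{Du}\le N(\norm{f}+\norm{g})+N_1\norm{u}$), one must remove the $N_1\norm{u}$ term via a time-partition (the analogue of the induction on $s_k=kT/m$ used to prove \eqref{absorb}). For the non-divergence problem this works because Lemma~\ref{4.4} gives $\norm{u}\le NT^\alpha\norm{\partial_t^\alpha u}_{L_{p,q,w}}$ and $\partial_t^\alpha u$ is an $L_{p,q,w}$ function. In the divergence form $\partial_t^\alpha u$ lies only in $\hh^{-1}_{p,q,w}$, so this embedding is unavailable. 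The paper's fix is the spatial mollification device: one proves $u^\ep\in\hh^{\alpha,2}_{p,q,w,0}$ with $\norm{\partial_t^\alpha u^\ep}_{L_{p,q,w}}\le N\ep^{-1}\norm{g}+N\norm{f}$ (Lemma~\ref{l6}), pairs it with $\norm{u^\ep-u}\le N\ep\norm{Du}$ (Lemma~\ref{l5}), and optimizes $\ep$ and $m$ simultaneously; in addition one needs the distributional product rule for $\partial_t^\alpha(\eta u)$ for $u\in\chh^{\alpha,1}$ (Lemma~\ref{6.666}), itself proved by mollification, and the integration of the $\hh^{-1}$ time-derivative (Lemma~\ref{6.8}). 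None of this appears in your write-up, yet this is exactly where the case $\alpha\in(1,2)$ in divergence form is genuinely new relative to both the non-divergence case and the $\alpha\in(0,1)$ literature.

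A smaller remark: the paper proves a mean oscillation estimate only for $Du$ (Proposition~\ref{63.9}) and obtains $\norm{u}$ separately via Agmon's trick; you propose instead to carry $(u,Du)$ jointly through the sharp-function machinery. This is conceivable but not automatic — the homogeneous Hölder estimate of Proposition~\ref{3.3} scales as $r^{-\sigma}$ whereas $u$ and $g$ do not have the same scaling, so the resulting $\norm{u}\le N\norm{g}$ bound would carry an explicit length-scale dependence that you would then have to track carefully through the partition of unity. This is not a fatal objection, but you should not treat it as a routine ``one derivative lower'' restatement of the $Du$ estimate.
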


\begin{theorem}\label{main4}
Let $\Omega = \rr^d_+$, $\alpha \in (1,2)$, $T \in (0, \infty)$, $p,q \in (1, \infty)$, $K_1 \in [1, \infty)$, and $[w]_{p,q,\Omega} \le K_1$.
There exists $\gamma_0 = \gamma_0 (d, \delta, \alpha, p, q, K_1 ) \in (0,1)$ such that under Assumption \ref{ass2} $(\gamma_0)$, the following holds.
For any $u \in \mathring{\chh}_{p,q,w,0}^{\alpha, 1} ( \Omega_T )$ satisfying
\[
\partial_t^\alpha u
- D_i (a^{ij} D_j u + a^i u ) - b^i D_i u -cu = D_i g_i + f \quad \text{in}\quad  \Omega_T,
\stepcounter{equation}\tag{\theequation}\label{915}
\]
we have
\[
\norm{\partial_t^\alpha u}_{\hh^{-1}_{p,q,w}}
+ \norm{u}_{p,q,w}
+ \norm{Du}_{p,q,w}
\leq N (\norm{f}_{p,q,w}+\norm{g}_{p,q,w}), \stepcounter{equation}\tag{\theequation}\label{9184}
\]
where
$\norm{\cdot}_{p,q,w}
= \norm{\cdot}_{L_{p,q,w} (\Omega_T) }$ and $N = N(d, \delta, \alpha, p, q, K_1, R_0, R_2, T).$
Moreover, for any $f, g_i \in L_{p,q,w} ( \Omega_T)$, there exists a unique solution
$u \in \mathring{\chh}_{p,q,w,0}^{\alpha, 1} ( \Omega_T)$ to (\ref{915}).
\end{theorem}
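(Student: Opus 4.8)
The plan is to reduce \eqref{9184} to an a priori estimate plus the method of continuity, exactly as one passes from the whole-space non-divergence result (Theorem \ref{main}) to the whole-space divergence result (Theorem \ref{6main}), and from the latter to the half-space case. First I would dispose of the time-derivative norm: from \eqref{915} one has, in the weak sense, $\partial_t^\alpha u = D_i(a^{ij}D_j u + a^i u + g_i) + b^i D_i u + cu + f$, so by the very definition of $\norm{\cdot}_{\hh^{-1}_{p,q,w}}$ and Assumption \ref{ass1},
\[
\norm{\partial_t^\alpha u}_{\hh^{-1}_{p,q,w}} \le N\big(\norm{u}_{p,q,w} + \norm{Du}_{p,q,w} + \norm{f}_{p,q,w} + \norm{g}_{p,q,w}\big),
\]
so it suffices to bound $\norm{u}_{p,q,w} + \norm{Du}_{p,q,w}$ by $N(\norm{f}_{p,q,w} + \norm{g}_{p,q,w})$. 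The lower-order terms $a^i,b^i,c$ are then moved to the right-hand side and absorbed after inserting an auxiliary term $\lambda u$ with $\lambda$ large, as in the earlier sections, and the weighted mixed-norm estimate is deduced from a pointwise mean-oscillation bound for $Du$ via the $A_p$-weighted level-set machinery built on $\cmm$ and $\css\cmm$ that the paper has already set up for \eqref{nondiv}. Thus the real task is a mean-oscillation estimate for weak solutions of the divergence equation with leading coefficients of small mean oscillation, on a small parabolic cylinder $Q_{r_1,r_2}(t_0,x_0)$.

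For a cylinder $Q_{r_1,r_2}(t_0,x_0)$ disjoint from $(0,T)\times\partial\Omega$ the argument is the whole-space one behind Theorem \ref{6main}: freeze the leading coefficients over the cylinder, write $u=w+(u-w)$ with $w$ solving the frozen constant-coefficient divergence equation and the same data, control $u-w$ through the equation it satisfies and the smallness $\gamma_0$ of the oscillation, and use interior H\"older regularity of $Dw$ for $\partial_t^\alpha-\Delta$ (reached after a linear change of the spatial variables normalizing the frozen matrix to the identity). For the model constant-coefficient divergence equation itself I would invoke the non-divergence machinery: if $u_i$ solves $\partial_t^\alpha u_i-\Delta u_i=g_i$ and $u_0$ solves $\partial_t^\alpha u_0-\Delta u_0=f$ (Theorem \ref{main} or \ref{main2}), then $\sum_i D_i u_i+u_0$ solves the divergence model equation with $\norm{u}_{p,q,w}+\norm{Du}_{p,q,w}$ bounded by $\norm{f}_{p,q,w}+\norm{g}_{p,q,w}$; this is the concrete content of ``divergence follows from non-divergence.''

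The genuinely new point is a boundary cylinder, $x_0\in\partial\Omega=\{x^1=0\}$. There I would freeze the leading coefficients over $Q_{r_1,r_2}(t_0,x_0)\cap\Omega_T$, so that $\overline a^{ij}$ is constant and the coefficients stay within $N\gamma_0$ of $\overline a^{ij}$ by Remark \ref{ass2.3}, and then apply a linear change of variables in $x$ that fixes the hyperplane $\{x^1=0\}$ and brings $(\overline a^{ij})$ to a form with vanishing mixed normal--tangential entries $\overline a^{1j}=\overline a^{j1}=0$ for $j\ge 2$ (always possible for a positive-definite constant matrix, at the cost of fixed factors in $\delta$ and $\gamma_0$). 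With the frozen operator now block-diagonal in the normal direction, the half-cylinder Dirichlet problem for $\partial_t^\alpha-D_i(\overline a^{ij}D_j\,\cdot\,)$ extends to a full cylinder in $\rr^{d+1}$ by odd reflection in $x^1$ of the frozen-coefficient solution, of $f$, and of the tangential components $g_j$ ($j\ge2$), together with even reflection of $g_1$ and of the (constant) leading coefficients; after extending $w_2$ to an $A_q(\rr^d)$ weight as in Appendix \ref{B}, Theorem \ref{6main} applied on concentric cylinders to the doubled problem gives H\"older regularity of the gradient up to $\{x^1=0\}$, which transfers back. Unique solvability then follows by the method of continuity from the model equation $\partial_t^\alpha u-\Delta u=D_i g_i+f$ in $\Omega_T$, which is itself solved by this reflection together with Theorem \ref{6main} in $\rr^d_T$ — note the construction $\sum_i D_i u_i+u_0$ used in the whole space does not respect the Dirichlet condition here, through the $D_1 u_1$ term — the uniqueness in Theorem \ref{6main} forcing the whole-space solution to be odd in $x^1$ and hence to vanish on $\{x^1=0\}$.

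The hardest step is this boundary mean-oscillation estimate. A naive global odd reflection of the \emph{variable} leading coefficients across $\{x^1=0\}$ is unavailable, because the odd reflection of $a^{1j}$ has an $O(1)$ jump on the hyperplane and so fails to have small mean oscillation near it (the coefficients need not even possess boundary traces). The resolution — reflecting only \emph{after} freezing and diagonalizing the frozen matrix, so that only even reflections of constants are needed — must therefore be carried out inside the freezing step of the mean-oscillation argument rather than globally; making this work while tracking the dependence of all constants on $\gamma_0$, verifying that the doubled operator meets the hypotheses of Theorem \ref{6main}, and controlling the extra first-order contributions in the fractional derivative produced by the cutoff in time (as in the non-divergence case) is the technical core of the proof.
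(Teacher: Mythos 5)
Your outline matches the paper's proof of Theorem~\ref{main4}: reduce the $\hh^{-1}_{p,q,w}$ bound on $\partial_t^\alpha u$ to the $u,Du$ bounds via the equation; obtain an $L+\lambda$ estimate in the half space (the paper's Lemma~\ref{91566}, built from Proposition~\ref{9181}, the half-space analogues of Propositions~\ref{12291}, \ref{64.2}, and Corollary~\ref{6small}); remove the residual $\norm{u}_{p,q,w}$ by a cutoff in time; and solve the model equation by odd/even reflection plus Theorem~\ref{6main}, using the uniqueness in~\ref{6main} to enforce the odd symmetry and hence the Dirichlet condition — exactly as you observe. The only place you deviate is cosmetic: for a boundary cylinder you kill the mixed entries $\overline a^{1j}$ of the frozen matrix by a shear fixing $\{x^1=0\}$ and then reflect, while the paper (Propositions~\ref{5.2} and~\ref{9181}) normalizes all the way to the Laplacian via $y\mapsto A^{1/2}O^{-1}y$ with $O$ an orthogonal matrix realigning the image half space, so that only the model operator $\partial_t^\alpha-\Delta$ ever needs to be reflected; both normalizations are valid and yield constants depending only on $\delta$.
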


\begin{theorem}\label{divdomain}
Let $\Omega $ be a domain $\alpha \in (1,2)$, $T \in (0, \infty)$, $p,q \in (1, \infty)$, $K_1 \in [1, \infty)$, and $[w]_{p,q,\Omega} \le K_1$.
There exist $\gamma_0 = \gamma_0 (d, \delta, \alpha, p, q, K_1) \in (0,1)$ and $ \theta = \theta(d, \delta, \alpha, p, q, K_1) \in (0,{1/3})$ such that under Assumption  \ref{ass2} $(\gamma_0)$ and Assumption  \ref{ass3} $(\theta)$, the following holds.
For any $u \in \mathring{\chh}_{p,q,w,0}^{\alpha, 1} ( \Omega_T )$ satisfying
\[
\partial_t^\alpha u
- D_i (a^{ij} D_j u + a^i u ) - b^i D_i u - cu = D_i g_i + f \quad \text{in}\quad  \Omega_T,
\stepcounter{equation}\tag{\theequation}\label{10042}
\]
we have
\[
\norm{\partial_t^\alpha u}_{\hh^{-1}_{p,q,w}}
+ \norm{u}_{p,q,w}
+ \norm{Du}_{p,q,w}
\leq N (\norm{f}_{p,q,w}+\norm{g}_{p,q,w}),
\]
where
$\norm{\cdot}_{p,q,w}
= \norm{\cdot}_{L_{p,q,w} (\Omega_T) }$ and $N = N(d, \delta, \alpha, p, q, K_1, R_0,R_2,T).$
Also, for any $f, g_i \in L_{p,q,w} ( \Omega_T)$, there exists a unique solution
$u \in \mathring{\chh}_{p,q,w,0}^{\alpha, 1} ( \Omega_T)$ to (\ref{10042}).
\end{theorem}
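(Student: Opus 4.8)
\emph{Proof proposal.} The plan is to reduce the a priori estimate to the half-space result Theorem~\ref{main4} near $\partial\Omega$ and to the whole-space result Theorem~\ref{6main} in the interior, by a partition of unity together with a bi-Lipschitz flattening of the boundary, and then to obtain solvability from the a priori estimate by the method of continuity combined with the solvability already available in the flat and whole-space cases (which themselves follow from the non-divergence theorems by the perturbation argument of Section~\ref{6}). As usual it suffices to prove the a priori estimate for $u\in\mathring{\chh}^{\alpha,1}_{p,q,w,0}(\Omega_T)$; moreover, since the equation itself writes $\partial_t^\alpha u=D_i(a^{ij}D_ju+a^iu)+b^iD_iu+cu+D_ig_i+f$ as an element of $\hh^{-1}_{p,q,w}$ with the correct bound once $\norm{u}_{p,q,w}+\norm{Du}_{p,q,w}$ is controlled, it is enough to establish $\norm{u}_{p,q,w}+\norm{Du}_{p,q,w}\le N(\norm{f}_{p,q,w}+\norm{g}_{p,q,w})$.

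Fix $r_0\in(0,\tfrac12\min(R_0,R_2)]$, cover $\overline{\Omega}$ by balls $B_{r_0}(x_k)$ of bounded overlap with either $B_{2r_0}(x_k)\subset\Omega$ or $x_k\in\partial\Omega$, and take a subordinate partition of unity $\{\zeta_k\}$ with $|D\zeta_k|\le N/r_0$. Since $\zeta_k$ is independent of $t$, $v_k:=\zeta_k u$ solves weakly on $\Omega_T$ a divergence-form equation $\partial_t^\alpha v_k-D_i(a^{ij}D_jv_k+a^iv_k)-b^iD_iv_k-cv_k=D_i\tilde g_{k,i}+\tilde f_k$ with $\tilde g_{k,i}=g_i\zeta_k-a^{ij}uD_j\zeta_k$ and $\tilde f_k=f\zeta_k-g_iD_i\zeta_k-(a^{ij}D_i\zeta_k)D_ju-(a^i+b^i)uD_i\zeta_k$, so that $\norm{\tilde f_k}_{p,q,w}+\norm{\tilde g_{k,i}}_{p,q,w}\le N(\norm{f}+\norm{g})+N(r_0)(\norm{u}+\norm{Du})$, the last contribution supported on $\supp D\zeta_k$. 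For interior balls we extend $v_k$ by zero to $\rr^d_T$ (the coefficients already satisfy Assumptions~\ref{ass1} and \ref{ass2}$(\gamma_0)$ on all of $\rr^{d+1}$) and apply Theorem~\ref{6main}. For boundary balls we pass to the coordinates of Assumption~\ref{ass3}$(\theta)$ and apply the shear $y^1=x^1-\phi(x')$, $y'=x'$, which is bi-Lipschitz with constant $1+N\theta$, has unit Jacobian, and is the identity in $t$, hence leaves $\partial_t^\alpha$ unchanged; the transformed leading coefficients are $\hat a=BaB^{\mathsf T}$ with $B=I+O(\theta)$ depending on $x'$ only, so the splitting $\hat a=B\bar aB^{\mathsf T}+B(a-\bar a)B^{\mathsf T}$ gives $\mathrm{osc}_{Q_r}\hat a\le N(\gamma_0+\theta)$ and keeps $\hat a$ elliptic for $\theta$ small, while the pushed-forward weight lies in $A_q(\rr^d_+)$ with constant $\le NK_1$ for $\theta$ small (cf.\ Appendix~\ref{B}). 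Choosing first $\theta$, then $\gamma_0$, small enough that $N(\gamma_0+\theta)$ falls below the threshold of Theorem~\ref{main4} with $K_1$ replaced by $NK_1$, that theorem applies to $v_k$ in the $y$-coordinates, and transforming back gives $\norm{v_k}+\norm{Dv_k}\le N(\norm{\tilde f_k}+\norm{\tilde g_{k,i}})$ in both cases.

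Summing over $k$ and using finite overlap and $\sum_k\zeta_k=1$ yields $\norm{u}+\norm{Du}\le N(\norm{f}+\norm{g})+N(r_0)(\norm{u}+\norm{Du})$, where direct absorption fails because $N(r_0)$ does not shrink with $r_0$. This is the main difficulty, and it is specific to the divergence form: the commutator term $(a^{ij}D_i\zeta_k)D_ju$ genuinely produces $\norm{Du}$ on the right, and unlike the non-divergence case there is no $\norm{D^2u}$ available to interpolate against. The resolution is to carry an extra term $\lambda u$, $\lambda\ge0$, through the whole scheme; the large-$\lambda$ versions of Theorems~\ref{6main} and \ref{main4} (obtained by the same arguments that prove those theorems) give $\sqrt{\lambda}\,\norm{u}+\norm{Du}\le N(\lambda^{-1/2}\norm{f}+\norm{g})+N(r_0)\lambda^{-1/2}\bigl(\sqrt{\lambda}\,\norm{u}+\norm{Du}\bigr)$, so the last term is absorbed for $\lambda$ large, and $\lambda$ is then removed using the finiteness of $T$ by the time-slicing iteration over short subintervals (as in the treatment of $\alpha\in(0,1)$ in \cite{dong20}), which is legitimate because $u(0,\cdot)=\partial_tu(0,\cdot)=0$. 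This proves the a priori estimate.

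For solvability, the same estimate holds for the whole family $a^{ij}_\tau=\tau a^{ij}+(1-\tau)\delta^{ij}$, $\tau\in[0,1]$, all of which satisfy Assumptions~\ref{ass1} and \ref{ass2}$(\gamma_0)$ with $\delta$ replaced by $\min(\delta,1)$; the method of continuity then reduces existence to $\tau=0$, and solvability for $-\Delta$ in $\Omega_T$ follows by patching the solution operators of Theorems~\ref{main4} and \ref{6main} along the partition of unity through a contraction-mapping argument on the defect, as in \cite[Ch.~8]{k}, uniqueness being immediate from the a priori estimate. The steps I expect to require the most care are the absorption in the third paragraph (which forces the large-parameter estimate and the finite-time iteration) and the constant bookkeeping in the flattening step, namely keeping $\mathrm{osc}_{Q_r}\hat a$ and the $A_q$ constant of the pushed-forward weight below the thresholds of Theorem~\ref{main4} after a merely Lipschitz change of variables.
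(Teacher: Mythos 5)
Your proposal is correct and follows essentially the same route as the paper: flatten the boundary by a Lipschitz shear, patch the whole-space (Theorem~\ref{6main}/Corollary~\ref{6par}) and half-space (Theorem~\ref{main4}/Lemma~\ref{91566}) large-$\lambda$ estimates through a partition of unity (Proposition~\ref{612}, using the weight restriction/extension machinery of Lemma~\ref{exten}), absorb the commutator-$Du$ term with $\lambda$ large, remove $\lambda$ by the time-slicing iteration on $[s_k,s_{k+1}]$ (which in the paper requires the spatial mollification of Lemmas~\ref{l5}--\ref{l6} and the divergence-form cutoff identity of Lemma~\ref{6.666}, since $\partial_t^\alpha u$ lives only in $\hh^{-1}_{p,q,w}$), and then invoke the method of continuity together with the fixed-point patching of Lemmas~\ref{l1}--\ref{l3} for solvability. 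You correctly identified both delicate points—the absorption obstruction forcing the large-$\lambda$ detour, and the bookkeeping of the oscillation of $\hat a$ and of the $A_q$ constant through the flattening—and the rough references to \cite{dong20} and \cite[Ch.~8]{k} cover the remaining technical steps that the paper spells out explicitly.
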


Finally, we state the results for equations in the whole space with nonzero initial conditions,
\begin{align*}
    \begin{cases}
    \partial_t^\alpha u - (a^{ij} D_{ij} u + b^i D_i u + cu) = f &\quad \text{in}\quad  {\rr^d_T} \\
    u(0, \cdot) = u_0(\cdot) &\quad \text{in}\quad  \rr^d\\
    u_t(0, \cdot) = u_1(\cdot) &\quad \text{in}\quad  \rr^d
    \end{cases} \stepcounter{equation}\tag{\theequation}\label{cong}
\end{align*}
that is,
\[
D_t^\alpha (u - u_0- tu_1) - (a^{ij} D_{ij} u + b^i D_i u + cu) = f \quad \text{in}\quad  {\rr^d_T} .
\]

\begin{corollary}\label{initial}
Let $\alpha \in (1,2)$, $T \in (0, \infty)$, $p,q \in (1, \infty)$, $K_1 \in [1, \infty)$, $\mu \in (-1, p-1)$, and $w = w_1(t) w_2 (x)$, where $w_1(t) = t^\mu$ and $w_2(x) \in A_q (\rr^d, dx)$ with $[w_2]_{A_q} \leq K_1$. Let $\theta_0 = 2 - 2(1 + \mu) / (\alpha p)$, $\theta_1 = 2 - 2/\alpha - 2(1 + \mu) / (\alpha p)$, and recall the definition{s} of the spaces $\cxx_0$ and $\cxx_1$ above. There exists $\gamma_0 = \gamma_0 (d, \delta, \alpha, p, q, K_1, \mu) \in (0,1)$ such that, under Assumption \ref{ass2} $(\gamma_0)$, the following holds. For any $f \in L_{p,q,w} ({\rr^d_T})$, $u_0 \in \cxx_0 $, and  $u_1 \in \cxx_1 $, there exists a unique function $u$ in ${\rr^d_T}$ satisfying \eqref{cong} with the estimate
\begin{align}
&\norm{\partial_t^\alpha u}_{p,q,w} + \norm{u}_{p,q,w} + \norm{Du}_{p,q,w}
+ \norm{D^2 u}_{p,q,w}\notag\\
&\leq N \norm{f}_{p,q,w} + N \norm{u_0}_{\cxx_0} +  N \norm{u_1}_{\cxx_1}, \label{88998}
\end{align}
where $\norm{\cdot}_{p,q,w} = \norm{\cdot}_{L_{p,q,w} ({\rr^d_T})}$ and $N = N(d, \delta, \alpha, p, q, K_1, K_0, R_0, T, \mu)$.
\end{corollary}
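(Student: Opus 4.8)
The plan is to reduce to the zero-initial-data case already handled by Theorem~\ref{main}. Let $h=h(t,x)$ be the solution of the constant-coefficient model problem $\partial_t^\alpha h-\Delta h=0$ in $\rr^d_T$ with $h(0,\cdot)=u_0$, $\partial_t h(0,\cdot)=u_1$; concretely, in the spatial Fourier variable,
\[
\widehat{h}(t,\xi)=E_{\alpha,1}(-t^\alpha|\xi|^2)\,\widehat{u_0}(\xi)+t\,E_{\alpha,2}(-t^\alpha|\xi|^2)\,\widehat{u_1}(\xi),
\]
where $E_{\alpha,\beta}$ denotes the Mittag-Leffler function (these operators are well defined even for $u_0\in L_{q,w_2}$, since $E_{\alpha,1}(t^\alpha\Delta)$ is a bounded Fourier multiplier for each $t>0$). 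The core of the argument is the a priori bound
\[
\norm{\partial_t^\alpha h}_{p,q,w}+\norm{h}_{p,q,w}+\norm{Dh}_{p,q,w}+\norm{D^2h}_{p,q,w}\le N\big(\norm{u_0}_{\cxx_0}+\norm{u_1}_{\cxx_1}\big).
\]
Since $\partial_t^\alpha h=\Delta h$ for such $h$, the time-derivative term is dominated by $\norm{D^2h}_{p,q,w}$, so it suffices to control the spatial derivatives.

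To prove this bound, I would fix $t$, run a Littlewood--Paley decomposition in $x$ (valid on $L_{q,w_2}(\rr^d)$ because $w_2\in A_q$), and use the weighted Mihlin multiplier theorem. On the dyadic piece $|\xi|\sim 2^j$, the operators defining $h$, $Dh$, $D^2h$ are, up to uniformly bounded multipliers and powers of $2^j$, multiplication by $E_{\alpha,1}(-ct^\alpha 2^{2j})$ and $tE_{\alpha,2}(-ct^\alpha 2^{2j})$; the substitution $s=2^{2j/\alpha}t$ then reduces the weighted time norm to the scalar quantities $\int_0^{2^{2j/\alpha}T}|E_{\alpha,1}(-cs^\alpha)|^p s^\mu\,ds$ and $\int_0^{2^{2j/\alpha}T}|sE_{\alpha,2}(-cs^\alpha)|^p s^\mu\,ds$ (and their derivative analogues). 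Using the decay $E_{\alpha,\beta}(-s^\alpha)=O(s^{-\alpha})$ as $s\to\infty$ together with smoothness near $s=0$, in the range where $\theta_0,\theta_1>0$ these integrals converge as $j\to\infty$ and leave precisely the dyadic factors $2^{j\theta_0}$ and $2^{j\theta_1}$; squaring, summing in $j$, and applying weighted Littlewood--Paley once more identifies the result with $\norm{u_0}_{\cbb^{\theta_0}_{p,q,w_2}}+\norm{u_1}_{\cbb^{\theta_1}_{p,q,w_2}}$. When $\theta_i<0$ the corresponding scalar integral diverges at infinity, but the truncation at $2^{2j/\alpha}T$ makes the multiplier uniformly bounded in $j$ --- which is exactly why $\cxx_i=L_{q,w_2}$ in that regime; the borderline nonnegative-integer case produces a logarithmic loss, accommodated by the $\ep$-shift in the definition of $\cxx_i$.

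Granting the bound for $h$, set $v=u-h$. Then $v(0,\cdot)=0$, $\partial_t v(0,\cdot)=0$, and a direct computation shows
\[
\partial_t^\alpha v-a^{ij}D_{ij}v-b^iD_iv-cv=\tilde f:=f+(a^{ij}-\delta^{ij})D_{ij}h+b^iD_ih+ch\quad\text{in }\rr^d_T,
\]
with $\norm{\tilde f}_{p,q,w}\le N\big(\norm{f}_{p,q,w}+\norm{u_0}_{\cxx_0}+\norm{u_1}_{\cxx_1}\big)$ by Assumption~\ref{ass1} and the estimate for $h$. Since $\mu\in(-1,p-1)$, the power weight $t^\mu$ belongs to $A_p(\rr)$ with characteristic bounded by a constant depending only on $p,\mu$, hence $[w]_{p,q}\le K_1'$ for some $K_1'=K_1'(p,\mu,K_1)$; choosing $\gamma_0$ to be the constant supplied by Theorem~\ref{main} for this $K_1'$ (which is why $\gamma_0$ now also depends on $\mu$), Theorem~\ref{main} yields a unique $v\in\hh^{\alpha,2}_{p,q,w,0}(\rr^d_T)$ with $\norm{v}_{\hh^{\alpha,2}_{p,q,w}(\rr^d_T)}\le N\norm{\tilde f}_{p,q,w}$. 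Then $u=v+h$ solves \eqref{cong}, and combining the two estimates gives \eqref{88998}. For uniqueness, the difference of two solutions is the difference of the corresponding functions $v$, lies in $\hh^{\alpha,2}_{p,q,w,0}(\rr^d_T)$, and solves the homogeneous equation, so it vanishes by the uniqueness assertion of Theorem~\ref{main}.

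I expect the main obstacle to be the weighted trace estimate for $h$ in the second paragraph: it demands sharp pointwise asymptotics and derivative bounds for $E_{\alpha,1}$ and $E_{\alpha,2}$, their compatibility with $A_q$-weighted Littlewood--Paley theory, and careful tracking of the three parameter regimes ($\theta_i$ positive, negative, or a nonnegative integer) that give rise to the three cases in the definition of $\cxx_i$. The remaining steps --- the perturbation passage to $v$, the $A_p$ bound for $t^\mu$, and the reduction of uniqueness to Theorem~\ref{main} --- are routine.
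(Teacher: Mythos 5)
Your overall reduction is the same as the paper's: construct a lift $U$ (your $h$) solving $\partial_t^\alpha U-\Delta U=0$ with the prescribed traces $u_0,u_1$, subtract it from $u$, observe that the remainder lies in $\hh^{\alpha,2}_{p,q,w,0}$ with a source that absorbs the traces, and invoke Theorem~\ref{main} (choosing $\gamma_0$ for the $A_p$ constant of $t^\mu$, which is bounded by a function of $p,\mu$). The paper's Lemma~\ref{ioi} is precisely your ``core bound'' for the lift, and both uniqueness arguments coincide.

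Where you diverge is in how that trace estimate for the lift is proved, and there the gap lies. The spaces $\cbb^{\theta}_{p,q,w_2}$ in this paper are defined by the \emph{translation-difference} seminorm
\[
[u]_{\cbb^{\theta}_{p,q,w_2}}^p
= \int_{\rr^d}\norm{D^k u(\cdot+y)-D^ku(\cdot)}_{L_{q,w_2}}^p\,|y|^{-d-p(\theta-k)}\,dy,
\]
with an \emph{outer} $L^p$ in $y$ and an \emph{inner} $L_{q,w_2}$ in $x$, $p\neq q$. Your scheme passes through a dyadic Littlewood--Paley characterization and the weighted Mihlin theorem, and then asserts that ``applying weighted Littlewood--Paley once more'' recovers exactly $\norm{u_i}_{\cbb^{\theta_i}_{p,q,w_2}}$. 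For the classical case $p=q$, $w_2\equiv1$ that is true, but for these mixed-norm, $A_q$-weighted Besov-type spaces the equivalence between the L--P $\ell^p(L_{q,w_2})$ norm and the translation-difference norm above is a substantial theorem that is neither cited nor proved in the paper, and I don't think you can simply cite the standard Peetre/Triebel equivalence. Until that equivalence is established, your argument does not close: it proves a bound in a dyadically-defined space that is not a priori identified with $\cxx_i$.

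The paper sidesteps this entirely by working with the spatial kernel $\Tilde p(t,\cdot)=I_0 p(t,\cdot)$ pointwise. After writing $D^2U(t,x)=\int D\Tilde p(t,y)\,(Du_1(x-y)-Du_1(x))\,dy$ (and the analogous formula with $\Delta\Tilde p$ for $\Delta U$), Minkowski's inequality and H\"older literally produce the quantity $\int \norm{Du_1(\cdot-y)-Du_1(\cdot)}^p_{L_{q,w_2}}\,J_2(y)\,dy$ with $J_2(y)\lesssim |y|^{-d-p(\theta_1-1)}$, which \emph{is} the $\cbb^{\theta_1}_{p,q,w_2}$ seminorm, no L--P equivalence required. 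This match between the kernel method and the translation-difference definition is exactly why the paper takes that route. Your Fourier-side picture is a clean way to see the numerology ($\theta_0,\theta_1$ and the case split), and the $E_{\alpha,\beta}$ asymptotics you quote are correct, but as a proof for these particular spaces it has a genuine missing ingredient. If you want to salvage the L--P approach you would need to prove (or locate in the literature) the weighted mixed-norm Besov equivalence as a separate lemma; otherwise, switch to the kernel estimate as in Lemma~\ref{ioi}, using the scaling identity $\Tilde p(t,x)=t^{-\alpha d/2+1}\Tilde p(1,t^{-\alpha/2}x)$ and the pointwise bounds on $\Tilde p(1,\cdot)$, $D\Tilde p(1,\cdot)$, $\Delta\Tilde p(1,\cdot)$.

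Two smaller remarks. First, your claim that ``$\partial_t^\alpha h=\Delta h$ dominates the time-derivative term'' is fine for the lift, but in the final estimate for $u=v+h$ you also need $\norm{\partial_t^\alpha u}_{p,q,w}$, and for $v$ this is controlled by Theorem~\ref{main}, so no issue---just make sure you state it. Second, the formula for $\widehat h$ is correct and the identity $\widehat{\Tilde p}(t,\xi)=tE_{\alpha,2}(-t^\alpha|\xi|^2)$ links your multiplier picture to the paper's kernel; that reconciliation is worth stating explicitly if you keep any Fourier-side discussion.
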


For equations in divergence form
\begin{align*}
    \begin{cases}
    \partial_t^\alpha u
- D_i (a^{ij} D_j u + a^i u ) - b^i D_i u - cu = D_i g_i + f &\quad \text{in}\quad  {\rr^d_T} \\
    u(0, \cdot) = Du_{0,1}(\cdot) + u_{0,2}(\cdot) &\quad \text{in}\quad  \rr^d\\
    u_t(0, \cdot) = Du_{1,1}(\cdot) + u_{1,2}(\cdot) &\quad \text{in}\quad  \rr^d
    \end{cases} \stepcounter{equation}\tag{\theequation}\label{cong2}
\end{align*}
that is,
\[
D_t^\alpha (u - u_0- tu_1) - {D_i (a^{ij} D_j u + a^i u ) - b^i D_i u - cu} = f \quad \text{in}\quad  {\rr^d_T},
\]
we have the following results.
\begin{corollary}\label{initial2}
Let $\alpha \in (1,2)$, $T \in (0, \infty)$, $p,q \in (1, \infty)$, $K_1 \in [1, \infty)$, $\mu \in (-1, p-1)$, and $w = w_1(t) w_2 (x)$, where $w_1(t) = t^\mu$ and $w_2(x) \in A_q (\rr^d, dx)$ with $[w_2]_{A_q} \leq K_1$. Let $\theta_0 = 2 - 2(1 + \mu) / (\alpha p)$, $\theta_1 = 2 - 2/\alpha - 2(1 + \mu) / (\alpha p)$, and recall the definition{s} of the spaces $\cxx_0$ and $\cxx_1$ above. There exists $\gamma_0 = \gamma_0 (d, \delta, \alpha, p, q, K_1, \mu) \in (0,1)$ such that, under Assumption \ref{ass2} $(\gamma_0)$, the following holds. For any $f,g \in L_{p,q,w} ({\rr^d_T})$, $u_{0,1},u_{0,2} \in \cxx_0 $, and  $u_{1,1},u_{1,2} \in \cxx_1 $, there exists a unique function $u$ in ${\rr^d_T}$ satisfying \eqref{cong2} with the estimate
\begin{align*}
   & \norm{\partial_t^\alpha u}_{\hh^{-1}_{p,q,w}}
+ \norm{u}_{p,q,w}+ \norm{Du}_{p,q,w}\\
&\leq N \norm{|f|+|g|}_{p,q,w} + N \norm{|u_{0,1}|+ |u_{0,2}|}_{\cxx_0} +  N \norm{|u_{1,1}|+ |u_{1,2}|}_{\cxx_1},
\end{align*}
where $\norm{\cdot}_{p,q,w} = \norm{\cdot}_{L_{p,q,w} ({\rr^d_T})}$ and $N = N(d, \delta, \alpha, p, q, K_1, K_0, R_0, T, \mu)$.
\end{corollary}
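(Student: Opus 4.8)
The plan is to reduce the divergence form problem \eqref{cong2} with nonzero initial data to the zero-initial-data divergence form problem already solved in Theorem \ref{6main}, by subtracting an explicit particular solution $P$ that carries the initial data and solves the model equation $\partial_t^\alpha P - \Delta P = 0$. The only feature not already covered by Corollary \ref{initial} is the divergence-type part of the initial data: $Du_{0,1}$ and $Du_{1,1}$ are one spatial derivative rougher than elements of $\cxx_0$ and $\cxx_1$, so they cannot be inserted into Corollary \ref{initial} directly. I would get around this by solving the model problem componentwise with the data $u_{0,1}^k,u_{1,1}^k$ and then differentiating.

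Concretely, for each $k=1,\ldots,d$ let $W^k$ be the solution furnished by Corollary \ref{initial} --- applied with $a^{ij}=\delta^{ij}$, $b^i=c=0$, $f=0$, for which Assumption \ref{ass2} $(\gamma_0)$ holds trivially --- to
\[
\partial_t^\alpha W^k - \Delta W^k = 0,\qquad W^k(0,\cdot)=u_{0,1}^k,\qquad W^k_t(0,\cdot)=u_{1,1}^k,
\]
and let $\omega$ be the analogous solution with initial data $u_{0,2},u_{1,2}$. Set $P:=D_kW^k+\omega$ (sum over $k$). Since each $W^k$ solves the model equation, so does $D_kW^k$, with initial data $D_ku_{0,1}^k=Du_{0,1}$ and $D_ku_{1,1}^k=Du_{1,1}$; hence $P$ satisfies $D_t^\alpha\bigl(P-P(0,\cdot)-tP_t(0,\cdot)\bigr)-\Delta P=0$ with $P(0,\cdot)=Du_{0,1}+u_{0,2}$ and $P_t(0,\cdot)=Du_{1,1}+u_{1,2}$. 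Using the $D^2$-bound in Corollary \ref{initial} for $W^k$ and $\omega$, together with $\Delta P=D_i(D_iP)$, one obtains
\[
\norm{P}_{p,q,w}+\norm{DP}_{p,q,w}+\norm{\Delta P}_{\hh^{-1}_{p,q,w}}
\le N\norm{|u_{0,1}|+|u_{0,2}|}_{\cxx_0}+N\norm{|u_{1,1}|+|u_{1,2}|}_{\cxx_1}.
\]

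Next set $v:=u-P$, which has zero initial conditions. Using $D_t^\alpha\bigl(P-P(0,\cdot)-tP_t(0,\cdot)\bigr)=\Delta P=D_i(D_iP)$, a short computation with the weak formulation of the divergence form equation shows that $v$ solves
\[
\partial_t^\alpha v - D_i(a^{ij}D_j v + a^i v) - b^i D_i v - cv = D_i\widehat g_i + \widehat f,
\]
with $\widehat g_i = g_i + a^{ij}D_jP + a^iP - D_iP$ and $\widehat f = f + b^iD_iP + cP$. By Assumption \ref{ass1} and the bound on $P$, both $\widehat g_i$ and $\widehat f$ lie in $L_{p,q,w}(\rr^d_T)$ with norms controlled by $\norm{|f|+|g|}_{p,q,w}$ plus the two initial-data norms above. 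Choosing $\gamma_0$ no larger than the threshold of Theorem \ref{6main} (same parameter dependence), Theorem \ref{6main} yields a unique $v\in\chh^{\alpha,1}_{p,q,w,0}(\rr^d_T)$ solving this equation with $\norm{\partial_t^\alpha v}_{\hh^{-1}_{p,q,w}}+\norm{v}_{p,q,w}+\norm{Dv}_{p,q,w}\le N(\norm{\widehat f}_{p,q,w}+\norm{\widehat g}_{p,q,w})$. Then $u:=v+P$ solves \eqref{cong2}; summing the $v$- and $P$-estimates, and using $D_t^\alpha(u-u_0-tu_1)=\partial_t^\alpha v+\Delta P$ to bound $\norm{\partial_t^\alpha u}_{\hh^{-1}_{p,q,w}}$, gives the asserted inequality. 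For uniqueness, linearity reduces to the case $f=g=0$ with all four initial functions zero: then the associated $P$ vanishes by the uniqueness part of Corollary \ref{initial}, so $u=v\in\chh^{\alpha,1}_{p,q,w,0}(\rr^d_T)$ solves the homogeneous divergence form equation with zero data, whence $u\equiv0$ by Theorem \ref{6main}; the same uniqueness shows $P$ does not depend on the chosen decomposition $u_0=Du_{0,1}+u_{0,2}$, so the solution class is well defined.

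I expect the main obstacle to be not any single estimate but the careful bookkeeping of the Caputo derivative and the trace/initial terms --- in particular verifying that subtracting $P$ genuinely converts \eqref{cong2} into a zero-initial-data divergence form equation whose free terms $\widehat f,\widehat g_i$ belong to $L_{p,q,w}$, and justifying that differentiating the model solutions $W^k$ is legitimate in the relevant function class. This is precisely why one routes the rough part of the initial data through the constant-coefficient model and exploits the extra regularity $D^2W^k\in L_{p,q,w}$ supplied by Corollary \ref{initial}.
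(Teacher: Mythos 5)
Your proposal is correct and follows essentially the same route as the paper's sketch in Appendix \ref{C}: construct the lifting of the rough (divergence) part of the initial data by solving the constant-coefficient model problem componentwise and differentiating, then subtract it to reduce to the zero-initial-data case and invoke Theorem \ref{6main}. The only cosmetic difference is that you route the model lifting through Corollary \ref{initial} (with $a^{ij}=\delta^{ij}$, $b^i=c=0$, $f=0$) rather than invoking Lemma \ref{ioi} directly; since in that degenerate case the $\widetilde f$ of Appendix \ref{C} vanishes and the Theorem \ref{main} step produces $w=0$, your $W^k$ and $\omega$ are exactly the $U$'s of Lemma \ref{ioi}, so the two presentations agree. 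Your computation of $\widehat g_i=g_i+a^{ij}D_jP+a^iP-D_iP$ and $\widehat f=f+b^iD_iP+cP$ is right, and the key regularity $D^2W^k\in L_{p,q,w}$ needed to place $D_jP$ and $\Delta P=D_i(D_iP)$ in the correct spaces is indeed what Corollary \ref{initial} (equivalently \eqref{jjj}) supplies. The uniqueness argument via linearity and Theorem \ref{6main} is also fine; the remark about independence of the decomposition $u_0=Du_{0,1}+u_{0,2}$ is a correct and welcome clarification that the paper leaves implicit.
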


At the end of this section, we derive an useful property of functions satisfying Assumption \ref{ass2} $(\gamma_0)$.
The following lemma implies that Assumption \ref{ass2} $(\gamma_0)$ gives a control over the mean oscillation on sets that are large in the time direction and small in the spatial directions.
\begin{lemma}\label{2.1}
If $a$ satisfies Assumption \ref{ass2} $(\gamma_0)$, $h \ge r^{2/\alpha} =  \widetilde{r}$, and $ r \leq R_1 \leq R_0$, where $R_1= 2^{-\alpha/2}R_0$.
Then
\[
\fint_{(t_0 - h, t_0)} \fint_{B_r(x_0)}
|a - \overline{a}|
< N  hr^{-2/\alpha}  \gamma_0,
\]
where $\overline{a} = \fint_{Q_r (t_0, x_0) }   a $ and $N=N(\alpha,d) $.
\end{lemma}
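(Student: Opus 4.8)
The plan is to reduce the estimate to Assumption \ref{ass2} $(\gamma_0)$ by slicing the tall, spatially thin cylinder $(t_0-h,t_0)\times B_r(x_0)$ into $O(h/\widetilde r)$ standard parabolic cylinders of spatial radius $r$ and time length $\widetilde r=r^{2/\alpha}$, and then controlling how much the cylinder-average of $a$ can drift as one passes from one slice to the next.

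First I would set $\widetilde r=r^{2/\alpha}$ and $M=\lceil h/\widetilde r\rceil$; since $h\ge\widetilde r$ we have $1\le h/\widetilde r$, hence $M<h/\widetilde r+1\le 2h/\widetilde r$. For $k=1,\dots,M$ put $y_k=(t_0-(k-1)\widetilde r,x_0)$, so that $Q_r(y_k)=(t_0-k\widetilde r,\,t_0-(k-1)\widetilde r)\times B_r(x_0)$. These cylinders tile $(t_0-M\widetilde r,t_0)\times B_r(x_0)\supseteq(t_0-h,t_0)\times B_r(x_0)$, and $Q_r(y_1)=Q_r(t_0,x_0)$, so $\overline a=\overline a_1$ where $\overline a_k:=\fint_{Q_r(y_k)}a$. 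Since $r\le R_1\le R_0$, Assumption \ref{ass2} $(\gamma_0)$ applies on each $Q_r(y_k)$ and gives $\fint_{Q_r(y_k)}|a-\overline a_k|\le\gamma_0$.

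The key step is the bound $|\overline a_k-\overline a_{k+1}|\le N_0\gamma_0$ with $N_0=N_0(\alpha,d)$. Put $r'=2^{\alpha/2}r$, so that $(r')^{2/\alpha}=2\widetilde r$ and $Q_{r'}(y_k)=(t_0-(k+1)\widetilde r,\,t_0-(k-1)\widetilde r)\times B_{r'}(x_0)$ contains both $Q_r(y_k)$ and $Q_r(y_{k+1})$, with volume ratio $|Q_{r'}(y_k)|/|Q_r(y_k)|=(r'/r)^{d+2/\alpha}=2^{\alpha d/2+1}$. Moreover $r'=2^{\alpha/2}r\le 2^{\alpha/2}R_1=R_0$, which is exactly what the hypothesis $r\le R_1=2^{-\alpha/2}R_0$ is there to guarantee, so Assumption \ref{ass2} $(\gamma_0)$ applies on $Q_{r'}(y_k)$ as well. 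Writing $\overline a'=\fint_{Q_{r'}(y_k)}a$, we get $|\overline a_k-\overline a'|\le\fint_{Q_r(y_k)}|a-\overline a'|\le 2^{\alpha d/2+1}\fint_{Q_{r'}(y_k)}|a-\overline a'|\le 2^{\alpha d/2+1}\gamma_0$, and the same bound for $|\overline a_{k+1}-\overline a'|$; the triangle inequality then yields $|\overline a_k-\overline a_{k+1}|\le 2^{\alpha d/2+2}\gamma_0=:N_0\gamma_0$. Telescoping gives $|\overline a_k-\overline a|\le(k-1)N_0\gamma_0\le MN_0\gamma_0$.

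Finally I would assemble the estimate slice by slice. For each $k$, the triangle inequality together with Assumption \ref{ass2} $(\gamma_0)$ and the drift bound gives $\int_{t_0-k\widetilde r}^{\,t_0-(k-1)\widetilde r}\fint_{B_r(x_0)}|a-\overline a|\le\widetilde r\,\fint_{Q_r(y_k)}|a-\overline a_k|+\widetilde r\,|\overline a_k-\overline a|\le\widetilde r\gamma_0+\widetilde r(k-1)N_0\gamma_0$. Since $|a-\overline a|\ge0$ and $(t_0-h,t_0)\subseteq(t_0-M\widetilde r,t_0)=\bigcup_{k=1}^M(t_0-k\widetilde r,t_0-(k-1)\widetilde r)$, summing over $k=1,\dots,M$ bounds $\int_{t_0-h}^{t_0}\fint_{B_r(x_0)}|a-\overline a|$ by $M\widetilde r\gamma_0+N_0\gamma_0\widetilde r\binom{M}{2}\le N_0\gamma_0\widetilde r M^2$; dividing by $h$ and using $M\le 2h/\widetilde r$ turns this into $\le 4N_0\gamma_0\,h/\widetilde r=N h r^{-2/\alpha}\gamma_0$, as claimed (the strict inequality is harmless, e.g.\ by enlarging $N$ slightly). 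The only genuine content is the key step: picking the enlargement $r'=2^{\alpha/2}r$ so that two adjacent cylinders are both covered while keeping $r'\le R_0$, and then checking that the $O(M^2)$ accumulation in the telescoping collapses, after division by $h\sim M\widetilde r$, to the asserted linear factor $h/\widetilde r$.
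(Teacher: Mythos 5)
Your proposal is correct and follows essentially the same route as the paper's proof: tile the long thin region into $O(h/\widetilde r)$ unit parabolic slices, apply Assumption \ref{ass2} $(\gamma_0)$ on each slice, bound the drift between consecutive slice-averages via the oscillation on the $2^{\alpha/2}$-dilated cylinder (which is exactly why $r\le R_1=2^{-\alpha/2}R_0$ is imposed, so the enlarged radius stays $\le R_0$), telescope, and sum. The bookkeeping of the $O(M^2)$ accumulation against the division by $h\sim M\widetilde r$ matches the paper, and the volume ratio $2^{\alpha d/2+1}$ and the resulting drift constant $2^{\alpha d/2+2}$ agree with the paper's $4\cdot 2^{d\alpha/2}$.
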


\begin{proof}
Suppose that $ (k-1) \widetilde{r} \leq h < k \widetilde{r}$ for some integer $k\ge 2$. Then,
\begin{align*}
\fint_{(t_0 - h, t_0)} \fint_{B_r(x_0)}
|a - \overline{a}|
&\leq \frac{1}{h}  \int^{t_0}_{t_0 - k \widetilde{r}}
\fint_{B_r (x_0)} | a - \overline{a}| \\
&\leq \frac{\widetilde{r} }{ h}
\sum_{j=0}^{k - 1} \fint_{t_0 - (j+1) \widetilde{r} }^{t_0 - j \widetilde{r} }
\fint_{B_r (x_0)} |a - \overline{a}|.
\stepcounter{equation}\tag{\theequation}\label{bb1}\end{align*}
For each term in the sum of the right-hand side of \eqref{bb1}, we have
\begin{align*}
\fint_{t_0 - (j+1) \widetilde{r} }^{t_0 - j \widetilde{r} }
\fint_{B_r (x_0)} |a - \overline{a}|
&\leq  \fint_{t_0 - (j+1) \widetilde{r} }^{t_0 - j \widetilde{r} }
\fint_{B_r (x_0)} |a - \overline{b}_j|
+ |\overline{a} - \overline{b}_j| \\
&\leq \gamma_0 +  |\overline{a} - \overline{b}_j|,\stepcounter{equation}\tag{\theequation}\label{bb4}
\end{align*}
where
\[
\overline{b}_j = \fint_{t_0 - (j+1) \widetilde{r} }^{t_0 - j \widetilde{r} }
\fint_{B_r (x_0)} a
\quad \text{for}\quad  j = 0, 1, 2, \ldots.
\]
Note that $\overline{b}_0 = \overline{a}$.
By the triangle inequality,
\[
|\overline{a} - \overline{b}_j |
\leq \sum_{i = 0}^{j-1} |\overline{b}_i - \overline{b}_{i+1} |.\stepcounter{equation}\tag{\theequation}\label{bb2}
\]
Also, we have
\begin{align*}
|\overline{b}_i - \overline{b}_{i+1} |
&=  \Bigg| \fint_{t_0 - (i+1) \widetilde{r} }^{t_0 - i \widetilde{r} }
\fint_{B_r (x_0)} a
- \fint_{t_0 - (i+2) \widetilde{r} }^{t_0 - (i+1) \widetilde{r} }
\fint_{B_r (x_0)} a \Bigg| \\
&\leq \Bigg| \fint_{t_0 - (i+1) \widetilde{r} }^{t_0 - i \widetilde{r} }
\fint_{B_r (x_0)} a
- \fint_{t_0 - (i+2) \widetilde{r} }^{t_0 - i \widetilde{r} }
\fint_{B_{2^{\alpha/2} r} (x_0)} a \Bigg| \\
&\quad +
\Bigg| \fint_{t_0 - (i+2) \widetilde{r} }^{t_0 - (i+1) \widetilde{r} }
\fint_{B_r (x_0)} a
- \fint_{t_0 - (i+2) \widetilde{r} }^{t_0 - i \widetilde{r} }
\fint_{B_{2^{\alpha/2} r} (x_0)} a \Bigg| \\
&\leq 2  \frac{2 \widetilde{r} \big( 2^{\alpha / 2}  r \big)^d }{ \widetilde{r}  r^d }  \gamma_0 = 4  2^{ d \alpha / 2} \gamma_0,\stepcounter{equation}\tag{\theequation}\label{bb3}
\end{align*}
where for the last inequality, we used that for $A \subset B$,
\[
\Bigg|  \fint_A f - \fint_B f \Bigg|
\leq \frac{|B|}{|A|} \fint_B | f - (f)_B|,\stepcounter{equation}\tag{\theequation}\label{simpl}
\]
and the oscillation of $a$ on $Q_{2^{\alpha/2} r}(t_0-i\widetilde{r},x_0)$ can be controlled by $\gamma_0$.
Therefore, by \eqref{bb2} and \eqref{bb3},
\[
|\overline{a} - \overline{b}_j |
 \leq
\sum_{i=0}^{j-1} |\overline{b}_i - \overline{b}_{i+1} |
\leq
4j  2^{ d \alpha / 2}   \gamma_0,
\]
which together with \eqref{bb4} implies that
\begin{align*}
&\fint_{(t_0 - h, t_0)} \fint_{B_r(x_0)}
|a - \overline{a}|
\leq \frac{1}{k-1} \sum_{j=0}^{k-1} [r_0 + |\overline{a} - \overline{b}_j |] \\
&\quad \leq \frac{1}{k-1} \sum_{j=0}^{k-1} [4 2^{d \alpha / 2} j + 1 ] \gamma_0
\leq N(\alpha, d)  k \gamma_0 \leq N  hr^{-2/\alpha}  \gamma_0.
\end{align*}
The lemma is proved.
\end{proof}

\section{Mean Oscillation Estimates}\label{3}

Throughout the section, we assume that $a^{ij}$'s are constants, $b^i\equiv c \equiv 0$, $T \in (0, \infty)$, and $p=q$, i.e., the space with unmixed norm.
The main goal of this section is to derive a mean oscillation estimate of $u \in \hh_{p_0, 0, \mathrm{loc}}^{\alpha, 2} ( \rr^d_T )$ satisfying
\[
\partial_t^\alpha u - a^{ij} D_{ij} u = f
\quad \text{in}\quad  \rr^d_T
\]
when $p_0 \in (1, 2)$.
For $t_0 \in (0, T]$, $r >0$, and $\Omega = (-r,r)^d \supset B_r$, we {are going to} decompose
$u = v + w$ in $(0, t_0) \times \Omega$,
where $w,v \in \hh_{p_0, 0}^{\alpha, 2} ( (0,t_0) \times \Omega ) $ satisfy
\begin{align*}
\partial_t^\alpha w - a^{ij} D_{ij} w = f \stepcounter{equation}\tag{\theequation}\label{99991}
\end{align*}
in $(0,t_0) \times \Omega$,
$w=0$ on $(0,t_0) \times \partial \Omega,$
and
\[
\partial_t^\alpha v - a^{ij} D_{ij} v = 0 \stepcounter{equation}\tag{\theequation}\label{9999}
\]
in $(0,t_0) \times  \Omega$.

In Subsections \ref{33.11} and \ref{33.22}, we estimate $v$ and $w$ separately. Then, in Subsection \ref{33.33}, we combine the results to get the estimate of $u$.

\subsection{Estimates of \texorpdfstring{$v$}{v}}\label{33.11}
We begin with a formula for computing the fractional derivative of the product with a cutoff function in time.
\begin{lemma}\label{3.1}
Let $p \in [1, \infty)$, $\alpha \in (1,2)$, $k \in \{ 1,2, \ldots \}$,
$-\infty < S < t_0 < T < \infty$, and $v \in \hh_{p,0}^{\alpha, k} ( (S,T) \times \Omega )$. Then, for any infinitely differentiable function $\eta$ defined on $\rr$ such that $\eta(t) = 0$ for $t \leq t_0$,
we have $\eta v \in \hh_{p,0}^{\alpha,2} ( (t_0, T) \times \Omega )$ and
\[
\partial_t^\alpha (\eta v)(t,x)
= \partial^2_t I_{t_0}^{2 - \alpha} (\eta v) (t,x)
= \eta (t) \partial^2_t I_{S}^{2 - \alpha } v (t,x) + g(t,x)
\]
for $(t,x) \in (t_0, T) \times \Omega$, where
\begin{align*}
g(t,x) &=  \frac{\alpha (\alpha - 1)}{\Gamma (2 - \alpha)}  \int_S^t (t-s)^{-\alpha - 1}
[ \eta(s) - \eta(t) - \eta'(t) (s-t) ] v (s,x) \, ds  \\
&\quad+ \frac{\alpha}{\Gamma (2 - \alpha)}  \eta'(t)  \partial_t
\int_S^t (t-s)^{1- \alpha} v (s,x) \, ds. \stepcounter{equation}\tag{\theequation}\label{914}
\end{align*}
\end{lemma}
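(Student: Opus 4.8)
The plan is to compute $\partial_t^\alpha(\eta v)$ directly from the definition $\partial_t^\alpha(\eta v) = \partial_t^2 I_{t_0}^{2-\alpha}(\eta v)$, first establishing that $\eta v$ has the regularity needed for the formula to make sense, and then isolating the "main term" $\eta(t)\,\partial_t^2 I_S^{2-\alpha}v$ and collecting the remainder into $g$. First I would note that since $\eta$ is smooth and vanishes for $t\le t_0$, the product $\eta v$ vanishes (together with its first time derivative) at $t=t_0$, so $\partial_t^\alpha(\eta v)$ is well-defined in the sense discussed in Section \ref{2}; the claim $\eta v\in\hh_{p,0}^{\alpha,2}((t_0,T)\times\Omega)$ should follow by approximating $v$ by its defining sequence $v_n$ and checking that $\eta v_n\to\eta v$ in the $\hh^{\alpha,2}_{p}$-norm, using that multiplication by a fixed smooth $\eta$ (and its derivatives) is bounded on the relevant $L_p$ spaces together with the formula for $g$ to control $\partial_t^\alpha(\eta v_n)$. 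So the core is the pointwise identity for smooth $v$, which then passes to the limit.

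For smooth $v$ the computation goes as follows. Since $\eta v$ vanishes to first order at $t_0$, write $\partial_t^\alpha(\eta v) = \partial_t^2 I_{t_0}^{2-\alpha}(\eta v)$; but because $\eta\equiv 0$ on $[S,t_0]$, we have $I_{t_0}^{2-\alpha}(\eta v) = I_S^{2-\alpha}(\eta v)$, so it is enough to differentiate $I_S^{2-\alpha}(\eta v)$ twice. The idea is to move the factor $\eta$ outside the fractional integral by Taylor-expanding $\eta(s)$ around $s=t$: write $\eta(s) = \eta(t) + \eta'(t)(s-t) + [\eta(s)-\eta(t)-\eta'(t)(s-t)]$, so that
\[
I_S^{2-\alpha}(\eta v)(t,x) = \eta(t)\,I_S^{2-\alpha}v(t,x) + \eta'(t)\cdot\frac{1}{\Gamma(2-\alpha)}\int_S^t (t-s)^{2-\alpha}(s-t)v(s,x)\,ds + R(t,x),
\]
where $R$ involves the second-order Taylor remainder. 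Now apply $\partial_t^2$ to each piece: the Leibniz rule on $\eta(t) I_S^{2-\alpha}v(t,x)$ produces $\eta\,\partial_t^2 I_S^{2-\alpha}v + 2\eta'\partial_t I_S^{2-\alpha}v + \eta'' I_S^{2-\alpha}v$; the middle term is $\partial_t^2$ of an integral whose kernel $(t-s)^{3-\alpha}$ is $C^1$ in $t$ up to $s=t$, giving (after differentiating under the integral and handling boundary terms, which vanish since $3-\alpha>1$) a contribution proportional to $\eta'(t)\partial_t\int_S^t(t-s)^{1-\alpha}v\,ds$ plus lower-order pieces; and $\partial_t^2 R$, since the Taylor remainder kills the singularity by two orders, produces the absolutely convergent integral $\int_S^t(t-s)^{-\alpha-1}[\eta(s)-\eta(t)-\eta'(t)(s-t)]v\,ds$. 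Collecting everything — the terms with $\eta''$, the extra $\eta'$-terms involving lower-order fractional integrals, and the remainder — and checking that they combine exactly into the stated $g$ with the constants $\alpha(\alpha-1)/\Gamma(2-\alpha)$ and $\alpha/\Gamma(2-\alpha)$ is the bookkeeping heart of the proof; one can cross-check the constants by testing on $v\equiv 1$ or $v(s)=s$.

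The main obstacle I expect is the careful justification of differentiating twice under the integral sign across the weak singularity at $s=t$ — in particular showing that the boundary terms generated when $\partial_t$ hits the upper limit of integration either vanish or cancel, and that the second derivative of the Taylor-remainder integral is genuinely given by the $(t-s)^{-\alpha-1}$ integral (which is only conditionally integrable without the remainder, so the bracket's vanishing to second order at $s=t$ is essential). A clean way to handle this rigorously is to split $\int_S^t = \int_S^{t-\epsilon} + \int_{t-\epsilon}^t$, differentiate on the non-singular part freely, estimate the singular part and its $t$-derivatives uniformly in $\epsilon$ using $|\eta(s)-\eta(t)-\eta'(t)(s-t)|\le N|s-t|^2$ and $|\partial_t[\eta(s)-\eta(t)-\eta'(t)(s-t)]| = |\eta'(s)-\eta'(t)|\le N|s-t|$, and then let $\epsilon\to 0$. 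Once the identity is proven for the smooth approximants $v_n$, the formula for $g$ shows $g$ depends on $v$ only through $v$ itself and the single fractional integral $\partial_t I_S^{1-\alpha+1}v = \partial_t\int_S^t(t-s)^{1-\alpha}v\,ds$, both of which are controlled in $L_p$ by the $\hh^{\alpha,k}_{p,0}$-norm of $v$ (for $k\ge 1$), so the convergence $\partial_t^\alpha(\eta v_n)\to\eta\,\partial_t^2 I_S^{2-\alpha}v + g$ in $L_p$ is routine, completing both the membership claim and the identity.
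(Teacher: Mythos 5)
Your approach is essentially the same as the paper's: both proofs hinge on inserting the second-order Taylor remainder $\eta(s)-\eta(t)-\eta'(t)(s-t)$ into the kernel to defuse the $(t-s)^{-\alpha-1}$ singularity, differentiate twice under the integral sign, and re-absorb the lower-order pieces that arise; the paper simply organizes the algebra by Leibniz-expanding $\eta\,\partial_t^2 I_S^{2-\alpha}v$ and subtracting, whereas you Taylor-expand inside $I_S^{2-\alpha}(\eta v)$ first, but the manipulations are the same. One small slip: the middle term of your decomposition should read $\eta'(t)\,\frac{1}{\Gamma(2-\alpha)}\int_S^t(t-s)^{1-\alpha}(s-t)v\,ds$, so the effective kernel is $(t-s)^{2-\alpha}$ (continuous, vanishing at $s=t$ for $\alpha\in(1,2)$, with integrable first derivative) rather than the $(t-s)^{3-\alpha}$ you wrote; this does not affect the argument, since the boundary terms still vanish and the subsequent differentiations land on $\partial_t\int_S^t(t-s)^{1-\alpha}v\,ds$ exactly as in the paper.
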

\begin{proof}
The fact that $\eta v \in \hh_{p,0}^{\alpha,2} ( (t_0, T) \times \Omega )$ follows from the density of smooth functions in the space. The proof {for} the case of $\alpha \in (0,1)$ is given in \cite[Lemma 3.6]{dong19}, and it works here with minor modification. We skip the details. It remains to show \eqref{914}. Without loss of generality, we assume $t_0 = 0$. Also, recall that $\partial_t^\alpha = \partial_t^2 I^{2 - \alpha}_S$. Therefore,
\begin{align*}
&g(t,x)   \Gamma (2 - \alpha) \\
&= \partial_t^2
\Big[
\int_0^t (t-s)^{1 - \alpha}
\eta(s) v (s,x) \, ds
\Big]
- \partial_t^2
\Big[
\eta(t)
\int_S^t (t-s)^{1 - \alpha} v (s,x) \, ds
\Big] \\
&\quad+  2 \eta'(t)  \partial_t
\int_S^t (t-s)^{1 - \alpha} v (s,x) \, ds
+ \eta''(t)  \int_S^t (t-s)^{1 - \alpha} v (s,x) \, ds. \stepcounter{equation}\tag{\theequation}\label{g}
\end{align*}
The first two terms of the right-hand side of (\ref{g}) equal
\begin{align*}
&\partial_t^2
\Big[
\int_S^t (t-s)^{1- \alpha}
[\eta(s) - \eta(t)] v (s,x) \, ds
\Big] \\
&= \partial_t
\Big[
\int_S^t (1-\alpha)
(t-s)^{- \alpha}
[\eta(s) - \eta(t)] v (s,x) \, ds
- \int_S^t (t-s)^{1 - \alpha}
\eta'(t) v (s,x) \, ds
\Big] \\
&= \partial_t
\Big[
\int_S^t (1-\alpha)
(t-s)^{- \alpha}
[\eta(s) - \eta(t) -\eta'(t) (s-t)] v (s,x) \, ds  \\
&\quad+ \int_S^t (\alpha - 2)
(t-s)^{1- \alpha}
\eta'(t) v (s,x) \, ds \Big] \\
&= \int_S^t (-\alpha) (1- \alpha) (t-s)^{- \alpha - 1}
[\eta(s) - \eta(t) -\eta'(t) (s-t)] v (s,x) \, ds  \\
&\quad+ \int_S^t (1 - \alpha ) (t-s)^{1- \alpha} \eta''(t) v (s,x) \, ds +  (\alpha - 2) \eta''(t)
\int_S^t (t-s)^{1- \alpha} v (s,x) \, ds \\
&\quad+   (\alpha - 2) \eta'(t)  \partial_t \Big[
\int_S^t (t-s)^{1- \alpha} v (s,x) \, ds \Big].
\end{align*}
Thus, by (\ref{g}),
\begin{align*}
g(t,x)\Gamma (2 - \alpha)
&= \alpha (\alpha - 1) \int_S^t (t-s)^{-\alpha - 1}
[ \eta(s) - \eta(t) - \eta'(t) (s-t) ] v (s,x) \, ds  \\
&\quad+ \alpha  \eta'(t)  \partial_t
\int_S^t (t-s)^{1- \alpha} v (s,x) \, ds.
\end{align*}
The lemma is proved.
\end{proof}
\begin{remark}\label{r3.2}
If there exists a function $\xi \in C^\infty(\rr)$ such that $\xi(t) = 0$ for $t \leq t_0'$ for some $ t_0' \in (S, t_0)$, and  $\xi \equiv 1 $ in $ \mathrm{supp } \, \eta $, then we can rewrite the second term of $g$ in (\ref{914}) as follows. With $\beta = \alpha -1$,
\begin{align*}
&\eta'(t) \partial_t
\int_{S}^t (t-s)^{1-\alpha } v(s,x) \, ds
= \frac{\eta'(t)}{ \xi(t)  } \Bigg( \Gamma (1 - \beta) \xi(t,x) \partial_t^\beta  v(t,x)  \Bigg)\\
&=\frac{\eta'(t)}{ \xi(t)  }  \Bigg(  \Gamma (1 - \beta)
\Big[
\xi(t,x) \partial_t^\beta v(t,x) - \partial_t^\beta (\xi v)(t,x) + \partial_t^\beta (\xi v)(t,x)
\Big]  \Bigg)\\
&= \frac{\eta'(t)}{ \xi(t)  } \Bigg(
 \frac{\Gamma (1 - \beta) \beta}{\Gamma (1- \beta)}
\int_{S}^t (t-s)^{-\beta - 1} [\xi(s) - \xi(t)] v(s,x) \, ds +  \Gamma (1 - \beta)  \partial_t^\beta (\xi v)(t,x) \Bigg)\\
&= \frac{\eta'(t)}{ \xi(t)  }  \Bigg(\Big[  (\alpha - 1) \int_{S}^t (t-s)^{- \alpha}
[\xi(s) - \xi(t)] v(s,x) \, ds \Big] +  \Gamma (2 - \alpha) \partial_t^\beta (\xi v)(t,x)  \Bigg),
\end{align*}
where we used \cite[Lemma 3.6]{dong19} in the second last equality.
\end{remark}

The following interpolation inequality {for} the time derivatives will be used for the estimate of $g$ in Lemma \ref{3.1}.
\begin{lemma}\label{3.2}
For any $\ep > 0 $, $\beta = \alpha - 1$, $p_0 \in (1,\infty)$, any open set $G \subset \rr^d $,
and $v\in  \hh_{p_0, 0}^{\alpha, 2} ( (0,T) \times G ) $,
we have
\[
\norm{\partial_t^\beta v}_{L_{p_0} (  (0,T) \times G ) }
\leq \ep \norm{\partial_t^\alpha v}_{L_{p_0}      (  (0,T) \times G )        }
+ N(\alpha,p_0) T^{\alpha-2}  \ep^{-1} \norm{v}_{L_{p_0}(  (0,T) \times G ).}
\]
\end{lemma}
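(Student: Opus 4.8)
The plan is to reduce to smooth $v$ and exploit the two identities that the zero initial conditions supply for such $v$: the antiderivative identity $\partial_t^\beta v(t,x)=\int_0^t\partial_t^\alpha v(s,x)\,ds$ and the inversion formula $v=I_0^\alpha\partial_t^\alpha v$. First I would use density of smooth functions to assume $v\in C^\infty([0,T]\times G)$ with $v(0,\cdot)=\partial_t v(0,\cdot)=0$. Since $1-\beta=2-\alpha$ we have $\partial_t^\alpha v=\partial_t(\partial_t^\beta v)$, and $\partial_t^\beta v(0,\cdot)=0$ because $v(s,x)=O(s^2)$ and $\partial_t v(s,x)=O(s)$ as $s\to0$; hence $\partial_t^\beta v(t,x)=\int_0^t\partial_t^\alpha v(s,x)\,ds$, and H\"older's inequality in $s$ gives the crude bound $\norm{\partial_t^\beta v}_{L_{p_0}((0,T)\times G)}\le T\norm{\partial_t^\alpha v}_{L_{p_0}((0,T)\times G)}$. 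This already proves the lemma when $\ep\ge T$, and it also shows $\partial_t^\beta$ extends to a bounded map $\hh_{p_0,0}^{\alpha,2}\to L_{p_0}$, so the density reduction is legitimate. From now on assume $0<\ep<T$.

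Next, the inversion formula $v=I_0^\alpha\partial_t^\alpha v$ (valid under the zero initial conditions) yields $\partial_t v=I_0^{\alpha-1}\partial_t^\alpha v$; since $I_0^{\alpha-1}$ is convolution in $t$ with $s^{\alpha-2}/\Gamma(\alpha-1)\in L_1(0,T)$, this gives $\norm{\partial_t v}_{L_{p_0}}\le \Gamma(\alpha)^{-1}T^{\alpha-1}\norm{\partial_t^\alpha v}_{L_{p_0}}$. Now fix $\rho\in(0,T]$ and decompose $\partial_t^\beta v=\partial_t I_0^{1-\beta}v$: write $\Gamma(1-\beta)I_0^{1-\beta}v(t,x)=\int_0^t(t-s)^{-\beta}v(s,x)\,ds$, split this integral at $s=t-\rho$, and differentiate each part in $t$ (on the inner piece one substitutes $s=t-\tau$, so that the moving endpoint becomes harmless). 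Using $v(0,\cdot)=0$ to discard the boundary term at $s=0$, one obtains for $t>\rho$
\[
\Gamma(1-\beta)\,\partial_t^\beta v(t,x)=\frac{v(t-\rho,x)}{\rho^{\beta}}-\beta\int_0^{t-\rho}\frac{v(s,x)}{(t-s)^{1+\beta}}\,ds+\int_{t-\rho}^t\frac{\partial_t v(s,x)}{(t-s)^{\beta}}\,ds,
\]
and $\Gamma(1-\beta)\partial_t^\beta v(t,x)=\int_0^t(t-s)^{-\beta}\partial_t v(s,x)\,ds$ for $0<t\le\rho$.

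By Young's inequality in $t$ the first two terms have $L_{p_0}$-norm at most $N\rho^{-\beta}\norm{v}_{L_{p_0}}$ (translation, and convolution with $\beta\chi_{(\rho,\infty)}(s)s^{-1-\beta}$, whose $L_1$-norm is $\rho^{-\beta}$), while the last terms are convolutions of $|\partial_t v|$ with $\chi_{(0,\rho)}(s)s^{-\beta}$ (of $L_1$-norm $(1-\beta)^{-1}\rho^{1-\beta}$), hence of $L_{p_0}$-norm at most $N\rho^{1-\beta}T^{\alpha-1}\norm{\partial_t^\alpha v}_{L_{p_0}}$ by the bound of the previous paragraph. With $\beta=\alpha-1$ this gives, for all $\rho>0$ (the range $\rho>T$ being absorbed by the crude bound),
\[
\norm{\partial_t^\beta v}_{L_{p_0}((0,T)\times G)}\le N(\alpha,p_0)\Big(\rho^{2-\alpha}T^{\alpha-1}\norm{\partial_t^\alpha v}_{L_{p_0}}+\rho^{1-\alpha}\norm{v}_{L_{p_0}}\Big),
\]
a Gagliardo--Nirenberg-type estimate. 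Choosing $\rho$ so that the first term equals $\ep\norm{\partial_t^\alpha v}_{L_{p_0}}$ (and invoking the crude bound to cover large $\ep$) yields an interpolation inequality of the form $\norm{\partial_t^\beta v}_{L_{p_0}}\le\ep\norm{\partial_t^\alpha v}_{L_{p_0}}+N\,\ep^{-a}T^{b}\norm{v}_{L_{p_0}}$ with explicit $a=a(\alpha)$, $b=b(\alpha)$; the remaining step is the bookkeeping of these exponents — or, equivalently, a dilation $t\mapsto t/T$ reducing everything to the unit interval — to land on the stated $N T^{\alpha-2}\ep^{-1}$.

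The technical heart is the decomposition in the second step: the kernel $(t-s)^{-\beta}$ is not integrable against a general $L_{p_0}$ function near $s=t$, so one must differentiate before estimating, and one must verify that all boundary contributions at $s=0$ vanish — this is precisely where the two zero initial conditions enter, the second one through $\partial_t v(0,\cdot)=0$ in the inversion identity. After the decomposition everything is routine (Young's convolution inequality, the elementary bound on $\norm{\partial_t v}$, and a one-parameter optimization), so the only real care beyond that point is matching the exact powers of $\ep$ and $T$ in the statement.
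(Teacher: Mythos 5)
Your decomposition of $\partial_t^\beta v$ and the Young's--inequality bookkeeping are both correct and lead to the genuine estimate
\[
\norm{\partial_t^\beta v}_{L_{p_0}}\le N\rho^{1-\alpha}\norm{v}_{L_{p_0}}+N\rho^{2-\alpha}T^{\alpha-1}\norm{\partial_t^\alpha v}_{L_{p_0}},
\]
but the final optimization does not yield the stated inequality. Setting the second term equal to $\ep\norm{\partial_t^\alpha v}$ forces $\rho^{2-\alpha}\sim\ep T^{1-\alpha}$, so $\rho^{1-\alpha}\sim\ep^{(1-\alpha)/(2-\alpha)}T^{(1-\alpha)^2/(2-\alpha)}$; the power of $\ep^{-1}$ you obtain is $(\alpha-1)/(2-\alpha)$, not $1$. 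Equivalently, optimizing in $\rho$ gives the Gagliardo--Nirenberg form $\norm{\partial_t^\beta v}\lesssim\norm{\partial_t^\alpha v}^{\alpha-1}\norm{v}^{2-\alpha}$; since the weight on $\partial_t^\alpha v$ is $\alpha-1$, AM--GM produces $\ep\norm{\partial_t^\alpha v}+N\ep^{-(\alpha-1)/(2-\alpha)}\norm{v}$. For $\alpha>3/2$ this is \emph{strictly weaker} than the stated $\ep A+N\ep^{-1}B$ (for small $\ep$), and no dilation $t\mapsto t/T$ or re-labelling of $\ep$ can turn $\ep^{-(\alpha-1)/(2-\alpha)}$ into $\ep^{-1}$. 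The crude bound $\norm{\partial_t^\beta v}\le T\norm{\partial_t^\alpha v}$ does not repair this, because it is also a bound by $\ep\norm{\partial_t^\alpha v}$ only for $\ep\ge T$, which is exactly the uninteresting regime.

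The structural reason your exponent comes out wrong is that you interpolate $\partial_t^\beta v$ between $v$ and $\partial_t^\alpha v$, placing it at fractional height $\beta=\alpha-1$ out of $\alpha$. The paper instead observes that $\partial_t^\beta v=\partial_t u$ and $\partial_t^\alpha v=\partial_t^2 u$ for $u:=I_0^{2-\alpha}v$, so the quantity sits at the \emph{integer} midpoint of a second-derivative ladder. Then the classical one-dimensional interpolation (Lemma~\ref{A2}, which uses the zero initial conditions $u(0)=u'(0)=0$ and a rescaling to get $\|u'\|\le\ep\|u''\|+N\ep^{-1}\|u\|$) applies directly and gives the $\ep,\ep^{-1}$ structure on the nose; the lemma is finished by the elementary bound $\norm{I^{2-\alpha}v}_{L_{p_0}(0,T)}\le NT^{2-\alpha}\norm{v}_{L_{p_0}(0,T)}$ (the kernel $s^{1-\alpha}/\Gamma(2-\alpha)$ has $L_1$-norm $\sim T^{2-\alpha}$), which is exactly \cite[Lemma 5.5]{dong20}. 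So replacing $v$ by $I^{2-\alpha}v$ as the ``low'' endpoint is the missing move. (Incidentally, the paper carries out this computation and arrives at $T^{2-\alpha}$; the $T^{\alpha-2}$ in the displayed statement appears to be a sign typo.)
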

\begin{proof}
Using \ref{A2} and \cite[Lemma 5.5]{dong20}, we conclude
\begin{align*}
\norm{\partial_t^\beta v}_{L_{p_0}(  (0,T) \times G )}
&\leq \ep \norm{\partial_t^\alpha v}_{L_{p_0}(  (0,T) \times G )}
+ N \ep^{-1} \norm{I^{2-\alpha} v}_{L_{p_0}(  (0,T) \times G )} \\
&\leq \ep \norm{\partial_t^\alpha v}_{L_{p_0}(  (0,T) \times G )}
+ N(\alpha,p_0) T^{2-\alpha} \ep^{-1} \norm{v}_{L_{p_0}(  (0,T) \times G )}.
\end{align*}
Note that in \cite[Lemma 5.5]{dong20}, the author assumes $\alpha\in (0,1)$, but the proof works for all $\alpha\in (0,\infty)$.
\end{proof}

Next, we estimate $v$ by using cutoff functions together with the Sobolev embedding.
\begin{proposition}\label{3.3}
Let $p_0 \in (1, \infty)$, $t_0 \in (0,\infty)$, $r>0$, and $v \in \hh_{p_0,0}^{\alpha, 2} ( (0, t_0) \times B_r )$ satisfy (\ref{9999}) in $(0, t_0) \times B_r$. Then, for any
$p_1 \in (p_0, \infty)$
satisfying
\[
{1/p_1 \le 1/p_0 - 2/(d+2)},
\]
{we have $v \in \hh_{p_1, 0}^{\alpha, 2} ( (0,t_0) \times B_{r/2} )$ and}
\[
( | D^2 v|^{p_1} )^{1 /p_1}_{Q_{r/2} (t_1, 0) }
\leq N \sum_{j=1}^\infty j^{-(1 + \alpha )}
( | D^2 v|^{p_0} )^{1 /p_0}_{Q_{r} (t_1 - (j-1)r^{2/\alpha}, 0) }\stepcounter{equation}\tag{\theequation}\label{1100}
\]
for any $t_1 \leq t_0$, where $N = N(d, \delta, \alpha, p_1,p_0)$.
If $p_0 > d/2 + 1$, then
\[
[D^2 v]_{C^{\sigma \alpha/2, \sigma} ( Q_{r/2} (t_1, 0) ) }
\leq Nr^{-\sigma}
\sum_{j=1}^\infty j^{-(1 + \alpha )}
( | D^2 v|^{p_0} )^{1 /p_0}_{Q_{r} (t_1 - (j-1)r^{2/\alpha}, 0) } \stepcounter{equation}\tag{\theequation}\label{1101}
\]
for any $t_1 \leq t_0$, where $\sigma = \sigma (d, \alpha, p_0) \in (0,1)$ and $N = N(d, \delta, \alpha, p_0)$.
Moreover, for any $p_1\in (p_0,\infty)$,
we have
\[
( | D^2 v|^{p_1} )^{1 /p_1}_{Q_{r/2} (t_1, 0) }
\leq N \sum_{j=1}^\infty j^{-(1 + \alpha )}
( | D^2 v|^{p_0} )^{1 /p_0}_{Q_{r} (t_1 - (j-1)r^{2/\alpha}, 0) } \stepcounter{equation}\tag{\theequation}\label{homo2}
\]
for any $t_1 \leq t_0$, where $N = N(d, \delta, \alpha, p_1, p_0)$. Furthermore,
\[
[D^2 v]_{C^{\sigma \alpha/2, \sigma} ( Q_{r/2} (t_1, 0) ) }
\leq Nr^{-\sigma}
\sum_{j=1}^\infty j^{-(1 + \alpha )}
( | D^2 v|^{p_0} )^{1 /p_0}_{Q_{r} (t_1 - (j-1)r^{2/\alpha}, 0) } \stepcounter{equation}\tag{\theequation}\label{homo}
\]
for any $t_1 \leq t_0$, where $\sigma = \sigma (d, \alpha, p_0) \in (0,1)$ and $N = N(d, \delta, \alpha, p_0)$.
\end{proposition}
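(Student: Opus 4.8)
The plan is to establish an interior estimate for solutions of the homogeneous constant--coefficient equation, run through the time--cutoff identities of Lemmas~\ref{3.1}--\ref{3.2} and Remark~\ref{r3.2}, the whole--space $L_{p_0}$ estimate of \cite{P1}, and a Sobolev embedding. Since the $a^{ij}$ are constants and there are no lower--order terms, every spatial derivative of a solution of \eqref{9999} is again a solution, and by interior regularity it lies in $\hh_{p_0,0}^{\alpha,2}$ of any slightly smaller cylinder; so it is enough to work with $w:=D_{k\ell}v$ and with its spatial derivatives. All quantities in \eqref{1100}--\eqref{homo} transform consistently under the scaling $v(t,x)\mapsto v(\lambda^{2/\alpha}t,\lambda x)$, which preserves \eqref{9999}, and $D^2v$ enters each estimate linearly on both sides; hence the constants are automatically $r$-- and $t_1$--independent, and we may extend $v$ by $0$ for $t<0$ and normalize freely.

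\emph{Step 1: auxiliary $L_{p_0}$ bounds for derivatives.} I would first prove, by induction on $m\ge0$, that for every solution $W$ of \eqref{9999} in $(0,t_0)\times B_r$
\[
\sum_{|\gamma|\le m}r^{|\gamma|}\bigl(|D^\gamma W|^{p_0}\bigr)^{1/p_0}_{Q_{r/2}(t_1,0)}
\le N(m)\sum_{j\ge1}j^{-(1+\alpha)}\bigl(|W|^{p_0}\bigr)^{1/p_0}_{Q_{Cr}(t_1-(j-1)r^{2/\alpha},0)}.
\]
Fix $\zeta\in C_0^\infty(B_r)$, $\zeta\equiv1$ on $B_{r/2}$, and a chain of time cutoffs $\eta_0\prec\eta_1\prec\cdots\prec\eta_M$, with $\eta_k\equiv0$ for $t\le t_1-c_kr^{2/\alpha}$ ($1=c_0<c_1<\cdots$ absolute) and $\eta_k\equiv1$ near $\supp\eta_{k-1}$, $\eta_0\equiv1$ on $[t_1-2^{-2/\alpha}r^{2/\alpha},t_1]$. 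For a fixed multi--index $\gamma$, Lemma~\ref{3.1} applied to $\eta_0D^\gamma W$ (using that $\zeta,\eta_0$ commute with $a^{ij}D_{ij}$) shows that $V:=\zeta\eta_0D^\gamma W$, extended by $0$, satisfies $\partial_t^\alpha V-a^{ij}D_{ij}V=h$ in $(0,t_1)\times\rr^d$ with
\[
h=\zeta\,g_0[D^\gamma W]-2a^{ij}(D_i\zeta)\eta_0 D_jD^\gamma W-(a^{ij}D_{ij}\zeta)\eta_0 D^\gamma W ,
\]
$g_0$ being \eqref{914} for $\eta=\eta_0$; then \cite{P1} gives $\|V\|_{\hh_{p_0}^{\alpha,2}}\le N\|h\|_{L_{p_0}}$. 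One estimates $\|h\|_{L_{p_0}}$ termwise: (i) in the memory part of $g_0$ split the $s$--integral over $I_j=(t_1-jr^{2/\alpha},t_1-(j-1)r^{2/\alpha})$; since $|\eta_0(s)-\eta_0(t)-\eta_0'(t)(s-t)|\lesssim\min(1,|s-t|^2r^{-4/\alpha})$, the kernel on $I_j$ is $\lesssim(jr^{2/\alpha})^{-\alpha-1}$, which after integration and Minkowski/H\"older yields $r^{-2}\sum_j j^{-(1+\alpha)}(|D^\gamma W|^{p_0})^{1/p_0}_{Q_r(t_1-(j-1)r^{2/\alpha},0)}$; (ii) the two cutoff commutators give $r^{-1}(|D^{\gamma+1}W|^{p_0})^{1/p_0}_{Q_r(t_1,0)}+r^{-2}(|D^\gamma W|^{p_0})^{1/p_0}_{Q_r(t_1,0)}$; (iii) the second, \emph{extra first--order--in--time} term $\eta_0'(t)\,\partial_t\!\int(t-s)^{1-\alpha}D^\gamma W\,ds$ of $g_0$ is rewritten via Remark~\ref{r3.2} through $\partial_t^{\alpha-1}(\eta_1D^\gamma W)$, to which Lemma~\ref{3.2} applies with a small $\ep$, reducing it to $\ep\|\partial_t^\alpha(\eta_1D^\gamma W)\|_{L_{p_0}}$ plus lower order, and $\partial_t^\alpha(\eta_1D^\gamma W)$ is re--expanded by Lemma~\ref{3.1} as $\eta_1a^{ij}D_{ij}D^\gamma W$ plus a memory/extra part now with $\eta_2\succ\eta_1$, and so on along the chain. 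With the $\ep$'s small and $M$ large, (iii) reduces to the memory series plus terms $r^{-2}(|D^\delta W|^{p_0})^{1/p_0}_{Q_{\asymp r}(t_1,0)}$, $|\delta|\le|\gamma|+1$. The higher--derivative terms from (ii), (iii) are then eliminated by the induction hypothesis on slightly enlarged cylinders, a standard iteration--over--radii (hole--filling) lemma to absorb, and the convolution bound $\sum_{j+k=m+1}j^{-(1+\alpha)}k^{-(1+\alpha)}\lesssim m^{-(1+\alpha)}$ (true since $1+\alpha>1$) to collapse the nested series. The case $m=0$ is trivial; $m=1,2$ carry the substance.

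\emph{Steps 2--3: improved integrability and H\"older bounds.} For \eqref{1100}, take $W=w=D_{k\ell}v$ and $\gamma=0$, but combine \cite{P1} with the Sobolev embedding $\hh_{p_0}^{\alpha,2}\hookrightarrow L_{p_1}$ available when $1/p_1\le1/p_0-2/(d+2)$ (for $p_1$ below the sharp exponent, follow it by H\"older on the bounded cylinder; the powers of $r$ balance exactly because $D^2v$ sits on both sides): this gives $\|w\|_{L_{p_1}(Q_{r/2}(t_1,0))}\le N\|V\|_{L_{p_1}}\le N\|h\|_{L_{p_0}}$. The memory term of $h$ now directly produces the series in $|D^2v|$ on the past cylinders; the commutator $r^{-1}\|Dw\|_{L_{p_0}(Q_r(t_1,0))}=r^{-1}\|D^3v\|_{L_{p_0}(Q_r(t_1,0))}$ becomes the same series by Step~1 applied to the solution $D^2v$ with $m=1$; and (iii) is handled as before. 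Dividing by $|Q_{r/2}|^{1/p_1}$ gives \eqref{1100}. If $p_0>d/2+1$, replacing the $L_{p_1}$ embedding by $\hh_{p_0}^{\alpha,2}\hookrightarrow C^{\sigma\alpha/2,\sigma}$ in the same computation gives \eqref{1101}. For an arbitrary $p_1\in(p_0,\infty)$ one iterates \eqref{1100} finitely many times — each step raises $1/p$ by the fixed amount $2/(d+2)$, halves the radius, and passes to a finer chain of cylinders dominated by the original $Q_r$--chain (enlarge, then use the convolution bound) — obtaining \eqref{homo2}; iterating until $p>d/2+1$ and invoking \eqref{1101} then gives \eqref{homo}. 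The asserted memberships of $v$ in the corresponding $\hh$--spaces on $B_{r/2}$ come out of the same estimates together with the density of smooth functions.

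\emph{Where the difficulty lies.} The hard part will be item (iii): the term $\eta_0'(t)\,\partial_t\!\int(t-s)^{1-\alpha}W\,ds$ — which has no analogue for $\alpha\in(0,1)$ — couples the estimate to $\partial_t^\alpha$ of an enlarged time cutoff of $W$, and it is precisely to close this loop that one needs the chain $\eta_0\prec\cdots\prec\eta_M$ and the interpolation inequality Lemma~\ref{3.2} with carefully tuned small parameters, followed by absorption of the resulting $\|D^2v\|_{L_{p_0}}$ terms on comparable cylinders. Tracking the nested power--weight series and the (slowly enlarging) cylinders through all the iterations is the remaining bookkeeping.
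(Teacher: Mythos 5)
Your proposal follows the paper's own proof in all essentials: the same identity for $\partial_t^\alpha(\eta v)$ from Lemma~\ref{3.1}, the same rewriting of the problematic first-order-in-time term via Remark~\ref{r3.2} and the interpolation Lemma~\ref{3.2}, the same chain of progressively larger time cutoffs whose recursive coupling is closed by absorption with small parameters, the same near-/far-past splitting producing the $j^{-(1+\alpha)}$ weights, and the same Sobolev embeddings and radius-halving iteration to reach general $p_1$ and the H\"older seminorm. Your Step~1 $m$-induction with the convolution bound $\sum_{j+k=m+1}j^{-(1+\alpha)}k^{-(1+\alpha)}\lesssim m^{-(1+\alpha)}$ is a heavier packaging than the paper's direct local $\hh^{\alpha,2}_{p_0}$ estimate (which keeps only $\norm{v}_{L_{p_0}}$, not $\norm{Dv}_{L_{p_0}}$, on the right and then simply promotes $v\to D^2v$ by mollification), but this is a difference of bookkeeping, not of method.
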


\begin{proof}
{\em Step 0.}
We first note that \eqref{homo2} and \eqref{homo} follow from \eqref{1100} and \eqref{1101} by a finite iteration argument. See \cite[Proposition 4.3]{dong20} for details.

{\em Step 1.}
By scaling, we assume $r=1$. For $k = 0,1, \ldots,$
let $\delta = \delta_0 = (1/2)^{2/\alpha}$ and
\[
\delta_k = \delta + (1-\delta) \sum_{j = 1}^k 2^{-j},
\]
and take $\eta_k \in C^{\infty}(\rr)$ such that
\[
\eta_k(t) = \begin{cases}
1 \quad  t \in (t_1 - \delta_k, t_1),
\\
0 \quad  t \in (t_1 - 1, t_1 - \delta_{k+1} ),
\end{cases}
\]
\begin{align*}
     |\eta'_k| \leq (1 - \delta)^{-1}  2^{k+2},\quad  |\eta''_k| \leq (1 - \delta)^{-2} 2^{2k + 4}. \stepcounter{equation}\tag{\theequation}\label{010406}
\end{align*}
Note that $\eta_{k+1} \equiv 1$ in $ \mathrm{supp } \, \eta_k$, and $v$ can be extended as zero for $t\leq 0 $. Therefore, by Lemma \ref{3.1},
for any $k$, we have
\[
\eta_k  v \in \hh_{p_0, 0}^{\alpha, 2} ( (t_1-1,t_1) \times B_{3/4} ).
\]
Furthermore, let
\begin{align*}
    &g_k = \partial_t^\alpha (\eta_k v) - \eta_k \partial_t^\alpha v  \quad \text{in}\quad  (t_1-1,t_1) \times B_{3/4}, \\
    &r_k = \frac{3}{4} - \frac{1}{4}  \frac{1}{2^k}, \quad \text{and}\quad  \Omega_k :=  (t_1-1, t_1) \times B_{r_k} \stepcounter{equation}\tag{\theequation}\label{010409}
\end{align*}
with $r_0 = 1/2$ {and} $r_\infty = 3/4$. Then,
\begin{align*}
\norm{\eta_k v}_{\hh_{p_0}^{\alpha, 2} ( \Omega_k) }
&\leq N  \norm{\eta_k v}_{L_{p_0} (\Omega_k)}
+ \norm{ D^2 (\eta_k v) }_{L_{p_0} (\Omega_k) }
+ \norm{\partial_t^\alpha (\eta_k v) }_{L_{p_0} (\Omega_k) }  \\
&\leq N  \norm{\eta_k v}_{L_{p_0} (\Omega_k)}
+ N \norm{g_k }_{L_{p_0} (\Omega_{k+1}) }
+  \frac{N}{(r_{k+1} - r_k)^2  }   \norm{ \eta_k v }_{L_{p_0} (\Omega_{k+1}) }   \\
&\leq N 2^{k+2} \norm{ v}_{L_{p_0} (\Omega_\infty)}
+ N \norm{g_k}_{L_{p_0} (\Omega_{k+1})},
\stepcounter{equation}\tag{\theequation}\label{*}
\end{align*}
where $N = N(d, \delta, \alpha, p_0)$, and the second inequality is by a local estimate which can be proved using the estimate in whole space in \cite{P1} together with a localization argument as in \cite[Lemma 4.2]{dong19}.

To estimate $g_k$,
we take $\zeta \in C^{\infty}(\rr)$ such that
\[
|\zeta|  \leq 1 \quad \text{and}\quad  \zeta(t) = \begin{cases}
1 \quad \quad \text{when}\quad  t \in (t_1 - 2, t_1), \\
0 \quad \quad \text{when}\quad  t \in (-\infty, t_1 - 3 ).
\end{cases} \stepcounter{equation}\tag{\theequation}\label{rr}
\]
Furthermore, let $v_1 = \zeta v$ and $v_2 = v-v_1$. Lemma \ref{3.1} with $S = -\infty$ yields
\begin{align*}
|g_k | &\leq N(\alpha) \big[  |g_{k,1}| + |g_{k,2}| \big],
\end{align*}
where
\begin{align*}
g_{k,1}(t,x)
&= \alpha (\alpha - 1) \int_{t_1-3}^t (t-s)^{-\alpha - 1}
[ \eta_k(s) - \eta_k(t) - \eta_k'(t) (s-t) ] v_1(s,x) \, ds  \\
&\quad+ \alpha  \eta_k'(t)  \partial_t
\int_{t_1-3}^t (t-s)^{1- \alpha} v_1(s,x) \, ds,\stepcounter{equation}\tag{\theequation}\label{rr1}
\end{align*}
and
\begin{align*}
 g_{k,2}(t,x)
&= \alpha (\alpha - 1) \int_{-\infty}^{t_1-2} (t-s)^{-\alpha - 1}
[ \eta_k(s) - \eta_k(t) - \eta_k'(t) (s-t) ] v_2(s,x) \, ds  \\
&\quad+ \alpha  \eta_k'(t)  \partial_t
\int_{-\infty}^{t_1-2} (t-s)^{1- \alpha} v_2(s,x) \, ds \\
&= \alpha (\alpha - 1)  \int_{-\infty}^{t_1-2} (t-s)^{-\alpha - 1} [ \eta_k (s) - \eta_k (t) ] v_2(s,x) \, ds.
\stepcounter{equation}\tag{\theequation}\label{rr2}\end{align*}

{\em Step 2.}
We further decompose $g_{k,1}$ as
$$g_{k,  1} = \alpha (\alpha - 1) g_{k,1,1} + \alpha g_{k, 1, 2},$$
where
\begin{align*}
 g_{k,1,1}(t,x)
= \int_{t_1-3}^{t} (t-s)^{-\alpha - 1}
[ \eta_k(s) - \eta_k(t)   - \eta_k' (t)  (s-t)  ]v_1(s,x) \, ds.
\end{align*}
For $t\in (t_1-1, t_1)$,
\begin{align*}
|g_{k,1,1}(t,x)|
&\leq
\norm{\eta_k''}_{L_\infty}
\cdot \int_{t_1 - 3}^t | (t-s)^{-\alpha +1} v(s,x) | \, ds \\
&\leq \norm{\eta_k''}_{L_\infty}
\int_{t - 3}^t |(t-s)^{-\alpha + 1} v(s,x) | \, ds \\
&= \norm{\eta_k''}_{L_\infty}
\int_{0}^3 \tau^{-\alpha + 1} |v(t-\tau,x) | \, d\tau,
\end{align*}
and by the Minkowski inequality,
\begin{align*}
\norm{g_{k,1,1}}_{L_{p_0} (\Omega_{k+1} )  }
&\leq N \norm{  \eta_k'' }_{L_\infty} \norm{v}_{L_{p_0} ( (t_1 - 4, t_1) \times B_{3/4} ) }. \stepcounter{equation}\tag{\theequation}\label{k11}
\end{align*}
For $\beta := \alpha - 1\in (0,1)$, using the facts that $v_1 \in \hh_{p_0, 0}^{\alpha, 2} ( (t_1-3,t_1) \times B_{3/4} $ and
$\eta_{k+1} \equiv 1 $ in $\mathrm{supp }\, \eta_k$ together with Remark \ref{r3.2}, we have
\begin{align*}
g_{k,1,2}(t,x)
&= \frac{\eta_k'(t)}{ \eta_{k+1}(t)  } \Big[  (\alpha - 1) \int_{t_1 - 3}^t (t-s)^{- \alpha}
[\eta_{k+1}(s) - \eta_{k+1}(t)] v_1(s,x) \, ds  \\
& \quad \quad+  \Gamma (2 - \alpha) \partial_t^\beta (\eta_{k+1} v_1)  \Big],
\end{align*}
which implies that
\begin{align*}
|g_{k,1,2}(t,x) |
&\leq N(\alpha)\norm{\eta_k'}_{L_\infty}
\int_{t_1 - 3}^t
(t-s)^{-\alpha}
|\eta_{k+1} (s) - \eta_{k+1} (t) | |v(s,x)| \, ds \\
&\quad+ N(\alpha)\norm{\eta_k'}_{L_\infty} |\partial_t^\beta (\eta_{k+1} v_1)|\\
&\leq N(\alpha)\norm{\eta_k'}_{L_\infty}\norm{\eta_{k+1}'}_{L_\infty}
\int_{t_1 - 3}^t
(t-s)^{1-\alpha}|v(s,x)| \, ds \\
&\quad+ N(\alpha)\norm{\eta_k'}_{L_\infty}|\partial_t^\beta (\eta_{k+1} v_1)|.
\end{align*}
By applying Lemma \ref{3.2} to $\eta_{k+1} v_1 = \eta_{k+1} v \in \hh_{p_0, 0}^{\alpha, 2}(\Omega_{k+1})$ and Young's inequality,  for any $\ep > 0$, we have
\begin{align*}
&\norm{g_{k,1,2}}_{L_{p_0} (\Omega_{k+1} )}
\leq
N\norm{\eta_k'}_{L_\infty}
 \norm{\eta_{k+1}'}_{L_\infty}
\norm{v}_{L_{p_0} ( t_1 - 4, t_1 ) \times B_{3/4} (x_0) )  } \\
&\quad+
\ep \norm{\partial_t^\alpha  (\eta_{k+1} v) }_{L_{p_0} (\Omega_{k+1})}
+ N \ep^{-1} \norm{\eta_k'}^2_{L_\infty}
\norm{\eta_{k+1} v}_{L_p (\Omega_{k+1})}, \stepcounter{equation}\tag{\theequation}\label{k12}
\end{align*}
where $N = N(\alpha,d,p_0)$.

Recall the definition of $g_{k,2}$ in \eqref{rr2}. For $j \ge 2$, $t \in (t_1 - 1, t_1)$, and $s\in (t_1 - j - 1,t_1 - j)$, it is easily seen that $t-s > (j-1) >  (j + 1)/4$.
Thus,
\begin{align*}
|g_{k,2}(t,x)|
&\leq \int_{-\infty}^{t_1 - 2}
(t-s)^{-\alpha - 1} | v (s,x)| \, ds \leq \sum_{j=2}^\infty \int_{t_1 - j - 1}^{t_1 - j}
(t-s)^{-\alpha - 1} |v (s,x)| \, ds \\
&\leq N(\alpha)\sum_{j=2}^\infty \int_{t_1 - j - 1}^{t_1 - j}
(j+1)^{-\alpha - 1} |v (s,x)| \, ds.
\end{align*}
Therefore, by H\"older's inequality,
\begin{align*}
\norm{ g_{k,2} }_{L_{p_0} (\Omega_{k+1})  }
&\leq  N \sum_{j=2}^\infty (j+1)^{-(\alpha + 1)}
\norm{v}_{ L_{p_0} ((t_1 - j - 1, t_1 - j ) \times B_{3/4}) } \\
&\leq  N \sum_{j=2}^\infty (j+1)^{-(\alpha + 1)}
\norm{v}_{ L_{p_0} ((t_1 - j - 1, t_1 - j ) \times B_{1}) }, \stepcounter{equation}\tag{\theequation}\label{k2}
\end{align*}
where $N = N(\alpha,d,p_0)  $.
Combining  (\ref{k11}), (\ref{k12}), and (\ref{k2}), we get
\begin{align*}
&\norm{g_k}_{L_{p_0} (\Omega_{k+1}) } \leq N(\alpha)\norm{g_{k,2}}_{L_{p_0} (\Omega_{k+1}) }
+ N(\alpha)\norm{g_{k,1}}_{L_{p_0} (\Omega_{k+1}) } \\
&\leq N \sum_{j=2}^\infty
(j+1)^{-\alpha- 1}
\norm{v}_{L_{p_0} ((t_1 - j - 1, t_1 -j )\times B_1)} \\
&\quad+
N\norm{\eta_k''}_{L_\infty} \sum_{j=0}^{3} \norm{v}_{L_{p_0} ((t_1 - j - 1, t_1 -j ) \times B_1)} \\
&\quad+
N \norm{\eta_k'}_{L_\infty} \norm{\eta_{k+1}'}_{L_\infty}
\sum_{j=0}^{3} \norm{v}_{L_{p_0} ((t_1 - j - 1, t_1 -j )\times B_1)} \\
&\quad+
\ep \norm{\partial_t^\alpha (\eta_{k+1} v)}_{L_{p_0} (\Omega_{k+1})}
+ N \ep^{-1} \norm{\eta_k'}_{L_\infty}^2 \norm{\eta_{k+1} v}_{L_{p_0} (\Omega_{k+1})},\stepcounter{equation}\tag{\theequation}\label{yy1}
\end{align*}
where $N = N(\alpha,d,p_0)  $.

{\em Step 3.}
Multiplying both sides of (\ref{*}) by $\ep^k$, summing over $ k = 0, 1, \ldots$, and using \eqref{yy1}, we get
\begin{align*}
& \sum_{k=0}^\infty \varepsilon^k\norm{\eta_k v}_{\hh_{p_0}^{\alpha, 2} ( \Omega_k) } \leq N \norm{v}_{L_{p_0} (\Omega_\infty)} \sum_{k=0}^\infty 2^k\ep^{k} + \sum_{k=0}^\infty \ep^{k} \norm{g_k}_{L_{p_0} (\Omega_{k+1})}\\
&\leq N \norm{v}_{L_{p_0} (\Omega_\infty)} \sum_{k=0}^\infty 2^k\ep^{k}
+ N \sum_{j=2}^\infty
(j+1)^{-\alpha- 1}
\norm{v}_{L_{p_0} (t_1 - j - 1, t_1 -j )\times B_1} \sum_{k=0}^\infty \ep^{k}\\
&\quad+
N \sum_{j=0}^{3} \norm{v}_{L_{p_0} (t_1 - j - 1, t_1 -j ) \times B_1} \sum_{k=0}^\infty \ep^{k}2^{2k}\\
&\quad+
\sum_{k=0}^\infty \ep^{k+1} \norm{\partial_t^\alpha (\eta_{k+1} v)}_{L_{p_0} (\Omega_{k+1})}
+ N \norm{ v}_{L_{p_0} (\Omega_\infty)} \ep^{-1} \sum_{k=0}^\infty \ep^{k}2^{2k}, \stepcounter{equation}\tag{\theequation}\label{k88}
\end{align*}
where $N = N(d, \delta, \alpha, p_0)$. Picking $\ep < 1/8$ such that
$\sum_{k=0}^\infty \ep^k 2^{2k} < \infty$, and absorbing the second last summation to the left{-hand side} of \eqref{k88}, we derive
\begin{align*}
\norm{\eta_0 v}_{\hh_{p_0}^{\alpha, 2} ( \Omega_0 ) }
\leq N\sum_{j=1}^\infty j^{-\alpha -1}
\norm{v}_{ L_{p_0} ( (t_1 - j, t_1 - j +1 ) \times B_1 )}.
\end{align*}
Therefore, by using the Sobolev embedding
$$
 \hh_{p_0, 0}^{\alpha, 2} ( (t_1-1, t_1) \times B_{1/2} ) \hookrightarrow \mathbf{W}_{p_0, 0}^{1, 2}( (t_1-1, t_1) \times B_{1/2} ) \hookrightarrow L_{p_1}( (t_1-1, t_1) \times B_{1/2} ),
$$
we conclude
\begin{align*}
& \norm{v}_{ L_{p_1} ( Q_{1/2} (t_1, 0) )   }
= \norm{\eta_0 v}_{  L_{p_1} (Q_{1/2}( (t_1, 0) ) } \leq \norm{\eta_0 v}_{  L_{p_1}( (t_1 - 1, t_1) \times B_{1/2} ) } \\
&\leq N \norm{\eta_0 v}_{\hh_{p_0}^{\alpha, 2} ( (t_1-1, t_1) \times B_{1/2}  ) } \leq N \sum_{j=1}^\infty j^{-\alpha -1}
\norm{v}_{ L_{p_0} ( (t_1 - j, t_1 - j +1 ) \times B_1 )},  \stepcounter{equation}\tag{\theequation}\label{homo estimate}
\end{align*}
where $N = N(d,\delta,\alpha,p_0,p_1)$.

For the proof of \eqref{1101}, we pick $\widetilde{p}_0\le p_0$ such that $\widetilde{p}_0 \in (d/2 + 1,d + 2)$, and let $\sigma = 2-(d+2)/\widetilde{p}_0 \in (0,1)$. By using the Sobolev embedding
$$ \hh_{\widetilde{p}_0 , 0}^{\alpha, 2} ( (t_1-1, t_1) \times B_{1/2} ) \hookrightarrow \mathbf{W}_{\widetilde{p}_0 , 0}^{1, 2}( (t_1-1, t_1) \times B_{1/2} ) \hookrightarrow C^{\sigma/2,\sigma}( (t_1-1, t_1) \times B_{1/2} ),$$
we replace $\norm{v}_{ L_{p_1} ( Q_{1/2} (t_1, 0) )}$ with $[ v]_{C^{\sigma \alpha/2, \sigma} ( Q_{r/2} (t_1, 0) ) }$ in \eqref{homo estimate} to conclude
\[
[v]_{C^{\sigma \alpha/2, \sigma} ( Q_{1/2} (t_1, 0) ) }
\leq N \sum_{j=1}^\infty j^{-\alpha -1}
\norm{v}_{ L_{p_0} ( (t_1 - j, t_1 - j +1 ) \times B_1 )}, \stepcounter{equation}\tag{\theequation}\label{1102}
\]
where $N= N(d,\delta,\alpha,p_0)$.

{\em Step 4.}
It remains to prove $
D^2v\in \hh_{p_0, 0}^{\alpha, 2} ( (0,t_{1}) \times B_{3/4} )$.
If so, \eqref{1100} and \eqref{1101} follow from replacing $v$ with $D^2v$ into (\ref{homo estimate}) and \eqref{1102}.
Note that by \cite[Lemma 4.3]{dong19}, $(D  v)^\delta = D(v^\delta) \in
\hh_{p_0, 0}^{\alpha, 2} ( (0,t_1) \times B_{5/6} )
$
for $\delta$ sufficiently small, where $(D v)^\delta$ and $v^\delta$ denote the mollification in the spatial variables. It follows that
$$
(D  v)^\delta
\to D v
\quad \text{in}\quad
L_{p_0} ( (0,t_1) \times B_{5/6} ).
$$
By repeating the argument in Step 3, {we have}
\begin{align*}
\norm{ (Dv)^\delta}_{\hh_{p_0,0}^{\alpha,2} ( (0,t_1) \times B_{5/6} )}
&\leq  N \sum_{j=1}^\infty j^{-\alpha -1}
\norm{(Dv)^\delta}_{ L_{p_0} ( (t_1 - j, t_1 - j +1 ) \times B_{7/8} )} \\
&\leq N \sum_{j=1}^\infty j^{-\alpha -1}
\norm{(Dv)^\delta}_{ L_{p_0} ( (0,t_1) \times B_{7/8} )},
\end{align*}
where $N = N(d,\delta,\alpha,p_0)$. Therefore,
$ (D v)^\delta$ is Cauchy in $\hh_{p_0,0}^{\alpha,2} ((0,t_1)\times B_{5/6} )$, which implies
\[
D v \in \hh_{p_0, 0}^{\alpha, 2} ( (0,t_1) \times B_{5/6} ).
\]
Replacing $Dv$ with $D^2v$ and repeating the above process, we conclude
\[
D^2  v \in \hh_{p_0, 0}^{\alpha, 2} ( (0,t_1) \times B_{3/4} ).
\]
Finally, the statement $v \in \hh_{p_1, 0}^{\alpha, 2} ( (0,t_1) \times B_{1/2} )$ follows from a mollification argument. See \cite[Lemma 4.1]{dong20} for details. The proposition is proved.
\end{proof}

\begin{remark}\label{go}
By slightly modifying the proof of \eqref{homo estimate}, we derive an estimate that will be used later in Proposition \ref{3.8}. For $w \in \hh_{p_0, 0}^{\alpha, 2} ( (0, t_0) \times B_1 )$ {satisfying} \eqref{99991}, if we take cutoff functions $\eta_k$ defined in \eqref{010406} and $g_k{,\Omega_k}$ defined in \eqref{010409} with $w$ in place of $v$, then $\partial_t^{\alpha} (w \eta_k ) - \Delta (w \eta_k ) = f\eta_k {+} g_k $. Similar to \eqref{*}, we have
\begin{align*}
\norm{\eta_k w}_{\hh_{p_0}^{\alpha, 2} ( \Omega_k) }
\leq N 2^{k+2} \norm{ w}_{L_{p_0} (\Omega_\infty)} + N \norm{ f}_{L_{p_0} (\Omega_\infty)}
+ N \norm{g_k}_{L_{p_0} (\Omega_{k+1})}.
\stepcounter{equation}\tag{\theequation}\label{**}
\end{align*}
Furthermore, by redefining $\zeta$ in \eqref{rr} by \[
\zeta(t) = \begin{cases}
1 \quad \quad \text{when}\quad  t \in (t_1 - 3, t_1),  \\
0 \quad \quad \text{when}\quad  t \in (-\infty, t_1 - 4 ),
\end{cases}
\]
the lower bounds of the integrals in \eqref{rr1} become $t_1 - 4$, and the upper bound of the integral in \eqref{rr2} becomes $t_1 - 3= s_2$, where $s_j = t_1 - 2^{j} + 1$ for $j=0,1,\ldots$. In this case, we estimate $g_{k,1}$ similarly as in Step 2 above. However, we estimate $g_{k,2}$ differently:
\begin{align*}
|g_{k,2}(t,x)|
&\leq \int_{-\infty}^{t_1 - 3}
(t-s)^{-\alpha - 1} | v (s,x)| \, ds \leq \sum_{j=2}^\infty \int^{s_j}_{s_{j+1}}
(t-s)^{-\alpha - 1} |v (s,x)| \, ds \\
&\leq N(\alpha)\sum_{j=2}^\infty \int^{s_j}_{s_{j+1}}
2^{j(-\alpha - 1)} |v (s,x)| \, ds,
\end{align*}
where the last inequality is because $t\in (t_1-1,t_1)$. Therefore, by repeating the argument in Step 3 above with \eqref{**}, and applying the Minkowski inequality and H\"older’s inequality, we conclude
\begin{align*}
(  |D^2 w|^{p_0} )^{1 / p_0}_{Q_{1/2} (t_0, 0)}
&\leq N(|f|^{p_0})^{1/p_0}_{Q_1 (t_0, 0 )}
+ N \sum_{k=0}^\infty 2^{-k \alpha}
(|w|^{p_0})^{1/p_0}_{ ( s_{k+1},s_k ) \times B_1},
\end{align*}
where $N = N(d, \delta, \alpha, p_0)$.
\end{remark}

\subsection{Estimates of \texorpdfstring{$w$}{w}}\label{33.22}
In this subsection, we prove the existence and the estimates of $w$ satisfying \eqref{99991} when $a^{ij} = \delta^{ij}$.
\begin{lemma}\label{3.6}
For $\Omega=(0,1)^d$ and $f \in L_2 ( \Omega_T )$, there exists a unique $w \in \hh_{2,0}^{\alpha, 2} (\Omega_T )$ to
\[
\partial_t^\alpha w - 4\Delta w = f \quad \text{in}\quad
\Omega_T,\quad
w=0
\quad \text{on}\quad
(0,T) \times \partial \Omega,
\]
and $w$ satisfies
$$
\norm{w}_{\hh_{2}^{\alpha, 2} (\Omega_T )} \le N \norm{f}_{L_2(\Omega_T )}.
$$
\end{lemma}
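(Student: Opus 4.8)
The plan is to solve \eqref{3.6}-type equation by separation of variables in $x$, reducing it to a family of scalar fractional ODEs indexed by the Dirichlet eigenvalues of $-4\Delta$ on the cube, and then to control these ODEs by a resolvent estimate that is uniform in the eigenvalue. Concretely, I would let $\{e_n\}_{n\in\nn^d}$ be the $L_2(\Omega)$-orthonormal basis of Dirichlet eigenfunctions of $-4\Delta$ on $\Omega=(0,1)^d$ (products of sines), so that $-4\Delta e_n=\mu_n e_n$ with $\mu_n=4\pi^2|n|^2\ge \mu_0:=4\pi^2 d>0$, and expand $f(t,\cdot)=\sum_{n}f_n(t)e_n$, where $\sum_n\norm{f_n}_{L_2(0,T)}^2=\norm{f}_{L_2(\Omega_T)}^2$. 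For each $n$ I would take $w_n$ to be the classical solution on $(0,T)$ of $\partial_t^\alpha w_n+\mu_n w_n=f_n$ with $w_n(0)=w_n'(0)=0$, given by the Mittag-Leffler representation $w_n(t)=\int_0^t(t-s)^{\alpha-1}E_{\alpha,\alpha}(-\mu_n(t-s)^\alpha)f_n(s)\,ds$ (which is well defined since $(t-s)^{\alpha-1}\in L_1(0,T)$ and $E_{\alpha,\alpha}$ is bounded on compacts), and define $w=\sum_n w_n e_n$.

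The heart of the proof is the claim that $\norm{\partial_t^\alpha w_n}_{L_2(0,T)}+\mu_n\norm{w_n}_{L_2(0,T)}\le N(\alpha)\norm{f_n}_{L_2(0,T)}$ with $N$ independent of $n$ and of $T$. To prove it, extend $f_n$ and $w_n$ by zero to $t<0$; because $w_n(0)=w_n'(0)=0$ (equivalently $I_0^{2-\alpha}w_n$ and its first derivative vanish at $0$), the Fourier transform in $t$ of the causal function $w_n$ satisfies $\widehat{\partial_t^\alpha w_n}(\tau)=(i\tau)^\alpha\widehat{w_n}(\tau)$, hence $\widehat{w_n}(\tau)=\widehat{f_n}(\tau)/\big((i\tau)^\alpha+\mu_n\big)$. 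Writing $\phi=\alpha\pi/2\in(\pi/2,\pi)$, one has $(i\tau)^\alpha=|\tau|^\alpha e^{\pm i\phi}$, so
\[
|(i\tau)^\alpha+\mu_n|^2=\big(|\tau|^\alpha\cos\phi+\mu_n\big)^2+|\tau|^{2\alpha}\sin^2\phi=\big(|\tau|^\alpha+\mu_n\cos\phi\big)^2+\mu_n^2\sin^2\phi,
\]
which gives $|(i\tau)^\alpha+\mu_n|\ge \sin(\tfrac{\alpha\pi}{2})\,\max\{|\tau|^\alpha,\mu_n\}$ with $\sin(\tfrac{\alpha\pi}{2})=\cos(\tfrac{(\alpha-1)\pi}{2})>0$. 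Therefore $|\tau|^{2\alpha}/|(i\tau)^\alpha+\mu_n|^2\le \sin^{-2}(\tfrac{\alpha\pi}{2})$ and $\mu_n^2/|(i\tau)^\alpha+\mu_n|^2\le \sin^{-2}(\tfrac{\alpha\pi}{2})$; Plancherel on $\rr$, together with causality of $w_n$ and $\supp f_n\subset[0,T]$, yields the claim with $N(\alpha)=2/\cos(\tfrac{(\alpha-1)\pi}{2})$. (Alternatively, the same estimate can be obtained through the Laplace transform and the identity $\widetilde{\partial_t^\alpha w_n}(\lambda)=\lambda^\alpha\widetilde{w_n}(\lambda)$ for $\real\lambda>0$.)

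It then remains to assemble. By Plancherel in $x$, $\norm{w}_{L_2(\Omega_T)}^2=\sum_n\norm{w_n}_{L_2(0,T)}^2\le \mu_0^{-2}\sum_n\mu_n^2\norm{w_n}^2$, $\norm{\partial_t^\alpha w}_{L_2(\Omega_T)}^2=\sum_n\norm{\partial_t^\alpha w_n}^2$, and $\norm{D^2 w}_{L_2(\Omega_T)}^2\le N(d)\sum_n\mu_n^2\norm{w_n}^2$ (by a direct computation on the product-sine basis, or by the elliptic estimate $\norm{D^2\cdot}_{L_2}\le N(d)\norm{\Delta\cdot}_{L_2}$ on the cube for $H^2\cap H_0^1$ functions), with $\norm{Dw}^2$ and $\norm{w}^2$ dominated in the same way since $\mu_n\ge\mu_0$; combined with the scalar estimate this yields $\norm{w}_{\hh_{2}^{\alpha,2}(\Omega_T)}\le N\norm{f}_{L_2(\Omega_T)}$ with $N=N(d,\alpha)$, and in particular the series defining $w$ converges in $\hh_{2}^{\alpha,2}(\Omega_T)$. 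That $w\in\hh_{2,0}^{\alpha,2}(\Omega_T)$ with the zero initial and Dirichlet conditions follows because each partial sum $\sum_{|n|\le M}w_n e_n$ lies in that space (the $e_n$ are smooth and vanish on $\partial\Omega$, and each $w_n$ is approximable by smooth functions on $[0,T]$ vanishing to first order at $t=0$) and the tails are Cauchy by the same a priori estimate; equivalently, one first proves everything for smooth $f$ and passes to the limit. Finally, uniqueness: if $w-w'$ solves the homogeneous problem with zero data, each of its coefficients $v_n$ solves $\partial_t^\alpha v_n+\mu_n v_n=0$ with $v_n(0)=v_n'(0)=0$, hence $v_n\equiv 0$ (by the scalar estimate with $f_n=0$), so $w=w'$.

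I expect the main obstacles to be bookkeeping rather than analytic: (i) justifying the symbol identity $\widehat{\partial_t^\alpha w_n}=(i\tau)^\alpha\widehat{w_n}$ rigorously in the $L_2$-with-zero-initial-data setting, which is handled cleanly either through the explicit Mittag-Leffler solution (regular enough to transform) or through the Laplace transform with a limiting argument, and (ii) upgrading the norm bound to genuine membership $w\in\hh_{2,0}^{\alpha,2}(\Omega_T)$, which is where one reduces to smooth $f$. The genuinely new analytic point, the uniform-in-$\mu_n$ resolvent bound, is elementary once the Fourier symbol is available, precisely because $\alpha<2$ keeps $(i\tau)^\alpha$ off the negative real axis so that $(i\tau)^\alpha+\mu_n$ cannot be small relative to $\max\{|\tau|^\alpha,\mu_n\}$.
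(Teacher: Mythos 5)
Your overall plan---Fourier sine expansion in $x$, reduction to the scalar family $\partial_t^\alpha w_n+\mu_n w_n=f_n$ with zero Cauchy data, a $\mu_n$-uniform estimate, and reassembly by Parseval---is the same reduction the paper performs, but the key scalar estimate is obtained by a genuinely different mechanism. The paper represents the solution as a convolution with $H_{\alpha,\lambda}=I^{\alpha-1}E_\alpha(-\lambda t^\alpha)$, verifies $L(H_{\alpha,\lambda})(s)=(s^\alpha+\lambda)^{-1}$, and then uses the complete monotonicity fact $E_\alpha(z)\in(0,1)$ for $z<0$ to compute $\norm{H_{\alpha,\lambda}\chi_{(0,T)}}_{L_1}=\lambda^{-1}\bigl(1-E_\alpha(-\lambda T^\alpha)\bigr)\le\lambda^{-1}$, closing with Young's inequality and then reading off $\norm{\partial_t^\alpha\phi}_{L_2}\le 2\norm{f}_{L_2}$ from the ODE. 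You instead extend causally to $\rr$, pass to the Fourier multiplier $\widehat{w_n}=\widehat{f_n}/\bigl((i\tau)^\alpha+\mu_n\bigr)$, prove the sectoriality bound $|(i\tau)^\alpha+\mu_n|\ge\sin(\alpha\pi/2)\max\{|\tau|^\alpha,\mu_n\}$, and close by Plancherel. Both arguments produce $T$-uniform constants (the paper's $\lambda^{-1}$ bound and your multiplier bound). The paper's route is slightly more elementary---no Fourier analysis on the line, and no need to justify $\widehat{\partial_t^\alpha w_n}=(i\tau)^\alpha\widehat{w_n}$ for an $L_2$ causal function, which, as you note, requires a density argument or passage through the explicit Mittag-Leffler representation---whereas yours makes transparent the geometric reason the estimate holds ($\alpha<2$ keeps $(i\tau)^\alpha$ off the negative real axis) and would apply verbatim to any sectorial symbol without Mittag-Leffler computations. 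The upgrade from the a priori bound to actual membership in $\hh_{2,0}^{\alpha,2}(\Omega_T)$ is, correctly, reduced to smooth $f$ in both treatments. One small remark in your favor: the eigenvalue written in the paper's proof, $4(n_1\cdots n_d)^2\pi^{2d}$, is a typo for $4\pi^2(n_1^2+\cdots+n_d^2)$; your $\mu_n=4\pi^2|n|^2$ is the correct value.
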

\begin{proof}
We use the method of separation of variables to construct $w$ explicitly. Since $f\in L_2 ((0,t_0) \times \Omega )$, we write
\[
f (t,x^1,\ldots,x^d) = \sum^\infty_{n_1, n_2, \ldots, n_d =1}
f_{n_1, \ldots, n_d } (t)\sin (n_1 \pi x^1) \cdots \sin (n_d \pi x^d).
\]
By orthogonality,
\[
f_{n_1, \ldots, n_d } (t)  = 2^d \int_0^1 \cdots \int_0^1 \sin (n_1 \pi x^1) \cdots \sin (n_d \pi x^d) f(t, x ) \, dx.
\]
We take
\[
w (t,x^1,\ldots,x^d) = \sum^\infty_{n_1, n_2, \ldots, n_d =1}
w_{n_1, \ldots, n_d } (t) \sin (n_1 \pi x^1) \cdots \sin (n_d \pi {x^d}) \stepcounter{equation} \tag{\theequation}\label{10291}.
\]
Then,
\begin{align*}
\partial_t^\alpha w - 4\Delta w &= \sum^\infty_{n_1,\ldots, n_d =1}  \big[  \partial_t^\alpha {w}_{n_1, \ldots, n_d }(t) + 4(n_1\ldots n_d)^2 \pi^{2d} {w}_{n_1, \ldots, n_d }(t) \big]\\
& \quad \quad\quad\quad \quad \cdot \sin (n_1 \pi x^1) \cdots \sin (n_d \pi x^d) \\
&= \sum^\infty_{n_1, n_2, \ldots, n_d =1}f_{n_1, \ldots, n_d } (t)  \sin (n_1 \pi x^1) \cdots \sin (n_d \pi {x^d}).
\end{align*}
To determine $w_{n_1, \ldots, n_d }$, it is sufficient to solve the following ODE: for $\lambda > 0 $,
\begin{align*}
\partial_t^\alpha \phi(t) + \lambda \phi(t) = f(t),
\quad
\phi(0) = 0,
\quad
\phi'(0) = 0.
\end{align*}
Let
\[
\phi(t) = \int_0^t H_{\alpha, \lambda}(t-s) f(s) \, ds,
\]
where
$$
H_{\alpha, \lambda}(t) = I^{\alpha - 1} E_\alpha (- \lambda t^\alpha ), \quad \text{and}\quad
 E_\alpha(z) = \sum^\infty_{n=0} \frac{z^n}{\Gamma(\alpha n +1)}
 $$
is the Mittag-Leffler function. We check that this $\phi$ solves the differential equation. It suffices to check that
\[
L \phi
= L(H_{\alpha, \lambda}) \cdot L(f)
=  \frac{Lf(s)}{s^\alpha + \lambda}, \stepcounter{equation} \tag{\theequation}\label{opop}
\]
where  $L$ denotes the Laplace transformation, and in the second equality of \eqref{opop} we used the following proved in \cite[Lemma 3.4]{P1}
$$
L(H_{\alpha, \lambda}) (s) = \frac{1}{s^\alpha + \lambda}.
$$
Recall that
$\partial_t^\alpha \phi = \partial_t I^{2 - \alpha} (\phi ' ).$
Therefore,
$L[\partial_t^\alpha \phi] (s)
= s^\alpha L \phi, $
which implies
$$
s^\alpha L \phi (s) + \lambda L \phi (s) = L f(s),
$$
and we arrive at \eqref{opop}.

By \cite{ML}, $E_\alpha(z) \in (0,1)$ for $z\in(-\infty,0)$, and thus,
\begin{align*}
&\norm{H_{\alpha, \lambda}  \cdot  \chi_{(0,T) } }_{L_1 (\rr)} = I^{\alpha} E_\alpha (- \lambda \cdot^\alpha )(T) = \frac{1}{\Gamma(\alpha)} \int_0^T (T-s)^{\alpha -1} E_\alpha (- \lambda s^\alpha ) ds\\
&= \frac{1}{\Gamma(\alpha)} \int_0^T (T-s)^{\alpha -1} \sum_{k=0}^\infty \frac{(-\lambda)^k s^{\alpha k}}{\Gamma(\alpha k +1)}  ds\\
&= \frac{1}{\Gamma(\alpha)} \sum_{k=0}^\infty \frac{(-\lambda)^k}{\Gamma(\alpha k +1)} \int_0^T (T-s)^{\alpha -1} s^{\alpha k} ds\\
&= \frac{1}{\Gamma(\alpha)} \sum_{k=0}^\infty \frac{(-\lambda)^k}{\Gamma(\alpha k +1)} T^{\alpha(k+1)} \frac{\Gamma(\alpha) \Gamma(\alpha k +1)}{\Gamma(\alpha (k+1) +1)}= \lambda^{-1}(1 - E_\alpha (- \lambda T^\alpha )).
\end{align*}
Also, note that
\[
|\phi(t)| \leq \int_{-\infty}^\infty
|H_{\alpha, \lambda} (t-s) \cdot \chi_{t-s \in (0,T)} |
\cdot |f (s) \cdot \chi_{s \in (0,T)} |
\, ds.
\]
By Young's inequality,
\begin{align*}
\norm{\phi}_{L_2 (0,T)}
&\leq
\norm{f \cdot  \chi_{s \in (0,T)}}_{L_2 (\rr)}
\cdot
\norm{\hh_{\alpha, \lambda}  \cdot  \chi_{(0,T) } }_{L_1 (\rr)} \\
&\leq
\lambda^{-1} \norm{f}_{L_2 (0,T)},
\end{align*}
and
$$\norm{\partial_t^\alpha \phi}_{L_2 (0,T)} \le \norm{\lambda \phi}_{L_2 (0,T)} + \norm{f}_{L_2 (0,T)} \le {2}\norm{f}_{L_2 (0,T)}.$$
Thus, we have
$$\norm{w_{n_1, \ldots, n_d }}_{L_2 (0,T)} \le (n_1\pi{\cdots} n_d\pi)^{-2}\norm{f_{n_1, \ldots, n_d }}_{L_2 (0,T)}, $$
and
$$\norm{\partial_t^\alpha w_{n_1, \ldots, n_d }}_{L_2 (0,T)} \le 2\norm{f_{n_1, \ldots, n_d }}_{L_2 (0,T)}. $$
It follows that  $w\in {\hh_{2,0}^{\alpha, 2} (\Omega_T )}$ and
$$
\norm{w}_{\hh_{2}^{\alpha, 2} (\Omega_T )} \le N \norm{f}_{L_2(\Omega_T )}.
$$
Thus, the lemma is proved.
\end{proof}

\begin{lemma}\label{3.7}
Let $p_0\in (1,2)$, $\Omega=(-1,1)^d $, and $f\in L_2(\Omega_T)$.  There exists a unique $w \in \hh_{2,0}^{\alpha, 2} (\Omega_T )$ to
\[
\partial_t^\alpha w - \Delta w = f \quad \text{in}\quad
\Omega_T, \quad
w=0
\quad \text{on}\quad
(0,T) \times \partial \Omega, \stepcounter{equation} \tag{\theequation}\label{1029}
\]
and $w$ satisfies
\[
\norm{ w}_{\hh_{p_0}^{\alpha, 2} (\Omega_T ) }
\leq N \norm{f}_{L_{p_0} (\Omega_T ) },
\stepcounter{equation} \tag{\theequation}\label{local}
\]
where $N = N(d,p_0,\alpha)$.
\end{lemma}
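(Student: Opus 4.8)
\emph{Strategy and Step 1 (well-posedness and the $L_2$ bound).}
The plan is to obtain the $L_2$ assertion from Lemma~\ref{3.6} by a change of variables, and the $L_{p_0}$ bound with $p_0<2$ by reflecting onto the whole space and invoking \cite{P1}; the one genuinely delicate point throughout is that the final constant must be independent of $T$, so one may not simply combine the $L_2$ bound with H\"older on the bounded set $\Omega_T$ (whose H\"older constant is $|\Omega_T|^{1/p_0-1/2}$ and blows up with $T$). Mapping $(-1,1)^d$ affinely onto $(0,1)^d$ turns $\partial_t^\alpha-\Delta$ into $\partial_t^\alpha-\tfrac14\Delta$, and a further dilation of the time variable by a constant depending only on $\alpha$ brings it to the operator $\partial_t^\alpha-4\Delta$ of Lemma~\ref{3.6}, on a finite time interval. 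Lemma~\ref{3.6} then gives the unique $w\in\hh_{2,0}^{\alpha,2}(\Omega_T)$ solving \eqref{1029}, with $\|w\|_{\hh_{2}^{\alpha,2}(\Omega_T)}\le N(d,\alpha)\|f\|_{L_2(\Omega_T)}$; this constant is independent of $T$ because the bounds in the proof of Lemma~\ref{3.6} are (they use only $E_\alpha(-\lambda\cdot^\alpha)\in(0,1)$ and $\|H_{\alpha,\lambda}\chi_{(0,T)}\|_{L_1}\le\lambda^{-1}$, uniformly in $\lambda>0$). Uniqueness in $\hh_{2,0}^{\alpha,2}(\Omega_T)$ follows by applying this a priori bound to the difference of two solutions.

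\emph{Step 2 (the $L_{p_0}$ estimate: higher-order terms).}
Extend $w$ and $f$ from $(-1,1)^d$ to $\rr^d$ by successive odd reflections across the hyperplanes $x^i=\pm1$, obtaining $\tilde w,\tilde f$ that are $4$-periodic in each coordinate and solve $\partial_t^\alpha\tilde w-\Delta\tilde w=\tilde f$ in $\rr^d_T$ with zero initial conditions; this is legitimate since $w$ vanishes on $\partial\Omega$ and both $\Delta$ and $\partial_t^\alpha$ commute with odd reflection across a coordinate hyperplane for functions vanishing on it. The whole-space $L_{p_0}$ estimates of \cite{P1} for $\partial_t^\alpha-\Delta$, localized as in \cite[Lemma~4.2]{dong19}, give a local estimate on parabolic cylinders; applying it on a fixed finite cover of one period cell and summing, by periodicity, over the boundedly many cells meeting $(-1,1)^d$ yields
\[
\|\partial_t^\alpha w\|_{L_{p_0}(\Omega_T)}+\|D^2 w\|_{L_{p_0}(\Omega_T)}\le N\big(\|f\|_{L_{p_0}(\Omega_T)}+\|w\|_{L_{p_0}(\Omega_T)}\big),
\]
with $N=N(d,p_0,\alpha)$ (the geometry being fixed) and independent of $T$, the whole-space constant of \cite{P1} being scale-invariant; the term $\|Dw\|_{L_{p_0}}$ is then subsumed via interpolation, $\|Dw\|_{L_{p_0}}\le\ep\|D^2 w\|_{L_{p_0}}+\ep^{-1}\|w\|_{L_{p_0}}$.

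\emph{Step 3 (the lower-order term).}
It remains to bound $\|w\|_{L_{p_0}(\Omega_T)}$ by $\|f\|_{L_{p_0}(\Omega_T)}$ with a $T$-independent constant. I would use the explicit representation from the proof of Lemma~\ref{3.6},
\[
w(t,\cdot)=\int_0^t H_{\alpha,A}(t-s)\,f(s,\cdot)\,ds,\qquad H_{\alpha,A}(\tau)=I^{\alpha-1}\!\big[E_\alpha(-A\,\cdot^{\alpha})\big](\tau),
\]
where $A=-\Delta$ under zero Dirichlet data on $(-1,1)^d$. Since the Dirichlet heat semigroup on a rectangle is a tensor product of one-dimensional sub-Markovian semigroups, it is an $L_{p_0}$-contraction, and $A$ has a spectral gap $A\ge c_0(d)>0$; these facts yield $\|H_{\alpha,A}(\tau)\|_{L_{p_0}\to L_{p_0}}\le\kappa(\tau)$ for some $\kappa\in L_1(0,\infty)$ with $\|\kappa\|_{L_1}=N(d,p_0,\alpha)$. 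Young's convolution inequality in $t$ then gives $\|w\|_{L_{p_0}(\Omega_T)}\le N(d,p_0,\alpha)\|f\|_{L_{p_0}(\Omega_T)}$ uniformly in $T$, and combining with Step~2 finishes the proof. (Conceptually, this and in fact the whole estimate is the maximal regularity of $\partial_t^\alpha u+Au=f$ on $L_{p_0}((-1,1)^d)$: $A$ has a bounded $H^\infty$-calculus there, $W^{2,p_0}$-elliptic regularity holds on the convex cube for $p_0\le2$, and the spectral gap gives $\|w\|_{L_{p_0}}\lesssim\|Aw\|_{L_{p_0}}$.)

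\emph{Main obstacle.}
The scalings of Step~1 and the reflection/localization of Step~2 are routine; the real difficulty is the $T$-uniform control of $\|w\|_{L_{p_0}}$ in Step~3. The naive route — testing the equation against $|w|^{p_0-2}w$ — is unavailable for $\alpha>1$ precisely because $\int_0^t(\partial_t^\alpha w)\,|w|^{p_0-2}w$ need not be nonnegative (this is the same obstruction flagged in the introduction), so one is forced either to exploit the explicit Mittag-Leffler kernel together with the exponential decay coming from the Dirichlet spectral gap, or to pass through the maximal-regularity / $H^\infty$-calculus picture for the Dirichlet Laplacian on the cube.
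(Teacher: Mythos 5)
Your Steps~1 and 2 are in the same spirit as the paper's proof (affine/dilation change of variables to invoke Lemma~\ref{3.6} for $L_2$ solvability, odd reflection and periodic extension to reduce the $L_{p_0}$ bound to the whole-space estimate of \cite{P1}). But the two proofs part ways exactly where you locate the ``main obstacle,'' and the paper's route is both simpler and closes a gap in yours.

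First, in Step~2 you localize on a fixed finite cover and inherit the zeroth-order term $\|w\|_{L_{p_0}}$ on the right. The paper avoids this by a sharper use of the periodicity: take a cutoff $\eta$ equal to $1$ on $(-M,M)^d$, supported in $(-2M,2M)^d$, with $|D\eta|\lesssim M^{-1}$, $|D^2\eta|\lesssim M^{-2}$, apply the whole-space estimate to $\eta w$, match the $O(M^{d/p_0})$ factors on both sides (by periodicity), and \emph{send $M\to\infty$}. The commutator terms then vanish and one obtains the clean bound $\|D^2 w\|_{L_{p_0}(\Omega_T)}\le N\|f\|_{L_{p_0}(\Omega_T)}$ with no lower-order term at all.

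Second, and more seriously, your Step~3 is where the real gap sits. The paper never touches the equation to control $\|w\|_{L_{p_0}}$; it just notes that $w(t,\cdot)=0$ on $\partial\Omega$ gives, by the boundary Poincar\'e inequality, $\|w\|_{L_{p_0}(\Omega)}\le N\|Dw\|_{L_{p_0}(\Omega)}$ for each $t$, and that the same boundary condition forces $\int_\Omega Dw(t,x)\,dx=0$, so a Poincar\'e--Wirtinger inequality gives $\|Dw\|_{L_{p_0}(\Omega)}\le N\|D^2w\|_{L_{p_0}(\Omega)}$. These are purely spatial and hence trivially $T$-uniform; combined with the $D^2$ estimate from Step~2 they finish the proof. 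By contrast, your Step~3 asserts $\|H_{\alpha,A}(\tau)\|_{L_{p_0}\to L_{p_0}}\le\kappa(\tau)$ with $\kappa\in L_1(0,\infty)$, on the strength of the $L_{p_0}$-contractivity of $e^{-tA}$ plus the spectral gap. This step does not go through as stated when $\alpha\in(1,2)$: $E_\alpha(-\cdot)$ is no longer completely monotone, so there is no Zolotarev/Wright-type subordination writing $E_\alpha(-A\tau^\alpha)$ as an integral of $e^{-sA}$ against a nonnegative density, and the contractivity of the heat semigroup cannot be transferred directly. The fall-back you hint at (bounded $H^\infty$-calculus for $A$ on $L_{p_0}$) can in principle be made to work, but one then has to verify that $\sup_{\lambda}\,|H_{\alpha,\lambda}(\tau)|$, the sup taken over a complex sector containing $\sigma(A)\subset[c_0,\infty)$, is $L_1$ in $\tau$ on $(0,\infty)$ — a nontrivial estimate on Mittag-Leffler functions with complex argument in the oscillatory regime $\alpha\in(1,2)$, where even the scalar kernels $H_{\alpha,\lambda}$ change sign. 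None of this machinery is needed: the elementary observation that $w$ has zero trace, hence mean-zero gradient, replaces your entire Step~3.
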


\begin{proof}
 Let $\widetilde{f}(t,y^1,\ldots,y^d) = f(t, 2y^1-1,\ldots,2y^d-1)$ for $(y^1,\ldots,y^d) \in (0,1)^d$. Clearly, $\widetilde{f}\in L_2((0,T)\times (0,1)^d) $, and by Lemma \ref{3.6}, there exists a unique $\widetilde{w} \in \hh_{2,0}^{\alpha, 2} ((0,T)\times (0,1)^d )$ such that
\[
\partial_t^\alpha \widetilde{w}  - 4\Delta \widetilde{w}  = \widetilde{f}  \quad \text{in}\quad
(0,T)\times (0,1)^d,\quad
\widetilde{w}=0
\quad \text{on}\quad
(0,T) \times \partial (0,1)^d.
\]
Then, let $w(t,x^1,\ldots, x^d) = \widetilde{w} (t,(x^1+1)/2,\ldots,(x^d+1)/2 )$ for $(x^1,\ldots,x^d) \in \Omega$. It follows that $w$ satisfies \eqref{1029}. Next, we show \eqref{local} by extending $w$ to the whole space. First, we take the odd extensions of $w$ and $f$ to $(-1,3)^d$, i.e., for $x^i \in (1, 3)$,
$$f(t,x^1,\ldots,x^d) := (-1)^d f(t,2-x^1,\ldots,2-x^d).$$
Then, we take the periodic extensions of $w$ and $f$ to the whole space by extending along each spatial direction with a period of $4$.
Note that in order to smoothly extend the function {up to} the boundary, it is necessary to take the odd extension first.
Let {$M$ be a positive integer and} $\eta\in C^\infty(\rr^d), \eta \in [0,1],$ be a cutoff function in the spatial variables so that
\[
\eta(x) =
\begin{cases}
1 \quad \quad \text{when}\quad  x \in (
{-M,M})^d, \\
0 \quad \quad \text{when}\quad  x \notin  (
{-2M,2M})^d,
\end{cases}
\]
$|D\eta| \leq ({2M})^{-1}$, and $ |D^2\eta|  \leq (2M)^{-2}${.}
Then, it follows from \eqref{10291} that $\eta w \in \hh_{p_0,0}^{\alpha, 2} ( \rr^d_T )$ and
$$ \partial_t^\alpha (\eta w) - \Delta(\eta w) = \eta f - \Delta\eta w -  2D_i\eta D_iw.$$
Using the result for equations with constant leading coefficients in the whole space \cite[Theorem 2.9]{P1}, we have
\begin{align*}
\norm{D^2(\eta w)}_{L_{p_0} (\rr^d_T ) }\leq N \Big[
\norm{\eta f}_{L_{p_0} (\rr^d_T ) }
+ \norm{w D_{ij} \eta}_{L_{p_0} (\rr^d_T ) }
+ \norm{D_i \eta D_j w}_{L_{p_0} (\rr^d_T ) }
\Big],
\end{align*}
where $N = N (d,p_0,\alpha)$.
Therefore, since $w$ is periodic,
\begin{align*}
& {M}^{d/p_0}  \norm{D^2 w}_{L_{p_0} (\Omega_T ) }
\leq N \Big[
({2}M)^{d/p_0}\norm{f}_{L_{p_0} (\Omega_T ) } \\
&\quad\quad+ ({2M})^{-2}  ({2}M)^{d/p_0} \norm{w }_{L_{p_0} (\Omega_T ) } + ({2M})^{-1} ({2}M)^{d/p_0} \norm{Dw}_{L_{p_0} (\Omega_T ) }
\Big],
\end{align*}
which implies
\begin{align*}
\norm{D^2 w}_{L_{p_0} (\Omega_T ) }
\leq N \Big[
\norm{f}_{L_{p_0} (\Omega_T ) }
&+  {M}^{-2} \norm{w }_{L_{p_0} (\Omega_T ) } +  {M}^{-1} \norm{Dw}_{L_{p_0} (\Omega_T ) }
\Big].
\end{align*}
By sending $M \to \infty$, {we get}
\[
\norm{D^2 w}_{L_{p_0} (\Omega_T ) }
\leq N \norm{f}_{L_{p_0} (\Omega_T ) }.
\]
Moreover, we have
\[
\norm{w}_{L_{p_0} ( \Omega_T )}
\leq N \norm{Dw}_{L_{p_0} ( \Omega_T )}
\leq N \norm{D^2 w}_{L_{p_0} ( \Omega_T )},
\]
where the first inequality is by the boundary Poincar\'e inequality, and the second inequality is also by a version of the Poincar\'e inequality using
$$
\int_\Omega Dw(t,x)\, dx = 0.
$$
Therefore, (\ref{local}) is proved.
\end{proof}

\begin{proposition}\label{3.8}
Let $\Omega = (-1,1)^d$, $p_0 \in (1,2)$, $f \in L_2 ( (0,t_0) \times B_{\sqrt{d}} )$, and $w \in \hh_{p_0, 0}^{\alpha, 2} ( (0, t_0) \times \Omega )$ with $w=0$ on $(0, t_0) \times \partial  \Omega$ such that
\[
\partial_t^\alpha w - \Delta w = f \quad \text{in}\quad  (0,t_0) \times \Omega.
\]
Then, for any $t_1 \leq t_0$, we have
\begin{align*}
( |D^2 w|^{p_0} )^{1 / p_0}_{Q_{1/2} (t_1, 0)}
&\leq
N \sum_{k=0}^\infty 2^{-\alpha k} (|f|^{p_0})^{1/p_0}_{ ( s_{k+1},s_k ) \times \Omega} \\
&\leq
N\sum_{k=0}^\infty 2^{-\alpha k} (|f|^{p_0})^{1/p_0}_{ ( s_{k+1},s_k ) \times B_{\sqrt{d}}}, \stepcounter{equation}\tag{\theequation}\label{nonhomoes}
\end{align*}
where $s_k = t_1 - 2^{k} + 1$ and $N =N(\alpha,d,p_0,\delta)$.
\end{proposition}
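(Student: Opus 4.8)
The plan is to feed the estimate of Remark \ref{go} into a dyadic-in-time decomposition of the source $f$, and then to absorb the resulting ``$w$-terms'' on its right-hand side: the part of $w$ coming from the recent source is controlled by the $L_{p_0}$ a priori bound of Lemma \ref{3.7}, while the part coming from the old source is controlled by a decay-in-time estimate for the solution operator on the cube, built from the representation of Lemma \ref{3.6}.

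Write $s_k=t_1-2^k+1$ and $F_m=(|f|^{p_0})^{1/p_0}_{(s_{m+1},s_m)\times\Omega}$, and extend $f$ and $w$ by $0$ for $t\le0$. Since $Q_1(t_1,0)=(s_1,s_0)\times B_1$ and $B_1\subset\Omega$, Remark \ref{go} reduces the proposition to the bound $\sum_{k\ge0}2^{-\alpha k}(|w|^{p_0})^{1/p_0}_{(s_{k+1},s_k)\times\Omega}\le N\sum_{m\ge0}2^{-\alpha m}F_m$ (the last inequality of the proposition being then immediate from $\Omega\subset B_{\sqrt d}$ and $|B_{\sqrt d}|\approx|\Omega|$). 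I would decompose $f=\sum_{m\ge0}f_m$ with $f_m=f\chi_{(s_{m+1},s_m)}$, take $w_m$ to be the solution furnished by Lemma \ref{3.7} with source $f_m$ (so that $w=\sum_m w_m$ by linearity and uniqueness), and use that causality of the Caputo derivative forces $w_m\equiv0$ on $(0,s_{m+1})\times\Omega$, so on $(s_{k+1},s_k)\times\Omega$ only the indices $m\ge k$ contribute.

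For $m\in\{k,k+1\}$ I would simply apply Lemma \ref{3.7} on $(0,s_k)\times\Omega$, which by causality already sees all of $f_m$, to get $(|w_m|^{p_0})^{1/p_0}_{(s_{k+1},s_k)\times\Omega}\le N F_m$. For $m\ge k+2$ one needs decay: writing $w_m(t,\cdot)=\int K(t-s)f_m(s,\cdot)\,ds$, where $K(\tau)$ is the spectral operator on $\Omega$ of symbol $H_{\alpha,\lambda}(\tau)=\tau^{\alpha-1}E_{\alpha,\alpha}(-\lambda\tau^\alpha)$ on the Dirichlet spectrum $\{\lambda_n\}$ of $-\Delta$ on the cube (cf. Lemma \ref{3.6}), the two-term asymptotics of the Mittag-Leffler function for $\alpha\in(1,2)$ give $|H_{\alpha,\lambda}(\tau)|\le N\lambda^{-2}\tau^{-\alpha-1}$ once $\lambda\tau^\alpha\ge1$; since the lowest Dirichlet eigenvalue of the cube equals $\pi^2 d/4>1$ and $t-s\ge s_{k+1}-s_{k+2}=2^{k+1}\ge2$ in this regime, this yields $\|K(\tau)\|_{L_{p_0}(\Omega)\to L_{p_0}(\Omega)}\le N\tau^{-\alpha-1}$ for $\tau\ge2$ (passing from the immediate $L_2$ bound $\sup_n|H_{\alpha,\lambda_n}(\tau)|$ to $L_{p_0}$ via the boundedness of $(-\Delta)^{-2}$ on the cube and interpolation, or a spectral multiplier estimate). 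Since $t-s\ge2^{m-1}$ on $(s_{k+1},s_k)\times(s_{m+1},s_m)$ for $m\ge k+2$, a H\"older estimate in $s$ then gives $(|w_m|^{p_0})^{1/p_0}_{(s_{k+1},s_k)\times\Omega}\le N2^{-\alpha m}F_m$. Summing, $\sum_k 2^{-\alpha k}(F_k+F_{k+1})\le N\sum_m2^{-\alpha m}F_m$ and $\sum_k2^{-\alpha k}\sum_{m\ge k+2}2^{-\alpha m}F_m=\sum_m2^{-\alpha m}F_m\sum_{k\le m-2}2^{-\alpha k}\le N\sum_m2^{-\alpha m}F_m$, which closes the argument.

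The hard part is exactly the decay estimate for the old part: one must produce genuine polynomial-in-time decay of order $\tau^{-\alpha-1}$ for the $L_{p_0}\to L_{p_0}$ norm of the Dirichlet solution operator on the cube. This is where the spectral gap of $-\Delta$ on $\Omega$ is essential --- on the whole space the $D^2$-kernel of $\partial_t^\alpha-\Delta$ decays only like $\tau^{-\alpha}$, which would be too slow --- and where the sharp $O(\mu^{-2})$ (rather than the generic $O(\mu^{-1})$) decay of $E_{\alpha,\alpha}(-\mu)$ for $\alpha\in(1,2)$ is used; some care is also needed to transfer the transparent $L_2$ estimate to $L_{p_0}$ while keeping all constants independent of $t_0$ and $T$.
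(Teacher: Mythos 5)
Your strategy has the right shape, and it does match the paper's in spirit: both hinge on the $\tau^{-\alpha-1}$ memory decay that makes $\sum_k 2^{-\alpha k}(\cdot)$ summable. But there is a genuine gap in the way you package that decay. You reduce the proposition (via Remark \ref{go}) to controlling $(|w_m|^{p_0})^{1/p_0}_{(s_{k+1},s_k)\times\Omega}$ for sources $f_m$ supported in dyadically old windows, and for $m\ge k+2$ you invoke an operator bound $\norm{K(\tau)}_{L_{p_0}(\Omega)\to L_{p_0}(\Omega)}\le N\tau^{-\alpha-1}$, $\tau\ge2$. The $L_2\to L_2$ version of this is indeed immediate from $\lambda^2 H_{\alpha,\lambda}(\tau)\le N\tau^{-\alpha-1}$ (and you are right that the $O(\mu^{-2})$ decay of $E_{\alpha,\alpha}(-\mu)$ comes from $1/\Gamma(0)=0$). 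But you never actually establish the $L_{p_0}$ version, and it is not a routine transfer: $p_0\in(1,2)$, so the trivial embedding $L_2(\Omega)\hookrightarrow L_{p_0}(\Omega)$ puts the wrong norm on the source side; writing $K(\tau)=(-\Delta)^{-2}\circ[(-\Delta)^2 K(\tau)]$ gives an $L_2$ bound for the bracketed factor but no $L_{p_0}$ bound without additional work; a pointwise kernel bound via eigenfunction expansion would require $\sum_n\lambda_n^{-2}<\infty$, which fails for $d\ge4$; a Hörmander-type spectral multiplier theorem for the Dirichlet Laplacian on the cube would need uniform control of $d/2$ derivatives of $\lambda\mapsto\lambda^2 H_{\alpha,\lambda}(\tau)$, which you have not checked; and for $\alpha\in(1,2)$ the operator $K(\tau)$ is not a subordinated heat semigroup (the subordination density is not a probability measure once $\alpha>1$), so you cannot piggyback on Gaussian heat-kernel domination either. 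This is the hard core of the argument and your proposal only flags it rather than proving it.

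The paper sidesteps exactly this obstacle. Instead of decomposing the source and analyzing the Dirichlet spectral operator, it cuts off the solution in time, $w\mapsto w\eta_k$, and measures the resulting Caputo commutator $g_k=\partial_t^\alpha(w\eta_k)-\eta_k\partial_t^\alpha w$. The far-past contribution $g_{k,2}$ carries the memory kernel $(t-s)^{-\alpha-1}$ explicitly (see the computation after \eqref{01045}), which produces the same $2^{-\alpha m}$ decay you are after, but purely at the level of scalar integral kernels — no spectral calculus on $\Omega$ is involved. The near-past contribution $g_{k,1}$ is handled by the interpolation Lemma \ref{3.2} together with the already-proved $L_{p_0}$ a priori bound of Lemma \ref{3.7} (whose proof goes through Calder\'on–Zygmund in $\mathbb{R}^d$ after odd/periodic extension, not through spectral theory), and then the $\varepsilon$-absorption and induction in \eqref{tt2}–\eqref{mm5} close the loop. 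Net effect: the paper gets the same $\tau^{-\alpha-1}$ mechanism you identified, but entirely at the kernel level and on $L_{p_0}$ from the start, which is why it does not need any $L_{p_0}$ spectral multiplier theorem. If you want to rescue your version, you would either need to prove the $L_{p_0}$ operator decay (e.g.\ via resolvent bounds and a Bromwich contour representation together with $\mathcal{R}$-sectoriality of the Dirichlet Laplacian on $L_{p_0}$), or fall back to the commutator route — at which point you would essentially be reproducing the paper's proof.
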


\begin{proof}
Without loss of generality, we assume that $t_1 = t_0$. By Remark \ref{go}, we have
\begin{align*}
(  |D^2 w|^{p_0} )^{1 / p_0}_{Q_{1/2} (t_0, 0)}
&\leq N(|f|^{p_0})^{1/p_0}_{Q_1 (t_0, 0 )}
+ N \sum_{k=0}^\infty 2^{-k \alpha}
(|w|^{p_0})^{1/p_0}_{ ( s_{k+1},s_k ) \times B_1}
\\
&\leq N(|f|^{p_0})^{1/p_0}_{Q_1 (t_0, 0 )}
+ N \sum_{k=0}^\infty 2^{-k \alpha}
(|w|^{p_0})^{1/p_0}_{ ( s_{k+1},s_k ) \times \Omega}  \stepcounter{equation} \tag{\theequation}\label{nonhomo},
\end{align*}
where $N = N(d, \delta, \alpha, p_0)$.
It remains to estimate
\[
A_k := ( |w|^{p_0} )^{1/p_0}_{(s_{k+1}, s_k) \times \Omega }.
\]
Take cutoff functions $\eta_k, \zeta \in C^{\infty}(\rr)$ such that
\[
\eta_k(t) =
\begin{cases}
1 \quad \quad \text{when}\quad  t \geq s_{k+1}, \\
0 \quad \quad \text{when}\quad  t \leq s_{k+2}
,\end{cases}\quad
\zeta(t) = \begin{cases}
1 \quad \quad \text{when}\quad  t \in (s_{k+3}, t_0), \\
0 \quad \quad \text{when}\quad  t \in (-\infty, s_{k+4} ),
\end{cases}
\]
{$|\eta_k'|  \leq 2^{-k}$, $|\eta_k''|\leq 2^{-2k}$, and $|\zeta| \leq 1$}. Furthermore, let $w_1 = \zeta w$ and $w_2 = w-w_1$. By Lemma \ref{3.1}, it follows that $w\eta_k \in \hh_{p_0, 0}^{\alpha, 2} ( (s_{k+2}, t_0) \times \Omega )$, and
{\[
\partial_t^{\alpha} (w \eta_k ) - \Delta (w \eta_k ) = g_k \quad \text{in}\quad  (s_{k+2}, t_0) \times \Omega, \stepcounter{equation} \tag{\theequation}\label{01045}
\]
where
\begin{align*}
g_k &= f \eta_k + g_{1, k} + g_{2, k}, \\
g_{k,1}(t,x)  &= \frac{ \alpha (\alpha - 1)}{\Gamma (2 - \alpha)}  \int_{s_{k+4}}^t (t-s)^{-\alpha - 1}
[ \eta_k(s) - \eta_k(t) - \eta_k'(t) (s-t) ] w_1(s,x) \, ds \\
&\quad+ \frac{ \alpha}{\Gamma (2 - \alpha)}  \eta_k'(t)  \partial_t
\int_{s_{k+4}}^t (t-s)^{1- \alpha} w_1(s,x) \, ds,
\end{align*}}
and
\begin{align*}
g_{k,2}(t,x)&=\frac{ \alpha (\alpha - 1)}{\Gamma (2 - \alpha)} \int_{-\infty}^{s_{k+3}} (t-s)^{-\alpha - 1}
[ \eta_k(s) - \eta_k(t) - \eta_k'(t) (s-t) ] w_2(s,x) \, ds  \\
&\quad+ \frac{ \alpha}{\Gamma (2 - \alpha)}  \eta_k'(t) \partial_t
\int_{-\infty}^{s_{k+3}} (t-s)^{1- \alpha} w_2(s,x) \, ds \\
&= N
\int_{-\infty}^{s_{k+3}} (t-s)^{-\alpha - 1}
[ \eta_k(s) - \eta_k(t) ]w_2(s,x) \, ds.
\end{align*}
Using the fact that $|\eta_k (s) - \eta_k (t) | = |\eta_k (t) |$
for $s < s_{k + 3}$, we have
\[
|g_{k,2}(t,x)|
\leq \sum_{j = k+3}^\infty \int_{s_{j + 1} }^{s_j}
(t-s)^{-\alpha - 1} |w(s,x)| \, ds.
\]
Moreover, note that for $t \in (s_{k+2}, s_k )$, $s \in (s_{j+1}, s_j )$, and $ j\ge k+3$, we have
\[
|t-s| \geq s_{k+2} - s_j
= 2^j - 2^{k+2}
\geq 2^j - 2^{j-1}
= 2^{j-1},
\]
and thus,
\[
|t-s|^{-\alpha - 1} \leq  N (\alpha) 2^{-j(\alpha + 1)}.
\]
Therefore, by the Minkowski inequality and H\"older’s inequality
\[
\norm{ g_{k,2} }_{L_{p_0} ( ( s_{k+2}, s_k ) \times \Omega  )}
\leq
N
\sum_{j=k+3}^\infty
2^{-\alpha j} (3 \cdot 2^k)^{1 / p_0}
\Bigg( \fint_{s_{j+1}}^{s_j} \fint_\Omega |w|^{p_0} \Bigg)^{1/ p_0},\stepcounter{equation} \tag{\theequation}\label{mm1}
\]
where $N = N(\alpha, d) $.

For $g_{k,1}$, we decompose
\[
g_{k, 1}
= g_{k, 1,1}
+ g_{k,1,2} + g_{k,1,3},
\]
where
\begin{align*}
g_{k, 1,1}(t,x)
&= N_1(\alpha) \int_{s_{k+4}}^{t} (t-s)^{-\alpha - 1}
[ \eta_k(s) - \eta_k(t)   - \eta_k' (t)  (s-t)  ]w_1(s,x) \, ds, \\
g_{k,1,2}(t,x)
&= N_1(\alpha) \frac{\eta_k'(t)}{\eta_{k+1}(t)}\int_{s_{k+4}}^{t} (t-s)^{-\alpha }[ \eta_{k+1}(s) - \eta_{k+1}(t) ]w_1(s,x) \, ds,
\end{align*}
and
\begin{align*}
g_{k,1,3}(t,x)
&= N_2(\alpha)  \frac{\eta_k'(t)}{\eta_{k+1}(t)}  \partial_t^{\alpha-1} (\eta_{k+1} w_1)(t,x)
= N_2(\alpha)  \frac{\eta_k'(t)}{\eta_{k+1}(t)}  \partial_t^{\alpha-1} (\eta_{k+1} w)(t,x).
\end{align*}
When
$t\ge s > s_{k+1}$, we have $\eta_k (s) = \eta_k (t) = 1$ and $\eta_k'(t) = 0$, which imply
\[
\eta_k (s) - \eta_k(t) - \eta_k ' (t) (s-t) = 0.
\]
Therefore, it follows
\begin{align*}
|g_{k,1,1} (t,x)|
&\leq N  \norm{\eta_k''}_{L_\infty}
\int^t_{s_{k+4}} (t-s)^{-\alpha+1} |w(s,x)| 1_{s \leq s_{k+1}} \, ds \\
&\leq N  2^{-2k}
\int_0^{15 2^k}
s^{-\alpha+1} |w (t-s, x) | 1_{s \geq t - s_{k+1}} \, ds,
\end{align*}
which implies
\begin{align*}
&\norm{g_{k,1,1}}_{L_{p_0}  ( ( s_{k+2}, s_k ) \times \Omega  )}\\
&\leq N 2^{-2k} \int_0^{15 \cdot 2^k}
s^{-\alpha+1} \norm{w (\cdot - s, \cdot)  1_{\cdot-s \leq s_{k+1} }   }_{L_{p_0} (        (s_{k+2}, s_k) \times \Omega  )     }  \, ds \\
&\leq N 2^{-\alpha k}
\norm{w}_{L_{p_0} ( ( s_{k+5}, s_{k+1}  ) \times \Omega ) } \leq N 2^{-\alpha k}
\sum_{j=1}^4 \norm{w}_{L_{p_0}  (  (s_{k + j + 1}, s_{k + j} ) \times \Omega ) },\stepcounter{equation} \tag{\theequation}\label{mm2}
\end{align*}
where $N = N(\alpha, d, p_0)$. Furthermore,
\begin{align*}
|g_{k,1,2}(t,x) |
&\leq N  \norm{\eta_k'}_{L_\infty} 1_{t \in (s_{k+2}, s_{k+1} )} \norm{\eta_{k+1}'}_{L_\infty}
\int^t_{s_{k+4}} (t-s)^{-\alpha+1} |w(s,x)| 1_{s \leq s_{k+2}} \, ds \\
&\leq N  2^{-k}  2^{-k} 1_{t \in (s_{k+2}, s_{k+1} )}
\int^t_{s_{k+4}} (t-s)^{-\alpha+1} |w(s,x)| 1_{s \leq s_{k+1}} \, ds \\
&\leq N  2^{-2k} 1_{t \in (s_{k+2}, s_{k+1} )}
\int_0^{15 2^k}
s^{-\alpha +1} |w (t-s, x) | 1_{s \geq t - s_{k+1}} \, ds,
\end{align*}
which implies that
\begin{align*}
\norm{g_{k,1,2}}_{L_{p_0}  ( ( s_{k+2}, s_k ) \times \Omega  )}
&\leq N 2^{-\alpha k}
\sum_{j=1}^4 \norm{w}_{L_{p_0}  (  (s_{k + j + 1}, s_{k + j} ) \times \Omega ) },\stepcounter{equation} \tag{\theequation}\label{mm3}
\end{align*}
where $N = N(\alpha, d, p_0)$. For any $\ep>0$, using Lemma \ref{3.2}, we have
\begin{align*}
&\norm{g_{k,1,3} }_{L_{p_0}  ( ( s_{k+2}, s_k ) \times \Omega  )}= N \norm{ \eta_k' (t) \partial_t^\beta (\eta_{k+1} w )}_{L_{p_0} ( (s_{k+2}, s_{k}) \times \Omega )} \\
&= N \norm{ \eta_k' (t) \partial_t^\beta (\eta_{k+1} w )}_{L_{p_0} ( (s_{k+2}, s_{k+1}) \times \Omega )} \\
&\leq N \norm{ \eta_k'}_{L_\infty}  \norm{\partial_t^\beta (\eta_{k+1} w )}_{L_{p_0} ( (s_{k+3}, s_{k+1}) \times \Omega )} \\
&\leq \ep \ \norm{\partial_t^\alpha (\eta_{k+1} w)}_{L_{p_0} ( (s_{k+3}, s_{k+1}) \times \Omega )}\\
&\quad + N\ep^{-1}   \norm{\eta_k'}_{L_\infty}^2  \norm{w}_{L_{p_0} ( (s_{k+3}, s_{k+1}) \times \Omega )}  (2^k)^{2 - \alpha} \\
&\leq \ep \norm{\eta_{k+1} w}_{ \hh_{p_0}^{\alpha, 2} ( (s_{k+3}, s_{k+1}) \times \Omega ) }+ N\ep^{-1}  2^{-\alpha k } \norm{w}_{L_{p_0} ( (s_{k+3}, s_{k+1}) \times \Omega )}. \stepcounter{equation} \tag{\theequation}\label{mm4}
\end{align*}
Thus, using Lemma \ref{3.7}, and combining \eqref{mm1}, \eqref{mm2}, \eqref{mm3}, and \eqref{mm4}, we derive
\begin{align*}
&\norm{w \eta_k}_{\hh_{p_0}^{\alpha, 2} ( (s_{k+2}, s_k ) \times \Omega )}\leq N \norm{g_k}_{L_{p_0} ( (s_{k+2},  s_k ) \times \Omega ) } \\
&\leq N \norm{f}_{L_{p_0} ( (s_{k+2},  s_k ) \times \Omega ) }
+ N \norm{g_{k,1}}_{L_{p_0} ( (s_{k+2},  s_k ) \times \Omega ) }
+ N \norm{g_{k, 2}}_{L_{p_0} ( (s_{k+2},  s_k ) \times \Omega ) } \\
&\leq N \norm{f}_{L_{p_0} ( (s_{k+2},  s_k ) \times \Omega ) } + N 2^{-\alpha k}
\sum_{j=1}^4 \norm{w}_{L_{p_0}  (  (s_{k + j + 1}, s_{k + j} ) \times \Omega ) } \\
&\quad + \ep \norm{\eta_{k+1} w}_{ \hh_{p_0}^{\alpha, 2} ( (s_{k+3}, s_{k+1}) \times \Omega ) }+ N\ep^{-1}  2^{-\alpha k } \norm{w}_{L_{p_0} ( (s_{k+3}, s_{k+1}) \times \Omega )} \\
&\quad +     N
\sum_{j=k+4}^\infty
2^{-\alpha j} (3 \cdot 2^k)^{1 / p_0}
\Bigg( \fint_{s_{j+1}}^{s_j} \fint_\Omega |w|^{p_0} \Bigg)^{1/ p_0},
\end{align*}
where $N = N(\alpha,d,p_0, \delta) $.
Dividing both sides by the measure, we have
\begin{align*}
&\norm{\eta_k w }_{\hh_{p_0}^{\alpha, 2} ( (s_{k+2}, s_k ) \times \Omega  )}
\big( { |\Omega|  |s_k - s_{k+2}  | } \big)^{-1/p_0}\\
&\leq N  ( |f|^{p_0} )^{1/p_0}_{(s_{k+2}, s_k ) \times \Omega} + N \sum_{j = k+1}^\infty 2^{-\alpha j} A_j+  N\ep^{-1} \sum_{j=k+1}^{k+2}  2^{-\alpha j}    A_j \\
&\quad+ 6^{1/p_0} \ep \norm{\eta_{k+1} w}_{\hh_{p_0}^{\alpha, 2} ( (s_{k+3}, s_{k+1} ) \times \Omega  )}
 \big( { |\Omega|  |s_{k+3} - s_{k+1}  | }  \big)^{-1/p_0}.\stepcounter{equation}\tag{\theequation}\label{tt2}
\end{align*}
For any fixed $k_0$, multiplying both sides of \eqref{tt2} by $2^{-\alpha k}$, and summing over $k= k_0, k_0+1, \ldots$, we get
\begin{align*}
 & \sum_{k = k_0}^\infty  2^{-\alpha k}
\norm{\eta_k w }_{\hh_{p_0}^{\alpha, 2} ( (s_{k+2}, s_k ) \times \Omega  )}
 \big( { |\Omega|  |s_k - s_{k+2}  | }  \big)^{-1/p_0} \\
&\leq N  \sum_{k = k_0}^\infty  2^{-\alpha k}
 ( |f|^{p_0} )^{1/p_0}_{(s_{k+2}, s_k ) \times \Omega} + N \sum_{k = k_0}^\infty  2^{-\alpha k}
 \sum_{j = k+1}^\infty 2^{-\alpha j} A_j \\
&\quad+ 6^{1/p_0} \ep \sum_{k = k_0}^\infty  2^{-\alpha k}
 \norm{\eta_{k+1} w}_{\hh_{p_0}^{\alpha, 2} ( (s_{k+3}, s_{k+1} ) \times \Omega  )}
 \big( { |\Omega|  |s_{k+3} - s_{k+1}  | }  \big)^{-1/p_0} \\
&\quad+  N\ep^{-1} \sum_{k = k_0}^\infty  2^{-\alpha k}
 \sum_{j=k+1}^{k+2}  2^{-\alpha j}  A_j. \stepcounter{equation} \tag{\theequation}\label{uui}
\end{align*}
By picking $\ep$ sufficiently small so that $6^{1/p_0}  \ep 2^\alpha \leq 1/2$, \eqref{uui} becomes
\begin{align*}
 & \sum_{k = k_0}^\infty  2^{-\alpha k}
\norm{\eta_k w}_{\hh_{p_0}^{\alpha, 2} ( (s_{k+2}, s_k ) \times \Omega  )}
 \big( { |\Omega|  |s_k - s_{k+2}  | }  \big)^{-1/p_0} \\
&\leq N  \sum_{k = k_0}^\infty  2^{-\alpha k}
 ( |f|^{p_0} )^{1/p_0}_{(s_{k+2}, s_k ) \times \Omega} + N \sum_{k = k_0}^\infty  2^{-\alpha k}
 \sum_{j = k+1}^\infty 2^{-\alpha j} A_j \\
&\quad+ \frac{1}{2} \sum_{k = k_0}^\infty  2^{-\alpha(k+1)}
 \norm{\eta_{k+1} w}_{\hh_{p_0}^{\alpha, 2} ( (s_{k+3}, s_{k+1} ) \times \Omega  )}
 \big( { |\Omega| |s_{k+3} - s_{k+1}  | }  \big)^{-1/p_0}. \stepcounter{equation} \tag{\theequation}\label{yy2}
\end{align*}
By absorbing the third term to the left-hand side of \eqref{yy2}, we conclude
\begin{align*}
\sum_{k = k_0}^\infty 2^{-\alpha k} A_k
&\leq \sum_{k = k_0}^\infty  2^{-\alpha k}
\norm{\eta_k w }_{\hh_{p_0}^{\alpha, 2} ( (s_{k+2}, s_k ) \times \Omega  )} \cdot
\big( { |\Omega| \cdot |s_k - s_{k+2}  | }  \big)^{-1/p_0} \\
&\leq N  \sum_{k = k_0}^\infty  2^{-\alpha k}
 ( |f|^{p_0} )^{1/p_0}_{(s_{k+2}, s_k ) \times \Omega}
 + N \sum_{k = k_0}^\infty  2^{-\alpha k}
 \sum_{j = k+1}^\infty 2^{-\alpha j} A_j \\
 &\leq N \sum_{k = k_0}^\infty  2^{-\alpha k}
 ( |f|^{p_0} )^{1/p_0}_{(s_{k+2}, s_k ) \times \Omega}
 + N \sum_{j = k_0 + 1}^\infty  2^{-\alpha j} A_j
 \sum_{k = k_0}^{j-1} 2^{-\alpha k} \\
 &\leq  N \sum_{k = k_0}^\infty  2^{-\alpha k}
 ( |f|^{p_0} )^{1/p_0}_{(s_{k+2}, s_k ) \times \Omega}
 +      \frac{N 2^{-\alpha k_0}}{1 - 2^{-\alpha}}
 \sum_{j = k_0 + 1}^{\infty} 2^{-\alpha j} A_j, \stepcounter{equation}\tag{\theequation}\label{ll}
\end{align*}
where $N = N(\alpha,d,p_0, \delta)$.
By picking $k_0$ sufficiently large so that $ \frac{N  2^{-\alpha k_0}}{1 - 2^{-\alpha}} \leq 1/2$, we have
\[
\sum_{k = k_0}^{\infty} 2^{-\alpha k} A_k
\leq  N \sum_{k = k_0}^\infty  2^{-\alpha k}
 ( |f|^{p_0} )^{1/p_0}_{(s_{k+2}, s_k ) \times \Omega}.
\]
Therefore, by induction,
\[\sum_{k = 0}^{\infty} 2^{-\alpha k} A_k
\leq  N(\alpha,d,p_0,\delta) \sum_{k = 0}^\infty  2^{-\alpha k}
 ( |f|^{p_0} )^{1/p_0}_{(s_{k+2}, s_k ) \times \Omega}. \stepcounter{equation}\tag{\theequation}\label{mm5}\]
Indeed, if there exists $N = N (\alpha,d,p_0,\delta)$ such that
$$\sum_{k = j}^{\infty} 2^{-\alpha k} A_k
\leq  N \sum_{k = j}^\infty  2^{-\alpha k}
 ( |f|^{p_0} )^{1/p_0}_{(s_{k+2}, s_k ) \times \Omega},$$
for $j = 1,2,\ldots, k_0$, then, by (\ref{ll}),
\begin{align*}
\sum_{k = j-1}^{\infty} 2^{-\alpha k} A_k &\le
 N \sum_{k = j-1}^\infty  2^{-\alpha k}
 ( |f|^{p_0} )^{1/p_0}_{(s_{k+2}, s_k ) \times \Omega}
 +    \big(  \frac{N 2^{-\alpha k_0}}{1 - 2^{-\alpha}} \big)
 \sum_{k = j}^{\infty} 2^{-\alpha k} A_k \\
 &\le  N
 \sum_{k = j-1}^\infty  2^{-\alpha k}
 ( |f|^{p_0} )^{1/p_0}_{(s_{k+2}, s_k ) \times \Omega} = N
 \sum_{k = j-1}^\infty  2^{-\alpha k}
 ( |f|^{p_0} )^{1/p_0}_{(s_{k+2}, s_k ) \times \Omega}.
\end{align*}
Finally, by (\ref{nonhomo}) and \eqref{mm5},
\begin{align*}
( |D^2 w|^{p_0} )^{1 / p_0}_{Q_{1/2} (t_1, 0)}
&\leq
N \sum_{k=0}^\infty 2^{-\alpha k} (|f|^{p_0})^{1/p_0}_{ ( s_{k+1},s_k ) \times \Omega} \\
&\leq
N\sum_{k=0}^\infty 2^{-\alpha k} (|f|^{p_0})^{1/p_0}_{ ( s_{k+1},s_k ) \times B_{\sqrt{d}}},
\end{align*}
where $N = N (\alpha,d,p_0,\delta)$.
The proposition is proved.
\end{proof}

\subsection{Estimates of \texorpdfstring{$u$}{u}}\label{33.33}
By combining the estimates of $v$ and $w$, we derive the mean oscillation estimates for $u$ in this subsection.
\begin{proposition}\label{3.9}
Let $p_0 \in (1, 2)$, $T \in (0, \infty)$, and $a^{ij}$ be constant. If $u \in \hh_{p_0, 0, \mathrm{loc}}^{\alpha, 2} ( \rr^d_T )$ satisfies
\[
\partial_t^\alpha u - a^{ij} D_{ij} u = f
\quad \text{in}\quad  \rr^d_T.
\]
Then, for any $(t_0, x_0) \in (0,T] \times \rr^d$,  $r \in (0, \infty)$, and $\kappa \in (0, 1/4)$, we have
\begin{align*}
& ( |D^2 u - (D^2 u )_{(t_0-(\kappa r)^{2/\alpha},t_0) \times B_{\delta\kappa r}(x_0))} | )_{
(t_0-(\kappa r)^{2/\alpha},t_0) \times B_{\delta \kappa r}(x_0)
} \\
&\quad \leq N \kappa^\sigma   \sum_{k=0}^\infty {2^{-k\alpha}}( |D^2 u |^{p_0} )^{1 / p_0}_{(t_0-2^{k}(r/2)^{2/\alpha}r,t_0)\times B_{\delta^{-1}r/2}(x_0)}\\
&\quad \quad+ N \kappa^{- (d+2 /\alpha)/p_0}
\sum_{k=0}^\infty 2^{-\alpha k} (|f|^{p_0})^{1/p_0}_{
( t_0 - 2^k r^{2/\alpha}, t_0 )
\times B_{\delta^{-1} \sqrt{d} r}{(x_0)}}
\stepcounter{equation} \tag{\theequation}\label{u},
\end{align*}
where $\sigma = \sigma (d, \alpha, p_0)$ and $N = N(d, \delta, \alpha, p_0)$.
\end{proposition}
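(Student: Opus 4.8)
The plan is to adapt the good-part/small-part decomposition of \cite[Section~4]{dong20} to $\alpha\in(1,2)$, feeding in Proposition~\ref{3.8} for the inhomogeneous piece and the H\"older estimate \eqref{homo} of Proposition~\ref{3.3} for the homogeneous piece. First I would translate so that $x_0=0$ and reduce to $a^{ij}=\delta^{ij}$: the affine change of variables $x\mapsto (a^{ij})^{-1/2}x$ turns $a^{ij}D_{ij}$ into $\Delta$ and distorts Euclidean balls by factors comparable to powers of $\delta$ (because $\delta I\le (a^{ij})\le\delta^{-1}I$), which is exactly where the factors $\delta$ and $\delta^{-1}$ in \eqref{u} come from; a further parabolic scaling, with time scaled by the exponent $2/\alpha$, normalizes $r$. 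Since the final bound involves only $L_{p_0}$ norms of $f$, I may also assume, after truncating and mollifying in time and passing to the limit, that $f$ is as smooth as needed, in particular locally in $L_2$ so that Lemma~\ref{3.7} applies.

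Next I would fix a cube $\Omega=(-\rho,\rho)^d$ of size comparable to $r$, large enough to contain all the spatial balls on which $v$ and $w$ must be estimated, and decompose $u=v+w$ on $(0,t_0)\times\Omega$: here $w\in\hh_{p_0,0}^{\alpha,2}((0,t_0)\times\Omega)$ solves $\partial_t^\alpha w-\Delta w=f$ in $(0,t_0)\times\Omega$ with zero lateral boundary data (Lemma~\ref{3.7}), and $v:=u-w$ then satisfies the homogeneous equation \eqref{9999} there. Since $u$, hence $v$ and $w$, are extended by zero for $t\le0$, the backward-in-time cylinders below cause no difficulty. Applying Proposition~\ref{3.8} to $w$ (after recentering and rescaling) controls $(|D^2w|^{p_0})^{1/p_0}$ on $Q_{r/2,\cdot}(t_0,0)$, and a fortiori on the smaller cylinder $Q:=(t_0-(\kappa r)^{2/\alpha},t_0)\times B_{\delta\kappa r}$ which lies inside it since $\kappa<1/4$, by the $f$-series on the right of \eqref{u}; applying \eqref{homo} to $v$ controls $[D^2v]_{C^{\sigma\alpha/2,\sigma}(Q_{r/2}(t_0,0))}$ by $Nr^{-\sigma}\sum_{j\ge1}j^{-(1+\alpha)}(|D^2v|^{p_0})^{1/p_0}$ over the cylinders $Q_r(t_0-(j-1)r^{2/\alpha},0)$.

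To assemble the mean oscillation, for any constant $c$ I would write $(|D^2u-(D^2u)_Q|)_Q\le 2(|D^2v-c|)_Q+2(|D^2w|)_Q$, and take $c:=D^2v(t_0,0)$, which is well-defined by the H\"older continuity; since $|t-t_0|^{\sigma\alpha/2}+|x|^\sigma\le N(\kappa r)^\sigma$ on $Q$, the first term is $\le N(\kappa r)^\sigma[D^2v]_{C^{\sigma\alpha/2,\sigma}(Q_{r/2}(t_0,0))}\le N\kappa^\sigma\sum_{j\ge1}j^{-(1+\alpha)}(|D^2v|^{p_0})^{1/p_0}_{Q_r(t_0-(j-1)r^{2/\alpha},0)}$. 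For the second term, enlarging $Q$ to $Q_{r/2,\cdot}(t_0,0)$ costs a volume ratio $\sim\kappa^{-(d+2/\alpha)}$ (the $2/\alpha$ from the time scaling), so H\"older's inequality and Proposition~\ref{3.8} give $(|D^2w|)_Q\le N\kappa^{-(d+2/\alpha)/p_0}\sum_{k\ge0}2^{-\alpha k}(|f|^{p_0})^{1/p_0}$ over the cylinders in \eqref{u}. It remains to bound $(|D^2v|^{p_0})^{1/p_0}\le (|D^2u|^{p_0})^{1/p_0}+(|D^2w|^{p_0})^{1/p_0}$ on each $Q_r(t_0-(j-1)r^{2/\alpha},0)$, to dispose of the $w$-part once more with Proposition~\ref{3.8}, and to convert the unit-spaced series $\sum_{j\ge1}j^{-(1+\alpha)}(\cdots)$ into the dyadic series $\sum_{k\ge0}2^{-\alpha k}(\cdots)$ of \eqref{u} by grouping $j$ into dyadic blocks $2^{k-1}\le j<2^k$, on which $\sum j^{-(1+\alpha)}\sim 2^{-\alpha k}$, and dominating the averages over the unit cylinders in a block by the average over a single cylinder of dyadic time-length and spatial radius $\delta^{-1}r/2$.

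I expect the main obstacle to be organizational rather than conceptual: there is no single delicate estimate, but one must carefully track how Euclidean balls deform under the normalization $a^{ij}\mapsto\delta^{ij}$ to obtain the precise $\delta^{\pm1}$ exponents, keep the time-scaling exponent $2/\alpha$ consistent across every cylinder, choose $\Omega$ neither too small (so Propositions~\ref{3.3} and \ref{3.8} apply) nor too large, and verify that regrouping $\sum_jj^{-(1+\alpha)}$ into $\sum_k2^{-\alpha k}$ yields exactly the cylinders in \eqref{u}. The scheme parallels the $\alpha\in(0,1)$ case in \cite{dong20} once the building blocks are in place.
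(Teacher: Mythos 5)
Your proposal follows essentially the same route as the paper's proof: reduce to $a^{ij}=\delta^{ij}$ and $r=1$ by an affine change of variables (accounting for the $\delta^{\pm1}$ factors), decompose $u=v+w$ on a cube via Lemma~\ref{3.7}, bound $(|D^2w|)$ by Proposition~\ref{3.8}, bound the oscillation of $D^2v$ by the H\"older estimate of Proposition~\ref{3.3}, write $D^2v=D^2u-D^2w$ and dispose of the residual $D^2w$ series again with Proposition~\ref{3.8}, and finally regroup $\sum_j j^{-(1+\alpha)}$ into the dyadic series $\sum_k 2^{-k\alpha}$. The only cosmetic difference is using $c=D^2v(t_0,0)$ in place of the mean $(D^2v)_{Q_\kappa}$ as the comparison constant, which is equivalent up to a factor of~$2$.
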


\begin{proof}
The proof is similar to that of  \cite[Proposition 4.7]{dong20}, where $\alpha$ is taken to be smaller than $1$.

{\em Step 1.} We start with the case when $a^{ij} = \delta^{ij}$. By shifting and scaling, we also assume $x_0 = 0$ and $r=1$.
Let $\Omega = (-1,1)^d$ be a cube in $\rr^d$ such that $B_1 \subseteq \Omega \subseteq B_{\sqrt d}$.
Then, by Lemma \ref{3.7}, there exists $w \in \hh_{p_0, 0}^{\alpha, 2} ( (0,t_0) \times \Omega )$ with $w=0$ on $(0,t_0) \times \partial \Omega$ satisfying
\[
\partial_t^\alpha w - \Delta w = f
\quad \text{in}\quad  (0,t_0) \times \Omega.
\]
By taking $v= u - w \in \hh_{p_0, 0}^{\alpha, 2} ( (0,t_0) \times \Omega)$, we have
\begin{align*}
\partial_t^\alpha v &- \Delta v = 0 \quad \text{in}\quad  (0,t_0) \times \Omega.
\end{align*}
Next, by H\"older's inequality,
\begin{align*}
& ( |D^2 u - (D^2 u )_{Q_{\kappa} (t_0, 0)} | )_{
Q_{\kappa} (t_0, 0) } \\
&\quad \leq
( |D^2 v - (D^2 v )_{Q_{\kappa} (t_0, 0)} | )_{
Q_{\kappa} (t_0, 0)}
+ N \kappa^{- (d+2 /\alpha)/p_0}
(| D^2 w|^{p_0} )^{1/p_0}_{Q_{1/2} (t_0, 0)} \stepcounter{equation} \tag{\theequation}\label{12055}.
\end{align*}

{\em Step 2.} We focus on the estimate of the first term in the right-hand side of \eqref{12055} since the estimate of the second term in given by \eqref{nonhomoes}.

It is easily seen that
\[
(|D^2 v - (D^2 v)_{Q_\kappa ( t_0,0) } | )_{Q_\kappa ( t_0,0)}
\leq N \kappa^\sigma [D^2 v]_{C^{\sigma \alpha/2, \sigma} (Q_{1/4} (t_0, 0)  )  } \stepcounter{equation} \tag{\theequation}\label{10304},
\]
and by applying Proposition \ref{homo} with $r =1/2$ together with the triangle inequality, we have
\begin{align*}
[D^2 v]_{C^{\sigma \alpha/2, \sigma} (Q_{1/4} (t_0, 0)  )  }
&\leq N
\sum_{j=1}^\infty
j^{- (1 + \alpha)}
( |D^2 v|^{p_0} )^{1/p_0}_{Q_{1/2}
(t_0 - (j-1) 2^{-2/\alpha}, 0) } \\
&\leq N
\sum_{j=1}^\infty
j^{- (1 + \alpha)}
( |D^2 u|^{p_0} )^{1/p_0}_{Q_{1/2}
(t_0 - (j-1) 2^{-2/\alpha}, 0) } \\
&\quad+ N
\sum_{j=1}^\infty
j^{- (1 + \alpha)}
( |D^2 w|^{p_0} )^{1/p_0}_{Q_{1/2}
(t_0 - (j-1) 2^{-2/\alpha}, 0) }. \stepcounter{equation} \tag{\theequation}\label{10303}
\end{align*}
It remains to estimate the summation involving $D^2w$ in \eqref{10303}. For $j=1,2,\ldots$ and $s_k^j = t_0 - (j-1) 2^{-2 /\alpha } - 2^k + 1$, by (\ref{nonhomoes}), we have
\[
( |D^2 w|^{p_0} )^{1/p_0}_{Q_{1/2}
(t_0 - (j-1) 2^{-2/\alpha}, 0) }
\leq N\sum_{k=0}^\infty
2^{-\alpha k} (|f|^{p_0})^{1/p_0}_{ (s^j_{k+1}, s^j_k) \times B_{\sqrt{d}}  }.
\]
It follows that
\begin{align*}
&\sum_{j=1}^\infty j^{-(1 + \alpha)}
(|D^2 w|^{p_0})^{1/ p_0}_{Q_{1/2}
(t_0 - (j-1) 2^{-2/\alpha}, 0) } \\
&\leq N \sum_{j=1}^\infty j^{-(1 + \alpha)}
\sum_{k=0}^\infty 2^{-\alpha k}
(|f|^{p_0})^{1/p_0}_{(s^j_{k+1}, s^j_k)\times {B_{\sqrt{d} }}} \\
&= N \sum_{m=1}^\infty
\sum_{\substack{
j \in \nn \\
m-1 \leq (j-1) 2^{-2 /\alpha} < m
}}
j^{-(1 + \alpha)}
\sum_{k=0}^\infty
2^{-\alpha k}
(|f|^{p_0})^{1/p_0}_{(s^j_{k+1}, s^j_k)\times {B_{\sqrt{d} }}}.
\end{align*}
Moreover, observe that for  $m-1 \leq (j-1) 2^{-2 /\alpha} < m$,
$$
(s^j_{k+1}, s^j_k)
\subset (t_0 - 2^{k+1} + 1 - m, t_0 - m + 1).
$$
Then, for $s_k = t_0 - 2^k + 1$,
\[
(|f|^{p_0})^{1/p_0}_{(s^j_{k+1}, s^j_k)\times {B_{\sqrt{d} }}}
\leq
2^{1/p_0}
(|f|^{p_0})^{1/p_0}_{(s_{k+1} - m, t_0 - m+1) \times {B_{\sqrt{d} }}}.\stepcounter{equation} \tag{\theequation}\label{1205}
\]
Thus, {by using \eqref{1205} and H\"older's inequality,}
\begin{align*}
&\sum_{j=1}^\infty j^{-(1 + \alpha)}
(|D^2 w|^{p_0})^{1/ p_0}_{Q_{1/2}
(t_0 - (j-1) 2^{-2/\alpha}, 0) }  \\
&\leq N
\sum_{m=1}^\infty
\sum_{\substack{
j \in \nn \\
m-1 \leq (j-1) 2^{-2 /\alpha} < m
}}
j^{-(1 + \alpha)}
\sum_{k=0}^\infty
2^{-\alpha k}
(|f|^{p_0})^{1/p_0}_{(s_{k+1} - m, t_0 - m+1) \times {B_{\sqrt{d} }}} \\
&\leq N
\sum_{m=1}^\infty
m^{-(1 + \alpha)}
\sum_{k=0}^\infty
2^{-\alpha k}
(|f|^{p_0})^{1/p_0}_{(s_{k+1} - m, t_0 - m+1) \times {B_{\sqrt{d} }}} \\
&\leq N
\sum_{l=0}^\infty
\sum_{m = 2^l}^{2^{l+1} - 1}
2^{- l (1 + \alpha)}
\sum_{k=0}^\infty
2^{-\alpha k}
(|f|^{p_0})^{1/p_0}_{(s_{k+1} - m, t_0 - m+1) \times {B_{\sqrt{d} }}} \\
&= N
\sum_{l=0}^\infty
2^{- l (1 + \alpha)}
\sum_{k=0}^\infty
2^{-\alpha k}
\sum_{m = 2^l}^{2^{l+1} - 1}
(|f|^{p_0})^{1/p_0}_{(s_{k+1} - m, t_0 - m+1) \times {B_{\sqrt{d} }}} \\
&\leq N
\sum_{l=0}^\infty
2^{- l\alpha}
\sum_{k=0}^\infty
2^{-\alpha k}
\Bigg[
\sum_{m = 2^l}^{2^{l+1} - 1}
2^{-l}
(|f|^{p_0})_{(s_{k+1} - m, t_0 - m+1) \times {B_{\sqrt{d} }}}
\Bigg]^{1/p_0}. \stepcounter{equation} \tag{\theequation}\label{12051}
\end{align*}
Furthermore, for the most inner sum of \eqref{12051}, we have
\begin{align*}
&\sum_{m = 2^l}^{2^{l+1} - 1}
2^{-l}
(|f|^{p_0})_{(s_{k+1} - m, t_0 - m+1) \times {B_{\sqrt{d} }}} \\
&= 2^{-l} 2^{-(k+1)}
\sum_{m = 2^l}^{2^{l+1} - 1}
\int_{s_{k+1} - m}^{t_0 - m + 1}
\fint_{{B_{\sqrt{d} }}} |f|^{p_0} \, dx \, dt \\
&= 2^{-l} 2^{-(k+1)}
\sum_{m = 2^l}^{2^{l+1} - 1}
\sum_{i=0}^{2^{k+1} - 1}
\int_{t_0 - m - i}^{t_0 - m - i + 1}
\fint_{{B_{\sqrt{d} }}} |f|^{p_0} \, dx \, dt \\
&\leq 2^{-l-k} 2^{\min \{ l, k \} }
\sum_{m = 2^l}^{2^{l+1} + 2^{k+1} - 2}
\int_{t_0 - m }^{t_0 - m  + 1}
\fint_{{B_{\sqrt{d} }}} |f|^{p_0} \, dx \, dt  \\
&\leq \begin{cases}
4 (|f|^{p_0})_{(t_0 - 2^{l+2} + 2, t_0)  \times {B_{\sqrt{d} }}} &\quad \text{for}\quad  l \geq k, \\
4 (|f|^{p_0})_{(t_0 - 2^{k+2} + 2, t_0)  \times {B_{\sqrt{d} }}} &\quad \text{for}\quad  k > l.
\end{cases} \stepcounter{equation} \tag{\theequation}\label{12052}
\end{align*}
From \eqref{12051} and \eqref{12052}, it follows that
\begin{align*}
&\sum_{j=1}^\infty j^{-(1 + \alpha)}
(|D^2 w|^{p_0})_{Q_{1/2}
(t_0 - (j-1) 2^{-2/\alpha}, 0) }\\
&\leq
N \sum_{k=0}^\infty 2^{-\alpha k}
(|f|^{p_0})^{1/p_0}_{(t_0 - 2^{k+2} + 2, t_0)  \times {B_{\sqrt{d} }}} \stepcounter{equation} \tag{\theequation}\label{10302}.
\end{align*}
Thus, by \eqref{10304}, \eqref{10303}, and \eqref{10302},
\begin{align*}
( |D^2 v - (D^2 v )_{Q_{\kappa} (t_0, 0)} | )_{
Q_{\kappa (t_0, 0)} }
&\leq N \kappa^\sigma
\sum_{j=1}^\infty j^{-(1 + \alpha)}
(|D^2 u |^{p_0})^{1/p_0}_{Q_{1/2} (t_0 - (j-1)2^{-2/\alpha}, 0 ) } \\
&\quad+
 N \kappa^\sigma
\sum_{k=1}^\infty
2^{-\alpha k} (|f|^{p_0})^{1/p_0}_{ (t_0 - 2^{k+2} + 2, t_0 ) \times {B_{\sqrt{d} }} }. \stepcounter{equation} \tag{\theequation}\label{yy3}
\end{align*}

{\em Step 3.} By \eqref{12055}, \eqref{yy3}, and \eqref{nonhomoes}, we conclude
\begin{align*}
& ( |D^2 u - (D^2 u )_{Q_{\kappa} (t_0, 0)} | )_{
Q_{\kappa} (t_0, 0) } \\
& \leq
( |D^2 v - (D^2 v )_{Q_{\kappa} (t_0, 0)} | )_{
Q_{\kappa} (t_0, 0)}
+ N \kappa^{- (d+2 /\alpha)/p_0}
(| D^2 w|^{p_0} )^{1/p_0}_{Q_{1/2} (t_0, 0)} \\
& \leq N\kappa^\sigma \sum_{j=1}^\infty j^{-(1 + \alpha)}
( |D^2 u |^{p_0} )^{1 / p_0}_{Q_{1/2} (t_0 - (j-1)2^{-2/\alpha}, 0 ) } \\
& \quad+ N \kappa^{- (d+2/\alpha)/p_0}
\sum_{k=0}^\infty 2^{-\alpha k}
(|f|^{p_0})^{1/p_0}_{(t_0 -2^k,t_0 ) \times {B_{\sqrt{d} }}},\stepcounter{equation} \tag{\theequation}\label{iyy3}
\end{align*}
where $N = N(d, \delta, \alpha, p_0)$. For the first term on the right-hand side of \eqref{iyy3}, by H\"older's inequality for $l_{p_0}$,
\begin{align*}
&\sum_{j=1}^\infty j^{-(1 + \alpha)}( |D^2 u |^{p_0} )^{1 / p_0}_{Q_{1/2} (t_0 - (j-1)2^{-2/\alpha}, 0 ) }\\
&\le \sum_{k=0}^\infty \sum_{j= 2^k}^{2^{k+1}-1}2^{-k(1 + \alpha)}( |D^2 u |^{p_0} )^{1 / p_0}_{Q_{1/2} (t_0 - (j-1)2^{-2/\alpha}, 0 ) }\\
&\le \sum_{k=0}^\infty 2^{-k\alpha} \bigg( 2^{-k}\sum_{j=2^k}^{2^{k+1}-1} ( |D^2 u |^{p_0} )_{Q_{1/2} (t_0 - (j-1)2^{-2/\alpha}, 0 ) }\bigg)^{1 / p_0}\\
&\le N  \sum_{k=0}^\infty {2^{-k\alpha}}( |D^2 u |^{p_0} )^{1 / p_0}_{(t_0-2^{k}2^{-2/\alpha},t_0)\times B_{1/2}}
.\stepcounter{equation} \tag{\theequation}\label{iyy4}
\end{align*}
Therefore, (\ref{u}) follows from \eqref{iyy3} and \eqref{iyy4} when $a^{ij}=\delta^{ij}$ and $r = 1 $.

{\em Step 4.} For general $a^{ij}$'s, we apply a change of variables. Replacing $a^{ij}$ with $(a^{ij}+a^{ji})/2$, without loss of generality, we may assume that $A = (a^{ij}) $ is symmetric and positive definite. Then, there exists a symmetric and invertible $A^{1/2}$. Let $$\widetilde{u}(t,y) = u(t, A^{1/2}y) \quad \text{in}\quad  \rr^d_T \quad \text{and}\quad  y_0 = A^{-1/2}x_0.$$
Therefore, \[
\partial_t^\alpha \widetilde{u} - \Delta \widetilde{u} = \widetilde{f}
\quad \text{in}\quad  \rr^d_T
\]
and
\begin{align*}
& ( |D^2 \widetilde{u} - (D^2 \widetilde{u} )_{Q_{\kappa r} (t_0, y_0)} | )_{
Q_{\kappa r (t_0, y_0)}
} \\
& \leq N \kappa^\sigma  \sum_{k=0}^\infty {2^{-k\alpha}}( |D^2 \widetilde{u} |^{p_0} )^{1 / p_0}_{(t_0-2^{k}(r/2)^{2/\alpha}r,t_0)\times B_{r/2}(y_0)}\\
&\quad+ N \kappa^{- (d+2 /\alpha)/p_0}
\sum_{k=0}^\infty 2^{-\alpha k} (|\widetilde{f}|^{p_0})^{1/p_0}_{
( t_0 -2^k r^{2/\alpha}, t_0 )
\times B_{ \sqrt{d} r}(y_0).
}
\end{align*}
It follows that
\begin{align*}
& ( |D^2 u - (D^2 u )_{(t_0-(\kappa r)^{2/\alpha},t_0) \times B_{\delta\kappa r}(x_0))} | )_{
(t_0-(\kappa r)^{2/\alpha},t_0) \times B_{\delta \kappa r}(x_0)
} \\
& \leq N \kappa^\sigma    \sum_{k=0}^\infty {2^{-k\alpha}} ( |D^2 u |^{p_0} )^{1 / p_0}_{(t_0-2^{k}(r/2)^{2/\alpha}r,t_0)\times B_{\delta^{-1}r/2}(x_0)} \\
& \quad+ N \kappa^{- (d+2 /\alpha)/p_0}
\sum_{k=0}^\infty 2^{-\alpha k} (|f|^{p_0})^{1/p_0}_{
( t_0 - 2^kr^{2/\alpha}, t_0 )
\times B_{\delta^{-1} \sqrt{d} r}{(x_0)}},
\end{align*}
where we used the ellipticity condition of $a^{ij}$'s to derive
$$B_{\delta r}(x_0) \subset A^{1/2}(B_{r}(y_0)) \subset B_{\delta^{-1} r}(x_0),$$
and
$$|B_{\delta^{-1} r}(x_0)| \le N(d,\delta) |A^{1/2}(B_{r}(y_0))| \le N(d,\delta) |B_{\delta r}(x_0)| $$
for any $r>0$. The proposition is proved.
\end{proof}

\section{Proof of Theorem \ref{main}}\label{4}
In this section, with the mean oscillation estimate (\ref{u}), we will prove Theorem \ref{main} by using the sharp function theorem and the maximal function theorem.

\begin{proposition}\label{4.1}
Let $\alpha \in (1,2)$, $T \in (0, \infty)$, $p,q \in (1, \infty)$, $K_1 \in [1, \infty)$, and $[w]_{p,q} \le K_1$.
There exist $p_0 = p_0 (d, p, q, K_1) \in (1, 2)$ and $\mu = \mu (d, p, q, K_1) \in (1, \infty)$ such that
\[
p_0 < p_0 \mu < \min \{ p, q\}
\]
and the following holds. If $u \in \hh_{p,q,w,0}^{\alpha, 2} ( \rr^d_T )$ is supported on  $(0,T) \times B_{\xi R_0}(x_1)$ for some $x_1\in \rr^d$ and $\xi >0 $, and satisfies
\[
\partial_t^\alpha u - a^{ij} (t,x) D_{ij} u = f \quad \text{in}\quad  \rr^d_T, \stepcounter{equation}\tag{\theequation}\label{nol}
\]
where the coefficients $a^{ij}(t,x)$'s satisfy Assumption \ref{ass2} $(\gamma_0)$, then for any $  r\in (0, \infty)$, $\kappa \in (0, 1/4)$, and $(t_0, x_0) \in (0,T] \times \rr^d$, we have
\begin{align*}
& ( |D^2 u - (D^2 u )_{(t_0-(\kappa r)^{2/\alpha},t_0) \times B_{\delta \kappa r}(x_0)} | )_{
(t_0-(\kappa r)^{2/\alpha},t_0) \times B_{\delta \kappa r}(x_0)
} \\
&\leq N \xi^{d/q_0} \kappa^{-d/q_0} ( | D^2 u |^{p_0} )^{1/p_0}_{(t_0-(\kappa r)^{2/\alpha},t_0) \times B_{\delta \kappa r}(x_0)} \\
&\quad + N \kappa^\sigma \sum_{k=0}^\infty {2^{-k\alpha}}( |D^2 u |^{p_0} )^{1 / p_0}_{(t_0-2^{k}(r/2)^{2/\alpha}r,t_0)\times B_{\delta^{-1}r/2}(x_0)}\\
&\quad + N \kappa^{- (d+2 /\alpha)/p_0} \gamma_0^{1/(\nu p_0)}
\sum_{k=0}^\infty  2^{-\alpha k} 2^{k/(\nu p_0)}(|D^2 u |^{\mu p_0})^{1/(\mu p_0)}_{
( t_0 - 2^k r^{2/\alpha}, t_0 )
\times B_{  \delta^{-1} \sqrt{d} r}(x_0) }\\
&\quad+ N \kappa^{- (d+2 /\alpha)/p_0}
 \sum_{k=0}^\infty  2^{-\alpha k} (| f  |^{p_0})^{1/p_0}_{
( t_0 - {2^k} r^{2/\alpha}, t_0 ) \times B_{  \delta^{-1} \sqrt{d} r}(x_0) },  \stepcounter{equation}\tag{\theequation}\label{meanoc}
\end{align*}
where $\nu = \mu/(\mu - 1)$, $q_0 = p_0/(p_0 - 1)$, and $N = N(d, \delta, \alpha, p, q, K_1)$. The functions $u$ and $f$ are defined to be zero whenever $t \leq 0$.
\end{proposition}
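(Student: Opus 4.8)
The plan is to freeze the coefficients and reduce to the constant-coefficient mean oscillation bound of Proposition \ref{3.9}, handling separately the range of $r$ in which Assumption \ref{ass2} $(\gamma_0)$ is available at the relevant scale. Replacing $a^{ij}$ by $(a^{ij}+a^{ji})/2$ we may assume $(a^{ij})$ is symmetric; by Assumption \ref{ass1} it is still uniformly elliptic with constant $\delta$. Fix $p_0\in(1,2)$ and $\mu\in(1,\infty)$ with $p_0\mu<\min\{p,q\}$, with $p_0$ close enough to $1$ that the compactly supported $u\in\hh^{\alpha,2}_{p,q,w,0}(\rr^d_T)$ also lies in $\hh^{\alpha,2}_{p_0,0,\mathrm{loc}}(\rr^d_T)$; this is possible by the reverse H\"older inequality for Muckenhoupt weights, and is where the dependence of $p_0,\mu$ on $d$ and $K_1$ enters. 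Put $R_1=2^{-\alpha/2}R_0$ as in Lemma \ref{2.1}. Since all four terms on the right-hand side of \eqref{meanoc} are nonnegative, it suffices to bound the left-hand side by a subcollection of them in each of the following two cases.

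\emph{Case 1: $\delta^{-1}\sqrt d\,r\le R_1$.} Set $\overline{a}^{ij}=\fint_{Q_{\delta^{-1}\sqrt d\,r}(t_0,x_0)}a^{ij}$, a symmetric matrix which is elliptic with constant $\delta$. Rewriting \eqref{nol} as $\partial_t^\alpha u-\overline{a}^{ij}D_{ij}u=\widetilde f$ in $\rr^d_T$, where $\widetilde f:=f+(a^{ij}-\overline{a}^{ij})D_{ij}u\in L_{p_0,\mathrm{loc}}(\rr^d_T)$ since $u$ has compact spatial support and $a^{ij}$ is bounded, and applying Proposition \ref{3.9} to $u$ with the constant matrix $\overline{a}^{ij}$, the left-hand side of \eqref{meanoc} is bounded by its second term plus $N\kappa^{-(d+2/\alpha)/p_0}\sum_{k=0}^\infty 2^{-\alpha k}(|\widetilde f|^{p_0})^{1/p_0}_{Q_k}$, where $Q_k=(t_0-2^k r^{2/\alpha},t_0)\times B_{\delta^{-1}\sqrt d\,r}(x_0)$. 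By the triangle inequality in $L_{p_0}(Q_k)$ the contribution of $f$ to $\widetilde f$ is exactly the last term of \eqref{meanoc}, and for the contribution of $(a^{ij}-\overline{a}^{ij})D_{ij}u$, H\"older's inequality with exponents $\mu,\nu$ gives
\[
(|(a^{ij}-\overline{a}^{ij})D_{ij}u|^{p_0})^{1/p_0}_{Q_k}\le\Big(\fint_{Q_k}|a^{ij}-\overline{a}^{ij}|^{\nu p_0}\Big)^{1/(\nu p_0)}(|D^2u|^{\mu p_0})^{1/(\mu p_0)}_{Q_k}.
\]
Since $|a^{ij}-\overline{a}^{ij}|\le2\delta^{-1}$, the first factor is at most $N\big(\fint_{Q_k}|a^{ij}-\overline{a}^{ij}|\big)^{1/(\nu p_0)}$; and Lemma \ref{2.1} (applicable because $\delta^{-1}\sqrt d\,r\le R_1$, for those $k$ with $2^kr^{2/\alpha}\ge(\delta^{-1}\sqrt d\,r)^{2/\alpha}$), together with Assumption \ref{ass2} $(\gamma_0)$ on $Q_{\delta^{-1}\sqrt d\,r}(t_0,x_0)$ for the remaining finitely many $k$, yields $\fint_{Q_k}|a^{ij}-\overline{a}^{ij}|\le N(d,\delta,\alpha)\,2^k\gamma_0$. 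Hence the first factor is $\le N\gamma_0^{1/(\nu p_0)}2^{k/(\nu p_0)}$, and multiplying by $N\kappa^{-(d+2/\alpha)/p_0}2^{-\alpha k}$ and summing over $k$ produces the third term of \eqref{meanoc}. This settles Case 1.

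\emph{Case 2: $\delta^{-1}\sqrt d\,r>R_1$.} Here freezing is not available, but $D^2u$ vanishes outside $S:=(0,T)\times B_{\xi R_0}(x_1)$. Write $Q=(t_0-(\kappa r)^{2/\alpha},t_0)\times B_{\delta\kappa r}(x_0)$. If $B_{\delta\kappa r}(x_0)\cap B_{\xi R_0}(x_1)=\es$, then $D^2u\equiv0$ on $Q$ and the left-hand side of \eqref{meanoc} is $0$. Otherwise, from $|D^2u-(D^2u)_Q|\le2|D^2u|$ and H\"older's inequality on $Q\cap S$,
\[
\big(|D^2u-(D^2u)_Q|\big)_Q\le\frac{2}{|Q|}\int_{Q\cap S}|D^2u|\le2\Big(\frac{|Q\cap S|}{|Q|}\Big)^{1/q_0}(|D^2u|^{p_0})^{1/p_0}_Q.
\]
Since $|Q\cap S|\le(\kappa r)^{2/\alpha}|B_{\xi R_0}|$ and $|Q|=(\kappa r)^{2/\alpha}|B_{\delta\kappa r}|$, we get $|Q\cap S|/|Q|\le(\xi R_0/(\delta\kappa r))^d$, and in the present range $R_0/r<2^{\alpha/2}\sqrt d/\delta$, so $|Q\cap S|/|Q|\le N(d,\delta,\alpha)\xi^d\kappa^{-d}$. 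This bounds the left-hand side of \eqref{meanoc} by its first term, completing Case 2.

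The main obstacle is Case 1: one must verify that Proposition \ref{3.9} applies verbatim to the frozen equation (same ellipticity constant, $u\in\hh^{\alpha,2}_{p_0,0,\mathrm{loc}}$) and, more delicately, control $\fint_{Q_k}|a^{ij}-\overline{a}^{ij}|$ by $N2^k\gamma_0$ uniformly in $k$ despite $Q_k$ being arbitrarily long in the time direction — exactly the content of Lemma \ref{2.1}, and the source of the factor $2^{k/(\nu p_0)}$ in the third term of \eqref{meanoc}. The remaining ingredients — the choice of $p_0,\mu$, the elementary covering estimates in Case 2, and absorbing dimensional constants into $N$ — are routine.
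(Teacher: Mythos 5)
Your proposal is correct and follows essentially the same route as the paper: select $p_0,\mu$ via reverse H\"older for $A_p$ weights with $p_0\mu<\min\{p,q\}$ and $u\in\hh^{\alpha,2}_{p_0\mu,0,\mathrm{loc}}$, split on whether $\delta^{-1}\sqrt d\,r$ exceeds $R_1$, handle the large-$r$ case by the support constraint and H\"older (yielding the $\xi^{d/q_0}\kappa^{-d/q_0}$ term), and in the small-$r$ case freeze the coefficients, apply Proposition~\ref{3.9} to $\partial_t^\alpha u-\overline a^{ij}D_{ij}u=\widetilde f$, and use Lemma~\ref{2.1} together with the $L_\infty$ bound on $a^{ij}-\overline a^{ij}$ to produce the $\gamma_0^{1/(\nu p_0)}2^{k/(\nu p_0)}$ factor. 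The only cosmetic differences are that you freeze over $Q_{\delta^{-1}\sqrt d\,r}$ rather than the paper's $Q_{\delta^{-1}2r}$ (which aligns more cleanly with the hypotheses of Lemma~\ref{2.1}) and that, where you write $u\in\hh^{\alpha,2}_{p_0,0,\mathrm{loc}}$, you in fact get and implicitly use the stronger $u\in\hh^{\alpha,2}_{p_0\mu,0,\mathrm{loc}}$ so that the $\mu p_0$-averages of $D^2u$ are finite.
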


\begin{proof}
For the given $w_1 \in A_p(\rr, dt)$ and $w_2 \in A_q (\rr^d, dx)$, using reverse H\"older's inequality for $A_p$ weights, we pick
$
\sigma_1 = \sigma_1 (d, p, K_1)$ and $
\sigma_2 = \sigma_2 (d, q, K_1)
$
such that $p - \sigma_1 > 1$, $q - \sigma_2 > 1$, and
\[
w_1 \in A_{p- \sigma_1} (\rr, dt),
\quad w_2 \in A_{q - \sigma_2} (\rr^d, dx).
\]
Take $p_0, \mu \in (1, \infty)$ so that
\[
p_0 \mu = \min \Bigg\{
\frac{p}{p- \sigma_1}, \frac{q}{q - \sigma_2}
\Bigg\} > 1.
\]
Note that
\[
w_1 \in A_{p - \sigma_1}
\subset A_{\frac{p}{p_0 \mu}}
\subset A_{\frac{p}{p_0}} (\rr, dt)
\]
and
\[
w_2 \in A_{q - \sigma_2}
\subset A_{\frac{q}{p_0 \mu}}
\subset A_{\frac{q}{p_0}} (\rr^d, dx).
\]
From these inclusions and the fact that $u \in \hh_{p,q,w,0}^{\alpha, 2} ( \rr^d_T )$, it follows that
$u \in \hh_{p_0\mu,0,\mathrm{loc}}^{\alpha, 2} ( \rr^d_T )$. See the proof of \cite[Lemma 5.10]{Ap}.

We first consider the case when $r \ge {R_1\delta}/\sqrt{d}$, where $R_1 = 2^{-\alpha/{2}}R_0$.
By H\"older's inequality,
\begin{align*}
& (|D^2 u - (D^2 u )_{(t_0-(\kappa r)^{2/\alpha},t_0) \times B_{\delta \kappa r}(x_0)} | )_{
(t_0-(\kappa r)^{2/\alpha},t_0) \times B_{\delta \kappa r}(x_0)
} \\
&\le 2 (|D^2 u| )_{(t_0-(\kappa r)^{2/\alpha},t_0) \times B_{\delta \kappa r}(x_0)}\\
&=  2 (|D^2 u| \chi_{(0,T) \times B_{\xi R_0}(x_1)})_{(t_0-(\kappa r)^{2/\alpha},t_0) \times B_{\delta \kappa r}(x_0)}\\
&\le 2 ( | D^2 u |^{p_0} )^{1/p_0}_{(t_0-(\kappa r)^{2/\alpha},t_0) \times B_{\delta \kappa r}(x_0)}  \Big(\frac{|B_{\xi R_0}|}{|B_{\delta \kappa r}|}\Big)^{1/q_0}\\
&\le N(d,\delta,p,q)\xi^{d/q_0} \kappa^{-d/q_0} ( | D^2 u |^{p_0} )^{1/p_0}_{(t_0-(\kappa r)^{2/\alpha},t_0) \times B_{\delta \kappa r}(x_0)} .\stepcounter{equation}\tag{\theequation}\label{nn3}
\end{align*}

Then, we consider the case when $r < {R_1\delta}/ \sqrt{d}$. Let
\[
\overline{a}^{ij}
= \fint_{Q_{\delta^{-1} 2 r} (t_0, x_0) }  a^{ij} \, dt dy
\]
and
\begin{align*}
 \partial_t^\alpha u - \overline{a}^{ij} D_{ij} u = f + ( a^{ij}- \overline{a}^{ij}) D_{ij} u:=\widetilde{f}.
\end{align*}
By Proposition \ref{3.9},
\begin{align*}
& ( |D^2 u - (D^2 u )_{(t_0-(\kappa r)^{2/\alpha},t_0) \times B_{\delta \kappa r}(x_0)} | )_{
(t_0-(\kappa r)^{2/\alpha},t_0) \times B_{\delta \kappa r}(x_0)
} \\
& \leq N \kappa^\sigma   \sum_{k=0}^\infty {2^{-k\alpha}}( |D^2 u |^{p_0} )^{1 / p_0}_{(t_0-2^{k}(r/2)^{2/\alpha}r,t_0)\times B_{\delta^{-1}r/2}(x_0)} \\
&\quad+ N \kappa^{- (d+2 /\alpha)/p_0}
\sum_{k=0}^\infty 2^{-\alpha k} (| \widetilde{f}  |^{p_0})^{1/p_0}_{
( t_0 - 2^k r^{2/\alpha}, t_0 )
\times B_{  \delta^{-1} \sqrt{d} r}(x_0) },
\end{align*}
where $N = N( d, \delta, \alpha, p, q, K_1)$.
By the definition of $\widetilde{f}$ {and H\"older's inequality},
\begin{align*}
& (| \widetilde{f}  |^{p_0})^{1/p_0}_{
( t_0 - 2^k r^{2/\alpha}, t_0 )
\times B_{  \delta^{-1} \sqrt{d} r}(x_0) } \\&\leq (| f  |^{p_0})^{1/p_0}_{
( t_0 - 2^k r^{2/\alpha}, t_0 )
\times B_{  \delta^{-1} \sqrt{d} r}(x_0) } +
(|\overline{a}^{ij} - a^{ij} |^{\nu p_0}
)^{1/ (\nu p_0)}_{
( t_0 - 2^k r^{2/\alpha}, t_0 )
\times B_{  \delta^{-1} \sqrt{d} r}(x_0) } \\
&\quad \quad\quad\cdot (|D^2 u |^{\mu p_0})^{1/(\mu p_0)}_{
( t_0 - 2^k r^{2/\alpha}, t_0 )
\times B_{  \delta^{-1} \sqrt{d} r}(x_0) }. \stepcounter{equation}\tag{\theequation}\label{nn1}
\end{align*}
Moreover, by Lemma \ref{2.1},
\begin{align*}
(|\overline{a}^{ij} - a^{ij} | )_{
( t_0 - 2^k r^{2/\alpha}, t_0 )
\times B_{  \delta^{-1} \sqrt{d} r}(x_0) } \leq N(\alpha, d,\delta) \gamma_0 2^k.\stepcounter{equation}\tag{\theequation}\label{nn2}
\end{align*}
Therefore, by \eqref{nn1} and \eqref{nn2},
\begin{align*}
& \sum_{k=0}^\infty  2^{-\alpha k} (| \widetilde{f}  |^{p_0})^{1/p_0}_{
( t_0 - 2^k r^{2/\alpha}, t_0 )
\times B_{  \delta^{-1} \sqrt{d} r}(x_0) }\\
&\leq \sum_{k=0}^\infty  2^{-\alpha k} (| f  |^{p_0})^{1/p_0}_{
( t_0 - 2^k r^{2/\alpha}, t_0 ) \times B_{  \delta^{-1} \sqrt{d} r}(x_0) }  \\
&\quad+ N\gamma_0^{1/(\nu p_0)}\sum_{k=0}^\infty  2^{-\alpha k} 2^{k/(\nu p_0)}(|D^2 u |^{\mu p_0})^{1/(\mu p_0)}_{
( t_0 - 2^k r^{2/\alpha}, t_0 )
\times B_{  \delta^{-1} \sqrt{d} r}(x_0) }.
\stepcounter{equation}\tag{\theequation}\label{nn4}
\end{align*}
Therefore, by combining both cases \eqref{nn3} and \eqref{nn4}, we arrive at the estimate (\ref{meanoc}). The proposition is proved.
\end{proof}

Next, we introduce the dyadic cubes as follows:
for each $n \in \zz$, pick $k(n)\in \zz$ such that
$$ k(n) \le \frac{2n}{\alpha} < k(n) +1,$$ and let
\begin{align*}
     Q_{\vec{i}}^n = \big[ \frac{i_0}{2^{k(n)}} + T, \frac{i_0+1}{2^{k(n)}} + T \big) \times \big[ \frac{i_1}{2^n}, \frac{i_1+1}{2^n} \big) \times \cdots \times \big[ \frac{i_d}{2^n}, \frac{i_d+1}{2^n} \big) \stepcounter{equation}\tag{\theequation}\label{cubes}
\end{align*} and
$$
\cc_n := \big\{ Q_{\vec{i}}^n = Q_{(i_0,\ldots,i_d)}^n : \vec{i} = (i_0,\ldots,i_d)\in \zz^{d+1}{, i_0\le -1}\big\}.
$$
Furthermore, denote the dyadic sharp function of $g$ by
$$  g^{\#}_{dy}(t,x) = \sup_{n< \infty} \fint_{Q_{\vec{i}}^n \ni (t,x) } |g(s,y) - g_{|n}(t,x)| \, dy \, ds,$$
where
$$g_{|n}(t,x) = \fint_{Q_{\vec{i}}^n} g(s,y) \, dy \, ds \quad \text{for}\quad  (t,x) \in Q_{\vec{i}}^n.
$$

\begin{proposition}\label{4.2}
Let $\alpha \in (1,2)$, $T \in (0, \infty)$, $p,q \in (1, \infty)$, $K_1 \in [1, \infty)$, and $[w]_{p,q} \le K_1$.
There exist $\gamma_0 = \gamma_0 (d, \delta, \alpha, p, q, K_1) > 0$  and $\xi(d, \delta, \alpha, p, q, K_1) >0$ such that under Assumption \ref{ass2} $(\gamma_0)$, for any $u \in \hh_{p,q,w,0}^{\alpha, 2} ( \rr^d_T )$ supported on $(0,T) \times B_{\xi R_0}(x_1)$ for some $x_1\in \rr^d$, satisfying (\ref{nol}), we have
\[
\norm{\partial_t^\alpha u}_{L_{p,q,w} }
+ \norm{D^2 u}_{L_{p,q,w} }
\leq
N \norm{f}_{L_{p,q,w} \stepcounter{equation}\tag{\theequation}\label{nn7} },
\]
where $\norm{\cdot}_{p,q,w} = \norm{\cdot}_{L_{p,q,w} ( \rr^d_T )} $ and $N = N(d, \delta, \alpha, p, q, K_1 )$.
\end{proposition}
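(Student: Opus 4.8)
The plan is to feed the pointwise mean oscillation estimate \eqref{meanoc} of Proposition \ref{4.1} into a weighted mixed-norm Fefferman--Stein (dyadic) sharp function theorem together with the weighted mixed-norm boundedness of the strong maximal function $\css\cmm$, and then close the argument by an absorbing step in which $\kappa$, $\xi$, and $\gamma_0$ are chosen in that order. Throughout, I keep $p_0 = p_0(d,p,q,K_1)\in(1,2)$ and $\mu=\mu(d,p,q,K_1)\in(1,\infty)$ as furnished by Proposition \ref{4.1}, so that $p_0<p_0\mu<\min\{p,q\}$, $w_1\in A_{p/(p_0\mu)}\subseteq A_{p/p_0}(\rr,dt)$ and $w_2\in A_{q/(p_0\mu)}\subseteq A_{q/p_0}(\rr^d,dx)$; here $\nu=\mu/(\mu-1)$ and $q_0=p_0/(p_0-1)$ as in \eqref{meanoc}. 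From these weight memberships and $u\in\hh_{p,q,w,0}^{\alpha,2}(\rr^d_T)$ one gets $u\in\hh_{p_0\mu,0,\mathrm{loc}}^{\alpha,2}(\rr^d_T)$ exactly as in the proof of Proposition \ref{4.1}. Moreover, since $u$ (extended by zero for $t\le 0$) is supported in $(0,T)\times B_{\xi R_0}(x_1)$ and $D^2u\in L_{p,q,w}(\rr^d_T)$, Hölder's inequality in $t$ and in $x$ (using the local integrability of $w_1^{-1/(p-1)}$ and $w_2^{-1/(q-1)}$) shows $D^2u\in L_1(\rr^d_T)$; consequently the averages $(D^2u)_{|n}$ over the dyadic cubes in $\cc_n$ tend to $0$ uniformly as $n\to-\infty$, which is the hypothesis required for the sharp function theorem.

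Next I bound $(D^2u)^\#_{dy}$ pointwise. Fix $(t,x)$ and a dyadic cube $Q_{\vec{i}}^n\ni(t,x)$. By the choice of $k(n)$ there is a parabolic cylinder $(t_0-(\kappa r)^{2/\alpha},t_0)\times B_{\delta\kappa r}(x_0)$ of the type appearing in \eqref{meanoc}, with $\kappa r=N(d,\delta,\alpha)2^{-n}$, that contains $Q_{\vec{i}}^n$ and has measure at most $N(d,\delta,\alpha)$ times that of $Q_{\vec{i}}^n$. Using $\fint_{Q_{\vec{i}}^n}|D^2u-c|\le(|\mathrm{cyl}|/|Q_{\vec{i}}^n|)\fint_{\mathrm{cyl}}|D^2u-c|$ with $c$ the average of $D^2u$ over this cylinder, and applying \eqref{meanoc}, I dominate $\fint_{Q_{\vec{i}}^n}|D^2u-(D^2u)_{|n}|$ by the right-hand side of \eqref{meanoc}. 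Every cylinder appearing there contains $(t,x)$ and is either a parabolic cylinder $Q_{r_1,r_2}$ or one that is fixed in space and dilated only in time, so each of the geometric series in \eqref{meanoc} (the one weighted by $2^{-\alpha k}2^{k/(\nu p_0)}$ converges because $1/(\nu p_0)<1<\alpha$) is bounded by a constant multiple of a single value of the strong maximal function. Taking the supremum over $Q_{\vec{i}}^n\ni(t,x)$ gives
\begin{align*}
(D^2u)^\#_{dy}
&\le N\big(\xi^{d/q_0}\kappa^{-d/q_0}+\kappa^\sigma\big)(\css\cmm|D^2u|^{p_0})^{1/p_0}\\
&\quad+N\kappa^{-(d+2/\alpha)/p_0}\gamma_0^{1/(\nu p_0)}(\css\cmm|D^2u|^{\mu p_0})^{1/(\mu p_0)}+N\kappa^{-(d+2/\alpha)/p_0}(\css\cmm|f|^{p_0})^{1/p_0},
\end{align*}
with $N=N(d,\delta,\alpha,p,q,K_1)$.

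Now, by the weighted mixed-norm sharp function theorem (applicable since $(D^2u)_{|n}\to0$; cf.\ \cite{dong20}) and then the weighted mixed-norm Hardy--Littlewood maximal function theorem for $\css\cmm$ on $L_{p/p_0,q/p_0,w}$ and on $L_{p/(\mu p_0),q/(\mu p_0),w}$ (valid by the $A$-weight memberships recorded above, and since $p/p_0,q/p_0>1$), the displayed inequality yields
\begin{align*}
\norm{D^2u}_{p,q,w}
&\le N\big(\xi^{d/q_0}\kappa^{-d/q_0}+\kappa^\sigma+\kappa^{-(d+2/\alpha)/p_0}\gamma_0^{1/(\nu p_0)}\big)\norm{D^2u}_{p,q,w}\\
&\quad+N\kappa^{-(d+2/\alpha)/p_0}\norm{f}_{p,q,w},
\end{align*}
where $N=N(d,\delta,\alpha,p,q,K_1)$ and $\norm{\cdot}_{p,q,w}=\norm{\cdot}_{L_{p,q,w}(\rr^d_T)}$.

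Finally, I choose $\kappa\in(0,1/4)$ with $N\kappa^\sigma\le1/6$, then $\xi>0$ with $N\xi^{d/q_0}\kappa^{-d/q_0}\le1/6$, and then $\gamma_0>0$ with $N\kappa^{-(d+2/\alpha)/p_0}\gamma_0^{1/(\nu p_0)}\le1/6$, each depending only on $d,\delta,\alpha,p,q,K_1$. Since $\norm{D^2u}_{p,q,w}<\infty$ (as $u\in\hh_{p,q,w,0}^{\alpha,2}(\rr^d_T)$), the three terms with $\norm{D^2u}_{p,q,w}$ on the right can be absorbed, giving $\norm{D^2u}_{p,q,w}\le N\norm{f}_{p,q,w}$; then $\norm{\partial_t^\alpha u}_{p,q,w}\le\norm{a^{ij}D_{ij}u}_{p,q,w}+\norm{f}_{p,q,w}\le N\norm{f}_{p,q,w}$ by Assumption \ref{ass1}, which is \eqref{nn7}. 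The main obstacle is the pointwise estimate of $(D^2u)^\#_{dy}$: matching the dyadic cubes of $\cc_n$ to the parabolic cylinders of Proposition \ref{4.1} with controlled measure ratios, and verifying that each series in \eqref{meanoc} — in particular the one weighted by $2^{k/(\nu p_0)}$ — converges and is dominated by one value of the strong maximal function; keeping the order $\kappa$, then $\xi$, then $\gamma_0$ in the absorbing step is the other point requiring care.
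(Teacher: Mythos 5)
Your argument is correct and follows essentially the same route as the paper's proof of Proposition \ref{4.2}: match the dyadic cubes $Q_{\vec i}^n$ to the parabolic cylinders in \eqref{meanoc}, dominate each geometric series (using $1/(\nu p_0)<1<\alpha$) by a single value of $\css\cmm$, invoke the weighted mixed-norm sharp function theorem together with the strong maximal function theorem, absorb by choosing $\kappa$, then $\xi$, then $\gamma_0$, and finally derive the $\partial_t^\alpha u$ bound from the equation. The only addition is that you explicitly record the check that $(D^2u)_{|n}\to 0$ as $n\to-\infty$, which the paper leaves implicit; that is a harmless and correct refinement.
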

\begin{proof}
For any $(t_0,x_0)\in \rr^d_T$, $n\in \zz$, and $ Q_{\vec{i}}^n$ containing $(t_0,x_0)$, there exist $r =r(n, \alpha, d,\delta)>0$ and $t_1 = \min(T,t_0+r^{2/\alpha}/2)\in(-\infty,T]$ such that
$$
Q_{\vec{i}}^n \subset (t_1- r^{2/\alpha},t_1) \times B_{\delta r}(x_0) \quad \text{and}\quad
|(t_1- r^{2/\alpha},t_1) \times B_{\delta r}(x_0)|\le N(\alpha,d,\delta)|Q_{\vec{i}}^n|.
$$
Thus, by (\ref{meanoc}), we have
\begin{align*}
&\fint_{Q_{\vec{i}}^n \ni (t_0,x_0) } |D^2 u(s,y) - {(D^2 u)}_{|n}(t_0,x_0)| \, dy \, ds\\
&\le ( |D^2 u - (D^2 u )_{(t_1- r^{2/\alpha},t_1) \times B_{\delta r}(x_0)} | )_{
(t_1-r^{2/\alpha},t_1) \times B_{\delta  r}(x_0)
} \\
&\leq N \xi^{d/q_0} \kappa^{-d/q_0} ( | D^2 u |^{p_0} )^{1/p_0}_{(t_1- r^{2/\alpha},t_1) \times B_{\delta r}(x_0)} \\
& + N \kappa^\sigma \sum_{k=0}^\infty {2^{-k\alpha}}( |D^2 u |^{p_0} )^{1 / p_0}_{(t_1-2^{k}(r/(2\kappa))^{2/\alpha}r,t_1)\times B_{\delta^{-1}r/(2\kappa)}(x_0)}\\
& + N \kappa^{- (d+2 /\alpha)/p_0} \gamma_0^{1/(\nu p_0)}
\sum_{k=0}^\infty  2^{-\alpha k} 2^{k/(\nu p_0)}(|D^2 u |^{\mu p_0})^{1/(\mu p_0)}_{
( t_1 - 2^k (r/\kappa)^{2/\alpha}, t_1 )
\times B_{  \delta^{-1} \sqrt{d} r/\kappa}(x_0) }\\
& + N \kappa^{- (d+2 /\alpha)/p_0}
 \sum_{k=0}^\infty  2^{-\alpha k} (| f  |^{p_0})^{1/p_0}_{
( t_1 - {2^k} (r/\kappa)^{2/\alpha}, t_1 ) \times B_{  \delta^{-1} \sqrt{d} r/\kappa}(x_0) },
\end{align*}
where $N$ is independent of $n$ and $\kappa$, and for the first inequality, we used \eqref{simpl}.
Therefore,  since $\alpha>1>1/(\nu p_0)$, we conclude that
\begin{align*}
(D^2 u)^{\#}_{dy}(t_0,x_0)
&\leq N \xi^{d/q_0} \kappa^{-d/q_0} (\css \cmm | D^2 u |^{p_0} )^{1/p_0} (t_0, x_0)\\
&\quad + N \kappa^\sigma (\css \cmm | D^2 u |^{p_0} )^{1/p_0} (t_0, x_0) \\
& \quad+ N \kappa^{- (d+2 /\alpha)/p_0} \gamma_0^{1/(\nu p_0)}
 (\css \cmm | D^2 u |^{\mu p_0} )^{1/(\mu p_0)} (t_0, x_0)\\
& \quad+ N \kappa^{- (d+2 /\alpha)/p_0}
 (\css \cmm | f |^{p_0} )^{1/p_0} (t_0, x_0).
\end{align*}
Then, by the weighted sharp function theorem \cite[Corollary 2.7]{Ap} and the weighted maximal function theorem for strong maximal functions \cite[Theorem 5.2]{dong20},
\begin{align*}
\norm{D^2 u}_{p,q,w} &\le N (\xi^{d/q_0} \kappa^{-d/q_0} + \kappa^\sigma + \kappa^{- (d+2 /\alpha)/p_0} \gamma_0^{1/(\nu p_0)} ) \norm{D^2 u}_{p,q,w} \\
&\quad + N \kappa^{- (d+2 /\alpha)/p_0}\norm{f}_{p,q,w},
\end{align*}
where $N = N (d,\delta,\alpha,p,q,K_1)$. Therefore, by first taking a sufficiently small $\kappa < 1/4$, and then taking sufficiently small $\xi,\gamma_0 > 0 $ such that
$$
N (\xi^{d/q_0} \kappa^{-d/q_0} + \kappa^\sigma + \kappa^{- (d+2 /\alpha)/p_0} \gamma_0^{1/(\nu p_0)} ) <1/2,
$$
we arrive at
\[\norm{D^2 u}_{L_{p,q,w} }
\leq
N \norm{f}_{L_{p,q,w} }. \stepcounter{equation}\tag{\theequation}\label{nn6}\]
Finally, the equation \eqref{nol} together with \eqref{nn6} yields \eqref{nn7}. The proposition is proved.
\end{proof}

Next, we get rid of the restriction on the function in Proposition \ref{4.2} by a partition of unity argument.
\begin{corollary}\label{withl}
Let $\alpha \in (1,2)$, $T \in (0, \infty)$, $p,q \in (1, \infty)$, $K_1 \in [1, \infty)$, and $[w]_{p,q} \le K_1$.
There exists $\gamma_0 = \gamma_0 (d, \delta, \alpha, p, q, K_1) > 0$ such that under Assumption \ref{ass2} $(\gamma_0)$, for any $u \in \hh_{p,q,w,0}^{\alpha, 2} ( \rr^d_T )$ satisfying (\ref{main estimate}) in $\rr^d_T$, we have
\[
\norm{\partial_t^\alpha u}_{p,q,w}
+ \norm{D^2 u}_{p,q,w}
\leq
N_0 \norm{f}_{p,q,w} + {N_0R_0^{-2}}\norm{u}_{p,q,w},\stepcounter{equation}\tag{\theequation}\label{wlower}
\]
where $N_0 = N_0(d, \delta, \alpha, p, q, K_1 )$ and $\norm{\cdot}_{p,q,w} = \norm{\cdot}_{L_{p,q,w} ( \rr^d_T )} $.
\end{corollary}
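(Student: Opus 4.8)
The plan is to derive \eqref{wlower} from Proposition~\ref{4.2} by a spatial partition of unity. Fix $\xi=\xi(d,\delta,\alpha,p,q,K_1)$ as in Proposition~\ref{4.2}, let $\{x_k\}_{k\in\zz^d}\subset\rr^d$ be a lattice of mesh size comparable to $\xi R_0$, and choose $\zeta_k(x)=\zeta_0(x-x_k)$ with $\zeta_0\in C_0^\infty(\rr^d)$, $0\le\zeta_0\le1$, $\supp\zeta_k\subset B_{\xi R_0}(x_k)$, $\sum_k\zeta_k\equiv1$ on $\rr^d$, $|D\zeta_k|\le N(\xi R_0)^{-1}$, and $|D^2\zeta_k|\le N(\xi R_0)^{-2}$; then the balls $B_{\xi R_0}(x_k)$ have overlap at most $K=K(d)$ and $\sum_k\zeta_k^2\le 1$. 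For $u\in\hh_{p,q,w,0}^{\alpha,2}(\rr^d_T)$, each $u\zeta_k$ lies in $\hh_{p,q,w,0}^{\alpha,2}(\rr^d_T)$ and is supported in $(0,T)\times B_{\xi R_0}(x_k)$.

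Since $\zeta_k$ depends only on $x$, $\partial_t^\alpha(u\zeta_k)=\zeta_k\partial_t^\alpha u$; expanding $D_{ij}(u\zeta_k)$ and using \eqref{main estimate}, one sees that $u\zeta_k$ satisfies $\partial_t^\alpha(u\zeta_k)-a^{ij}D_{ij}(u\zeta_k)=f_k$ in $\rr^d_T$, where
\[
f_k=\zeta_k f+\zeta_k(b^iD_iu+cu)-2a^{ij}(D_i\zeta_k)(D_ju)-(a^{ij}D_{ij}\zeta_k)u .
\]
Applying Proposition~\ref{4.2} to each $u\zeta_k$ (whose data $f_k$ is supported in a ball of radius $\xi R_0$) gives $\norm{\partial_t^\alpha(u\zeta_k)}_{p,q,w}+\norm{D^2(u\zeta_k)}_{p,q,w}\le N\norm{f_k}_{p,q,w}$; when $p\ne q$ I would instead apply the $\ell^2$-valued version of Proposition~\ref{4.2} to the whole family $(u\zeta_k)_k$, i.e.
\[
\norm{\big(\textstyle\sum_k|D^2(u\zeta_k)|^2\big)^{1/2}}_{p,q,w}+\norm{\big(\textstyle\sum_k|\partial_t^\alpha(u\zeta_k)|^2\big)^{1/2}}_{p,q,w}\le N\norm{\big(\textstyle\sum_k|f_k|^2\big)^{1/2}}_{p,q,w},
\]
with $N=N(d,\delta,\alpha,p,q,K_1)$.

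For the reassembly, since $\sum_k\zeta_k=1$ we have $D^2u=\sum_kD^2(u\zeta_k)$ and $\partial_t^\alpha u=\sum_k\partial_t^\alpha(u\zeta_k)$, and at each point at most $K$ summands are nonzero, so pointwise $|D^2u|\le K^{1/2}\big(\sum_k|D^2(u\zeta_k)|^2\big)^{1/2}$ and likewise for $\partial_t^\alpha u$; by monotonicity of $\norm{\cdot}_{p,q,w}$ and the vector-valued estimate above, it remains to bound $\norm{\big(\sum_k|f_k|^2\big)^{1/2}}_{p,q,w}$. Here all the work is pointwise: $\big(\sum_k|\zeta_k g|^2\big)^{1/2}=|g|\big(\sum_k\zeta_k^2\big)^{1/2}\le|g|$ for any $g$ independent of $k$, while the commutator terms obey $\big(\sum_k|2a^{ij}D_i\zeta_kD_ju|^2\big)^{1/2}\le N\delta^{-1}(\xi R_0)^{-1}K^{1/2}|Du|$ and $\big(\sum_k|(a^{ij}D_{ij}\zeta_k)u|^2\big)^{1/2}\le N\delta^{-1}(\xi R_0)^{-2}K^{1/2}|u|$ by bounded overlap; combining these and using $R_0,\xi\le 1$,
\[
\norm{\big(\textstyle\sum_k|f_k|^2\big)^{1/2}}_{p,q,w}\le N\norm{f}_{p,q,w}+N(\xi R_0)^{-1}\norm{Du}_{p,q,w}+N(\xi R_0)^{-2}\norm{u}_{p,q,w}.
\]
Feeding this back and using the weighted interpolation inequality $\norm{Du}_{p,q,w}\le\ep\norm{D^2u}_{p,q,w}+N\ep^{-1}\norm{u}_{p,q,w}$ (with $N=N(d,q,K_1)$, obtained by applying the corresponding inequality in $L_{q,w_2}(\rr^d)$ for a.e.\ $t$) with $\ep$ chosen so that $N(\xi R_0)^{-1}\ep\le\tfrac12$, one absorbs the $\norm{D^2u}$ term into the left side; the resulting coefficient of $\norm{u}_{p,q,w}$ is then a constant times $R_0^{-2}$, which is \eqref{wlower}. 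When $p=q$ the whole reassembly is simpler: raise everything to the $p$-th power, use $\sum_k\zeta_k^p\le1$ and bounded overlap to sum the scalar estimates for $D^2(u\zeta_k)$ and $f_k$, and then absorb as above.

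The one non-routine ingredient is the $\ell^2$-valued form of Proposition~\ref{4.2}. It should follow from the same proof: the mean oscillation estimates of Propositions~\ref{3.9} and~\ref{4.1} hold for each component $u\zeta_k$, and taking $\ell^2$ in $k$ — passing the averages through the $\ell^2$ norm by Minkowski's inequality — yields the same estimate with $|D^2u|$, $|f|$ replaced by $\big(\sum_k|D^2(u\zeta_k)|^2\big)^{1/2}$, $\big(\sum_k|f_k|^2\big)^{1/2}$; the smallness factor $\xi^{d/q_0}$ is retained because each $u\zeta_k$ individually lives on a ball of radius $\xi R_0$; and one then invokes the $\ell^2$-valued weighted sharp-function and strong maximal function theorems (Fefferman--Stein vector-valued inequalities) in place of their scalar counterparts used in Proposition~\ref{4.2}. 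Apart from this, the argument is bookkeeping of the harmless lower-order and commutator terms together with a standard interpolation/absorption; the point where one must be careful in the genuinely mixed case $p\ne q$ is precisely \emph{not} to reassemble by summing $\ell^p$- or $\ell^q$-norms of the pieces (which would force the constant to depend on the spatial support of $u$), but to keep everything inside a single $\ell^2$ square function until the final absorption.
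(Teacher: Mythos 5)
For the unmixed case $p=q$, your argument is essentially the paper's: a partition of unity $\{\zeta^k\}$ with $\supp\zeta^k\subset B_{c\,\xi R_0}(x_k)$, control of $\sum_k|D\zeta^k|^p$ and $\sum_k|D^2\zeta^k|^p$ by $R_0^{-p}$ and $R_0^{-2p}$, Proposition~\ref{4.2} applied to each $u\zeta^k$, a $p$-th-power summation using bounded overlap, and finally the weighted interpolation inequality in the spatial variables to absorb $\norm{Du}_{p,w}$. Your choice of a partition with $\sum_k\zeta_k\equiv 1$ versus the paper's normalization $1\le\sum_k|\zeta^k|^p\le N$ is immaterial.

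Where you genuinely diverge from the paper is the mixed case $p\neq q$. You propose to reassemble via an $\ell^2$-valued variant of Proposition~\ref{4.2}: take $\ell^2$ in $k$ of the mean-oscillation estimate, push the $\ell^2$ norm through the $L_{p_0}$-averages by Minkowski (valid since $p_0<2$), and then invoke vector-valued weighted Fefferman--Stein and strong-maximal inequalities. This can be made to work, but you are implicitly asserting $\ell^2$-valued extensions of the specific tools used in Proposition~\ref{4.2} --- namely the weighted dyadic sharp-function theorem on the anisotropic cylinders \eqref{cubes} and the weighted strong maximal function theorem on $Q_{r_1,r_2}$ cylinders, each with mixed $L_{p,q,w}$ norms. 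Those extensions are not off-the-shelf and would require redoing a nontrivial amount of the scalar proofs in vector-valued form. The paper instead deduces the $p\ne q$ case directly from the $p=q$ case by Rubio de Francia extrapolation (\cite[Theorem~2.5]{Ap}), which is a genuine shortcut: it needs no vector-valued sharp functions, no square function bookkeeping, and keeps the constants' dependence on $K_1$ transparent. Both routes are sound in principle, but the extrapolation route is what the paper uses and avoids the overhead your $\ell^2$ machinery would carry.
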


\begin{proof}
{\em Case 1.} We first consider the case when $p=q$ and $b^i=c=0$. We {take $\gamma_0$ and $\xi$ from Proposition \ref{4.2}, }and use a partition of unity argument in the spatial variables, and to this end we find $\{x_k\} \subset \rr^d$,
$\zeta^k \geq 0$, $\zeta^k \in C_0^\infty (\rr^d)$, $\supp (\zeta^k) \subset B_{\xi R_0/\sqrt{2}} (x_k)$ and
$$
1 \le \sum_{k=1}^\infty |\zeta^k|^p \le N(p), \quad \sum_{k=1}^\infty |D_x \zeta^k |^p\le NR_0^{-p}, \quad \sum_{k=1}^\infty |D^2_x \zeta^k |^p \leq NR_0^{-2p}
$$
for some $N = N (d,\delta, \alpha, p, K_1)$.
For $u_k(t,x) := u(t,x) \zeta^k (x)$, we have
\begin{align*}
\partial_t^\alpha u_k - a^{ij}D_{ij} u_k  &= (\partial_t^\alpha u) \zeta^k - (a^{ij}D_{ij}u) \zeta^k - 2a^{ij}D_i u D_j \zeta^k - a^{ij} u D_{ij}\zeta^k \\
&= f \zeta^k -2a^{ij}D_i u D_j \zeta^k - a^{ij} u D_{ij}\zeta^k.
\end{align*}
Therefore, by Proposition \ref{4.2},
\begin{align*}
& \norm{\partial_t^\alpha u_k}_{p,w}
+ \norm{D^2 u_k}_{p,w}\\
&\le N
\norm{f \zeta^k}_{p,w} + N\sum_{i,j=1}^d\norm{D_i \zeta^k D_j u }_{p,w}
+ N\sum_{i,j=1}^d \norm{u D_{ij}\zeta^k}_{p,w},
\end{align*}
where $N = N(d, \delta, \alpha, p,K_1)$. By raising to the $p$-th power and summing in $k$, we get
\begin{align*}
\norm{\partial_t ^\alpha u}^p_{p,w} + \norm{D^2u}^p_{p,w} \leq
N_0 \norm{f}^p_{p,w} + {N_0R_0^{-2p}} \norm{u}^p_{p,w}
+ {N_0R_0^{-p}} \norm{Du}^p_{p,w},
\end{align*}
where $N_0 = N_0 (d, \delta, \alpha, p, K_1) $. Therefore, (\ref{wlower}) follows from the weighted interpolation inequality {for} the spatial variables (see for instance, \cite[Lemma {3.8} (iii)]{K&D}).

{\em Case 2.} Then, we consider the case when  $p=q$ without assuming $b^i, c = 0$. In this case, we have
\[
\partial_t^\alpha u
- a^{ij} D_{ij} u
= f + b^i D_i u + cu.
\]
Thus, the result follows from Case 1 and the interpolation inequality {for} the spatial variables.

{\em Case 3.} Finally, for the general case when $p\neq q$, we use the extrapolation theorem in \cite[Theorem 2.5]{Ap}. See also \cite[Theorem 1.4]{extrapolation}. The Corollary is proved.
\end{proof}

To prove \eqref{99999}, we need to get rid of the $u$ term on the right-hand side of \eqref{wlower}. We start with the following Lemma.
\begin{lemma}\label{4.4}
Let $\alpha \in [1,2)$, $T \in (0, \infty)$, $p,q \in (1, \infty)$, $K_1 \in [1, \infty)$, and $[w]_{p,q} \le K_1$.
For any $u \in \hh_{p,q,w,0}^{\alpha, 2} ( \rr^d_T )$, we have
$$\norm{u}_{p,q,w} \le N T^\alpha \norm{\partial_t^\alpha u}_{p,q,w},  $$
where $N = N(\alpha,p,q,K_1)$ and $\norm{\cdot}_{p,q,w} = \norm{\cdot}_{L_{p,q,w} ( \rr^d_T )} $.
\end{lemma}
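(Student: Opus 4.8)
The plan is to identify the inverse of $\partial_t^\alpha$ on the zero-initial-data space as the Riemann--Liouville fractional integral $I_0^\alpha$, and then to bound $I_0^\alpha$ on $L_{p,q,w}(\rr^d_T)$ with operator norm at most $N(\alpha,p,q,K_1)\,T^\alpha$. The estimate then follows at once by applying this bound to $h=\partial_t^\alpha u$.

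First, by the definition of $\hh^{\alpha,2}_{p,q,w,0}(\rr^d_T)$ and the continuity of both $\norm{\cdot}_{p,q,w}$ and $\norm{\partial_t^\alpha\,\cdot}_{p,q,w}$ along the defining sequence, it suffices to prove the inequality for $u\in C^\infty([0,T]\times\rr^d)$ vanishing for large $|x|$ with $u(0,\cdot)=\partial_t u(0,\cdot)=0$. For such $u$ one has $\partial_t^\alpha u=I_0^{2-\alpha}(\partial_t^2 u)$ directly from the definition, and a Fubini computation reducing an iterated kernel integral to a Beta function gives the semigroup identity $I_0^\alpha I_0^{2-\alpha}g=I_0^{2}g$. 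Since $I_0^{2}(\partial_t^2 u)(t)=u(t)-u(0)-t\,\partial_t u(0)=u(t)$, we obtain
\[
u=I_0^\alpha(\partial_t^\alpha u).
\]
Hence the lemma follows once we establish $\norm{I_0^\alpha h}_{p,q,w}\le N(\alpha,p,q,K_1)\,T^\alpha\,\norm{h}_{p,q,w}$ for $h\in L_{p,q,w}(\rr^d_T)$.

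Since $I_0^\alpha$ acts only in the time variable, I would reduce this to a one-dimensional weighted bound: by Minkowski's integral inequality $\norm{(I_0^\alpha h)(t,\cdot)}_{L_{q,w_2}(\rr^d)}\le I_0^\alpha\phi(t)$ with $\phi(s):=\norm{h(s,\cdot)}_{L_{q,w_2}(\rr^d)}\ge 0$, so it is enough to show $\norm{I_0^\alpha\phi}_{L_p(w_1\,dt;(0,T))}\le NT^\alpha\norm{\phi}_{L_p(w_1\,dt;(0,T))}$ for nonnegative $\phi$ and $w_1\in A_p(\rr)$ with $[w_1]_{A_p}\le K_1$. Here the elementary but crucial point is that $\alpha-1\ge 0$, so $(t-s)^{\alpha-1}\le t^{\alpha-1}$ for $0<s<t$; extending $\phi$ by zero outside $(0,T)$ and letting $M$ be the Hardy--Littlewood maximal operator on $\rr$, this gives
\[
I_0^\alpha\phi(t)\le\frac{t^{\alpha-1}}{\Gamma(\alpha)}\int_0^t\phi(s)\,ds\le\frac{N\,t^{\alpha}}{\Gamma(\alpha)}\,M\phi(t)\le\frac{N\,T^{\alpha}}{\Gamma(\alpha)}\,M\phi(t).
\]
The classical weighted maximal function theorem then yields $\norm{M\phi}_{L_p(w_1\,dt)}\le N(p,K_1)\norm{\phi}_{L_p(w_1\,dt)}$, which is the desired bound. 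Combining with $u=I_0^\alpha(\partial_t^\alpha u)$ and passing to the limit along the approximating sequence completes the argument.

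The only step needing any care is the weighted boundedness of $I_0^\alpha$, and it is easy here precisely because $\alpha\ge 1$ makes the kernel $(t-s)^{\alpha-1}$ bounded by $t^{\alpha-1}$ on $\{0<s<t\}$, reducing it to the weighted Hardy--Littlewood maximal estimate; for $\alpha<1$ one would instead have to split the $s$-integral dyadically near $s=t$, which is why the argument is genuinely short only in the range $\alpha\in[1,2)$ treated here.
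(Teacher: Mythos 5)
Your proof is correct and follows the same strategy as the paper: both rest on the identity $u = I^\alpha\partial_t^\alpha u$ followed by a weighted $L_{p,q,w}$ bound on the fractional integral $I^\alpha$. The paper simply cites \cite[Lemma 5.5]{dong20} for that bound, while you give a short self-contained proof of it valid for $\alpha\ge 1$ via the pointwise domination $I_0^\alpha\phi(t)\le N\,t^\alpha M\phi(t)$ and the weighted Hardy--Littlewood maximal function theorem, and you correctly note that the bounded-kernel trick is what makes this elementary route available precisely in the range $\alpha\in[1,2)$.
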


\begin{proof} Note that
$u = I^\alpha \partial_t^\alpha u.$
Therefore, the result follows {from} \cite[Lemma 5.5]{dong20}.
\end{proof}

We are ready to prove Theorem \ref{main}. In the following proof, we use the notation $\norm{\cdot}_{p,q,w} = \norm{\cdot}_{L_{p,q,w} ( \rr^d_T )}$ and $\norm{\cdot}_{p,q,w,(\tau_1,\tau_2)}= \norm{\cdot}_{L_{p,q,w} ( (\tau_1,\tau_2) \times \rr^d )}$.
\begin{proof}[Proof of Theorem \ref{main} ]
We first prove the a priori estimate \eqref{99999}. By Corollary \ref{withl}, it suffices to prove
\[
\norm{u}_{p,q,w} \leq N \norm{f}_{p,q,w}. \stepcounter{equation}\tag{\theequation}\label{absorb}
\]
By extending $u$ and $f$ to be zero for $t <0$, we have
\[
\partial_t^\alpha u - a^{ij} D_{ij} u- b^i D_i u - cu = f \quad \text{in}\quad  (S,T) \times \rr^d
\]
for any $S \leq 0$.

Take a positive integer $m$ to be specified below. For $j = -1,0,\ldots, m-1$, set $s_j = {jT}/{m}$, and for $j\ge 0$, let $\eta_j=\eta_j(t)$ be smooth functions such that
\begin{align*}
\eta_j(t) &=
\begin{cases}
1 \quad \quad \text{when}\quad  t \geq s_j,\\
0 \quad \quad \text{when}\quad  t \leq s_{j-1},
\end{cases}
\quad |\eta_j'| \leq \frac{2m}{T},
\quad |\eta_j''| \leq 4 \frac{m^2}{T^2}. \stepcounter{equation}\tag{\theequation}\label{dd1}
\end{align*}
It follows {from} Lemma \ref{3.1}, $u \eta_j \in \hh_{p,q,w,0}^{\alpha, 2}
( (s_{j-1}, s_{j+1} ) \times \rr^d  )$. Furthermore, similar to the decomposition right below \eqref{01045}, we have
\begin{align*}
\partial_t^\alpha (u \eta_j)
&- a^{ij} D_{ij} (u \eta_j)
- b^i D_i (u \eta_j)
- c(u \eta_j) = f\eta_j + h_j
\end{align*}
in $ (s_{j-1}, s_{j+1} ) \times \rr^d $,
where for $t \in (s_{j-1}, s_{j+1} )$,
\begin{align*}
h_j (t,x)
&= \frac{\alpha (\alpha -1)}
{ \Gamma (2 -\alpha)}
\int_{0}^t
(t-s)^{-\alpha - 1}
[ \eta_j (s) - \eta_j (t) - \eta_j'(t)(s-t)]
u(s,x) \, ds \\
&\quad+ \frac{\alpha}{ \Gamma (2 - \alpha)}
\eta_j'(t)
\partial_t\int_{0}^t
(t-s)^{-\alpha + 1}
u(s,x) \, ds \\
&= \frac{\alpha (\alpha -1)}
{ \Gamma (2 - \alpha)}
\int_{{0}}^t
(t-s)^{-\alpha - 1}
[ \eta_j (s) - \eta_j (t) - \eta_j'(t)(s-t)]
u(s,x)  \chi_{s \leq s_j} \, ds \\
&\quad+ \frac{\alpha}{ \Gamma (2 - \alpha)}
\eta_j'(t)
\partial_t\int_{{0}}^t
(t-s)^{-\alpha + 1}
u(s,x) \, ds.
\end{align*}
For $j=0$, since
$\eta_0' (t) = 0$ for $t\in (s_0, s_1) $, we have $h_0 = 0$.
For $j = 1, 2,\ldots, m-1$, similar to the computation in Proposition \ref{3.3}, we have
\begin{align*}
h_j(t,x)
&= N_1(\alpha)
\int_{-\infty}^t
(t-s)^{-\alpha - 1}
[ \eta_j (s) - \eta_j (t) - \eta_j'(t)(s-t)]
u(s,x)  \chi_{s \leq s_j} \, ds \\
&\quad+ N_1(\alpha)\frac{\eta_j' (t)}
{ \eta_{j-1} (t) }
\int_{-\infty}^t
(t-s)^{-\alpha }
[  \eta_{j-1} (s) - \eta_{j-1}(t)] u(s,x) \chi_{s \leq s_j}  \, ds \\
&\quad+ N_2(\alpha)\frac{\eta_j'(t)}{\eta_{j-1}(t)}
\partial_t^\beta (\eta_{j-1} u )(t,x),
\end{align*}
where $\beta = \alpha -1$.
Thus, by the Minkowski inequality and Lemma \ref{4.4} with the observation that $ \partial_t \partial_t^\beta (\eta_{j-1}u) = \partial_t^\alpha (\eta_{j-1}u)$ in $(s_{j-2}, s_{j})$,
\begin{align*}
&\norm{h_j}_{p,q,w,(s_{j-1}, s_{j+1})}\\
&\leq
N_3 \norm{\eta_j''}_{L_\infty}
\norm{I^{2 - \alpha}
[|u (\cdot, x)| \chi_{\cdot\leq s_j} ] }_{p,q,w,(s_{j-1}, s_{j+1})} \\
&\quad +
N_3 \norm{\eta_j'}_{L_\infty}
\norm{\eta_{j-1}'}_{L_\infty}
\norm{I^{2 - \alpha}
[|u (\cdot, x)| \chi_{\cdot\leq s_j} ] }_{p,q,w,(s_{j-1}, s_{j+1})} \\
&\quad +
N_3 \norm{\eta_j'}_{L_\infty}
\norm{\partial_t^\beta (\eta_{j-1} u) }_{p,q,w,(s_{j-1}, s_{j})} \\
&\leq N_3 m^2 T^{-\alpha} \norm{u }_{p,q,w,(0, s_{j})} + N_3 \norm{\partial_t^{\alpha} (\eta_{j-1}u) }_{p,q,w,(s_{j-2}, s_{j})},
\stepcounter{equation}\tag{\theequation}\label{qqq1}\end{align*}
where $N_3$ is independent of $j$.
By (\ref{wlower}), for $j = 0,1,\ldots, m-1$,
\begin{align*}
&\norm{\partial_t^\alpha (u \eta_j)}_{p,q,w, (s_{j-1}, s_{j+1})}\leq
N_0 \norm{f \eta_j}_{p,q,w,(s_{j-1}, s_{j+1})} \\
&\qquad
+ {N_0R_0^{-2}} \norm{u \eta_j}_{p,q,w,(s_{j-1}, s_{j+1})}
+ N_0 \norm{h_j}_{p,q,w,(s_{j-1}, s_{j+1})},
\end{align*}
where $N_0 = N_0(d, \delta, \alpha, p, q, K_1 )$. In particular, for $j=0$,
\begin{align*}
\norm{\partial_t^\alpha (u \eta_0)}_{p,q,w,(s_{-1}, s_{1})} &\leq
N_0 \norm{f \eta_0}_{p,q,w,(s_{-1}, s_{1})}
+ {N_0R_0^{-2}} \norm{u \eta_0}_{p,q,w,(s_{-1}, s_{1})}. \stepcounter{equation}\tag{\theequation}\label{qqq2}
\end{align*}
For $j=1$, {by \eqref{qqq1} and \eqref{qqq2},}
\begin{align*}
&\norm{\partial_t^\alpha (u \eta_1)}_{p,q,w, (s_{0}, s_{2})} \\
&\leq
N_0 \norm{f \eta_1}_{p,q,w,(s_{0}, s_{2})}
+ {N_0R_0^{-2}} \norm{u \eta_1}_{p,q,w,(s_{0}, s_{2})}
+ N_0 \norm{h_1}_{p,q,w,(s_{-1}, s_{1})} \\
&\leq
N_0 \norm{f \eta_1}_{p,q,w,(s_{0}, s_{2})}
+ {N_0R_0^{-2}} \norm{u \eta_1}_{p,q,w,(s_{0}, s_{2})}
+ N_0
\big[N_3 m^2 T^{-\alpha}  \norm{u}_{{p,q,w,}(0, s_1)}  \\
&\quad + N_3 \norm{\partial_t^\alpha ( u \eta_{0})}_{ (s_{-1}, s_1) }  \big] \\
&\leq
N_0 \norm{f \eta_1}_{p,q,w,(s_{0}, s_{2})}
+ {N_0R_0^{-2}} \norm{u \eta_1}_{p,q,w,(s_{0}, s_{2})}
+ N_0N_3 m^2 T^{-\alpha}  \norm{u}_{p,q,w,(0, s_1)} \\
& \quad + N_0^2N_3  \norm{f \eta_0}_{p,q,w,(s_{-1}, s_{1})}
+ N_0^{2}N_3{R_0^{-2}} \norm{u \eta_0}_{p,q,w,(s_{-1}, s_1 )} \\
&\leq N \norm{f}_{p,q,w,(0, s_{2})} + N \norm{u}_{p,q,w,(0, s_{1})} + N_1 \norm{u}_{p,q,w,(s_{1}, s_{2})},
\end{align*}
where $N_1$ is independent of $m$, and $N$ depends on $m$, but both of them are independent of $j$.
Repeating this process for $j = 2,3,\ldots$, we conclude
\begin{align*}
& \norm{\partial_t^\alpha (u \eta_j) }_{p,q,w,(s_{j-1}, s_{j+1})} \\
&\leq N \norm{f}_{p,q,w,(0, s_{j+1})} + N \norm{u}_{p,q,w,(0, s_{j})} + N_1 \norm{u}_{p,q,w,(s_{j}, s_{j+1})}, \stepcounter{equation}\tag{\theequation}\label{thises}
\end{align*}
where $N_1= N_1(d,\delta,\alpha, p,q,K_1,R_0)$ and $N= N(d,\delta,\alpha, p,q,K_1,R_0,T,m)$.
Using Lemma \ref{4.4} and \eqref{thises}, {we obtain}
\begin{align*}
&\norm{u}_{p,q,w,(s_{j}, s_{j+1})}
\le \norm{u\eta_j}_{p,q,w,(s_{j-1}, s_{j+1})}\\
&\le N
({T}/{m})^{\alpha} \norm{\partial _t^\alpha (u\eta_j)}_{p,q,w,(s_{j-1}, s_{j+1})}\\
&\le N\norm{f}_{p,q,w,(0, s_{j+1})}+ N \norm{u}_{p,q,w,(0, s_{j})}
 + N_1(T/m)^{\alpha}  \norm{u}_{p,q,w,(s_{j-1}, s_{j+1})},
\end{align*}
where $N = N (d,\delta,\alpha, p,q,K_1,R_0,T,m)$.
By taking a sufficiently large $m$ satisfying
$$N_1(T/m)^{\alpha} < 1/2,$$
we arrive at
$$\norm{u}_{p,q,w,(s_{j}, s_{j+1})} \le N\norm{f}_{p,q,w,(0, s_{j+1})} +
N\norm{u}_{p,q,w,(0, s_{j})},$$
where $N = N(d,\delta,\alpha, p,q,K_1,R_0,T)$.
Therefore, (\ref{absorb}) follows from induction. Thus, the a priori estimate \eqref{99999} is proved. The existence of solutions follows the solvability of a simple equation with constant coefficients presented in \cite[Theorem 2.9]{P1} or \cite[Theorem 2.8]{simple} together with the method of continuity.
\end{proof}

\section{Proofs of Theorems \ref{main2} and \ref{nondivdomain}}\label{5}
In this section, we first prove Theorem \ref{main2} for the half space by taking extensions. Then, we prove Theorem \ref{nondivdomain} for smooth domains by using S. Agmon's idea and a partition of unity argument.

\subsection{The half space case}
In this subsection, we consider the case when $\Omega = \rr^d_+$.
Let
\[
(\widetilde{\css \cmm} f ) (t,x)= \sup_{\substack{{Q_{r_1, r_2} (s,y) \cap \Omega_{T} \ni (t,x)}\\ {(s,}y)\in \Omega_{T}}}
\fint_{Q_{r_1, r_2} (s,y) \cap \Omega_{T}}
|f(r,z) |  \, dz \, dr \stepcounter{equation} \tag{\theequation}\label{SM}
\]
and $B_r^+(x_0) := B_r(x_0) \cap \Omega $. Moreover, for $w(t,x) = w_1(t) w_2(x)$, where $w_1 \in A_p(\rr)$ and $w_2 \in A_q(\Omega)$, we take the even extension of $w_2$ to the whole space by setting $\overline{w_2}(x) = w_2( |x^1|, x')$. Furthermore, we denote $\overline{w}(t,x) = w_1(t) \overline{w_2}(x)$.
\begin{remark}
Note that if $[w_2]_{A_q(\Omega)} \le K_1$, then $[\overline{w_2}]_{A_q(\rr^d)} \le 2^q K_1$. Indeed, for any $x \in \rr^d, x^1 \ge0$ and $r>0$, we have
\begin{align*}
&\biggl(
\fint_{B_r(x_0) } w(x) \, dx
\biggr) \biggl(
\fint_{B_r(x_0)} (w(x))^{\frac{-1}{q-1}} \, dx
\biggr)^{q-1} \\
&\le \biggl(
2\fint_{B_r(x_0) \cap \Omega} w(x) \, dx
\biggr) \biggl(
2\fint_{B_r(x_0) \cap \Omega} (w(x))^{\frac{-1}{q-1}} \, dx
\biggr)^{q-1}
\le 2^q K_1.
\end{align*}
The same inequality holds for  $x^1 < 0$ by symmetry.
\end{remark}
\begin{lemma}\label{ext}
For any
$u \in \mathring{\hh}^{\alpha, 2}_{p,q,w,0} (\Omega_T )$,
let $\overline{u}(t,x) := u(t, |x^1|, x') \mathrm{{sgn}} \, x^1$.
Then, $
\overline{u}
\in
 \hh^{\alpha, 2}_{p,q,\overline{w},0} (\rr^d_T )$ and
$$ \norm{u}_{\hh^{\alpha, 2}_{p,q,w} (\Omega_T)} \leq \norm{\overline{u}}_{\hh^{\alpha, 2}_{p,q,\overline{w}} (\rr^d_T)} \leq 2 \norm{u}_{\hh^{\alpha, 2}_{p,q,w} (\Omega_T)}. $$
\end{lemma}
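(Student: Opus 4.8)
The plan is to prove the odd reflection lemma by first establishing the claim for smooth functions and then passing to the limit using the density definition of the spaces. First I would take a defining sequence $\{u_n\} \subset C_0^\infty([0,T]\times\overline{\Omega})$ with $u_n(0,\cdot)=0$, $\partial_t u_n(0,\cdot)=0$, $u_n=0$ on $(0,T)\times\partial\Omega$, converging to $u$ in $\hh^{\alpha,2}_{p,q,w}(\Omega_T)$. For each $n$ set $\overline{u_n}(t,x):=u_n(t,|x^1|,x')\,\mathrm{sgn}\,x^1$. Because $u_n$ vanishes on $\{x^1=0\}$, the odd extension is $C^1$ across the hyperplane, and one computes that $D_{x^1 x^1}\overline{u_n}$ is the odd extension of $D_{x^1 x^1}u_n$, the mixed second derivatives $D_{x^1 x^j}\overline{u_n}$ ($j\ge 2$) are the \emph{even} extensions of the corresponding derivatives of $u_n$, and the tangential second derivatives are again odd extensions; in all cases $|D^i\overline{u_n}(t,x)| = |D^i u_n(t,|x^1|,x')|$ pointwise for $i=0,1,2$. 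Likewise $\partial_t^\alpha \overline{u_n}(t,x) = \partial_t^\alpha u_n(t,|x^1|,x')\,\mathrm{sgn}\,x^1$ since the Caputo derivative acts only in $t$ and commutes with the spatial reflection, so $|\partial_t^\alpha\overline{u_n}(t,x)| = |\partial_t^\alpha u_n(t,|x^1|,x')|$.

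The second step is the norm identity. Since $\overline{w_2}(x)=w_2(|x^1|,x')$ and the integrand $|D^i\overline{u_n}|^q$ (resp.\ $|\partial_t^\alpha\overline{u_n}|^q$) is the even extension of the corresponding quantity on $\Omega$, splitting $\rr^d_T = \{x^1>0\}\cup\{x^1<0\}$ and using the change of variables $x^1\mapsto -x^1$ on the second piece gives
\[
\int_{\rr^d}|D^i\overline{u_n}(t,x)|^q\overline{w_2}(x)\,dx = 2\int_{\Omega}|D^i u_n(t,x)|^q w_2(x)\,dx,
\]
and similarly for $\partial_t^\alpha$. Raising to the $p/q$ power, integrating against $w_1(t)\,dt$, and taking $p$-th roots yields $\norm{D^i\overline{u_n}}_{L_{p,q,\overline{w}}(\rr^d_T)} = 2^{1/q}\norm{D^i u_n}_{L_{p,q,w}(\Omega_T)}$ and the same for $\partial_t^\alpha$. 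Summing over $i=0,1,2$ and adding the $\partial_t^\alpha$ term gives, after using $1 \le 2^{1/q}\le 2$, the two-sided bound
\[
\norm{u_n}_{\hh^{\alpha,2}_{p,q,w}(\Omega_T)} \le \norm{\overline{u_n}}_{\hh^{\alpha,2}_{p,q,\overline{w}}(\rr^d_T)} \le 2\norm{u_n}_{\hh^{\alpha,2}_{p,q,w}(\Omega_T)}.
\]
(The factor $2^{1/q}$ could be carried precisely, but the stated bounds with constant $2$ suffice.)

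The third and final step is the limiting argument. The linear map $u_n\mapsto\overline{u_n}$ sends the Cauchy sequence $\{u_n\}$ in $\hh^{\alpha,2}_{p,q,w}(\Omega_T)$ to a Cauchy sequence $\{\overline{u_n}\}$ in $\hh^{\alpha,2}_{p,q,\overline{w}}(\rr^d_T)$, by the upper bound applied to differences $u_n-u_m$. Hence $\overline{u_n}$ converges to some limit, which one identifies with $\overline{u}$ (defined by the same reflection formula) since reflection is continuous on $L_{p,q,w}$ and $u_n\to u$; the functions $\overline{u_n}$ belong to $C^\infty([0,T]\times\rr^d)$, vanish for large $|x|$, and satisfy $\overline{u_n}(0,\cdot)=0$, $\partial_t\overline{u_n}(0,\cdot)=0$, so by definition $\overline{u}\in\hh^{\alpha,2}_{p,q,\overline{w},0}(\rr^d_T)$. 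Passing to the limit in the two-sided bound for $u_n$ gives the claimed inequality for $u$. The only mild subtlety — and the "main obstacle," though it is routine — is checking that the odd extension of a function vanishing on $\{x^1=0\}$ together with its normal derivative being zero there yields genuinely $C^1$ (indeed that second derivatives behave as the reflections described); this is a standard computation with difference quotients near the hyperplane, and it is precisely why the boundary conditions $u_n=0$ and the definition of $\mathring{\hh}^{\alpha,2}_{p,q,w,0}$ are needed. Everything else is bookkeeping with the explicit $L_{p,q,w}$ norm.
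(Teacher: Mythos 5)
The structure of your argument — odd reflection, pointwise identities for the derivatives, the $2^{1/q}$ norm equality, and a Cauchy argument through the defining sequence — matches what the paper is implicitly relying on via \cite[Lemma 8.2.1]{k}. However, there is one genuine error in the third step: the claim that $\overline{u_n}\in C^\infty([0,T]\times\rr^d)$ is false. The odd reflection of a $C^\infty$ function that merely vanishes on $\{x^1=0\}$ is in general only $C^1$ across the hyperplane; for example, the odd reflection of $(x^1)^2$ is $|x^1|\,x^1$, which is $C^1$ but not $C^2$. So $\{\overline{u_n}\}$ is \emph{not} an admissible defining sequence for $\hh^{\alpha,2}_{p,q,\overline{w},0}(\rr^d_T)$ in the sense of the paper's definition, which requires smooth approximants. (Your observation that $D_{x^1x^1}\overline{u_n}$ is the a.e.\ odd extension of $D_{x^1x^1}u_n$, with no Dirac mass on the hyperplane because $D_{x^1}\overline{u_n}$ is continuous, is correct — but that only puts $\overline{u_n}$ in the right Sobolev class, not in $C^\infty$.)

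This is fixable with one more layer. Since each $\overline{u_n}$ has all the derivatives up to order two and $\partial_t^\alpha\overline{u_n}$ in $L_{p,q,\overline{w}}(\rr^d_T)$, together with zero initial data, you can mollify in $x$: set $\overline{u_n}^\ep=\overline{u_n}\ast_x\zeta^\ep$. Then $\overline{u_n}^\ep$ is $C^\infty$ in the spatial variables (and smooth in $t$, since $u_n$ is), vanishes for large $|x|$, inherits the zero initial conditions, and — using that $\overline{w_2}\in A_q(\rr^d)$ with $[\overline{w_2}]_{A_q}\le 2^q K_1$ as noted in the remark preceding the lemma — converges to $\overline{u_n}$ in $\hh^{\alpha,2}_{p,q,\overline{w}}(\rr^d_T)$ as $\ep\to 0$. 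Hence each $\overline{u_n}\in\hh^{\alpha,2}_{p,q,\overline{w},0}(\rr^d_T)$, and a diagonal argument with your Cauchy estimate then shows $\overline{u}$ lies in the same (closed) space. The remainder of your proof — the identification of the limit with $\overline{u}$, and passing to the limit in the two-sided bound — is correct as written.
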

\begin{proof}
When $w=1$, the proof follows from \cite[Lemma 8.2.1]{k}. For general $w$, the estimates follows from the same proof by using the definitions of $\overline{u}$ and $\overline{w}$.
\end{proof}

\begin{proposition}\label{5.2}
Let $\alpha \in (1, 2)$, $p_0 \in (1, 2)$, $T \in (0, \infty)$, $a^{ij}$ be constants, and  $u \in \mathring{\hh}_{p_0, 0, \mathrm{loc}}^{\alpha, 2} ( \Omega_T )$ satisfy
\[
\partial_t^\alpha u - a^{ij} D_{ij} u = f
\quad \text{in}\quad  \Omega_T.
\]
Then, for any $(t_0, x_0) \in (0,T] \times \Omega$, $r \in (0, \infty)$, and $\kappa \in (0, 1/4)$, we have
\begin{align*}
& ( |D^2 u - (D^2 u )_{(t_0-(\kappa r)^{2/\alpha},t_0) \times B^+_{\delta\kappa r}(x_0))} | )_{
(t_0-(\kappa r)^{2/\alpha},t_0) \times B^+_{\delta \kappa r}(x_0)
} \\
&\quad \leq N \kappa^\sigma  \sum_{k=0}^\infty {2^{-k\alpha}}( |D^2 u |^{p_0} )^{1 / p_0}_{(t_0-2^{k}(r/2)^{2/\alpha}r,t_0)\times B^+_{\delta^{-1}r/2}(x_0)}\\
&\quad \quad+ N \kappa^{- (d+2 /\alpha)/p_0}
\sum_{k=0}^\infty 2^{-\alpha k} (|f|^{p_0})^{1/p_0}_{
( t_0 - 2^k r^{2/\alpha}, t_0 )
\times B^+_{  \delta^{-1} \sqrt{d} r}(x_0)
}\stepcounter{equation} \tag{\theequation}\label{712},
\end{align*}
where $N = N(d, \delta, \alpha, p_0)$ and $ \sigma = \sigma (d, \alpha, p_0)$.
\end{proposition}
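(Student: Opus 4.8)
The plan is to follow the strategy of the proof of Proposition~\ref{3.9}: reduce to the constant-coefficient operator $\partial_t^\alpha-\Delta$ on the half space by a change of variables that fixes the hyperplane $\{x^1=0\}$, and then transfer the whole-space bound \eqref{u} to the half space via an odd reflection across that hyperplane.

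\emph{Reduction to $a^{ij}=\delta^{ij}$.} Taking $(a^{ij})$ to be symmetric and positive definite, I would first apply the shear $y^1=x^1$, $y^j=x^j-(a^{1j}/a^{11})x^1$ for $j\ge2$, which maps $\{x^1>0\}$ onto itself, fixes $\{x^1=0\}$, and removes the mixed second-order terms $D_{1j}$, $j\ge2$; a subsequent diagonal rescaling $z^1=y^1/\sqrt{a^{11}}$, $z'=M^{-1/2}y'$, where $M$ is the (still symmetric positive definite) lower-right block of the transformed matrix, turns the operator into $\partial_t^\alpha-\Delta$. Both maps are bi-Lipschitz with constants controlled by $\delta$, so they preserve $\mathring{\hh}_{p_0,0,\mathrm{loc}}^{\alpha,2}$ and the zero Dirichlet condition, and carry half-balls into sets trapped between comparable half-balls; this is the half-space analogue of Step~4 in the proof of Proposition~\ref{3.9} and accounts for the $\delta$-dependence of $N$ and $\sigma$ in \eqref{712}.

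\emph{Odd reflection and transfer.} Assume now $a^{ij}=\delta^{ij}$, and let $\overline u(t,x):=u(t,|x^1|,x')\,\mathrm{sgn}\,x^1$ and let $\overline f$ be the odd extension of $f$. Since $u=0$ on $\{x^1=0\}$, arguing as in Lemma~\ref{ext} gives $\overline u\in\hh_{p_0,0,\mathrm{loc}}^{\alpha,2}(\rr^d_T)$; moreover the reflection is even on $D_1$, $D_j$ ($j\ge2$) and $\partial_t$ and odd on $D_{11}$, $D_{jj}$, $\partial_t^\alpha$, so $\partial_t^\alpha\overline u-\Delta\overline u=\overline f$ in $\rr^d_T$ --- this is exactly where the elimination of the mixed terms in Step~1 is used. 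Applying Proposition~\ref{3.9} to $\overline u$ with the same $(t_0,x_0)$, $r$ and $\kappa$ controls the mean oscillation of $D^2\overline u$ over the full cylinder. Since $B^+_{\delta\kappa r}(x_0)\subset B_{\delta\kappa r}(x_0)$ with $|B_{\delta\kappa r}(x_0)|\le2|B^+_{\delta\kappa r}(x_0)|$ and $\overline u=u$ on $\{x^1>0\}$, the half-cylinder oscillation on the left of \eqref{712} is bounded by a constant times the full-cylinder oscillation of $D^2\overline u$; and since $|D^2\overline u(t,x)|=|D^2u(t,|x^1|,x')|$ while the reflection of $B_\rho(x_0)\cap\{x^1<0\}$ across $\{x^1=0\}$ is contained in $B^+_\rho(x_0)$ (because $x_0^1\ge0$), one gets $\fint_{B_\rho(x_0)}|D^2\overline u|^{p_0}\le2\fint_{B^+_\rho(x_0)}|D^2u|^{p_0}$ and likewise for $\overline f$ versus $f$. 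Inserting these comparisons into the right-hand side of \eqref{u} yields \eqref{712}; no interior/boundary case distinction is needed, since when $x_0^1$ is large the relevant half-balls are genuine balls and all of the above comparisons become trivial.

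\emph{Main obstacle.} The difficulty is bookkeeping rather than conceptual: one must verify carefully that the shear in Step~1 both removes the cross terms and keeps the half space invariant, and track the deformation of $r$ and of the cylinders under the composite change of variables; and in the reflection step one must keep straight the containments, the measure ratios between half-balls and full balls, and the parity of each component of $D^2\overline u$, so that the gain $\kappa^\sigma$ in the first term on the right of \eqref{u} survives the transfer.
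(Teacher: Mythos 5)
Your proposal is correct and matches the paper's proof of Proposition~\ref{5.2}: the paper also splits into the Laplacian case (odd reflection across $\{x^1=0\}$, then transfer the whole-space estimate \eqref{u} using the same volume comparisons and the containment of the reflected half-ball) and the general case (a change of variables $x\mapsto A^{1/2}O^{-1}x$ with $O$ orthogonal chosen so that the half space is preserved; your explicit shear followed by a block-diagonal rescaling is simply a concrete construction of such a map). One small slip in your parity bookkeeping, which does not affect the argument: since $\overline u$ is odd in $x^1$, the derivatives $D_j\overline u$ for $j\ge 2$ and $\partial_t\overline u$ are odd, not even; what is actually needed and is correct in your write-up is that $D_{11}\overline u$, $D_{jj}\overline u$, and $\partial_t^\alpha\overline u$ are all odd while $D_{1j}\overline u$ for $j\ge 2$ is even, which is precisely why the mixed terms must be eliminated first.
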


\begin{proof}
{\em Case 1.} We first consider the case when $a^{ij} = \delta^{ij}$.
By taking the odd extensions of $u$ and $f$ in the $x^1$ direction, denoted by $\overline{u},\overline{f}$, we see that $\overline{u}$ satisfies the equation
$$ \partial_t^\alpha \overline{u} - \Delta \overline{u} = \overline{f}.$$
Using Lemma \ref{ext} and Proposition \ref{3.9}, we have
\begin{align*}
& ( |D^2 \overline{u} - (D^2 \overline{u} )_{(t_0-(\kappa r)^{2/\alpha},t_0) \times B_{\kappa r}(x_0))} | )_{
(t_0-(\kappa r)^{2/\alpha},t_0) \times B_{ \kappa r}(x_0)
} \\
&\quad \leq N \kappa^\sigma  \sum_{k=0}^\infty {2^{-k\alpha}}( |D^2 \overline{u} |^{p_0} )^{1 / p_0}_{(t_0-2^{k}(r/2)^{2/\alpha}r,t_0)\times B_{\delta^{-1}r/2}(x_0)} \\
&\quad \quad+ N \kappa^{- (d+2 /\alpha)/p_0}
\sum_{k=0}^\infty 2^{-\alpha k} (|\overline{f}|^{p_0})^{1/p_0}_{
( t_0 - 2^k r^{2/\alpha}, t_0 )
\times B_{  \sqrt{d} r}(x_0)
}.
\end{align*}
Then, note that
\begin{align*}
& ( |D^2 u - (D^2 u )_{(t_0-(\kappa r)^{2/\alpha},t_0) \times B^+_{\kappa r}(x_0))} | )_{
(t_0-(\kappa r)^{2/\alpha},t_0) \times B^+_{ \kappa r}(x_0)
} \\
&= ( |D^2 \overline{u} - (D^2 \overline{u} )_{(t_0-(\kappa r)^{2/\alpha},t_0) \times B^+_{\kappa r}(x_0))} | )_{
(t_0-(\kappa r)^{2/\alpha},t_0) \times B^+_{ \kappa r}(x_0)
} \\
&\le 4( |D^2 \overline{u} - (D^2 \overline{u} )_{(t_0-(\kappa r)^{2/\alpha},t_0) \times B_{\kappa r}(x_0))} | )_{
(t_0-(\kappa r)^{2/\alpha},t_0) \times B_{ \kappa r}(x_0)
},
\end{align*}
and
\begin{align*}
\fint_{B_r(x_0)} |\overline{f}|
\leq \frac{1}{|B_r^+|} \int_{B_r(x_0)} |\overline{f}|
\leq  \frac{2}{|B_r^+|}  \int_{B^+_r(x_0)} |f|
\end{align*}
for any $r>0$ and $x_0 \in \Omega$.
Therefore, (\ref{712}) follows when $a^{ij}=\delta^{ij}$.

{\em Case 2.} For general $a^{ij}$'s, without loss of generality, we assume that $A = (a^{ij})$ is symmetric, and let $\widetilde{u} (t,x) = u(t, A^{1/2} x)$. Furthermore, we take an orthogonal matrix $O$ such that $O A^{-1/2}(\Omega) = \Omega$, and let $$\widehat{u}(t,x)= \widetilde{u}(t, O^{-1}x) = u(t,A^{1/2}O^{-1}x) \quad \text{for}\quad  x \in \Omega.$$
It follows that
\[
\partial_t^\alpha \widehat{u}
- \Delta \widehat{u}
=\widehat{f}
\quad \text{in}\quad  \Omega_T.
\]
For $ y_0 := OA^{-1/2} (x_0) $, we have
\[ B_{\delta r}^+ (x_0)
\subset A^{1/2}O^{-1} (B_r^+(y_0))
\subset B_{\delta^{-1}r}^+ (x_0) \]
for any $r> 0$. Therefore, (\ref{712}) for general $a^{ij}$'s follows by applying Case 1 to $\widehat{u}$.
\end{proof}

\begin{proof}[Proof of Theorem \ref{main2}]
Lemma \ref{ext} and Theorem \ref{main} yield the existence and uniqueness of solutions to the simple equation $\partial_t^\alpha u - \Delta u = f $ {in the half space}. Then, with Proposition \ref{5.2} and Remark \ref{ass2.3}, we follow the process in Section 4 with minor modifications by applying the Hardy-Littlewood and Fefferman-Stein theorems in the half space. We omit the details.
\end{proof}

\subsection{The domain case}
In this section, we denote the operator
$$
L = {\partial_t^{\alpha} -a^{ij}D_{ij} - b^iD_i -c}.
$$
Most of this subsection is devoted to {the study of} $L+\lambda$ for $\lambda$ sufficiently large.

\begin{lemma}\label{5.3}
Let $G = \rr_+^d$ or $\rr^d$, $\alpha \in (1,2)$, $T \in (0, \infty)$, $p,q \in (1, \infty)$, $ K_1 \in [1, \infty)$, and $[w]_{p,q,\Omega} \le K_1$.
There exist $\gamma_0 = \gamma_0 (d, \delta, \alpha, p, q, K_1 ) \in (0,1)$ and $\lambda_0 = \lambda_0(d, \delta, \alpha, p, q, K_1, R_0) \geq 1$ such that under Assumption \ref{ass2} $(\gamma_0)$, the following holds.
For any $u \in \mathring{\hh}_{p,q,w,0}^{\alpha, 2} ( (0,T)\times \rr^d_+ )$ or $\hh_{p,q,w,0}^{\alpha, 2} ( \rr^d_T )$ and
any $\lambda \geq \lambda_0$, we have
$$
\norm{u}_{\hh_{p,q,w}^{\alpha,2} ((0,T) \times G )}
+ \lambda \norm{u}_{L_{p,q,w}}
\leq N \norm{Lu + \lambda u}_{L_{p,q,w}},
$$
where $\norm{\cdot}_{p,q,w}
= \norm{\cdot}_{L_{p,q,w} ((0,T) \times G) }$ and $N = N(d, \delta, \alpha, p, q, K_1)$.
\end{lemma}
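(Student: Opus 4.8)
The plan is to reduce the estimate with the spectral parameter $\lambda$ to the already-established estimates on $\rr^d_T$ (Theorem \ref{main}) and on the half space (Theorem \ref{main2}) by a standard Agmon-type trick: introduce an auxiliary time variable so that $\lambda u$ is absorbed into a second-order spatial operator. Concretely, for a solution $u=u(t,x)$ on $(0,T)\times G$, set
\[
\hat u(t,x,y) := u(t,x)\,\zeta(y)\cos(\sqrt{\lambda}\,y),
\]
where $y\in\rr$ is a new variable and $\zeta\in C_0^\infty(\rr)$ is a fixed cutoff, $\zeta\equiv 1$ on $(-1,1)$, supported in $(-2,2)$. Then $\hat u$ satisfies an equation of the form
\[
\partial_t^\alpha \hat u - a^{ij}D_{ij}\hat u - D_{yy}\hat u - b^iD_i\hat u - c\hat u
= (Lu+\lambda u)\,\zeta(y)\cos(\sqrt\lambda y) + (\text{terms with } D\zeta,\,D^2\zeta),
\]
on $(0,T)\times (G\times\rr)$, and the extended leading coefficients (with an extra $1$ in the $yy$ slot) still satisfy Assumptions \ref{ass1} and \ref{ass2}$(\gamma_0)$ with the same $\delta$ (up to a harmless constant) and the same $\gamma_0$, since they are independent of $y$. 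The weight $w(t,x)$ is extended as $w(t,x)\otimes 1$ in $y$ — which lies in $A_q$ in the $(x,y)$ variables with a comparable constant — and the new domain is $G\times\rr$, which is again $\rr^{d+1}$ or a half space, so Theorem \ref{main} (resp. Theorem \ref{main2}) applies to $\hat u$.

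The key steps, in order, are: (i) check that $\hat u$ belongs to the appropriate solution space $\hh^{\alpha,2}_{p,q,\hat w,0}$ (resp. $\mathring{\hh}^{\alpha,2}$) on the extended cylinder — this follows from $u$ being in that space on $(0,T)\times G$ together with smoothness of $\zeta\cos(\sqrt\lambda y)$ and the density definition of the spaces; (ii) apply the a priori estimate of Theorem \ref{main}/\ref{main2} to $\hat u$, obtaining
\[
\norm{\hat u}_{\hh^{\alpha,2}_{p,q,\hat w}} \le N\norm{(Lu+\lambda u)\zeta\cos(\sqrt\lambda y)}_{p,q,\hat w} + N\norm{(\text{error terms with }D\zeta)}_{p,q,\hat w};
\]
(iii) compute the left-hand side: integrating $|\cos(\sqrt\lambda y)|^p$ and $|\sqrt\lambda\sin(\sqrt\lambda y)|^p$ over $y\in(-1,1)$ gives, for $\lambda\ge\lambda_0$ large, lower bounds proportional to $1$ and $\lambda^{p/2}$ respectively, so that $\norm{\hat u}_{\hh^{\alpha,2}}^p \gtrsim \norm{u}_{\hh^{\alpha,2}}^p + \lambda^p\norm{u}_{L_{p,q,w}}^p$ (the $\partial_t^\alpha$, $D^2_x$ terms contribute the $\hh^{\alpha,2}$-norm of $u$; the $D_{yy}$ term, which equals $-\lambda u\zeta\cos(\sqrt\lambda y)$ plus lower-order $D\zeta$ terms, contributes $\lambda\norm{u}_{L_{p,q,w}}$); (iv) on the right-hand side, bound $\norm{(Lu+\lambda u)\zeta\cos(\sqrt\lambda y)}_{p,q,\hat w}\le N\norm{Lu+\lambda u}_{p,q,w}$, and control the error terms involving $D\zeta,D^2\zeta$ by $N\norm{u}_{L_{p,q,w}}+N\norm{D_xu}_{L_{p,q,w}}$ (these come from $y\in(\pm1,\pm2)$ where $\zeta$ is non-constant); (v) absorb these error terms into the left-hand side by taking $\lambda_0$ large — since the left side carries a factor $\lambda\ge\lambda_0$ in front of $\norm{u}_{L_{p,q,w}}$ and we may interpolate $\norm{D_xu}_{L_{p,q,w}}$ between $\norm{u}_{L_{p,q,w}}$ and $\norm{D^2_xu}_{L_{p,q,w}}$ with a small constant, the error is dominated once $\lambda_0=\lambda_0(d,\delta,\alpha,p,q,K_1,R_0)$ is chosen appropriately.

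The main obstacle I expect is the bookkeeping in step (iii)–(v): one must make sure the $D_{yy}\hat u$ term genuinely produces a clean $\lambda\norm{u}_{L_{p,q,w}}$ lower bound (not merely an upper bound), which requires expanding $D_{yy}[\zeta(y)\cos(\sqrt\lambda y)] = -\lambda\zeta\cos(\sqrt\lambda y) + 2\zeta'(y)\cdot(-\sqrt\lambda\sin(\sqrt\lambda y)) + \zeta''(y)\cos(\sqrt\lambda y)$ and observing that the first term, restricted to $y\in(-1/2,1/2)$ say, dominates after integration in $y$ while the cross terms live on $\{\zeta'\ne 0\}\subset\{|y|\ge 1\}$ and are therefore lower-order contributions that get absorbed. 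A secondary technical point is verifying that the extended weight $\hat w(t,x,y)=w(t,x)$ still satisfies $[\hat w]_{p,q,G\times\rr}\le NK_1$ so that the constants from Theorem \ref{main}/\ref{main2} depend only on the stated parameters; this is routine since a weight constant on a factor is an $A_q$ weight on the product with the same (up to dimensional constant) characteristic. Once these are in place, the conclusion
\[
\norm{u}_{\hh^{\alpha,2}_{p,q,w}((0,T)\times G)} + \lambda\norm{u}_{L_{p,q,w}} \le N\norm{Lu+\lambda u}_{L_{p,q,w}}
\]
follows directly, with $N=N(d,\delta,\alpha,p,q,K_1)$ independent of $\lambda\ge\lambda_0$.
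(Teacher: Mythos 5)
Your approach is exactly the paper's (Agmon's idea with the auxiliary variable $y$ and the test function $u(t,x)\zeta(y)\cos(\sqrt{\lambda}y)$), and most of the bookkeeping is right: the expansion of $D_{yy}$, the fact that $\int|\zeta\cos(\sqrt{\lambda}y)|^p\,dy$ is bounded below uniformly in $\lambda$, the extension of the weight by $1$ in $y$, and the preservation of Assumptions \ref{ass1} and \ref{ass2} for the $y$-independent extended coefficients. One simplification: since the extended operator $a^{ij}D_{ij}+D_{yy}$ has no $x$--$y$ cross terms and $a^{ij},b^i,c$ are $y$-independent, the only error terms produced by differentiating the cutoff are $2\sqrt{\lambda}\zeta'(y)\sin(\sqrt{\lambda}y)\,u$ and $\zeta''(y)\cos(\sqrt{\lambda}y)\,u$, with no $D_xu\cdot D_y\zeta$ contribution; the interpolation of $\|D_xu\|$ in your step (v) is therefore superfluous.

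The genuine gap is in step (ii). Invoking Theorem \ref{main} (resp.\ \ref{main2}) on the extended cylinder imports a constant $N=N(d,\delta,\alpha,p,q,K_1,R_0,T)$, and once this is fed through the absorption in step (v) your $\lambda_0$ and the final $N$ necessarily depend on $T$. But the lemma asserts $N=N(d,\delta,\alpha,p,q,K_1)$ (no $T$, no $R_0$) and $\lambda_0=\lambda_0(d,\delta,\alpha,p,q,K_1,R_0)$ (no $T$). The paper obtains the $T$-free constant by applying Corollary \ref{withl} (and its unstated half-space analogue) rather than Theorem \ref{main}: that corollary gives
$\norm{\partial_t^\alpha v}+\norm{D^2v}\le N_0\norm{\widehat f}+N_0R_0^{-2}\norm{v}$
with $N_0=N_0(d,\delta,\alpha,p,q,K_1)$, independent of $T$; the extra lower-order $\norm{v}$ term, which your version does not carry because Theorem \ref{main} has already absorbed it, is precisely a $\norm{u}$ term that the large $\lambda$ eats together with the $\sqrt{\lambda}\norm{u}$ error. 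So replace the citation of Theorem \ref{main}/\ref{main2} by Corollary \ref{withl}/its half-space version and carry the additional $N_0R_0^{-2}\norm{u}$ through the absorption; as written, your argument proves only a weaker statement with $T$- and $R_0$-dependent constants.
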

\begin{proof}
We use an idea originally due to S. Agmon. Denote
$$
(0,T) \times G \times \rr
=\bigl\{ (t, x, y): t \in (0,T), x \in G, y \in \rr \bigr\},$$
and let
\[
v(t,x,y) = u(t,x) \zeta(y) \cos(\sqrt{\lambda}y),
\]
where $\zeta \in C_0^\infty (\rr)$ and $ \zeta {\not\equiv} 0$.
Then, let
$$
\widehat{f}
= \partial_t^\alpha v - \sum_{i,j=1}^d a^{ij} D_{ij} v - {D_y^2 v} -\sum_{i=1}^d b^i D_i v - cv,
$$
and note that
\begin{align*}
{D_y^2 v}
= \big( \zeta'' (y) \cos(\sqrt{\lambda}y) - 2\sqrt{\lambda}\zeta' (y) \sin(\sqrt{\lambda}y) - \lambda\zeta (y)\cos(\sqrt{\lambda}y) \big) u
\stepcounter{equation}\tag{\theequation}\label{quack}.
\end{align*}
Thus,
$$ \widehat{f} = \zeta (y) \cos (\sqrt{\lambda} y) (Lu + \lambda u)+2\sqrt{\lambda}\zeta' (y) \sin(\sqrt{\lambda}y) u-\zeta'' (y) \cos(\sqrt{\lambda}y)u.
$$
By applying Corollary \ref{withl} or a version of it in the half space to $v$ in $(0,T) \times G \times \rr$ with the weight $w(t,x,y): = w(t,x)$ for all $y\in \rr$, there exists $\gamma_0 = \gamma_0 (d, \delta, \alpha, p, q, K_1 ) \in (0,1)$ such that, if $a^{ij}$'s satisfy Assumption \ref{ass2} $(\gamma_0)$, then
\begin{align*}
&\norm{D_y^2 v}_{L_{p,q,w} ((0,T) \times G \times \rr) } \leq \norm{v}_{\hh_{p,q,w}^{\alpha,2} ((0,T) \times G \times \rr)}\\
&\leq N_0 \norm{\widehat{f}}_{L_{p,q,w} ((0,T) \times G \times \rr) } + N_1 \norm{v}_{L_{p,q,w} ((0,T) \times G \times \rr) }\\
&\leq N_0 \{  \norm{\zeta (y) \cos (\sqrt{\lambda} y) (Lu + \lambda u)}_{L_{p,q,w} ((0,T) \times G \times \rr) } \\
&\quad+ \sqrt{\lambda}\norm{\zeta' (y) \sin (\sqrt{\lambda} y)u}_{L_{p,q,w} ((0,T) \times G \times \rr) } \\
&\quad+ \norm{\zeta'' (y) \cos(\sqrt{\lambda}y)u})_{L_{p,q,w} ((0,T) \times G \times \rr) } \} + N_1 \norm{v}_{L_{p,q,w} ((0,T) \times G \times \rr) }\\
&\leq N_0 \norm{Lu + \lambda u}_{p,q,w}
+ N_0(1 + \sqrt{\lambda}) \norm{u}_{p,q,w} + N_1 \norm{u}_{p,q,w}, \stepcounter{equation}\tag{\theequation}\label{9111}
\end{align*}
where $N_0 = N_0(d, \delta, \alpha, p, q, K_1)$ and $N_1 = N_1(d, \delta, \alpha, p, q, K_1, R_0)$.
Moreover, by (\ref{quack}) and the fact that $\int_{\rr} |\zeta(y) \cos(\sqrt{\lambda}y)|^p dy$ is bounded away from $0$ independently of $\lambda$,
\begin{align*}
\norm{\lambda u}_{p,q,w}
&\leq \norm{D_y^2 v}_{L_{p,q,w} ((0,T) \times G \times \rr) } + N(1 + \sqrt{\lambda}) \norm{u}_{p,q,w} \\
&\leq N_0 \norm{Lu + \lambda u}_{p,q,w}
+ N_0(1 + \sqrt{\lambda}) \norm{u}_{p,q,w} + N_1 \norm{u}_{p,q,w}. \stepcounter{equation}\tag{\theequation}\label{9112}
\end{align*}
At this point, we take a sufficiently large $\lambda_0(d, \delta, \alpha, p, q, K_1, R_0) \ge 1$ such that
$$ N_0(1 + \sqrt{\lambda_0}) + N_1< \lambda_0/2.$$
Then, by (\ref{9111}) and (\ref{9112}), for any $\lambda \ge \lambda_0$, we have
$$
\norm{\lambda u}_{p,q,w} \leq N_0 \norm{Lu + \lambda u}_{p,q,w},
$$
which implies
\begin{align*}
&\norm{u}_{\hh_{p,q,w}^{\alpha,2} ((0,T) \times G )}
+ \lambda \norm{u}_{p,q,w} \le N\norm{v}_{\hh_{p,q,w}^{\alpha,2} ((0,T) \times G \times \rr)} + \lambda \norm{u}_{p,q,w}\\
&\le N_0 \norm{Lu + \lambda u}_{p,q,w}
+ N_0(1 + \sqrt{\lambda}) \norm{u}_{p,q,w} + N_1 \norm{u}_{p,q,w} + \lambda \norm{u}_{p,q,w}\\
&\le N_0 \norm{Lu + \lambda u}_{p,q,w} +  \lambda \norm{u}_{p,q,w} \le N_0 \norm{Lu + \lambda u}_{p,q,w}.
\end{align*}
The lemma is proved.
\end{proof}
\begin{proposition}\label{5.4}
Let $\Omega $ {be a domain}, $\alpha \in (1,2)$, $T \in (0, \infty)$, $p,q \in (1, \infty)$, $K_1 \in [1, \infty)$ and $[w]_{p,q,{\Omega}} \le K_1$.
There exist $\gamma_0 = \gamma_0 (d, \delta, \alpha, p, q, K_1 )\in(0,1)$ and $\theta = \theta(d, \delta, \alpha, p, q, K_1) \in (0,{1/3})$  such that under Assumption  \ref{ass2} $(\gamma_0)$ and Assumption {\ref{ass4}} $(\theta)$, the following holds.
There exists $\lambda_1 = \lambda_1(d, \delta, \alpha, p, q, K_1, R_0,R_2) \ge 1$ such that for any $\lambda \ge \lambda_1$ and $g \in L_{p,q,w} (\Omega_T)$, there exists a unique solution $u \in \mathring{\hh}_{p,q,w,0}^{\alpha, 2} ( \Omega_T)$ to
\[
Lu + \lambda u= g  \quad \text{in}\quad  \Omega_T, \stepcounter{equation}\tag{\theequation}\label{725}
\]
and $u$ satisfies
$$\norm{u}_{\hh_{p,q,w}^{\alpha,2} (\Omega_T )} \le N\norm{g}_{L_{p,q,w}(\Omega_T)}, $$
{where $N = N(d, \delta, \alpha, p, q, K_1, R_0,R_2)$.}
Moreover, for any constant $\lambda \ge 0$ and $u \in \mathring{\hh}_{p,q,w,0}^{\alpha, 2} ( \Omega_T)$ satisfying (\ref{725}), we have
\begin{align*}
\norm{u}_{\hh_{p,q,w}^{\alpha,2} (\Omega_T )} \le N(\norm{Lu + \lambda u}_{L_{p,q,w}(\Omega_T)} +
\norm{u}_{L_{p,q,w}(\Omega_T)}), \stepcounter{equation}\tag{\theequation}\label{729}
\end{align*}
where $N = N(d, \delta, \alpha, p, q, K_1, R_0,R_2)$. In particular when $\lambda = 0$, we have
\begin{align*}
\norm{u}_{\hh_{p,q,w}^{\alpha,2} (\Omega_T )} \le N(\norm{Lu}_{L_{p,q,w}(\Omega_T)} +
\norm{u}_{L_{p,q,w}(\Omega_T)}). \stepcounter{equation}\tag{\theequation}\label{gg}
\end{align*}
\end{proposition}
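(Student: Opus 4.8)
The plan is to reduce the domain case to Theorems \ref{main} and \ref{main2} (equivalently, to Lemma \ref{5.3} for $G=\rr^d$ and $G=\rr^d_+$) by a partition of unity, exploiting S. Agmon's device already built into Lemma \ref{5.3} to trade the zeroth order term $\lambda u$ for a second spatial derivative.

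\textbf{A priori estimate.} Fix a small radius $\rho\in(0,R_2]$ to be chosen, cover $\overline\Omega$ by balls $B_\rho(x_k)$, and take a subordinate partition of unity $\{\zeta^k\}\subset C_0^\infty$ with $\supp\zeta^k\subset B_\rho(x_k)$, $\sum_k|\zeta^k|^p\ge1$, $\sum_k|D\zeta^k|^p\le N\rho^{-p}$, $\sum_k|D^2\zeta^k|^p\le N\rho^{-2p}$. For $u_k:=u\zeta^k$ one computes
\[
Lu_k+\lambda u_k = g\zeta^k-2a^{ij}D_iu\,D_j\zeta^k-(a^{ij}D_{ij}\zeta^k+b^iD_i\zeta^k)u .
\]
If $B_\rho(x_k)\cap\partial\Omega=\es$, then, extending $u_k$ by zero, this holds in $\rr^d_T$ and Lemma \ref{5.3} with $G=\rr^d$ applies for $\lambda\ge\lambda_0$. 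If $B_\rho(x_k)$ meets $\partial\Omega$, flatten the boundary inside $B_{R_2}(z_k)$ by the $C^{1,1}$ change of variables $\Phi_k$ furnished by Assumption \ref{ass4} $(\theta)$, whose Jacobian is Lipschitz with constant controlled by $\theta$ and $\delta^{-1}$; since $\partial_t^\alpha$ involves only $t$ it is unchanged, and $\widehat u_k:=u_k\circ\Phi_k^{-1}$ satisfies an equation $\widehat L\widehat u_k+\lambda\widehat u_k=\widehat g_k$ in $(0,T)\times\rr^d_+$. Its leading coefficients remain uniformly elliptic with constant comparable to $\delta$, its lower-order coefficients are bounded by a constant depending only on $d,\delta$, and (using Remark \ref{ass2.3}, the $C^1$-closeness of $\Phi_k$ to a rigid motion when $\theta$ is small, and the oscillation of the Lipschitz Jacobian over balls of radius $\sim\rho$) their mean oscillation on small cylinders is bounded by a quantity that can be made smaller than the threshold $\gamma_0$ of Lemma \ref{5.3} by first taking $\theta$ small and then $\rho$ small. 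Hence Lemma \ref{5.3} in the half space applies to $\widehat u_k$ for $\lambda\ge\lambda_0$; transforming back, using the bi-Lipschitz invariance of the $\hh^{\alpha,2}_{p,q,w}$ norms, gives for every $k$
\[
\norm{u_k}_{\hh_{p,q,w}^{\alpha,2}}+\lambda\norm{u_k}_{L_{p,q,w}}
\le N\bigl(\norm{g\zeta^k}_{L_{p,q,w}}+\norm{Du\,D\zeta^k}_{L_{p,q,w}}+\norm{u\,D^2\zeta^k}_{L_{p,q,w}}\bigr).
\]
Proceeding as in Corollary \ref{withl} (first $p=q$, then $p\neq q$ by the extrapolation theorem), raising to the $p$-th power, summing over $k$, and absorbing $\norm{D^2u}$ by the weighted interpolation inequality $\norm{Du}_{L_{p,q,w}}\le\ep\norm{D^2u}_{L_{p,q,w}}+N\ep^{-1}\norm{u}_{L_{p,q,w}}$, we obtain
\[
\norm{u}_{\hh_{p,q,w}^{\alpha,2}(\Omega_T)}+\lambda\norm{u}_{L_{p,q,w}(\Omega_T)}
\le N_2\norm{g}_{L_{p,q,w}(\Omega_T)}+N_2\rho^{-2}\norm{u}_{L_{p,q,w}(\Omega_T)} ,
\]
where $g=Lu+\lambda u$ and $N_2,\rho$ depend only on the admissible parameters. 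Setting $\lambda_1:=\max\{\lambda_0,2N_2\rho^{-2}\}$, for $\lambda\ge\lambda_1$ the last term is absorbed into $\lambda\norm{u}_{L_{p,q,w}}$, which proves the estimate for $\lambda\ge\lambda_1$. For general $\lambda\ge0$: if $\lambda\ge\lambda_1$ this is what was just shown, while for $0\le\lambda<\lambda_1$ we write $Lu+\lambda_1u=(Lu+\lambda u)+(\lambda_1-\lambda)u$ and apply the $\lambda_1$-estimate to obtain \eqref{729}; \eqref{gg} is the case $\lambda=0$.

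\textbf{Solvability and uniqueness.} Uniqueness is immediate from \eqref{729}. For $\lambda\ge\lambda_1$, existence follows by the method of continuity along $L^{(s)}=sL+(1-s)(\partial_t^\alpha-\Delta)$, $s\in[0,1]$, since each $L^{(s)}+\lambda$ satisfies the hypotheses with the same constants, so the above a priori estimate holds uniformly in $s$; it therefore suffices to solve the model equation $\partial_t^\alpha u-\Delta u+\lambda u=g$ in $\Omega_T$ with zero initial and Dirichlet data, which is done by expanding $g$ in the Dirichlet eigenfunctions of $-\Delta$ on $\Omega$ and solving each scalar ODE $\partial_t^\alpha\phi+(\mu_n+\lambda)\phi=g_n$, $\phi(0)=\phi'(0)=0$, with the Mittag-Leffler kernel exactly as in Lemma \ref{3.6}; the resulting $L_2$-solution is then upgraded to $\mathring{\hh}_{p,q,w,0}^{\alpha,2}(\Omega_T)$ by the a priori estimate applied to the model operator. (Alternatively, a parametrix built by patching the half-space and whole-space solution operators with $\{\zeta^k\}$ has error of operator norm $<1$ for $\lambda$ large, which also yields invertibility of $L+\lambda$.)

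\textbf{Main obstacle.} The crux is the boundary flattening: one must check that after the $C^{1,1}$ change of variables the transformed problem genuinely falls within the scope of Lemma \ref{5.3}, i.e.\ the local domain becomes the half space, the ellipticity and the $L_\infty$ bounds on the (new) lower-order coefficients are preserved up to constants depending only on $d,\delta$, and the mean oscillation of the new leading coefficients falls below the \emph{fixed} threshold $\gamma_0$ of Lemma \ref{5.3}. This is precisely where the smallness of $\theta$ (hence $C^1$-closeness of $\Phi_k$ to a rigid motion) and the $\sup|D^2\phi|\le\delta^{-1}$ part of Assumption \ref{ass4} are indispensable, and it dictates that the localization radius $\rho$ — and therefore $\rho^{-2}$ and the final threshold $\lambda_1$ — be chosen in the right order relative to the already-fixed $\lambda_0$ and $\gamma_0$ of Lemma \ref{5.3}.
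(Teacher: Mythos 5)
Your a priori estimate is essentially the paper's: localize with a partition of unity, flatten the boundary near $\partial\Omega$ by the $C^{1,1}$ map from Assumption \ref{ass4}, verify the transformed coefficients still have small mean oscillation (this is where $\theta$ small and $\sup|D^2\phi|\le\delta^{-1}$ are used, cf.~\eqref{12281}), apply Lemma \ref{5.3} in the whole space or half space, sum in $k$, interpolate, and absorb with $\lambda$ large. That part is sound and matches the paper.

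The gap is in your existence argument. You invoke the method of continuity connecting $L$ to the model operator $\partial_t^\alpha-\Delta$ on the \emph{same domain} $\Omega$, and then propose to solve $\partial_t^\alpha u-\Delta u+\lambda u=g$ in $\mathring\hh^{\alpha,2}_{p,q,w,0}(\Omega_T)$ by a Dirichlet-eigenfunction expansion plus ``upgrading the $L_2$-solution by the a priori estimate.'' This last step is circular: the a priori estimate \eqref{729} is conditional — it controls the norm of a function \emph{already known} to lie in $\mathring\hh^{\alpha,2}_{p,q,w,0}(\Omega_T)$ and solve the equation; it cannot promote an $L_2$-solution into that space. On a general $C^{1,1}$ domain there is no reflection trick that reduces $\partial_t^\alpha-\Delta$ to the whole space (unlike the cube in Lemma \ref{3.6} or the half space in the proof of Theorem \ref{main2}), so the $L_{p,q,w}$-solvability of the model problem on $\Omega$ is exactly as hard as the proposition you are proving. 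Your parenthetical alternative — building a parametrix by patching the whole-space and half-space inverses $R_\lambda^k$ and showing the error $\sum_k L^k R_\lambda^k(\zeta^k\,\cdot)$ has operator norm $<1$ for $\lambda$ large — is the correct repair and is precisely what the paper does: it shows (Lemma \ref{A5}) that $v\mapsto\sum_k\zeta^kR_\lambda^k(\zeta^kg-L^kv)$ is a contraction on $\mathring\hh^{0,1}_{p,w,0}(\Omega_T)$, and (Lemma \ref{A4}) that the fixed point is a strong solution in $\mathring\hh^{\alpha,2}_{p,w,0}(\Omega_T)$, so no model problem on $\Omega$ is ever solved. If you intend the continuity route, the continuity path must instead start from an operator for which existence has already been established — for instance the patched parametrix operator itself — not from $\partial_t^\alpha-\Delta$ on $\Omega$.
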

\begin{proof}
The proof is based on a partition of unity argument in the spatial variables together with the results for equations in the whole space and the half space. When $p=q$, we construct a weak solution to (\ref{725}) in $\mathring{\hh}_{p,w,0}^{0, 1} ( \Omega_T)$, and prove that the weak solution is a strong solution in  $\mathring{\hh}_{p,q,w,0}^{\alpha, 2} ( \Omega_T)$ when  $\lambda$ is sufficiently large.
See Lemma \ref{A3}-\ref{A5}  for details.

When $p \neq q$, we use the extrapolation theorem in \cite[Theorem 2.5]{Ap}.
\end{proof}

\begin{proof}[Proof of Theorem \ref{nondivdomain}]
By Proposition \ref{5.4} together with the method of continuity, it suffices to prove the a priori estimate \eqref{jj1}. {Thus,} it remains to get rid of the $u$ term on the right-hand side of (\ref{gg}). We follow the proof of (\ref{4.4}) to conclude $$\norm{u}_{L_{p,q,w}(\Omega_T)} \le N \norm{Lu}_{L_{p,q,w}(\Omega_T)},$$
{which leads to}
$$\norm{u}_{\hh_{p,q,w}^{\alpha,2} (\Omega_T )} \le N \norm{Lu}_{L_{p,q,w}(\Omega_T)},$$
{where $N = N(d,\delta,\alpha, p,q,K_1,R_0,R_2,T)$.}
The theorem is proved.
\end{proof}

\section{Equations in divergence form}\label{6}
In this section, we are going to prove Theorem{s} \ref{6main}, \ref{main4}, and \ref{divdomain} using the results for non-divergence form equations.

We pick $\zeta \in C_0^\infty(\rr^d)$ satisfy{ing} $\supp(\zeta)\subset B_1$ and $\int_{\rr^d}\zeta =1$, and let $\zeta^\ep(\cdot) = \ep^{-d}\zeta(\cdot/\ep)$. Furthermore, we denote  $g = (g^1,\ldots, g^d)$, where $g^i$ are real valued functions for all $i$, and denote the mollifier of $u = u(t,x)$ in the spatial variables by
$$u^\ep(t,x) = u \ast_x \zeta^\ep = \int_{\rr^d} u (t,y) \zeta^\ep (x-y) \, dy. $$

\subsection{The whole space case} In this subsection, we prove Theorem \ref{6main} by following a similar procedure in Section \ref{4}, where we first consider the case when the leading coefficients are constants and then the general case using a partition of unity argument. Moreover, we denote $\norm{\,\cdot\,}_{p,q,w} = \norm{\,\cdot\,}_{L_{p,q,w} ( \rr^d_T )} $.
\begin{proposition}\label{8.29}
Let $\alpha \in (1,2)$, $T \in (0, \infty)$, $p,q \in (1, \infty)$, $K_1 \in [1, \infty)$, and $[w]_{p,q} \le K_1$.
For any $g_i \in L_{p,q,w} ( \rr^d_T )$, there exists a unique $u \in \chh_{p,q,w,0}^{\alpha, 1} ( \rr^d_T )$ to
\begin{align*}
\partial_t^\alpha u - \Delta u = D_i g_i \quad \text{in}\quad  \rr^d_T  \stepcounter{equation}\tag{\theequation}\label{68.12},
\end{align*}
and $u$ satisfies
\begin{align*}
    \norm{\partial_t^\alpha u}_{\hh^{-1}_{p,q,w}(\rr^d_T)}
+ \norm{u}_{p,q,w}
+ \norm{Du}_{p,q,w}
\leq N \norm{g}_{p,q,w}, \stepcounter{equation}\tag{\theequation}\label{68.13}
\end{align*}
where $N = N (d,\delta,\alpha,p,q,K_1,T)$.
\end{proposition}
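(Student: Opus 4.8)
The plan is to derive \eqref{68.12} from the non-divergence result of Theorem \ref{main}. For existence, given $g_i \in L_{p,q,w} ( \rr^d_T )$, I would apply Theorem \ref{main} with $a^{kl} = \delta^{kl}$ and $b^k = c = 0$ (constant leading coefficients trivially satisfy Assumption \ref{ass2} $(\gamma_0)$) to obtain, for each $i$, a unique $v_i \in \hh_{p,q,w,0}^{\alpha, 2} ( \rr^d_T )$ solving $\partial_t^\alpha v_i - \Delta v_i = g_i$ with $\norm{v_i}_{\hh_{p,q,w}^{\alpha, 2}} \le N \norm{g_i}_{p,q,w}$, and then set $u := \sum_{i=1}^d D_i v_i$. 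Applying $D_i$ to the $i$-th equation and summing shows $\partial_t^\alpha u - \Delta u = D_i g_i$ in the weak sense; moreover $\norm{u}_{p,q,w} + \norm{Du}_{p,q,w} \le N \sum_i \norm{v_i}_{\hh_{p,q,w}^{\alpha, 2}} \le N \norm{g}_{p,q,w}$, and, rewriting the equation as $\partial_t^\alpha u = D_j ( D_j u + g_j )$ with $D_j u + g_j \in L_{p,q,w}$, we get $\partial_t^\alpha u \in \hh^{-1}_{p,q,w}$ with $\norm{\partial_t^\alpha u}_{\hh^{-1}_{p,q,w}} \le \norm{Du}_{p,q,w} + \norm{g}_{p,q,w}$. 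Collecting these bounds yields \eqref{68.13}.

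To confirm that the constructed $u$ lies in $\chh_{p,q,w,0}^{\alpha, 1} ( \rr^d_T )$, take defining sequences $v_{i,n} \in C^\infty$ vanishing for large $|x|$ with $v_{i,n}(0,\cdot) = \partial_t v_{i,n}(0,\cdot) = 0$ and $v_{i,n} \to v_i$ in $\hh_{p,q,w}^{\alpha, 2}$. Then $u_n := D_i v_{i,n}$ inherits the same vanishing properties, $u_n \to u$ and $Du_n \to Du$ in $L_{p,q,w}$, and since $g_{i,n} := \partial_t^\alpha v_{i,n} - \Delta v_{i,n} \to g_i$ in $L_{p,q,w}$, the identity $\partial_t^\alpha u_n = D_j ( D_j u_n + g_{j,n} )$ gives $\partial_t^\alpha u_n \to \partial_t^\alpha u$ in $\hh^{-1}_{p,q,w}$. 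Hence $u_n \to u$ in $\chh_{p,q,w}^{\alpha, 1}$, so $u \in \chh_{p,q,w,0}^{\alpha, 1}$.

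For uniqueness it is enough, by linearity, to show that every $u \in \chh_{p,q,w,0}^{\alpha, 1} ( \rr^d_T )$ with $\partial_t^\alpha u - \Delta u = 0$ vanishes identically; I would argue by duality against the non-divergence equation. For $h \in C_0^\infty ( (0,T) \times \rr^d )$, the time reversal $s = T - t$ turns the backward adjoint problem (the right fractional derivative minus $\Delta$ equals $h$, with zero terminal data at $t = T$) into a forward problem with zero initial data and right-hand side in $L_{p',q',w'} ( \rr^d_T )$, where $w_1' = w_1^{-1/(p-1)} \in A_{p'}(\rr)$ and $w_2' = w_2^{-1/(q-1)} \in A_{q'}(\rr^d)$; by Theorem \ref{main} this forward problem is solvable, and undoing the reversal yields $\phi$ solving the adjoint problem with $\phi(T,\cdot) = \partial_t\phi(T,\cdot) = 0$. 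Pairing the equation for $u$ against $\phi$ and integrating by parts in $t$ and $x$ — the temporal boundary terms at $t = 0$ and $t = T$ dropping out because of the zero initial data for $u$ and the zero terminal data for $\phi$ — gives $\int_0^T \int_{\rr^d} u\,h = 0$, and since $h$ is arbitrary, $u \equiv 0$. The main technical obstacle is precisely this last step: justifying that $\phi$ is an admissible test function in the weak formulation satisfied by $u$ (the paper's test class consists of functions vanishing near $t = T$, whereas $\phi$ vanishes only at $t = T$) and that the fractional integration by parts in time leaves no residual boundary contribution, which is handled by combining the density of smooth functions with vanishing data at $t = 0$ in $\chh_{p,q,w,0}^{\alpha, 1}$ with the corresponding density in the dual space of functions with vanishing data at $t = T$.
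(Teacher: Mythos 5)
Your existence construction---solve $\partial_t^\alpha v_i - \Delta v_i = g_i$ by Theorem~\ref{main}, set $u = \sum_i D_i v_i$, and read the $\hh^{-1}$ bound off the rewritten equation---is exactly the paper's argument, and your verification that $u \in \chh^{\alpha,1}_{p,q,w,0}$ via the defining sequences $v_{i,n}$ is correct and essentially how the paper handles it.

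For uniqueness you diverge from the paper, and the divergence is where the trouble lies. The paper's argument is very short: given $u \in \chh^{\alpha,1}_{p,q,w,0}$ with $\partial_t^\alpha u - \Delta u = 0$, mollify in space. Since $\partial_t^\alpha u = \operatorname{div} g + f$ with $f, g \in L_{p,q,w}$, one has $\partial_t^\alpha u^\ep = g_i \ast D_i\zeta^\ep + f\ast\zeta^\ep \in L_{p,q,w}$, while $D^2 u^\ep \in L_{p,q,w}$ automatically, so $u^\ep \in \hh^{\alpha,2}_{p,q,w,0}$; a Fubini computation shows $u^\ep$ is a strong solution of $\partial_t^\alpha u^\ep - \Delta u^\ep = 0$, hence $u^\ep = 0$ by Theorem~\ref{main}, and $\ep \to 0$ finishes. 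Your duality route bypasses this, but you have in effect conceded the gap yourself in the final sentences: the adjoint solution $\phi$ obtained from Theorem~\ref{main} after time reversal is not in the test class $C_0^\infty([0,T)\times\rr^d)$ used to define the weak formulation (it vanishes at $t=T$, not near $t=T$, and it is a Sobolev solution, not a smooth one), so the pairing $\int u h = 0$ does not follow from the definition until one proves a nontrivial density statement and justifies the fractional integration by parts without residual boundary contributions. Those are real obstacles---the fractional kernel makes the temporal integration by parts genuinely delicate and requires tracking the $I^{2-\alpha}_0$ / $I^{2-\alpha,-}_T$ adjointness carefully---and resolving them would take more effort than the entire direct argument. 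The duality approach \emph{could} be made to work (your time reversal of $I^{2-\alpha,-}_T \partial_t^2$ into $\partial_\tau^\alpha$ is in fact correct, and the $A_{p'}$/$A_{q'}$ dual-weight claim is standard), but as written the key step is asserted, not proved. Replacing it with the spatial-mollification argument above closes the gap cleanly.
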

\begin{proof}
By Theorem \ref{main}, for $i = 1, \ldots,d$, there exist $v_i \in  \hh_{p,q,w,0}^{\alpha, 2} ( \rr^d_T )$ satisfying $$ \partial_t^\alpha v_i - \Delta v_i = g_i,$$
and $$\norm{Dv_i}_{p,q,w} + \norm{D^2v_i}_{p,q,w} \le N \norm{g_i}_{p,q,w}, $$
where $N = N (d,\delta,\alpha,p,q,K_1,T)$.
It follows that $$u := \sum_{i=1}^d D_iv_i \in \chh_{p,q,w,0}^{\alpha, 1} ( \rr^d_T )$$ is a solution of (\ref{68.12}). Moreover,
$$\norm{u}_{p,q,w} + \norm{Du}_{p,q,w} \le  \sum_{i=1}^d \norm{Dv_i}_{p,q,w} +  \sum_{i=1}^d  \norm{D^2 v_i}_{p,q,w} \le N\norm{g}_{p,q,w},$$
which together with the equation \eqref{68.12} implies (\ref{68.13}).

To prove the uniqueness of the solution, we consider $u \in \chh_{p,q,w,0}^{\alpha, 1} (\rr^d_T )$
satisfying
$$\partial_t^\alpha u - \Delta u = 0.$$
It suffices to prove $u=0$. We have $u^\ep =  u \ast_x \zeta^\ep \in \hh_{p,q,w,0}^{\alpha, 2}( \rr^d_T) $ and
\[\partial_t^\alpha u^\ep - \Delta u^\ep = 0.\stepcounter{equation}\tag{\theequation}\label{jj2}\]
Indeed, by the Fubini theorem,
\begin{align*}
I^{2 - \alpha} u^\ep (t,x) &= \frac{1}{\Gamma (2 -\alpha) } \int_0^t (t-s)^{1- \alpha}
u^\ep (s,x) \, ds = \int_{\rr^d} I^{2- \alpha} u (t,y) \zeta^{\ep} (x-y) \, dy.
\end{align*}
Thus, for any $\phi \in C_0^\infty ([0,T) \times \rr^d)$ and
$ \widetilde{\phi} (t, x) := \int_{\rr^d} \phi (t,y) \zeta^\ep (y - x) \, dy$, we have
\begin{align*}
&\int_0^T\int_{\rr^d}I^{2-\alpha} u^\ep (t,x) \phi_{tt} (t,x) + \int_0^T\int_{\rr^d}D_i u^\ep D_i \phi \\
&= \int_0^T\int_{\rr^d}I^{2-\alpha} u(t,y) \left( \int_{\rr^d} \phi_{tt} \zeta^\ep (x-y) \, dx \right) \, dy dt \\
&\quad \quad + \int_0^T\int_{\rr^d}\int_{\rr^d} D_i u(t,y) \zeta^\ep (x-y) \, dy D_i \phi(t,x) \, dx dt \\
&= \int_0^T\int_{\rr^d}I^{2-\alpha} u(t,y) \left( \int_{\rr^d} \phi_{tt} \zeta^\ep (x-y) \, dx \right) \, dy dt\\
&\quad \quad + \int_0^T\int_{\rr^d}D_i u(t,y) \int_{\rr^d} D_i \phi(t,x) \zeta^\ep (x-y) \, dx dy dt \\
&= \int_0^T\int_{\rr^d}I^{2-\alpha} u \widetilde{\phi}_{tt} + \int_0^T\int_{\rr^d}D_i u D_i \widetilde{\phi}= 0.
\end{align*}
We arrive at \eqref{jj2}. Therefore, $u^\ep=0$ by \eqref{jj2} and Theorem \ref{main}. Finally, by sending $\ep \to 0$, we arrive at $u=0$. The proposition is proved.
\end{proof}

The mean oscillation estimate follows immediately from the construction of the solution in Proposition \ref{8.29}.
\begin{proposition}\label{63.9}
Let $\alpha \in (1,2)$, $p_0 \in (1, 2)$, $T \in (0, \infty)$, $a^{ij}$ be constant and $u \in \chh_{p_0, 0, \mathrm{loc}}^{\alpha, 1} ( \rr^d_T )$ satisfy
\[
 \partial_t^\alpha u - D_i(a^{ij}D_j u) = D_i g_i
\quad \text{in}\quad  \rr^d_T.
\]
Then, for any $(t_0, x_0) \in (0,T] \times \rr^d,  r \in (0, \infty)$, and $\kappa \in (0, 1/4)$, we have
\begin{align*}
& ( |D u - (D u )_{(t_0-(\kappa r)^{2/\alpha},t_0) \times B_{\delta\kappa r}(x_0))} | )_{
(t_0-(\kappa r)^{2/\alpha},t_0) \times B_{\delta \kappa r}(x_0)
} \\
&\quad \leq N \kappa^\sigma  \sum_{k=0}^\infty {2^{-k\alpha}}( |D u |^{p_0} )^{1 / p_0}_{(t_0-2^{k}(r/2)^{2/\alpha}r,t_0)\times B_{\delta^{-1}r/2}(x_0)}\\
&\quad \quad+ N \kappa^{- (d+2 /\alpha)/p_0}
\sum_{k=0}^\infty 2^{-\alpha k} (|g|^{p_0})^{1/p_0}_{
( t_0 - 2^k r^{2/\alpha}, t_0 )
\times B_{  \delta^{-1} \sqrt{d} r}(x_0),
}
\end{align*}
where $N = N(d, \delta, \alpha, p_0)$ and $\sigma = \sigma (d, \alpha, p_0)$.
\end{proposition}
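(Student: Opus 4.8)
The plan is to adapt the proof of Proposition \ref{3.9} almost verbatim, with $D$ in place of $D^2$ throughout, using the observation underlying Proposition \ref{8.29}: if $W_i$ solves the \emph{non}-divergence equation $\partial_t^\alpha W_i - a^{kj}D_{kj}W_i = g_i$, then $\sum_i D_i W_i$ solves $\partial_t^\alpha u - D_k(a^{kj}D_j u) = D_i g_i$. As in Step 4 of the proof of Proposition \ref{3.9}, after symmetrizing we may reduce the positive definite constant matrix $(a^{ij})$ to the identity by a linear change of variables (the ellipticity comparing $B_{\delta r}$ and $B_{\delta^{-1}r}$ as there), so it suffices to treat $a^{ij}=\delta^{ij}$; I also shift and scale so that $x_0=0$, $r=1$, and fix the cube $\Omega=(-1,1)^d$ with $B_1\subset\Omega\subset B_{\sqrt{d}}$. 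The decomposition is $u=\tilde u+v$, where $\tilde u$ carries the right-hand side and $v$ is the homogeneous part.

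For the inhomogeneous part, I would use Lemma \ref{3.7} componentwise to produce $W_i\in\mathring{\hh}^{\alpha,2}_{p_0,0}((0,t_0)\times\Omega)$ with $W_i=0$ on the lateral boundary and $\partial_t^\alpha W_i-\Delta W_i=g_i$ in $(0,t_0)\times\Omega$, and set $\tilde u:=\sum_i D_i W_i$. Then $\tilde u$ solves $\partial_t^\alpha\tilde u-\Delta\tilde u=D_ig_i$ weakly in $(0,t_0)\times\Omega$ (the identity $\partial_t^\alpha\tilde u=\sum_iD_i\partial_t^\alpha W_i$ being justified by a spatial mollification, exactly as in Proposition \ref{8.29}). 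Since $|D\tilde u|\le N\sum_i|D^2W_i|$, Proposition \ref{3.8} applied to each $W_i$ gives, for every $t_1\le t_0$,
\[
(|D\tilde u|^{p_0})^{1/p_0}_{Q_{1/2}(t_1,0)}\le N\sum_{k=0}^\infty 2^{-\alpha k}(|g|^{p_0})^{1/p_0}_{(s_{k+1},s_k)\times B_{\sqrt{d}}},\quad s_k=t_1-2^k+1,
\]
and crucially no $Du$ term appears here.

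For the homogeneous part, $v:=u-\tilde u$ solves $\partial_t^\alpha v-\Delta v=0$ weakly in $(0,t_0)\times\Omega$, and since $u$ and each $W_i$ have zero initial conditions, so does $v$. Mollifying in the spatial variables, $v^\ep:=v\ast_x\zeta^\ep$ still solves the homogeneous equation and, being smooth in $x$, satisfies $\partial_t^\alpha v^\ep-\Delta v^\ep=0$ in the strong sense with zero initial conditions; by the bootstrap of Step 4 of the proof of Proposition \ref{3.3} it lies in $\hh^{\alpha,2}_{\widetilde p_0,0,\mathrm{loc}}$ for $\widetilde p_0$ as large as needed, and the same holds for $Dv^\ep$, which also solves \eqref{9999}. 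Applying to $Dv^\ep$ the Hölder-on-the-solution estimate from Step 3 of the proof of Proposition \ref{3.3} (i.e.\ \eqref{1102} with $Dv^\ep$ as the solution), rescaled, and then letting $\ep\to0$, I obtain
\[
[Dv]_{C^{\sigma\alpha/2,\sigma}(Q_{1/4}(t_1,0))}\le N\sum_{j=1}^\infty j^{-(1+\alpha)}(|Dv|^{p_0})^{1/p_0}_{Q_{1/2}(t_1-(j-1)2^{-2/\alpha},0)}
\]
for a suitable $\sigma=\sigma(d,\alpha,p_0)\in(0,1)$.

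Assembling, for $\kappa\in(0,1/4)$ one bounds $(|Du-(Du)_{Q_\kappa(t_0,0)}|)_{Q_\kappa(t_0,0)}$ by $(|Dv-(Dv)_{Q_\kappa(t_0,0)}|)_{Q_\kappa(t_0,0)}+N\kappa^{-(d+2/\alpha)/p_0}(|D\tilde u|^{p_0})^{1/p_0}_{Q_{1/2}(t_0,0)}$, estimates the first term by $N\kappa^\sigma[Dv]_{C^{\sigma\alpha/2,\sigma}(Q_{1/4})}$, inserts the two displays above, uses $|Dv|\le|Du|+|D\tilde u|$, and then runs the elementary $\ell_{p_0}$ bookkeeping of \eqref{iyy4} (to pass from $\sum_j j^{-(1+\alpha)}$ over shifted cylinders to $\sum_k 2^{-k\alpha}$ over expanding time slabs) together with the double-sum rearrangement of \eqref{12051}–\eqref{12052} (to absorb the $D\tilde u$ contributions into the $g$-term). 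Undoing the scaling and the change of variables then yields the claimed estimate. The step I expect to require the most care is the regularity bookkeeping for the homogeneous part: verifying that $v$, and hence $v^\ep$ and $Dv^\ep$, genuinely belong to the space where Proposition \ref{3.3} applies — that the zero initial conditions survive the decomposition and the mollification, and that the mollified homogeneous divergence-form solution coincides with a strong solution of the non-divergence form equation — whereas everything else is a transcription of Section \ref{3}.
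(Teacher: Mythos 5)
Your proposal is correct and spells out precisely what the paper's terse proof --- ``the result follows from the uniqueness of solution in Propositions \ref{8.29} and \ref{3.9}'' --- intends: on the scaled cube $\Omega$, construct $\tilde u = \sum_i D_i W_i$ by solving the non-divergence Dirichlet problems via Lemma \ref{3.7}, bound $D\tilde u$ through Proposition \ref{3.8} applied to each $W_i$ so that no $Du$ term enters that piece, handle the homogeneous remainder $Dv$ via the H\"older estimate of Proposition \ref{3.3} after spatial mollification, and then run the same $\ell_{p_0}$ and double-sum bookkeeping as in Proposition \ref{3.9}. The one caveat you flag --- verifying that $v$, $v^\varepsilon$, and $Dv^\varepsilon$ land in $\hh^{\alpha,2}_{p_0,0,\mathrm{loc}}$ with zero initial data so that Proposition \ref{3.3} applies --- is indeed where the real bookkeeping lives, and your mollification argument (mirroring the uniqueness step in the proof of Proposition \ref{8.29}) handles it correctly.
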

\begin{proof}
When $a^{ij} = \delta^{ij}$, the result follows from the uniqueness of solution in Propositions \ref{8.29} and \ref{3.9}. For general $a^{ij}$'s, we make a change of variables {as in the proof of Proposition \ref{3.9}}.
\end{proof}

\begin{proposition}\label{12291}
Let $\alpha \in (1,2)$, $T \in (0, \infty)$, $p,q \in (1, \infty)$, $K_1 \in [1, \infty)$, and $[w]_{p,q} \le K_1$.
There exist $p_0 = p_0 (d, p, q, K_1) \in (1, 2)$ and $\mu = \mu (d, p, q, K_1) \in (1, \infty)$ such that
\[
p_0 < p_0 \mu < \min \{ p, q\},
\]
and the following holds.
If $u \in \chh_{p,q,w,0}^{\alpha, 1} ( \rr^d_T )$ is supported on  $(0,T) \times B_{\xi R_0}(x_1)$ for some $x_1\in \rr^d$, $\xi >0 $, and satisfies
\[
\partial_t^\alpha u - D_i(a^{ij} (t,x) D_j u) = D_i g_i \quad \text{in}\quad  \rr^d_T,\stepcounter{equation}\tag{\theequation}\label{6nol}
\]
where the coefficients $a^{ij}$ satisfy Assumption \ref{ass2} $(\gamma_0)$, then for any $  r\in (0, \infty)$, $\kappa \in (0, 1/4)$, and $(t_0, x_0) \in (0,T] \times \rr^d$, we have
\begin{align*}
& ( |D u - (D u )_{(t_0-(\kappa r)^{2/\alpha},t_0) \times B_{\delta \kappa r}(x_0)} | )_{
(t_0-(\kappa r)^{2/\alpha},t_0) \times B_{\delta \kappa r}(x_0)
} \\
&\quad \leq N \xi^{d/q_0} \kappa^{-d/q_0} ( | D u |^{p_0} )^{1/p_0}_{(t_0-(\kappa r)^{2/\alpha},t_0) \times B_{\delta \kappa r}(x_0)} \\
&\quad \quad + N \kappa^\sigma \sum_{k=0}^\infty {2^{-k\alpha}}( |D u |^{p_0} )^{1 / p_0}_{(t_0-2^{k}(r/2)^{2/\alpha}r,t_0)\times B_{\delta^{-1}r/2}(x_0)}\\
&\quad \quad + N \kappa^{- (d+2 /\alpha)/p_0} \gamma_0^{1/(\nu p_0)}
\sum_{k=0}^\infty  2^{-\alpha k} 2^{k/(\nu p_0)}(|D u |^{\mu p_0})^{1/(\mu p_0)}_{
( t_0 - 2^k r^{2/\alpha}, t_0 )
\times B_{  \delta^{-1} \sqrt{d} r}(x_0) }\\
&\quad \quad+ N \kappa^{- (d+2 /\alpha)/p_0}
 \sum_{k=0}^\infty  2^{-\alpha k} (| g  |^{p_0})^{1/p_0}_{
( t_0 - {2^k} r^{2/\alpha}, t_0 ) \times B_{  \delta^{-1} \sqrt{d} r}(x_0) }, \stepcounter{equation}\tag{\theequation}\label{6meanoc}
\end{align*}
where $\nu = \mu/(\mu - 1)$, $q_0 = p_0/(p_0 - 1)$, and $N = N(d, \delta, \alpha, p, q, K_1)$. The functions $u$ and $g$ are defined to be zero whenever $t \leq 0$.
\end{proposition}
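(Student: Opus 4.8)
The plan is to repeat, essentially line by line, the argument of Proposition \ref{4.1}, with $D^2 u$ replaced by $Du$, the non-divergence equation replaced by the divergence equation \eqref{6nol}, and Proposition \ref{3.9} replaced by its divergence-form counterpart Proposition \ref{63.9}. First I would fix $p_0$ and $\mu$ exactly as in the proof of Proposition \ref{4.1}: writing $w = w_1(t)w_2(x)$ with $w_1 \in A_p(\rr,dt)$, $w_2 \in A_q(\rr^d,dx)$, use the reverse H\"older inequality for Muckenhoupt weights to pick $\sigma_1 = \sigma_1(d,p,K_1)$ and $\sigma_2 = \sigma_2(d,q,K_1)$ with $p-\sigma_1>1$, $q-\sigma_2>1$, $w_1 \in A_{p-\sigma_1}$, $w_2 \in A_{q-\sigma_2}$, and set $p_0\mu = \min\{p/(p-\sigma_1),\, q/(q-\sigma_2)\} > 1$. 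Then $w_1 \in A_{p/p_0}$, $w_2 \in A_{q/p_0}$, and since $u \in \chh_{p,q,w,0}^{\alpha,1}(\rr^d_T)$ these inclusions give $u \in \chh_{p_0\mu,0,\mathrm{loc}}^{\alpha,1}(\rr^d_T)$ (as in \cite[Lemma 5.10]{Ap}); in particular $Du \in L_{\mu p_0,\mathrm{loc}}$, so every average on the right-hand side of \eqref{6meanoc} is finite.

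Next I would split into two cases according to the size of $r$, with threshold $R_1\delta/\sqrt{d}$, where $R_1 = 2^{-\alpha/2}R_0$. For $r \ge R_1\delta/\sqrt{d}$, the mean oscillation of $Du$ over $(t_0-(\kappa r)^{2/\alpha},t_0)\times B_{\delta\kappa r}(x_0)$ is at most $2(|Du|)$ over the same cylinder; inserting the characteristic function of the support $(0,T)\times B_{\xi R_0}(x_1)$ and applying H\"older's inequality with exponent $p_0$ produces the term $N\xi^{d/q_0}\kappa^{-d/q_0}(|Du|^{p_0})^{1/p_0}$ over that cylinder, exactly as in \eqref{nn3}. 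For $r < R_1\delta/\sqrt{d}$, I would freeze the coefficients: set $\overline{a}^{ij} = \fint_{Q_{\delta^{-1}2r}(t_0,x_0)} a^{ij}$ and rewrite \eqref{6nol} in $\rr^d_T$ as $\partial_t^\alpha u - D_i(\overline{a}^{ij}D_j u) = D_i\bigl(g_i + (a^{ij}-\overline{a}^{ij})D_j u\bigr) =: D_i\widetilde{g}_i$, where $\widetilde{g}_i \in L_{p_0,\mathrm{loc}}$ because $a^{ij}-\overline{a}^{ij}$ is bounded (Assumption \ref{ass1}) and $D_j u \in L_{p_0,\mathrm{loc}}$. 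Applying Proposition \ref{63.9} with the constant coefficients $\overline{a}^{ij}$ and right-hand side $D_i\widetilde{g}_i$ bounds the oscillation of $Du$ by $N\kappa^\sigma\sum_k 2^{-k\alpha}(|Du|^{p_0})^{1/p_0}$ over the dilated cylinders plus $N\kappa^{-(d+2/\alpha)/p_0}\sum_k 2^{-\alpha k}(|\widetilde{g}|^{p_0})^{1/p_0}$ over $(t_0-2^kr^{2/\alpha},t_0)\times B_{\delta^{-1}\sqrt{d}r}(x_0)$.

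It then remains to unpack $\widetilde{g}$. By H\"older's inequality, on each cylinder $(|\widetilde{g}|^{p_0})^{1/p_0} \le (|g|^{p_0})^{1/p_0} + (|a^{ij}-\overline{a}^{ij}|^{\nu p_0})^{1/(\nu p_0)}(|Du|^{\mu p_0})^{1/(\mu p_0)}$ with $1/\nu + 1/\mu = 1$. Since $|a^{ij}-\overline{a}^{ij}| \le 2\delta^{-1}$ is bounded, the $L^{\nu p_0}$ average of $|a^{ij}-\overline{a}^{ij}|$ is controlled by a fixed power of its $L^1$ average, and Lemma \ref{2.1} gives $(|a^{ij}-\overline{a}^{ij}|)_{(t_0-2^kr^{2/\alpha},t_0)\times B_{\delta^{-1}\sqrt{d}r}(x_0)} \le N\gamma_0 2^k$, hence $(|a^{ij}-\overline{a}^{ij}|^{\nu p_0})^{1/(\nu p_0)} \le N\gamma_0^{1/(\nu p_0)}2^{k/(\nu p_0)}$ there. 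Combining this with the two cases above yields \eqref{6meanoc}.

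\emph{Main obstacle.} The argument is a transcription of the proof of Proposition \ref{4.1}, so the only genuine points of care are the bookkeeping ones: verifying that the frozen-coefficient equation is legitimately of divergence type with $\widetilde{g}_i \in L_{p_0,\mathrm{loc}}$ so that Proposition \ref{63.9} applies, and checking convergence of the resulting series. The latter is automatic: the series $\sum_k 2^{-\alpha k}2^{k/(\nu p_0)}$ and $\sum_k 2^{-\alpha k}$ converge because $\alpha > 1 > 1/(\nu p_0)$, and the $\mu p_0$-averages are finite by the weighted embedding established in the first step. Everything else is routine.
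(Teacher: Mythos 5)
Your proposal follows the same approach as the paper: fix $p_0$ and $\mu$ by reverse H\"older for the weight components so that $u \in \chh_{p_0\mu,0,\mathrm{loc}}^{\alpha,1}(\rr^d_T)$, split by the threshold $r \gtrless R_1\delta/\sqrt{d}$, freeze coefficients and absorb $(a^{ij}-\overline{a}^{ij})D_j u$ into the divergence-form right-hand side for small $r$ (then apply Proposition \ref{63.9} and Lemma \ref{2.1} together with H\"older as in \eqref{nn1}--\eqref{nn4}), and use the support restriction with H\"older for large $r$. The details match the paper's argument, including the use of the $L^\infty$ bound on $a^{ij}-\overline{a}^{ij}$ to convert the $L^{\nu p_0}$ average into $\gamma_0^{1/(\nu p_0)}2^{k/(\nu p_0)}$.
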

\begin{proof}
Note that this proposition is similar to Proposition \ref{4.1}. Indeed, there exist $p_0 = p_0 (d, p, q, K_1) \in (1, 2)$ and $\mu = \mu (d, p, q, K_1) \in (1, \infty)$ such that
\[
p_0 < p_0 \mu < \min \{ p, q\}
\] and  $u \in \chh_{p,q,w,0}^{\alpha, 1} ( \rr^d_T ) \subset \chh_{p_0\mu, 0, \mathrm{loc}}^{\alpha, 1} ( \rr^d_T )$.

If $r < {R_1\delta}/\sqrt{d}$, where $R_1 = 2^{-{\alpha}/{2}}R_0$, let
\[
\overline{a}^{ij}
= \fint_{Q_{\delta^{-1} 2 r} (t_0, x_0) }  a^{ij} \, dt dy
\]
and $\overline{g}_i = g_i + (a^{ij}- \overline{a}^{ij})D_ju$. By using Lemma \ref{2.1} and Proposition \ref{63.9}, we have
\begin{align*}
& ( |D u - (Du )_{(t_0-(\kappa r)^{2/\alpha},t_0) \times B_{\delta \kappa r}(x_0)} | )_{
(t_0-(\kappa r)^{2/\alpha},t_0) \times B_{\delta \kappa r}(x_0)
} \\
& \leq N \kappa^\sigma  {\sum_{k=0}^\infty {2^{-k\alpha}}( |D u |^{p_0} )^{1 / p_0}_{(t_0-2^{k}(r/2)^{2/\alpha}r,t_0)\times B_{\delta^{-1}r/2}(x_0)}} \\
&\quad+ N \kappa^{- (d+2 /\alpha)/p_0}
\sum_{k=0}^\infty 2^{-\alpha k} (| \overline{g}  |^{p_0})^{1/p_0}_{
( t_0 - {2^k} r^{2/\alpha}, t_0 )
\times B_{  \delta^{-1} \sqrt{d} r}(x_0) }\\
&\le  N \kappa^\sigma \sum_{k=0}^\infty {2^{-k\alpha}}( |D u |^{p_0} )^{1 / p_0}_{(t_0-2^{k}(r/2)^{2/\alpha}r,t_0)\times B_{\delta^{-1}r/2}(x_0)}\\
& \quad + N \kappa^{- (d+2 /\alpha)/p_0} \gamma_0^{1/(\nu p_0)}
\sum_{k=0}^\infty  2^{-\alpha k} 2^{k/(\nu p_0)}(|D u |^{\mu p_0})^{1/(\mu p_0)}_{
( t_0 - 2^k r^{2/\alpha}, t_0 )
\times B_{  \delta^{-1} \sqrt{d} r}(x_0) }\\
& \quad+ N \kappa^{- (d+2 /\alpha)/p_0}
 \sum_{k=0}^\infty  2^{-\alpha k} (| g  |^{p_0})^{1/p_0}_{
( t_0 - {2^k} r^{2/\alpha}, t_0 ) \times B_{  \delta^{-1} \sqrt{d} r}(x_0) },.
\end{align*}

If $r \geq R_1 \delta/\sqrt{d}$, then by H\"older's inequality,
\begin{align*}
& (|D u - (D u )_{(t_0-(\kappa r)^{2/\alpha},t_0) \times B_{\delta \kappa r}(x_0)} | )_{
(t_0-(\kappa r)^{2/\alpha},t_0) \times B_{\delta \kappa r}(x_0)
} \\
&\le 2 (|D u| )_{(t_0-(\kappa r)^{2/\alpha},t_0) \times B_{\delta \kappa r}(x_0)}\\
&=  2 (|D u| \chi_{(0,T) \times B_{\xi R_0}(x_1)})_{(t_0-(\kappa r)^{2/\alpha},t_0) \times B_{\delta \kappa r}(x_0)}\\
&\le 2 ( | D u |^{p_0} )^{1/p_0}_{(t_0-(\kappa r)^{2/\alpha},t_0) \times B_{\delta \kappa r}(x_0)}\Big(\frac{|B_{\xi R_0}|}{|B_{\delta \kappa r}|}\Big)^{1/q_0}\\
&\le N(d,\delta,p,q)\xi^{d/q_0} \kappa^{-d/q_0} ( | D u |^{p_0} )^{1/p_0}_{(t_0-(\kappa r)^{2/\alpha},t_0) \times B_{\delta \kappa r}(x_0)}.
\end{align*}
Therefore, by combining both cases, we have the estimate (\ref{6meanoc}), and the proposition is proved.
\end{proof}

\begin{proposition}\label{64.2}
Let $\alpha \in (1,2)$, $T \in (0, \infty)$, $p,q \in (1, \infty)$, $K_1 \in [1, \infty)$, and $[w]_{p,q} \le K_1$. There exist $\gamma_0 = \gamma_0 (d, \delta, \alpha, p, q, K_1) > 0$  and $\xi(d, \delta, \alpha, p, q, K_1) >0$ such that under Assumption \ref{ass2} $(\gamma_0)$, for any $u \in \chh_{p,q,w,0}^{\alpha, 1} ( \rr^d_T )$ supported on $(0,T) \times B_{\xi R_0}(x_1)$ for some $x_1\in \rr^d$ and satisfying (\ref{6nol}) in $\rr^d_T$, we have
\[ \norm{D u}_{L_{p,q,w}}
\leq
N \norm{g}_{L_{p,q,w}},
\]
where $N = N(d, \delta, \alpha, p, q, K_1 )$.
\end{proposition}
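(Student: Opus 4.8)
The plan is to run the proof of Proposition~\ref{4.2} essentially verbatim, with the second-order non-divergence mean oscillation estimate \eqref{meanoc} replaced by the first-order divergence-form estimate \eqref{6meanoc}, and with $D^2u$ replaced by $Du$ throughout. Two features of the set-up make this transfer legitimate. First, Proposition~\ref{12291} supplies exponents $p_0\in(1,2)$ and $\mu\in(1,\infty)$ with $p_0<p_0\mu<\min\{p,q\}$ such that $u\in\chh_{p,q,w,0}^{\alpha,1}(\rr^d_T)\subset\chh_{p_0\mu,0,\mathrm{loc}}^{\alpha,1}(\rr^d_T)$ (using reverse H\"older for the $A_p$ weights exactly as in the proof of Proposition~\ref{4.1}), so that every average in \eqref{6meanoc} is finite. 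Second, the hypothesis $u\in\chh_{p,q,w,0}^{\alpha,1}(\rr^d_T)$ already gives $Du\in L_{p,q,w}(\rr^d_T)$, which is precisely what is needed to justify the absorption at the end.

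First I would fix $(t_0,x_0)\in\rr^d_T$, $n\in\zz$, and a dyadic cube $Q_{\vec{i}}^n\ni(t_0,x_0)$ from \eqref{cubes}, and — exactly as in the proof of Proposition~\ref{4.2} — pick $r=r(n,\alpha,d,\delta)>0$ and $t_1=\min(T,t_0+r^{2/\alpha}/2)\le T$ so that $Q_{\vec{i}}^n\subset(t_1-r^{2/\alpha},t_1)\times B_{\delta r}(x_0)$ with $|(t_1-r^{2/\alpha},t_1)\times B_{\delta r}(x_0)|\le N(\alpha,d,\delta)|Q_{\vec{i}}^n|$. Using \eqref{simpl} to replace the average of $|Du-(Du)_{|n}(t_0,x_0)|$ over $Q_{\vec{i}}^n$ by the corresponding average over this larger cylinder, then applying \eqref{6meanoc} at the appropriate scale (with $r$ replaced by $r/\kappa$, where $\kappa\in(0,1/4)$ is a free parameter, so the tail cylinders become $(t_1-2^k(r/\kappa)^{2/\alpha},t_1)\times B_{\delta^{-1}\sqrt{d}\,r/\kappa}(x_0)$ and their companions), each of the four sums in $k$ on the right-hand side of \eqref{6meanoc} is dominated by the strong maximal function $\css\cmm$ evaluated at $(t_0,x_0)$; here the series converge because $\alpha>1>1/(\nu p_0)$, so that $\sum_k 2^{-\alpha k}2^{k/(\nu p_0)}<\infty$ and $\sum_k 2^{-\alpha k}<\infty$. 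Taking the supremum over $n$ and all such cubes yields
\begin{align*}
(Du)^{\#}_{dy}(t_0,x_0)
&\le N\big(\xi^{d/q_0}\kappa^{-d/q_0}+\kappa^\sigma\big)(\css\cmm|Du|^{p_0})^{1/p_0}(t_0,x_0)\\
&\quad+N\kappa^{-(d+2/\alpha)/p_0}\gamma_0^{1/(\nu p_0)}(\css\cmm|Du|^{\mu p_0})^{1/(\mu p_0)}(t_0,x_0)\\
&\quad+N\kappa^{-(d+2/\alpha)/p_0}(\css\cmm|g|^{p_0})^{1/p_0}(t_0,x_0),
\end{align*}
with $N=N(d,\delta,\alpha,p,q,K_1)$, $\nu=\mu/(\mu-1)$, and $q_0=p_0/(p_0-1)$.

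Next I would invoke the weighted dyadic sharp function theorem \cite[Corollary 2.7]{Ap} together with the weighted strong maximal function theorem \cite[Theorem 5.2]{dong20}: since $w_1\in A_{p/(p_0\mu)}\subset A_{p/p_0}(\rr,dt)$ and $w_2\in A_{q/(p_0\mu)}\subset A_{q/p_0}(\rr^d,dx)$, the operators $f\mapsto(\css\cmm|f|^{p_0})^{1/p_0}$ and $f\mapsto(\css\cmm|f|^{\mu p_0})^{1/(\mu p_0)}$ are bounded on $L_{p,q,w}$, and one obtains
\begin{align*}
\norm{Du}_{p,q,w}
&\le N\big(\xi^{d/q_0}\kappa^{-d/q_0}+\kappa^\sigma+\kappa^{-(d+2/\alpha)/p_0}\gamma_0^{1/(\nu p_0)}\big)\norm{Du}_{p,q,w}\\
&\quad+N\kappa^{-(d+2/\alpha)/p_0}\norm{g}_{p,q,w}.
\end{align*}
I then first choose $\kappa<1/4$ so small that $N\kappa^\sigma\le 1/6$, and next — with $\kappa$ now fixed — choose $\xi>0$ and $\gamma_0>0$ so small that $N\xi^{d/q_0}\kappa^{-d/q_0}\le 1/6$ and $N\kappa^{-(d+2/\alpha)/p_0}\gamma_0^{1/(\nu p_0)}\le 1/6$. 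Because $\norm{Du}_{p,q,w}<\infty$, I may move $\tfrac12\norm{Du}_{p,q,w}$ to the left-hand side, which gives $\norm{Du}_{p,q,w}\le N\norm{g}_{p,q,w}$, as desired.

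The step I expect to demand the most care is purely organizational rather than analytic: arranging the dyadic-cylinder covering with the right comparability of Lebesgue measures so that \eqref{6meanoc} can legitimately be fed into the dyadic sharp function, and verifying that each of the shifted tail cylinders occurring on the right of \eqref{6meanoc} is majorized by $\css\cmm$ at the base point $(t_0,x_0)$. Every genuinely hard ingredient — the mean oscillation bound \eqref{6meanoc} (which rests on Proposition~\ref{63.9}, hence on the solvability Proposition~\ref{8.29}), and the weighted sharp and strong maximal function theorems — is already available, so no new analytic obstacle arises beyond what was overcome for the non-divergence equation in Proposition~\ref{4.2}.
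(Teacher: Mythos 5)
Your proposal matches the paper's own proof, which simply notes that with the divergence-form mean oscillation estimate \eqref{6meanoc} in hand, one repeats the argument of Proposition \ref{4.2} verbatim with $Du$ in place of $D^2u$, combining the dyadic sharp function theorem over the cubes \eqref{cubes} with the weighted strong maximal function theorem and then choosing $\kappa$, $\xi$, $\gamma_0$ in that order. Your more detailed write-out of these steps is correct and takes essentially the same route.
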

\begin{proof}
With (\ref{6meanoc}), the proof is similar to that of Proposition \ref{4.2} using the sharp function theorem with respect to the same dyadic cubes defined in \eqref{cubes} and the weight sharp and maximal function theorems.
\end{proof}

Different from the non-divergence form case, we introduce the parameter $\lambda$ in order to extend the result to
more general functions.
\begin{corollary}\label{6small}
Let $\alpha \in (1,2)$, $T \in (0, \infty)$, $p,q \in (1, \infty)$, $K_1 \in [1, \infty)$, and $[w]_{p,q} \le K_1$.
There exist $\gamma_0 = \gamma_0 (d, \delta, \alpha, p, q, K_1) > 0$ and $\xi(d, \delta, \alpha, p, q, K_1) >0$ such that under Assumption \ref{ass2} $(\gamma_0)$, the following holds. There exists $\lambda_0 = \lambda_0(d, \delta, \alpha, p, q, K_1,R_0)$ such that for any $\lambda \geq \lambda_0$ and $u \in \chh_{p,q,w,0}^{\alpha, 1} ( \rr^d_T )$ supported on $(0,T) \times B_{\xi R_0/\sqrt{2}}(x_1)$ for some $x_1\in \rr^d$ {and} satisfying
$$
\partial_t^\alpha u
 {-} D_i (a^{ij} D_j u ) {+} \lambda u = D_i g_i + f \quad \text{in}\quad  \rr^d_T,
$$
where $f,g_i \in L_{p,q,w}( \rr^d_T )$, we have
\begin{align*}
\sqrt{\lambda} \norm{ u}_{L_{p,q,w}} +
\norm{D u}_{L_{p,q,w}}  \leq
N \norm{g}_{L_{p,q,w}}
+ \frac{N}{\sqrt{\lambda} } \norm{f}_{L_{p,q,w}},
\end{align*}
where $N = N(d, \delta, \alpha, p, q, K_1 )$.
\end{corollary}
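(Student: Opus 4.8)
The plan is to adapt S.\ Agmon's device from the proof of Lemma \ref{5.3}, reducing the estimate to the $\lambda$-free a priori bound of Proposition \ref{64.2} applied one dimension up. Fix once and for all an \emph{even} profile $\phi \in C_0^\infty(\rr)$, $\phi \not\equiv 0$, with $\supp \phi \subset (-1/\sqrt 2, 1/\sqrt 2)$; let $\xi$ be the constant of Proposition \ref{64.2} in dimension $d+1$; put $\zeta(y) := \phi\big(y/(\xi R_0)\big)$ and set
\[
v(t,x,y) = u(t,x)\,\zeta(y)\sin(\sqrt{\lambda}\,y), \qquad (t,x,y) \in (0,T) \times \rr^d \times \rr .
\]
Then $v$ is supported in $(0,T) \times B_{\xi R_0}\big((x_1,0)\big)$, it lies in $\chh^{\alpha,1}_{p,q,\overline{w},0}\big((0,T) \times \rr^{d+1}\big)$ with $\overline{w}(t,x,y) := w(t,x)$ (still admissible, since $\overline{w}_2$ does not depend on $y$), and, for $\gamma_0$ small, the coefficients extended by $a^{ij}(t,x,y):=a^{ij}(t,x)$ for $i,j\le d$, $a^{(d+1)(d+1)}:=1$, the rest $0$, satisfy Assumption \ref{ass1} and the smallness hypothesis of Proposition \ref{64.2} in $\rr^{d+1}$. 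Using a \emph{rescaled fixed} profile is deliberate, so that the powers of $\xi R_0$ produced by $y$-integrals below will cancel between the two sides of the final inequality, keeping the constant $N$ free of $R_0$ (only $\lambda_0$ will depend on $R_0$).

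Next I would derive, in the weak sense, the divergence-form equation for $v$. From $D_y^2 v = \big(\zeta''\sin(\sqrt{\lambda}y) + 2\sqrt{\lambda}\,\zeta'\cos(\sqrt{\lambda}y) - \lambda\zeta\sin(\sqrt{\lambda}y)\big)u$ and $\partial_t^\alpha u - D_i(a^{ij} D_j u) = D_i g_i + f - \lambda u$, the $\lambda\zeta\sin(\sqrt\lambda y)u$ terms cancel, and the source for $v$ becomes $D_i\big(\zeta(y)\sin(\sqrt{\lambda}y)\,g_i\big)$ together with $f\,\zeta(y)\sin(\sqrt{\lambda}y)$, $-\zeta''(y)\sin(\sqrt{\lambda}y)\,u$, and $-2\sqrt{\lambda}\,\zeta'(y)\cos(\sqrt{\lambda}y)\,u$. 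Each of these last three is a function of $(t,x)$ times a smooth compactly supported function of $y$ with zero integral over $\rr$ --- here the parity of $\zeta$ matters, $\zeta''$ being even and $\zeta'$ odd, so the products with $\sin$, $\cos$ are odd --- hence each equals $D_y$ of the same $(t,x)$-factor times the (again compactly supported) $y$-antiderivative, call the resulting fields $fH$, $-uP$, $-2\sqrt{\lambda}\,uQ$ where $H(y) = \int_{-\infty}^y \zeta\sin(\sqrt\lambda s)\,ds$, $P(y) = \int_{-\infty}^y \zeta''\sin(\sqrt\lambda s)\,ds$, $Q(y) = \int_{-\infty}^y \zeta'\cos(\sqrt\lambda s)\,ds$. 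Substituting $s = \xi R_0\tau$ and integrating by parts once gives, with $N = N(\phi,q)$, the bounds $\norm{H}_{L_q(\rr)} \le N(\xi R_0)^{1/q}\lambda^{-1/2}$, $\norm{P}_{L_q(\rr)} \le N(\xi R_0)^{1/q}\lambda^{-1/2}(\xi R_0)^{-2}$, and $\sqrt\lambda\,\norm{Q}_{L_q(\rr)} \le N(\xi R_0)^{1/q}(\xi R_0)^{-1}$. Thus $v$ solves a divergence-form equation in $\rr^{d+1}$ whose right-hand side is the divergence of a vector field with first $d$ components $\zeta\sin(\sqrt{\lambda}\cdot)g_i$ and last component $fH - uP - 2\sqrt{\lambda}\,uQ$.

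Applying Proposition \ref{64.2} in $\rr^{d+1}$ to $v$ yields $\norm{Dv}_{L_{p,q,\overline{w}}} \le N_0\big(\sum_i \norm{\zeta\sin(\sqrt{\lambda}\cdot)g_i}_{L_{p,q,\overline{w}}} + \norm{fH - uP - 2\sqrt{\lambda}\,uQ}_{L_{p,q,\overline{w}}}\big)$. Since $\overline{w}_2$ is independent of $y$, each $L_{p,q,\overline{w}}$-norm of a product $F(t,x)G(y)$ factors as $\norm{G}_{L_q(\rr)}\,\norm{F}_{L_{p,q,w}(\rr^d_T)}$; with the bounds above and $\norm{\zeta\sin(\sqrt\lambda\cdot)}_{L_q(\rr)} \le (\xi R_0)^{1/q}\norm{\phi}_{L_q}$, the right-hand side is at most $N(\xi R_0)^{1/q}\big(\norm{g}_{p,q,w} + \lambda^{-1/2}\norm{f}_{p,q,w} + (\lambda^{-1/2}(\xi R_0)^{-2} + (\xi R_0)^{-1})\norm{u}_{p,q,w}\big)$. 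On the left, $D_x v = (D_x u)\zeta\sin(\sqrt{\lambda}\cdot)$ and $D_y v = u\big(\zeta'\sin(\sqrt\lambda\cdot)+\sqrt{\lambda}\,\zeta\cos(\sqrt{\lambda}\cdot)\big)$, and since $\norm{\zeta\sin(\sqrt\lambda\cdot)}_{L_q(\rr)}$, $\norm{\zeta\cos(\sqrt\lambda\cdot)}_{L_q(\rr)} \ge c\,(\xi R_0)^{1/q}$ uniformly for $\lambda$ large (with $c = c(\phi,q) > 0$), one gets $\norm{Dv}_{L_{p,q,\overline w}} \ge c'(\xi R_0)^{1/q}\big(\sqrt{\lambda}\,\norm{u}_{p,q,w} + \norm{Du}_{p,q,w}\big)$ once $\lambda$ is large. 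Dividing through by $(\xi R_0)^{1/q}$ so that these factors cancel, and then absorbing the $\norm{u}_{p,q,w}$ term into $\sqrt\lambda\,\norm{u}_{p,q,w}$ --- possible for $\lambda \ge \lambda_0$ with $\lambda_0$ comparable to $R_0^{-2}$ times a constant depending on $d,\delta,\alpha,p,q,K_1$ --- gives the assertion, with final constant $N = N(d,\delta,\alpha,p,q,K_1)$.

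The main obstacle is the $f$ term: Proposition \ref{64.2}, unlike Corollary \ref{withl} used in Lemma \ref{5.3}, admits only right-hand sides in divergence form, so after the Agmon extension $f$ must be recast as a $y$-derivative. The crux is that $f$ then carries the oscillatory factor $\zeta(y)\sin(\sqrt\lambda y)$ of vanishing mean (because $\zeta$ is even), so its $y$-antiderivative $H$ is compactly supported and, by one integration by parts exploiting the oscillation, of size $O(\lambda^{-1/2})$ --- exactly the gain $\lambda^{-1/2}\norm{f}$ in the conclusion; the non-divergence terms $\zeta''\sin(\sqrt\lambda y)u$ and $\sqrt\lambda\,\zeta'\cos(\sqrt\lambda y)u$ are handled the same way and contribute only a $\norm{u}$ term to be absorbed. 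The remaining points --- that the extended weight and coefficients remain admissible on $\rr^{d+1}$, and that $v$ genuinely lies in the solution space with zero initial conditions --- are routine.
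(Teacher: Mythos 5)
Your proposal is correct and follows precisely the Agmon-type extension the paper invokes; the key twist you identify is genuine and necessary: unlike the non-divergence estimate in Corollary~\ref{withl}, Proposition~\ref{64.2} admits only a divergence-form right-hand side, so the oscillatory factor must be chosen with zero mean so that each of $f\,\zeta\sin(\sqrt\lambda y)$, $\zeta''\sin(\sqrt\lambda y)\,u$, and $\sqrt\lambda\,\zeta'\cos(\sqrt\lambda y)\,u$ can be rewritten as $D_y$ of a compactly supported vector field whose $y$-antiderivative gains the $\lambda^{-1/2}$ factor by one integration by parts. This is the content of the reference \cite[Lemma 5.5]{k2} the paper points to, and your rescaled-profile bookkeeping (so the $(\xi R_0)^{1/q}$ factors cancel on both sides, leaving $N$ free of $R_0$ while $\lambda_0\sim R_0^{-2}$) and your absorption of the resulting $\norm{u}_{L_{p,q,w}}$ terms are both handled correctly.
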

\begin{proof}
With Proposition \ref{64.2}, the proof is similar to that of Proposition \ref{5.3} using S. Agmon's idea. See \cite[Lemma 5.5]{k2} for details.
\end{proof}

\begin{corollary}\label{6par}
Let $\alpha \in (1,2)$, $T \in (0, \infty)$, $p,q \in (1, \infty)$, $K_1 \in [1, \infty)$, and $[w]_{p,q} \le K_1$.
There exists $\gamma_0 = \gamma_0 (d, \delta, \alpha, p, q, K_1) > 0$ such that under Assumption \ref{ass2} $(\gamma_0)$, the following holds. There exists $\lambda_0 = \lambda_0(d, \delta, \alpha, p, q, K_1,R_0)$ such that for any $\lambda \geq \lambda_0$ and $u \in \chh_{p,q,w,0}^{\alpha, 1} ( \rr^d_T )$ satisfying
$$
\partial_t^\alpha u
- D_i (a^{ij} D_j u + a^i u ) - b^i D_i u  - cu + \lambda u = D_i g_i + f \quad \text{in}\quad  \rr^d_T,
$$
where $f,g_i \in L_{p,q,w}( \rr^d_T )$, we have
\begin{align*}
&\sqrt{\lambda} \norm{ u}_{L_{p,q,w}} +
\norm{D u}_{L_{p,q,w}}\leq
N \norm{g}_{L_{p,q,w}}
+ \frac{N}{\sqrt{\lambda} } \norm{f}_{L_{p,q,w}},
\end{align*}
where $N = N(d, \delta, \alpha, p, q, K_1 )$.
\end{corollary}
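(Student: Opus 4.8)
The plan is to run the partition of unity argument from the proof of Corollary~\ref{withl}, with Corollary~\ref{6small} playing the role of Proposition~\ref{4.2}, and with the lower-order coefficients $a^i,b^i,c$ and the zeroth-order term $\lambda u$ absorbed into the right-hand side. Compared with the a priori estimate of Theorem~\ref{main}, the argument is simpler because $\partial_t^\alpha$ acts only in $t$ and hence commutes with multiplication by a spatial cutoff, so no time-derivative commutator appears. As in Corollary~\ref{withl}, I would first treat the case $p=q$ and then deduce the general case from the extrapolation theorem \cite[Theorem 2.5]{Ap}.

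Assume $p=q$. Take $\gamma_0$ and $\xi$ from Corollary~\ref{6small} and, exactly as in the proof of Corollary~\ref{withl}, choose $\{x_k\}\subset\rr^d$ and $\zeta^k\in C_0^\infty(\rr^d)$ with $\zeta^k\ge0$, $\supp\zeta^k\subset B_{\xi R_0/\sqrt 2}(x_k)$, $1\le\sum_k|\zeta^k|^p\le N$, and $\sum_k|D\zeta^k|^p\le NR_0^{-p}$, where $N=N(d,\delta,\alpha,p,K_1)$. Put $u_k:=u\zeta^k\in\chh^{\alpha,1}_{p,q,w,0}(\rr^d_T)$; it is supported in $(0,T)\times B_{\xi R_0/\sqrt 2}(x_k)$. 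Since $\zeta^k$ is independent of $t$, we have $\partial_t^\alpha u_k=\zeta^k\partial_t^\alpha u$, and applying the product rule to the flux $a^{ij}D_ju+a^iu$ one finds that $u_k$ satisfies
\[
\partial_t^\alpha u_k - D_i(a^{ij}D_j u_k) + \lambda u_k = D_i\widehat{g}^{\,k}_i + \widehat{f}^{\,k}\quad\text{in }\rr^d_T,
\]
where $\widehat{g}^{\,k}$ gathers the terms $\zeta^k g_i$, $\zeta^k a^i u$ (moved out of the divergence operator), and the commutator term $a^{ij}u\,D_j\zeta^k$, while $\widehat{f}^{\,k}$ gathers $\zeta^k f$, $\zeta^k b^iD_i u$, $\zeta^k c u$, and the commutator terms $g_iD_i\zeta^k$, $(a^iu)D_i\zeta^k$, $(a^{ij}D_ju)D_i\zeta^k$. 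In particular, pointwise $|\widehat{g}^{\,k}|\le N(|g|+|u|)\zeta^k+N|u|\,|D\zeta^k|$ and $|\widehat{f}^{\,k}|\le N(|f|+|u|+|Du|)\zeta^k+N(|g|+|u|+|Du|)\,|D\zeta^k|$.

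Next I would apply Corollary~\ref{6small} to each $u_k$ (which requires $\lambda\ge\lambda_0$ from that corollary), raise the resulting estimates to the $p$-th power, and sum over $k$; using $\sum_k|\zeta^k|^p\ge1$ on the left and $\sum_k|\zeta^k|^p\le N$, $\sum_k|D\zeta^k|^p\le NR_0^{-p}$, $R_0\le1$ on the right, one obtains
\[
\sqrt\lambda\,\norm{u}_{p,q,w}+\norm{Du}_{p,q,w}\le N\norm{g}_{p,q,w}+NR_0^{-1}\norm{u}_{p,q,w}+\frac{N}{\sqrt\lambda}\bigl(\norm{f}_{p,q,w}+R_0^{-1}\norm{g}_{p,q,w}+R_0^{-1}\norm{u}_{p,q,w}+R_0^{-1}\norm{Du}_{p,q,w}\bigr),
\]
with $N=N(d,\delta,\alpha,p,q,K_1)$. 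Enlarging $\lambda_0$ to $\lambda_0(d,\delta,\alpha,p,q,K_1,R_0)$ so that $NR_0^{-1}\le\frac14\sqrt{\lambda_0}$ and $NR_0^{-1}\lambda_0^{-1/2}\le\frac14$, the terms $NR_0^{-1}\norm{u}_{p,q,w}$, $\frac{N}{\sqrt\lambda}R_0^{-1}\norm{u}_{p,q,w}$ and $\frac{N}{\sqrt\lambda}R_0^{-1}\norm{Du}_{p,q,w}$ are absorbed into the left-hand side, while $\frac{N}{\sqrt\lambda}R_0^{-1}\norm{g}_{p,q,w}\le N\norm{g}_{p,q,w}$; this gives $\sqrt\lambda\,\norm{u}_{p,q,w}+\norm{Du}_{p,q,w}\le N(\norm{g}_{p,q,w}+\lambda^{-1/2}\norm{f}_{p,q,w})$ with $N$ independent of $R_0$. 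The case $p\ne q$ then follows from \cite[Theorem 2.5]{Ap}, exactly as in Case~3 of the proof of Corollary~\ref{withl}.

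The argument is essentially routine given Corollary~\ref{6small}; the points that need care are the bookkeeping of which commutator and lower-order terms land in the $g$-slot versus the $f$-slot — so that the powers of $\sqrt\lambda$ on the right-hand side come out correctly — and the final absorption step, where one must verify that every $\norm{u}_{p,q,w}$ and $\norm{Du}_{p,q,w}$ term on the right can be removed by taking $\lambda_0$ large in terms of $R_0$ while keeping the constant $N$ in the conclusion independent of $R_0$. A minor additional check is that the spatial cutoff keeps $u_k$ in $\chh^{\alpha,1}_{p,q,w,0}(\rr^d_T)$, which is immediate from the definition of that space.
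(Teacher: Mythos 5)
Your proposal is correct and follows essentially the same strategy as the paper's proof of Corollary~\ref{6par}: apply Corollary~\ref{6small} to the spatially localized pieces $u\zeta^k$, sum $p$-th powers over the partition of unity, and absorb the extra $\norm{u}$ and $\norm{Du}$ terms by enlarging $\lambda_0$ (with the resulting $R_0$-dependence landing in $\lambda_0$ rather than in the final constant $N$), then extrapolate to $p\ne q$. The only organizational difference is that the paper first handles $a^i=b^i=c=0$ (its Case~1) and then folds the lower-order terms into the right-hand side and re-absorbs them in a separate Case~2, whereas you carry them through the commutator bookkeeping in one pass; the two are equivalent.
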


\begin{proof}
{\em Case 1.} We first consider the case when $p=q$ and $a^i=b^i=c=0$. We take $\gamma_0$, $\xi$, and $\lambda_0$ from Corollary \ref{6small}, and use a partition of unity argument in the spatial variables by picking $\{x_k\} \subset \rr^d$,
$\zeta^k \geq 0$, $\zeta^k \in C_0^\infty (\rr^d)$, $\supp (\zeta^k) \subset B_{\xi R_0/\sqrt{2}} (x_k)$ and
$$1 \le \sum_{k=1}^\infty |\zeta^k|^p \le N(p), \quad \sum_{k=1}^\infty |D_x \zeta^k |^p \leq N(d,\alpha, p, K_1, R_0).$$
For $u_k(t,x): = u(t,x) \zeta^k (x)$, we have
{\begin{align*}
\partial_t^\alpha u_k &- D_i (a^{ij} D_j u_k ) + \lambda u_k \\
&= (\partial_t^\alpha u) \zeta^k - D_i (a^{ij} \zeta^k D_j u ) + \lambda u \zeta^k
- D_i (a^{ij} u D_j \zeta^k) \\
&= D_i (g_i \zeta^k - a^{ij} u D_j \zeta^k)
+ f \zeta^k -a^{ij} D_i \zeta^k D_j u
- g_i D_i \zeta^k.
\end{align*}}
Therefore, by Corollary \ref{6small} for $\lambda \geq \lambda_0$,
\begin{align*}
&\sqrt{\lambda} \norm{u_k}_{p,w}
+ \norm{D u_k}_{p,w}\\
&\leq N
\big[\norm{\zeta^k g}_{p,w} + \norm{u D_j \zeta^k}_{p,w} \big]
+ \frac{N}{\sqrt{\lambda}}
\big[
\norm{f \zeta^k}_{p,w} + \norm{D_i \zeta^k D_j u }_{p,w}
+ \norm{D_i \zeta^k g_i}_{p,w}
\big],
\end{align*}
where $N = N(d, \delta, \alpha, p,K_1)$. By raising to the $p$-th power and summing in $k$, we get
\begin{align*}
&{\lambda}^{p/2} \norm{u}^p_{p,w} + \norm{Du}^p_{p,w}\\
&\leq
N_0 \norm{g}^p_{p,w} + N_1 \norm{u}^p_{p,w}
+ \frac{N_0}{{\lambda}^{p/2}} \norm{f}^p_{p,w}
+ \frac{N_1}{{\lambda}^{p/2}} \norm{Du}^p_{p,w}
+ \frac{N_1}{{\lambda}^{p/2}} \norm{g}^p_{p,w},
\end{align*}
where $N_0 (d, \delta, \alpha, p, K_1) $ and $N_1 = (d, \delta, \alpha, p,K_1, R_0)$.
We further pick $\lambda_0$ sufficiently large such that for all $\lambda \geq \lambda_0 (d, \delta, \alpha, p, K_1, R_0)$,
$${\lambda}^{p/2} - N_1 > \frac{{\lambda}^{p/2}}{2} \quad \text{and}\quad  1 - \frac{N_1}{{\lambda}^{p/2}} > \frac{1}{2},$$
which implies
\[
\sqrt{\lambda} \norm{u}_{p,w} + \norm{Du}_{p,w}
\leq N \norm{g}_{p,w} + \frac{N}{\sqrt{\lambda}} \norm{f}_{p,w},
\]
where $N = N(d, \delta, \alpha, p,K_1)$.

{\em Case 2.} Then, we consider the case when  $p=q$ without assuming $a^i, b^i, c = 0$. In this case, we have
\[
\partial_t^\alpha u
- D_i (a^{ij} D_j u)
+\lambda u
= D_i (g_i + a^i u)
+ f + b^i D_i u + cu.
\]
Therefore, the result follows from Case 1 by picking a possibly larger $\lambda_0$ depending on the upper bound of $a^i, b^i$, and $c$.

{\em Case 3.} Finally, for the general case when $p\neq q$, we use the extrapolation theorem in \cite[Theorem 2.5]{Ap}. The corollary is proved.
\end{proof}

Before we prove Theorem \ref{6main}, we introduce the following lemma, which implies that if $u\in \chh_{p,q,w,0}^{\alpha, 1} ( \rr^d_T )$, then
$\partial_t^{\alpha-1} u \in {\hh^{-1}_{p,q,w}( \rr^d_T)}$, and its norm can be controlled by the norm of $\partial_t^{\alpha} u$.
\begin{lemma}\label{6.8}
For $[w]_{p,q} \le K_1$, if there exist $f,g \in L_{p,q,w}  (\Omega_T)$ such that
\[
\int_{\Omega_T} u \phi_{tt}
= - \int_{\Omega_T} g_i D_i \phi
+ \int_{\Omega_T} f \phi
\]
for any $\phi \in C_0^\infty ([0,T) \times \Omega)$,
then there exist $\widetilde{g}, \widetilde{f} \in L_{p,q,w} ( \Omega_T)$
such that
\[
\int_{\Omega_T} u \phi_t
= \int_{\Omega_T} \widetilde{g}_i D_i \phi
- \int_{\Omega_T} f \phi
\]
for any $\phi \in C_0^\infty ([0,T) \times \Omega)$.
Furthermore, \[\norm{\widetilde{g}}_{L_{p,q,w}} \le NT\norm{g}_{L_{p,q,w}} \quad \text{and}\quad  \quad \norm{\widetilde{f}}_{L_{p,q,w}} \le NT\norm{f}_{L_{p,q,w}},\stepcounter{equation}\tag{\theequation}\label{jj4}\]
where $N = N(p,K_1)$.
\end{lemma}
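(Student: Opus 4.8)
The plan is to realise $\widetilde g_i$ and $\widetilde f$ as the antiderivatives in time of $g_i$ and $f$, and to pass from the identity involving $\phi_{tt}$ to the one involving $\phi_t$ by integrating the test function once in the time variable. First I would fix $\phi \in C_0^\infty([0,T)\times\Omega)$ and set
\[
\psi(t,x) = \int_T^t \phi(s,x)\,ds .
\]
Since $\phi$ is supported away from $\{t=T\}$, so is $\psi$; together with the fixed compact spatial support of $\phi$ inside $\Omega$, this shows $\psi \in C_0^\infty([0,T)\times\Omega)$ (note $\psi$ need not vanish at $t=0$, which is permitted), so $\psi$ is admissible in the hypothesis. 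Moreover $\psi_t = \phi$, hence $\psi_{tt} = \phi_t$, and $D_i\psi(t,x) = \int_T^t D_i\phi(s,x)\,ds$.

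Substituting $\psi$ into the hypothesis gives $\int_{\Omega_T} u\,\phi_t = -\int_{\Omega_T} g_i D_i\psi + \int_{\Omega_T} f\psi$. The next step is to interchange the order of the $s$- and $t$-integrations by Fubini's theorem: writing $\int_T^t(\cdot)\,ds = -\int_t^T(\cdot)\,ds$ and swapping, one obtains
\[
-\int_{\Omega_T} g_i D_i\psi = \int_{\Omega_T} \widetilde g_i D_i\phi , \qquad \int_{\Omega_T} f\psi = -\int_{\Omega_T} \widetilde f\,\phi ,
\]
where $\widetilde g_i(s,x) := \int_0^s g_i(t,x)\,dt$ and $\widetilde f(s,x) := \int_0^s f(t,x)\,dt$. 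This yields the asserted identity $\int_{\Omega_T} u\,\phi_t = \int_{\Omega_T}\widetilde g_i D_i\phi - \int_{\Omega_T}\widetilde f\,\phi$, once the norm bounds are established so that indeed $\widetilde g,\widetilde f \in L_{p,q,w}(\Omega_T)$.

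For the estimate \eqref{jj4}, write $h(t) := \norm{g_i(t,\cdot)}_{L_{q,w_2}(\Omega)}$; by Minkowski's integral inequality in $x$ one has $\norm{\widetilde g_i(s,\cdot)}_{L_{q,w_2}(\Omega)} \le \int_0^s h(t)\,dt$, so it suffices to bound the one-dimensional operator $h \mapsto \bigl(s\mapsto\int_0^s h(t)\,dt\bigr)$ on $L_p(w_1\,dt;(0,T))$ with norm at most $NT$, $N=N(p,K_1)$. Extending $h$ by zero to $\rr$, for $s\in(0,T)$ the interval $[0,2s]$ contains $s$ and has length $2s$, so $\int_0^s|h(t)|\,dt \le 2s\cdot\frac{1}{2s}\int_0^{2s}|h|\,dt \le 2T\,(\text{Hardy--Littlewood maximal function of }h)(s)$. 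The classical weighted maximal function estimate for $A_p(\rr,dt)$ weights with $[w_1]_{A_p}\le K_1$ then gives $\norm{\widetilde g_i}_{L_{p,q,w}} \le NT\norm{g_i}_{L_{p,q,w}}$, and the identical argument applies to $\widetilde f$ in place of $\widetilde g_i$.

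The argument is essentially routine and there is no deep obstacle; the only points that call for care are checking that $\psi=\int_T^t\phi\,ds$ genuinely lies in $C_0^\infty([0,T)\times\Omega)$ — precisely that its support stays away from $t=T$, which is exactly where $\phi$ vanishes — and keeping the signs and the limits of integration straight through the Fubini step and the domination of the Hardy-type average by the maximal function.
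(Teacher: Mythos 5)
Your argument is correct and coincides with the paper's proof up to a sign convention: you work with $\psi(t,x)=\int_T^t\phi(s,x)\,ds$, whereas the paper uses $\Phi(t,x)=\int_t^T\phi(s,x)\,ds=-\psi(t,x)$, and both perform the same Fubini interchange and the same Hardy-type domination by the weighted maximal function. (You also rightly place $\widetilde f$ rather than $f$ in the conclusion, which is what the paper's own computation actually produces.)
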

\begin{proof}
Let
\[
\widetilde{g}_i (t,x) = \int_0^t g_i (s,x) \, ds
\quad \text{and}\quad
\widetilde{f} (t,x) = \int_0^t f(s,x) \, ds.
\]
For any $\phi \in C_0^\infty ([0,T) \times \Omega)$, we check that
\[
\int_{\Omega_T} u \phi_t
= \int_{\Omega_T} \widetilde{g}_i D_i \phi
- \int_{\Omega_T} \widetilde{f} \phi. \stepcounter{equation}\tag{\theequation}\label{jj3}
\]
Note that the right-hand side of \eqref{jj3} equals
\begin{align*}
& \int_{\Omega_T} \int_0^t g_i (s,x) \, ds \, D_i \phi (t,x) \, dt - \int_{\Omega_T}  \int_0^t f(s,x) \, ds \, \phi(t,x) \, dt   \\
&=  \int_{\Omega_T} \int_t^T  D_i \phi (s,x)\, ds \,  g_i (t,x) \, dt - \int_{\Omega_T} \int_t^T  \phi (s,x)\, ds \,  f (t,x) \, dt    \\
&=-\int_{\Omega_T} u \Phi_{tt} = \int_{\Omega_T} u \phi_t,
\end{align*}
where
\[
\Phi(t,x) = \int_t^T \phi(s,x) \, ds \quad \text{and}\quad  \Phi \in C_0^\infty ([0,T) \times \Omega).
\]
To prove \eqref{jj4}, note that
\[
\int_0^t \norm{g_i (s, \cdot) }_{L_{q,w_2}} \, ds
\leq t \fint_0^t \norm{g_i (s, \cdot)}_{L_{q,w_2}} \, ds
\leq T \cmm_t \left( \norm{g_i (\cdot, \cdot)}_{L_{q,w_2}} \right) (t),
\]
where $\cmm{_t}$ denotes the maximal function in $t$.
Thus, by the Minkowski inequality and the weighted maximal function theorem, we have
\begin{align*}
\norm{\widetilde{g_i} }_{p,q,w}
\leq \big\|
\int_0^t \norm{g_i (s, \cdot)}_{L_{q,w_2}} \, ds
\big\|_{L_{p,w}(0,T)}
\leq N T  \norm{g_i}_{p,q,w},
\end{align*}
where $N = N(p,K_1)$. The lemma is proved.
\end{proof}

To prove the a priori estimate \eqref{6gg}, we are going to take cutoff functions in time (cf. the proof of \eqref{99999}). To that end, we compute the fractional derivative of the product in the following lemma (cf. Lemma \ref{3.1}).

\begin{lemma}\label{6.666}
Let $p \in [1, \infty)$, $\alpha \in (1,2)$, $k \in \{ 1,2, \ldots \}$,
$-\infty < S < t_0 < T < \infty$, and $v \in \chh_{p,q,w,0}^{\alpha, 1} ( (S,T) \times \Omega )$. Then, for any infinitely differentiable function $\eta$ defined on $\rr$ such that $\eta(t) = 0$ for $t \leq t_0$, we have $\eta v\in \chh_{p,q,w,0}^{\alpha,1} ( (t_0, T) \times \Omega )$ and
\[
\partial_t^\alpha (\eta v)
= \partial^2_t I_{t_0}^{2 - \alpha} (\eta v)
= \eta \partial^2_t I_{S}^{2 - \alpha } v  + h,
\]
where
\begin{align*}
h(t,x) &=  \frac{ \alpha (\alpha - 1)}{\Gamma (2 - \alpha)}  \int_S^t (t-s)^{-\alpha - 1}
[ \eta(s) - \eta(t) - \eta'(t) (s-t) ] v (s,x) \, ds  \\
&\quad+ \frac{ \alpha}{\Gamma (2 - \alpha)}  \eta'(t)  \partial_t I_{t_0}^{2 - \alpha}  v
\end{align*}
in the sense of distribution.
\end{lemma}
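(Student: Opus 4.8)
The plan is to transport Lemma \ref{3.1} to the weaker space $\chh_{p,q,w,0}^{\alpha,1}$ by approximation, the new feature being that for $v$ merely in $\chh_{p,q,w,0}^{\alpha,1}$ the term $\eta'\partial_t I_{t_0}^{2-\alpha}v$ is only a distribution, which is where Lemma \ref{6.8} enters. First I would fix an approximating sequence $\{v_n\}\subset C^\infty([S,T]\times\Omega)$ vanishing for large $|x|$, with $v_n(S,\cdot)=0$, $\partial_t v_n(S,\cdot)=0$, and $\norm{v_n-v}_{\chh_{p,q,w}^{\alpha,1}((S,T)\times\Omega)}\to 0$. Since $\eta$ vanishes identically on $(-\infty,t_0]$, the function $\eta v_n$ is smooth, vanishes for large $|x|$, vanishes together with its first time derivative at $t=t_0$, and satisfies $I_{t_0}^{2-\alpha}(\eta v_n)=I_S^{2-\alpha}(\eta v_n)$ on $(t_0,T)$. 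Applying Lemma \ref{3.1} to $v_n$ (which lies in $\hh_{p,0}^{\alpha,2}((S,T)\times\Omega)$ by the vanishing of its Cauchy data) and reorganizing the resulting term $\eta'(t)\partial_t\int_S^t(t-s)^{1-\alpha}v_n\,ds$ exactly as in Remark \ref{r3.2} and the proof of Proposition \ref{3.3} yields, for each $n$,
\[
\partial_t^\alpha(\eta v_n)=\eta\,\partial_t^2 I_S^{2-\alpha}v_n+h_n \quad\text{in}\quad (t_0,T)\times\Omega,
\]
with $h_n$ the expression in the statement with $v$ replaced by $v_n$; in this smooth setting both sides are honest functions and the identity is exactly the content of Lemma \ref{3.1}.

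Next I would pass to the limit term by term. In the first term of $h_n$ the bracket $\eta(s)-\eta(t)-\eta'(t)(s-t)$ is $O((s-t)^2)$, so that term is dominated by $\norm{\eta''}_{L_\infty}$ times a fractional integral of $|v_n|$ of order $2-\alpha\in(0,1)$ and, by Minkowski's inequality, converges in $L_{p,q,w}((t_0,T)\times\Omega)$ to the same expression with $v$. For $\eta\,\partial_t^2 I_S^{2-\alpha}v_n=\eta\,\partial_t^\alpha v_n$, I would observe that multiplication by the smooth time-cutoff $\eta$ is bounded on $\hh^{-1}_{p,q,w}$: if $\partial_t^\alpha v_n$ is represented by data $(f_n,g_{i,n})\in L_{p,q,w}$ through \eqref{6def}, testing against $\eta\phi$ shows $\eta\,\partial_t^\alpha v_n$ is represented by $(\eta f_n,\eta g_{i,n})$, so $\eta\,\partial_t^\alpha v_n\to\eta\,\partial_t^\alpha v$ in $\hh^{-1}_{p,q,w}((t_0,T)\times\Omega)$. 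Finally, writing $\eta'\partial_t I_{t_0}^{2-\alpha}v_n=\eta'\partial_t I_S^{2-\alpha}v_n+c_\alpha\,\eta'(t)\int_S^{t_0}(t-s)^{-\alpha}v_n(s,x)\,ds$ for a constant $c_\alpha$, the last summand is, by the fast vanishing of $\eta'$ near $t_0$, a genuine function converging in $L_{p,q,w}$, while $\eta'\partial_t I_S^{2-\alpha}v_n=\eta'\partial_t^{\alpha-1}v_n$ lies in $\hh^{-1}_{p,q,w}$ with norm controlled by $\norm{\partial_t^\alpha v_n}_{\hh^{-1}_{p,q,w}}$ by Lemma \ref{6.8}; hence this term also converges in $\hh^{-1}_{p,q,w}((t_0,T)\times\Omega)$.

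Combining these, $\partial_t^\alpha(\eta v_n)$ is Cauchy in $\hh^{-1}_{p,q,w}((t_0,T)\times\Omega)$ while $\eta v_n\to\eta v$ and $D(\eta v_n)\to D(\eta v)$ in $L_{p,q,w}((t_0,T)\times\Omega)$. Therefore $\eta v\in\chh_{p,q,w}^{\alpha,1}((t_0,T)\times\Omega)$ with $\partial_t^\alpha(\eta v)=\eta\,\partial_t^2 I_S^{2-\alpha}v+h$ in the distributional sense, and the sequence $\{\eta v_n\}$, which has the required vanishing at $t=t_0$ and for large $|x|$, certifies $\eta v\in\chh_{p,q,w,0}^{\alpha,1}((t_0,T)\times\Omega)$.

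I expect the main obstacle to be purely bookkeeping: checking that multiplication by a smooth time-cutoff is continuous on $\hh^{-1}_{p,q,w}$ under the change of time interval from $(S,T)$ to $(t_0,T)$, estimating the $L_{p,q,w}$-convergence of the correction $\eta'(t)\int_S^{t_0}(t-s)^{-\alpha}v_n(s,x)\,ds$ near $t_0$ via Minkowski's inequality, and invoking Lemma \ref{6.8} with the right interval and data to bound the $\partial_t^{\alpha-1}$-type term in $\hh^{-1}_{p,q,w}$. The smooth-function identity itself introduces nothing beyond Lemma \ref{3.1}.
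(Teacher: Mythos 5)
Your proposal is correct but follows a genuinely different approximation scheme than the paper's. The paper uses the \emph{spatial} mollification $v^\ep = v\ast_x\zeta^\ep$: starting from the representation $\partial_t^\alpha v = \operatorname{div} g + f$ with $f,g_i\in L_{p,q,w}$, Fubini gives $\partial_t^\alpha v^\ep = \operatorname{div} g^\ep + f^\ep\in L_{p,q,w}$, which upgrades $v^\ep$ to the \emph{strong} space $\hh_{p,q,w,0}^{\alpha,2}((t_0,T)\times\Omega)$; Lemma \ref{3.1} is applied to $v^\ep$ (where all terms are honest $L_{p,q,w}$ functions), and then $\ep\to 0$ using the dominated convergence theorem and Lemma \ref{6.8}. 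You instead work with the smooth defining sequence $\{v_n\}\subset C^\infty$ of $\chh_{p,q,w,0}^{\alpha,1}$, apply Lemma \ref{3.1} pointwise (no implicit extension of that lemma to weighted mixed-norm spaces is needed since each $v_n$ is fully smooth with compact spatial support), and track the limit term by term in $\hh^{-1}_{p,q,w}$. Your observation that multiplication by the time-only cutoff $\eta$ is bounded on $\hh^{-1}_{p,q,w}$ because $\nabla_x\eta=0$, and your use of Lemma \ref{6.8} to control $\norm{\eta'\partial_t^{\alpha-1}v_n}_{\hh^{-1}_{p,q,w}}$ by $\norm{\partial_t^\alpha v_n}_{\hh^{-1}_{p,q,w}}$, are both sound. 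What the paper's route buys is brevity: once $v^\ep$ is in the strong space, the three terms of the formula are genuine functions and the limit is a single application of dominated convergence. What your route buys is a proof that uses Lemma \ref{3.1} only in its cleanest (smooth) form and an explicit reconciliation of the lower integration limit $S$ (as it appears in Lemma \ref{3.1}) with $t_0$ (as it appears in the statement of this lemma), via the correction $c_\alpha\,\eta'(t)\int_S^{t_0}(t-s)^{-\alpha}v_n\,ds$, which the paper's proof glosses over; note however that you should also track the accompanying constant factor, since the $I_S$ and $I_{t_0}$ fractional integrals carry a $\Gamma(2-\alpha)$ normalization that must be reconciled with the $\tfrac{\alpha}{\Gamma(2-\alpha)}$ prefactor appearing in the stated formula for $h$.
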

\begin{proof}
Let $f,g\in L_{p,q,w}( (S,T) \times \Omega ) $ such that
$$ \partial_t^\alpha v = \partial_t^2
I_S^{2 - \alpha} v
= \mathrm{div} \, g + f.$$
By taking the mollification in the spatial variables and using the Fubini theorem, we have
$$
\partial_t^\alpha v^\ep = \partial_t^2
I_S^{2 - \alpha} v^\ep
= \mathrm{div} \, g^\ep + f^\ep\in L_{p,q,w}( (S,T) \times \Omega ).
$$
Therefore, $v^\ep \in \hh_{p,q,w,0}^{\alpha,2} ( (t_0, T) \times \Omega )$, and by Lemma \ref{3.1},
\begin{align*}
    \partial_t^\alpha (\eta v^\ep) &= \eta \partial^2_t I_{S}^{2 - \alpha } v^\ep \\
   &\quad+ \frac{ \alpha (\alpha - 1)}{\Gamma (2 - \alpha)}  \int_S^t (t-s)^{-\alpha - 1}
[ \eta(s) - \eta(t) - \eta'(t) (s-t) ] v^\ep (s,x) \, ds  \\
&\quad+ \frac{\alpha}{\Gamma (2 - \alpha)}  \eta'(t)  \partial_t I_{t_0}^{2 - \alpha}  v^\ep
\end{align*}
in the strong sense. Then, by sending $\ep \to 0$, and using the dominated convergence theorem and Lemma \ref{6.8}, we conclude
$$ \int_S^t (t-s)^{-\alpha - 1}
[ \eta(s) - \eta(t) - \eta'(t) (s-t) ] v^\ep (s,x) \, ds $$
converges to
$$\int_S^t (t-s)^{-\alpha - 1}
[ \eta(s) - \eta(t) - \eta'(t) (s-t) ] v(s,x) \, ds \quad \text{in}\quad  L_{p,q,w}((S,T) \times \Omega), $$
$$
\partial^2_t I_{S}^{2 - \alpha } v^\ep \to \partial^2_t I_{S}^{2 - \alpha } v \quad \text{in}\quad  \hh^{-1}_{p,q,w}((S,T) \times \Omega),
$$
and
$$
\partial_t I_{t_0}^{2 - \alpha } v^\ep \to \partial_t I_{t_0}^{2 - \alpha } v \quad \text{in}\quad  \hh^{-1}_{p,q,w}((t_0,T) \times \Omega).
$$
Therefore, the lemma is proved.
\end{proof}

We are ready to prove Theorem \ref{6main}. Recall that $\norm{\cdot}_{p,q,w} = \norm{\cdot}_{L_{p,q,w} ( \rr^d_T )}$, and $\norm{\cdot}_{p,q,w,(\tau_1,\tau_2)}:= \norm{\cdot}_{L_{p,q,w} ( (\tau_1,\tau_2) \times \rr^d )}$.
\begin{proof}[Proof of Theorem \ref{6main}]
We first prove the a priori estimate \eqref{6gg}. By adding $\lambda u $ to both sides of (\ref{6main estimate}), we have
\[
\partial_t^\alpha u
- D_i (a^{ij} D_j u + a^i u) -b^i D_i u - cu + \lambda u = D_i g_i + f +  \lambda u.
\]
By Corollary \ref{6par}, there exists $\gamma_0 = \gamma_0 (d, \delta, \alpha, p, q, K_1) > 0$ such that under Assumption \ref{ass2} $(\gamma_0)$, the following holds. There exists $\lambda_0 = \lambda_0(d, \delta, \alpha, p, K_1,R_0)$ such that for all $\lambda \geq \lambda_0$,
\begin{align*}
\sqrt{\lambda}
\norm{u}_{p,q,w}
+ \norm{Du}_{p,q,w}
\leq N \norm{g}_{p,q,w} + \frac{N}{\sqrt{\lambda}} \norm{f}_{p,q,w}
+ N \sqrt{\lambda} \norm{u}_{p,q,w},
\end{align*} and by picking $\lambda = \max (1,\lambda_0)$, we have
\begin{align*}
\norm{u}_{p,q,w}
+ \norm{Du}_{p,q,w}
\leq N \norm{g}_{p,q,w} + N \norm{f}_{p,q,w}
+ N_1 \norm{u}_{p,q,w}, \stepcounter{equation}\tag{\theequation}\label{6hh}
\end{align*}
where $N = N(d, \delta, \alpha, p, K_1)$ and $N_1 = N_1 (d, \delta, \alpha, p, K_1,R_0)$.
Therefore, in order to prove (\ref{6gg}), it is sufficient to prove
\[
\norm{u}_{p,q,w} \leq N\norm{g}_{p,q,w} + N \norm{f}_{p,q,w} \stepcounter{equation}\tag{\theequation}\label{9152}
\]
for some $N = N(d, \delta, \alpha, p, K_1,R_0,T)$.

{\em Step 1.} Take a positive integer $m$ to be specified below. Set $s_k = {kT}/{m}$ for $k =  -1, 0,1,\ldots,  m-1$, and define $\eta_k = \eta_k(t)$ as in \eqref{dd1} for $k\ge 0$.
Then, by Lemma \ref{6.666}, $u \eta_k \in \chh_{p,q,w,0}^{\alpha, 1}
( (s_{k-1}, s_{k+1} ) \times \rr^d  )$
and
\begin{align*}
& \partial_t^\alpha (u \eta_k)  {-} D_i (a^{ij} D_j (u \eta_k) + a^i (u \eta_k)) {-} b^i D_i (u \eta_k)  {-}c(u \eta_k) \\
& = D_i (g_i \eta_k) + f \eta_k  {-} \eta_k \partial_t^\alpha u  {+}\partial_t^\alpha (u \eta_k)\\
& = D_i (g_i \eta_k) + f \eta_k + \Tilde{N}  \int_{-\infty}^t (t-s)^{-\alpha - 1}
[ \eta_k(s) - \eta_k(t) - \eta_k'(t) (s-t) ] u(s,x) \, ds  \\
&\quad+ N \eta_k'  \partial_t I_{t_0}^{2 - \alpha} u\\
&=  D_i (g_i \eta_k) + N D_i(\widetilde{{g_i}} \eta_k') + f\eta_k + N \widetilde{f} \eta_k'(t)  \\
&\quad \quad + \Tilde{N}  \int_{-\infty}^t (t-s)^{-\alpha - 1}
[ \eta_k(s) - \eta_k(t) - \eta_k'(t) (s-t) ] \chi_{s\le s_k} u(s,x) \, ds, \stepcounter{equation}\tag{\theequation}\label{688}
\end{align*}
where $N$ and $\Tilde{N}$ depend only on $\alpha$, and
\[
\partial_t
I_0^{2 - \alpha} u =: \mathrm{div} \, \widetilde{g} + \widetilde{f}.
\]
By using the equation \eqref{6main estimate}, Lemma \ref{6.8}, and (\ref{6hh}), we have
\begin{align*}
&\norm{\widetilde{g}\eta_k'}_{p,q,w,(s_{k-1},s_{k+1})} \le \norm{\eta_k'}_{L_\infty} \norm{\widetilde{g}}_{p,q,w,(0,s_{k})} \\
&\le N \norm{\eta_k'}_{L_\infty}\frac{kT}{m}\{ \norm{g}_{p,q,w,(0,s_{k})} + \norm{Du}_{p,q,w,(0,s_{k})} +  \norm{u}_{p,q,w,(0,s_{k})} \}\\
&\le  Nk( \norm{g}_{p,q,w,(0,s_{k})} + \norm{Du}_{p,q,w,(0,s_{k})} +  \norm{u}_{p,q,w,(0,s_{k})})\\
&\le Nk( \norm{g}_{p,q,w,(0,s_{k})} + \norm{f}_{p,q,w,(0,s_{k})} +  \norm{u}_{p,q,w,(0,s_{k})}), \stepcounter{equation}\tag{\theequation}\label{9101}
\end{align*}
where $N = N (d,\delta,\alpha,p,q,K_1,R_0)$,
and similarly,
\begin{align*}
&\norm{\widetilde{f}\eta_k'}_{p,q,w,(s_{k-1},s_{k+1})} \le Nk ( \norm{g}_{p,q,w,(0,s_{k})} + \norm{f}_{p,q,w,(0,s_{k})}
+  \norm{u}_{p,q,w,(0,s_{k})}). \stepcounter{equation}\tag{\theequation}\label{9102}
\end{align*}

{\em Step 2.} By the triangle inequality,
\begin{align*}
&\norm{u}_{p,q,w,(s_{k},s_{k+1})} \le \norm{u\eta_k}_{p,q,w,(s_{k-1},s_{k+1})} \\
& \leq \norm{(u\eta_k)^\ep - u\eta_k}_{p,q,w,(s_{k-1},s_{k+1})}  + \norm{(u\eta_k)^\ep}_{p,q,w,(s_{k-1},s_{k+1})}. \stepcounter{equation}\tag{\theequation}\label{9105}
\end{align*}
For the first term in the right-hand side of \eqref{9105}, note for each $t$, by Lemma \ref{l5}, we have
\begin{align*}
\norm{(u\eta_k)^\ep (t, \cdot) - (u\eta_k) (t, \cdot) }_{q, w_2}
&\leq N \ep  \norm{D (u\eta_k) (t, \cdot)}_{q, w_2},
\end{align*}
where $N= N(d,q,K_1)$, which implies
\begin{align*}
\norm{(u\eta_k)^\ep - u\eta_k}_{p,q,w,(s_{k-1},s_{k+1})} \leq  N \ep \norm{D( u\eta_k)}_{p,q,w,(s_{k-1},s_{k+1})}. \stepcounter{equation}\tag{\theequation}\label{9103}
\end{align*}
To estimate $\norm{(u\eta_k)^\ep}_{p,q,w,(s_{k-1},s_{k+1})}$ in \eqref{9105}, we take the mollification in the spatial variables on both sides of (\ref{688}), and then move {all the terms} except $\partial_t^\alpha(u\eta_k)^\ep$ to the right-hand side of the equation. Using Lemma \ref{l6}, (\ref{9101}), and (\ref{9102}), for $\ep < 1$, we have
\begin{align*}
&\norm{\partial_t^\alpha(u\eta_k)^\ep}_{p,q,w,(s_{k-1},s_{k+1})}\\
&\le \frac{N}{\ep} \big\{\norm{g\eta_k}_{p,q,w,(s_{k-1},s_{k+1})}  + \norm{\widetilde{g}\eta_k'}_{p,q,w,(s_{k-1},s_{k+1})} \\
&\quad\quad  + \norm{D(u\eta_k)}_{p,q,w,(s_{k-1},s_{k+1})} + \norm{u\eta_k}_{p,q,w,(s_{k-1},s_{k+1})}  \big\} \\
&\quad + N\big\{\norm{f\eta_k}_{p,q,w,(s_{k-1},s_{k+1})} + \norm{\widetilde{f}\eta_k'}_{p,q,w,(s_{k-1},s_{k+1})}  + m^2T^{-\alpha}\norm{u}_{p,q,w,(0,s_{k})}   \big\}\\
&\leq \frac{N}{\ep} \big\{ m\norm{f}_{p,q,w,(0,s_{k+1})}
+ m\norm{g}_{p,q,w,(0,s_{k+1})} +
m\norm{u}_{p,q,w,(0,s_{k})} \\
&\quad \quad +
\norm{u}_{p,q,w,(s_{k-1},s_{k+1})} \big\} + N \big\{m\norm{f}_{p,q,w,(0,s_{k+1})}
+ m\norm{g}_{p,q,w,(0,s_{k+1})} \\
&\quad\quad + m\norm{u}_{p,q,w,(0,s_{k})} + m^2T^{-\alpha}\norm{u}_{p,q,w,(0,s_{k})}\big\} \\
& \leq \frac{N}{\ep} \big\{ m\norm{f}_{p,q,w,(0,T)}
+ m\norm{g}_{p,q,w,(0,T)} \\
&\quad \quad +
m\norm{u}_{p,q,w,(0,s_{k})}+
\norm{u}_{p,q,w,(s_{k-1},s_{k+1})} \big\}  + N  m^2T^{-\alpha}\norm{u}_{p,q,w,(0,s_{k})}  \stepcounter{equation}\tag{\theequation}\label{9104},
\end{align*}
where $N =N(d, \delta, \alpha, p, K_1,R_0) $. For the first inequality \eqref{9104}, we used
\begin{align*}
&\norm{\int_{0}^\cdot (\cdot-s)^{-\alpha - 1}
[ \eta_k(s) - \eta_k(\cdot) - \eta_k'(\cdot) (s-\cdot) ] \chi_{s\le s_k} u(s,x) \, ds}_{p,q,w,(s_{k-1},s_{k+1})}\\
&\leq
\norm{\eta_k''}_{L_\infty}
\norm{I^{2 - \alpha}
[{|}u (\cdot, x){|} \chi_{\cdot\leq s_k} ] }_{p,q,w,(s_{k-1}, s_{k+1})} \\
&\le N(\alpha,p,q,K_1)m^2T^{-\alpha}\norm{u}_{p,q,w,(0,s_{k})},
\end{align*}
and for the second inequality, we used (\ref{6hh}) to derive
\begin{align*}
&\norm{D(u\eta_k)}_{p,q,w,(s_{k-1},s_{k+1})} \le \norm{Du}_{p,q,w,(0,s_{k+1})} \\
&\le N\big\{ \norm{f}_{p,q,w,(0,T)} + \norm{g}_{p,q,w,(0,T)} + \norm{u}_{p,q,w,(0,s_{k+1})} \big\},
\end{align*}
where $N = N(d, \delta, \alpha, p, K_1,R_0)$.
Combining (\ref{9103}), (\ref{9104}), and (\ref{9105}), we have
\begin{align*}
&\norm{u}_{p,q,w,(s_{k},s_{k+1})}\\
& \leq \norm{(u\eta_k)^\ep - u\eta_k}_{p,q,w,(s_{k-1},s_{k+1})}  + \norm{(u\eta_k)^\ep}_{p,q,w,(s_{k-1},s_{k+1})} \\
&\le N\ep  \norm{D( u\eta_k)}_{p,q,w,(s_{k-1},s_{k+1})}+
N\big(\frac{T}{m}\big)^\alpha\norm{\partial_t^\alpha(u\eta_k)^\ep}_{p,q,w,(s_{k-1},s_{k+1})}\\
&\le N\ep \big(\norm{f}_{p,q,w,(0,T)} +\norm{g}_{p,q,w,(0,T)}+  \norm{u}_{p,q,w,(0,s_k)}+
\norm{u}_{p,q,w,(s_k,s_{k+1})}
\big)\\
&\quad \quad + \big( \frac{T}{m}\big)^\alpha \frac{N}{\ep} \big\{ m\norm{f}_{p,q,w,(0,T)} + m\norm{g}_{p,q,w,(0,T)} +
m\norm{u}_{p,q,w,(0,s_{k})}\\
&\quad \quad + \norm{u}_{p,q,w,(s_{k-1},s_{k+1})} \big\}
+ \big( \frac{T}{m}\big)^\alpha N m^2T^{-\alpha}\norm{u}_{p,q,w,(0,s_{k})}, \stepcounter{equation}\tag{\theequation}\label{1401}
\end{align*}
where $N= N(d,\delta,\alpha,p,q,K_1,R_0)$.
By first picking $\ep$ sufficiently small, and then picking $m$ sufficiently large so that
$\norm{u}_{p,q,w,(s_{k},s_{k+1})}$ can be absorbed to the left-hand side of \eqref{1401}, we arrive at
\begin{align*}
\norm{u}_{p,q,w,(s_k, s_{k+1})}
\leq N \norm{f}_{p,q,w,(0, T)}
+ N\norm{g}_{p,q,w,(0, T)} + N\norm{u}_{p,q,w,(0,s_{k})},
\end{align*}
where $N = N(d,\delta,\alpha,p,q,K_1,R_0,T)$.
Therefore, (\ref{9152}) follows from an induction on $k$, and the a priori estimate (\ref{6gg}) is proved. Finally, the existence of solutions follows from the solvability of the simple equation with leading coefficients $a^{ij} = \delta^{ij}$ given in Proposition \ref{8.29} together with the method of continuity.
\end{proof}

Next, we prove Theorem \ref{main4} by following a similar procedure {as} in Section \ref{5}.
\subsection{The half space case}
When $\Omega = \rr^d_+$, i.e., the half space, we derive the mean oscillation estimate by taking odd and even extensions to the whole space.
Recall the definition of the strong maximal function for the half space in \eqref{SM}.
Moreover, for $w(t,x) = w_1(t) w_2(x)$ where $w_1 \in A_p(\rr)$ and $w_2 \in A_q(\Omega)$, we denote the even extension of $w_2$ to the whole space by $\overline{w_2}(x)$, and let $\overline{w}(t,x) = w_1(t) \overline{w_2}(x)$.
Similar to Lemma \ref{ext}, we have the following lemma.

\begin{lemma}\label{9182}
For any
$u \in \mathring{\chh}^{\alpha, 1}_{p,q,w,0} (\Omega_T )$,
let $\overline{u}(t,x) = u(t, |x^1|, x') \mathrm{sgn} \, x^1$.
Then, $
\overline{u}
\in
 \chh^{\alpha, 1}_{p,q,\overline{w},0} (\rr^d_T )$ and
$$ \norm{u}_{\chh^{\alpha, 1}_{p,q,w} (\Omega_T)} \leq \norm{\overline{u}}_{\chh^{\alpha, 1}_{p,q,\overline{w}} (\rr^d_T)} \leq 2 \norm{u}_{\chh^{\alpha, 1}_{p,q,w} (\Omega_T)}. $$
\end{lemma}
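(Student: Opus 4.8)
The plan is to follow the proof of Lemma \ref{ext} for the parts involving $u$ and $Du$, and to insert one extra argument for the part involving $\partial_t^\alpha u$, since the $\chh^{\alpha,1}$-norm measures it in $\hh^{-1}_{p,q,w}$ rather than in $L_{p,q,w}$. All reflections below are in the $x^1$ variable: the odd reflection of $v$ is $v(t,|x^1|,x')\,\mathrm{sgn}\,x^1$ and the even reflection is $v(t,|x^1|,x')$; since $\overline{w_2}$ is the even reflection of $w_2$, one has $\norm{\overline v}_{L_{p,q,\overline w}(\rr^d_T)}=2^{1/q}\norm{v}_{L_{p,q,w}(\Omega_T)}$ for any $v$, whichever reflection is used, so that $\norm{v}_{L_{p,q,w}(\Omega_T)}\le\norm{\overline v}_{L_{p,q,\overline w}(\rr^d_T)}\le 2\norm{v}_{L_{p,q,w}(\Omega_T)}$.

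First I would reduce to the defining sequence. Pick $\{u_n\}$ smooth, compactly supported, vanishing on $(0,T)\times\partial\Omega$, with $u_n(0,\cdot)=\partial_tu_n(0,\cdot)=0$ and $\norm{u_n-u}_{\chh^{\alpha,1}_{p,q,w}(\Omega_T)}\to0$. Because $u_n$ vanishes on $\{x^1=0\}$, its odd reflection $\overline{u_n}$ is Lipschitz with compact support, $D_1\overline{u_n}$ is the even reflection of $D_1u_n|_\Omega$, and $D_i\overline{u_n}$ ($i\ge2$) is the odd reflection of $D_iu_n|_\Omega$. Moreover, spatial reflection commutes with $I_0^{2-\alpha}$ and $\partial_t^2$, and $\partial_t^\alpha u_n$ is a bounded function (this uses $u_n(0,\cdot)=\partial_tu_n(0,\cdot)=0$ and $\alpha<2$), so $\partial_t^\alpha\overline{u_n}=\partial_t^2I_0^{2-\alpha}\overline{u_n}$ is the odd reflection of $\partial_t^\alpha u_n$, in particular in $L_{p,q,\overline w}(\rr^d_T)$. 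Mollifying $\overline{u_n}$ in $x$, which commutes with $\partial_t^\alpha$ by Fubini, gives smooth compactly supported approximations vanishing together with their $t$-derivative at $t=0$ and converging to $\overline{u_n}$ in $\chh^{\alpha,1}_{p,q,\overline w}(\rr^d_T)$; hence $\overline{u_n}\in\chh^{\alpha,1}_{p,q,\overline w,0}(\rr^d_T)$.

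Next I would prove the two-sided norm bound for $\overline{u_n}$. The bounds for the $L_{p,q,\overline w}$-norms of $\overline{u_n}$ and $D_i\overline{u_n}$ follow from the reflection identity above. For the $\hh^{-1}$ part, fix $\varepsilon>0$ and pick $g_i,f\in L_{p,q,w}(\Omega_T)$ representing $\partial_t^\alpha u_n$ as in \eqref{6def} with $\sum_i\norm{g_i}_{L_{p,q,w}(\Omega_T)}+\norm{f}_{L_{p,q,w}(\Omega_T)}\le\norm{\partial_t^\alpha u_n}_{\hh^{-1}_{p,q,w}(\Omega_T)}+\varepsilon$. Let $\overline g_1$ be the even reflection of $g_1$ and let $\overline g_i$ ($i\ge2$) and $\overline f$ be the odd reflections; then $\partial_t^\alpha\overline{u_n}$, $\overline f$ and $\mathrm{div}\,\overline g$ are all odd in $x^1$. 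Given $\phi\in C_0^\infty([0,T)\times\rr^d)$, write $\phi=\phi_e+\phi_o$ for its even and odd parts in $x^1$: the even part pairs to zero with each of these odd quantities, and folding the half $\{x^1<0\}$ onto $\{x^1>0\}$ reduces the desired identity $\int_{\rr^d_T}\partial_t^\alpha\overline{u_n}\,\phi=\int_{\rr^d_T}(\overline f\,\phi-\overline g_iD_i\phi)$ to \eqref{6def} for $u_n$ tested against $\psi:=2\phi_o|_{\{x^1>0\}}$, a smooth compactly supported function vanishing on $\partial\Omega$ and hence admissible after a routine cutoff approximation, since $\psi$ vanishes linearly at $\partial\Omega$. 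Thus $\partial_t^\alpha\overline{u_n}=\mathrm{div}\,\overline g+\overline f$ on $\rr^d_T$, so $\norm{\partial_t^\alpha\overline{u_n}}_{\hh^{-1}_{p,q,\overline w}(\rr^d_T)}\le2^{1/q}\bigl(\norm{\partial_t^\alpha u_n}_{\hh^{-1}_{p,q,w}(\Omega_T)}+\varepsilon\bigr)$; conversely, restricting any whole-space representation of $\partial_t^\alpha\overline{u_n}$ to $\Omega_T$ and to test functions supported in $\Omega$ gives $\norm{\partial_t^\alpha u_n}_{\hh^{-1}_{p,q,w}(\Omega_T)}\le\norm{\partial_t^\alpha\overline{u_n}}_{\hh^{-1}_{p,q,\overline w}(\rr^d_T)}$. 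Letting $\varepsilon\to0$ yields $\norm{u_n}_{\chh^{\alpha,1}_{p,q,w}(\Omega_T)}\le\norm{\overline{u_n}}_{\chh^{\alpha,1}_{p,q,\overline w}(\rr^d_T)}\le2\norm{u_n}_{\chh^{\alpha,1}_{p,q,w}(\Omega_T)}$.

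Finally, applying this bound to $u_n-u_m$ shows $\{\overline{u_n}\}$ is Cauchy in $\chh^{\alpha,1}_{p,q,\overline w}(\rr^d_T)$, and its limit equals $\overline u$ because $u_n\to u$ in $L_{p,q,w}(\Omega_T)$ forces $\overline{u_n}\to\overline u$ in $L_{p,q,\overline w}(\rr^d_T)$; since each $\overline{u_n}$ lies in the closed subspace $\chh^{\alpha,1}_{p,q,\overline w,0}(\rr^d_T)$, so does $\overline u$, and letting $n\to\infty$ in the two-sided bound finishes the proof. I expect the $\hh^{-1}$ step — verifying that $(\overline g,\overline f)$ represents $\partial_t^\alpha\overline u$ on $\rr^d_T$ through the even/odd test-function symmetrization — to be the only point requiring genuine care; everything else is exactly as in Lemma \ref{ext}.
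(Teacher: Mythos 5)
Your proposal is correct and follows exactly the route the paper implicitly indicates (``similar to Lemma~\ref{ext}''): odd/even reflection in $x^1$, reduction to the defining smooth sequence, and a two-sided scaling of the norms by $2^{1/q}$. The only genuinely new ingredient beyond Lemma~\ref{ext} is the $\hh^{-1}$ component, which you handle correctly via the even/odd splitting of test functions and the observation that the folded test function $\psi=2\phi_o|_{\{x^1>0\}}$ vanishes linearly on $\partial\Omega$, so the requisite cutoff approximation works because $f,g_i\in L_{1,\mathrm{loc}}$ and the boundary strip has shrinking measure; the paper does not spell any of this out.
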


\begin{proposition}\label{9181}
Let $\alpha \in (1,2)$, $p_0 \in (1, 2)$, $T \in (0, \infty)$, $a^{ij}$ be constants, and $u \in \mathring{\chh}_{p_0, 0, \mathrm{loc}}^{\alpha, 1} ( \Omega_T )$ satisfy
\[
 \partial_t^\alpha u - D_i(a^{ij}D_j u) = D_i g_i
\quad \text{in}\quad  \Omega_T \stepcounter{equation}\tag{\theequation}\label{9183}.
\]
Then, for any $(t_0, x_0) \in (0,T] \times \Omega,  r \in (0, \infty)$, and $\kappa \in (0, 1/4)$, we have
\begin{align*}
& ( |D u - (D u )_{(t_0-(\kappa r)^{2/\alpha},t_0) \times B^+_{\delta\kappa r}(x_0))} | )_{
(t_0-(\kappa r)^{2/\alpha},t_0) \times B^+_{\delta \kappa r}(x_0)
} \\
&\quad \leq N \kappa^\sigma   \sum_{k=0}^\infty {2^{-k\alpha}}( |D u |^{p_0} )^{1 / p_0}_{(t_0-2^{k}(r/2)^{2/\alpha}r,t_0)\times B^+_{\delta^{-1}r/2}(x_0)}\\
&\quad \quad+ N \kappa^{- (d+2 /\alpha)/p_0}
\sum_{k=0}^\infty 2^{-\alpha k} (|g|^{p_0})^{1/p_0}_{
( t_0 - 2^k r^{2/\alpha}, t_0 )
\times B^+_{  \delta^{-1} \sqrt{d} r}(x_0)
},\stepcounter{equation}\tag{\theequation}\label{uu1}
\end{align*}
where $N = N(d, \delta, \alpha, p_0)$ and $\sigma = \sigma (d, \alpha, p_0)$.
\end{proposition}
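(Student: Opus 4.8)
The plan is to reduce to the whole-space divergence-form mean oscillation estimate, Proposition~\ref{63.9}, by the reflection argument, in complete parallel with the proof of Proposition~\ref{5.2}. First I would treat the case $a^{ij}=\delta^{ij}$. Extend $u$ oddly in the $x^1$-variable, $\overline{u}(t,x)=u(t,|x^1|,x')\,\mathrm{sgn}\,x^1$, extend $g_1$ evenly, and extend $g_i$ oddly for $i=2,\ldots,d$, calling the results $\overline{g}_i$; the choice of parities is forced by requiring the right-hand side $D_i\overline{g}_i$ to be odd in $x^1$, matching the oddness of $\partial_t^\alpha\overline{u}-\Delta\overline{u}$. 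Using the weak formulation \eqref{9183} together with the boundary condition $u=0$ on $(0,t_0)\times\partial\Omega$, and the $\chh^{\alpha,1}$-extension Lemma~\ref{9182} in its local version, one checks that $\overline{u}\in\chh_{p_0,0,\mathrm{loc}}^{\alpha,1}(\rr^d_T)$ solves $\partial_t^\alpha\overline{u}-\Delta\overline{u}=D_i\overline{g}_i$ in $\rr^d_T$. Applying Proposition~\ref{63.9} to $\overline{u}$ on the full cylinders centered at $(t_0,x_0)$ then gives the estimate in terms of full-ball averages.

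It remains to convert back to half balls; here one uses that $x_0\in\Omega$, so $x_0^1>0$ and $|B_r(x_0)|/2\le|B_r^+(x_0)|\le|B_r(x_0)|$ for every $r>0$. On the left-hand side, the standard comparison of averages on nested sets of comparable measure gives
$(|D\overline{u}-(D\overline{u})_{B^+_{\delta\kappa r}(x_0)}|)_{B^+_{\delta\kappa r}(x_0)}\le 4\,(|D\overline{u}-(D\overline{u})_{B_{\delta\kappa r}(x_0)}|)_{B_{\delta\kappa r}(x_0)}$, and the restriction of $D\overline{u}$ to $\Omega$ is $Du$. On the right-hand side, the reflection symmetry of $\overline{u}$ and of $\overline{g}$, together with the fact that the part of $B_r(x_0)$ lying in $\{x^1<0\}$ reflects into $B_r^+(x_0)$, yields $(|D\overline{u}|^{p_0})^{1/p_0}_{B_r(x_0)}\le 2^{1/p_0}(|Du|^{p_0})^{1/p_0}_{B_r^+(x_0)}$ and the analogous bound for $\overline{g}$; the time direction is untouched throughout. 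Substituting these into the conclusion of Proposition~\ref{63.9} yields \eqref{uu1} when $a^{ij}=\delta^{ij}$.

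For general constant $a^{ij}$ I would argue as at the end of the proof of Proposition~\ref{5.2}: replace $(a^{ij})$ by its symmetrization, take the symmetric square root $A^{1/2}$, pick an orthogonal matrix $O$ with $OA^{-1/2}(\Omega)=\Omega$, and set $\widehat{u}(t,x)=u(t,A^{1/2}O^{-1}x)$, $y_0=OA^{-1/2}x_0$. A direct computation with the weak formulation shows $\widehat{u}\in\mathring{\chh}_{p_0,0,\mathrm{loc}}^{\alpha,1}(\Omega_T)$ and $\partial_t^\alpha\widehat{u}-\Delta\widehat{u}=D_i\widehat{g}_i$ in $\Omega_T$, where $\widehat{g}$ is a constant-coefficient linear combination of the components of $g(t,A^{1/2}O^{-1}\cdot)$, so $|\widehat{g}|\le N(d,\delta)\,|g(t,A^{1/2}O^{-1}\cdot)|$. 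Applying the case already proved to $\widehat{u}$ and using the inclusions $B_{\delta r}^+(x_0)\subset A^{1/2}O^{-1}(B_r^+(y_0))\subset B_{\delta^{-1}r}^+(x_0)$ together with the volume comparison $|A^{1/2}O^{-1}(B_r^+(y_0))|\le N(d,\delta)\,|B_{\delta r}^+(x_0)|$, both consequences of the ellipticity of $(a^{ij})$, recovers \eqref{uu1} in general after adjusting the radii by the fixed factor $\delta$; the factors $\kappa^\sigma$, $\kappa^{-(d+2/\alpha)/p_0}$ and the summation structure are unaffected.

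I do not expect any essential new difficulty beyond Propositions~\ref{5.2} and~\ref{63.9}; the only point requiring genuine care is the bookkeeping in the first case — verifying that the odd extension of $u$ together with the mixed odd/even extension of the $g_i$ is a bona fide weak solution of the divergence-form equation across $\{x^1=0\}$, which is exactly where the hypothesis $u\in\mathring{\chh}$ (the zero Dirichlet condition) enters, and then translating the half-ball quantities on both sides of \eqref{uu1} faithfully into the full-ball quantities to which Proposition~\ref{63.9} applies.
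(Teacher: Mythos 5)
Your proposal is correct and follows essentially the same route as the paper's (very terse) proof of Proposition~\ref{9181}: odd reflection of $u$, even extension of $g_1$ and odd extensions of $g_i$ for $i\ge 2$, reduction to the whole-space estimate of Proposition~\ref{63.9} via Lemma~\ref{9182}, comparison of full-ball and half-ball averages using $|B_r^+(x_0)|\ge \tfrac12|B_r(x_0)|$, and finally the same $A^{1/2}O^{-1}$ change of variables as in Proposition~\ref{5.2} for general constant $a^{ij}$. Your write-up merely supplies the bookkeeping that the paper leaves implicit, in particular the parity argument forcing the choice of extensions and the role of the Dirichlet condition in making the odd extension a weak solution across $\{x^1=0\}$.
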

\begin{proof}
The proof is similar to that of Proposition \ref{5.2}. Indeed, when $a_{ij} = \delta_{ij}$, we extend $g$ by letting
$$ g_1 (x) = g_1(|x^1|,x'), \quad g_i (x) = g_i(|x^1|,x') \mathrm{sgn}\, x^1 \quad \text{for}\quad  i= 2,\ldots,d$$ so that the odd extension of $u$ is a solution of (\ref{9183}) in the whole space. Therefore, \eqref{uu1} follows from Proposition \ref{63.9}, Lemma \ref{9182}, and a change of variables.
\end{proof}

To prove Theorem \ref{main4}, we only need the following proposition for the case $G = \rr_+^d $. {However,} for convenience, here we include the result for the whole space, since we will need {both of} them later.
\begin{lemma}\label{91566}
Let $G = \rr_+^d$ or $\rr^d$, $\alpha \in (1,2)$, $T \in (0, \infty)$, $p,q \in (1, \infty)$, $ K_1 \in [1, \infty)$, and $[w]_{p,q} \le K_1$.
There exists $\gamma_0 = \gamma_0 (d, \delta, \alpha, p, q, K_1 ) >0$ such that under Assumption \ref{ass2} $(\gamma_0)$, the following holds. There exists
$
\lambda_0 = \lambda_0(d, \delta, \alpha, p, q, K_1, R_0)
$
such that
for any $\lambda\ge \lambda_0$ and $u \in \mathring{\chh}_{p,q,w,0}^{\alpha, 1} ( (0,T) \times \rr^d_+)$ or $\chh_{p,q,w,0}^{\alpha, 1} ( \rr^d_T )$ satisfying
$$
Lu + \lambda u = D_i g_i + f \quad \text{in}\quad  (0,T) \times G,
$$
we have
\begin{align*}
&\sqrt{\lambda} \norm{ u}_{L_{p,q,w} ( (0,T) \times G )} +
\norm{D u}_{L_{p,q,w} ( (0,T) \times G )}\\
&\leq
N \norm{g}_{L_{p,q,w} ( (0,T) \times G )}
+ \frac{N}{\sqrt{\lambda} } \norm{f}_{L_{p,q,w} ( (0,T) \times G )},
\end{align*}
where $N = N(d, \delta, \alpha, p, q, K_1)$.
\end{lemma}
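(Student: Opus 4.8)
The plan is to treat the two cases $G=\rr^d$ and $G=\rr^d_+$ separately, reducing the first to a result already established and the second to a half-space analogue of the chain of arguments in Subsection \ref{6}. For $G=\rr^d$ the asserted estimate is exactly the content of Corollary \ref{6par}, so nothing new is required; one only records that the constant $N$ produced there has the stated dependence on $(d,\delta,\alpha,p,q,K_1)$ and that $\lambda_0$ depends in addition on $R_0$.

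For $G=\rr^d_+$ I would run the same three-step scheme used for the whole space. First, starting from the mean oscillation estimate for constant-coefficient divergence form equations in the half space (Proposition \ref{9181}) and from Remark \ref{ass2.3}, which shows that Assumption \ref{ass2} $(\gamma_0)$ controls the mean oscillation of $a^{ij}$ by $4\gamma_0$ on half-cylinders, I would freeze the coefficients on $Q_{\delta^{-1}2r}(t_0,x_0)\cap\Omega_T$, write $\overline{g}_i=g_i+(a^{ij}-\overline{a}^{ij})D_ju$, and combine H\"older's inequality with Lemma \ref{2.1} to obtain the half-space version of the perturbed mean oscillation estimate (the analogue of Proposition \ref{12291}) for $u$ supported in a small ball $(0,T)\times B^+_{\xi R_0}(x_1)$. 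Next, using the weighted sharp function theorem and the weighted maximal function theorem for the strong maximal function $\widetilde{\css\cmm}$ in the half space, together with the dyadic cubes from \eqref{cubes}, I would absorb the terms containing $Du$ and deduce $\norm{Du}_{L_{p,q,w}}\le N\norm{g}_{L_{p,q,w}}$ for such small-support $u$ (the analogue of Proposition \ref{64.2}). Then I would introduce the parameter $\lambda$ by S. Agmon's device: set $v(t,x,y)=u(t,x)\zeta(y)\cos(\sqrt{\lambda}y)$ on $(0,T)\times\rr^d_+\times\rr$, observe that $\rr^d_+\times\rr$ is again a half space, compute $D_y^2 v$ as in \eqref{quack}, apply the small-support half-space estimate to $v$, and choose $\lambda_0$ large enough to obtain the $\lambda$-weighted bound for small-support $u$ (the analogue of Corollary \ref{6small}, cf. \cite[Lemma 5.5]{k2}). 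Finally, a partition of unity in the spatial variables with cutoffs $\zeta^k$ supported in balls of radius $\xi R_0/\sqrt2$, the interpolation/absorption step for the cross terms, and the treatment of the lower-order terms $a^i u$, $b^i D_i u$, $cu$ exactly as in Cases 1--3 of Corollary \ref{6par}, remove the support restriction and yield the full statement; the case $p\neq q$ follows from the extrapolation theorem \cite[Theorem 2.5]{Ap}.

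The only genuinely new ingredient compared with Subsection \ref{6} is the presence of the boundary, and this has already been handled at the level of the mean oscillation estimate in Proposition \ref{9181} via odd and even reflection (Lemma \ref{9182}); once that estimate and the half-space sharp and maximal function theorems are available, every subsequent step is formally identical to the whole-space case. Thus the main point to check carefully is that Agmon's extension does not interact badly with the boundary --- but since adjoining the inert variable $y\in\rr$ keeps the spatial domain a half space and does not touch the $x^1$-direction, the half-space small-support estimate applies verbatim to $v$, and the selection of $\lambda_0$ proceeds as in the whole-space argument. We omit the routine details.
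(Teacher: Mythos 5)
Your proposal follows exactly the paper's route: for $G=\rr^d$ it invokes Corollary \ref{6par}, and for $G=\rr^d_+$ it reproduces in more detail the paper's one-line instruction to derive half-space analogues of Propositions \ref{12291} and \ref{64.2} and Corollary \ref{6small} from Proposition \ref{9181}, the half-space Fefferman--Stein and Hardy--Littlewood theorems, and Agmon's trick, and then conclude by a partition of unity (with extrapolation for $p\neq q$). The observation that adjoining the inert Agmon variable keeps the spatial domain a half space is exactly the reason the whole-space scheme carries over verbatim, and your write-up is a correct expansion of the paper's terse sketch.
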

\begin{proof}
{The case when $G = \rr^d$ was proved in Corollary \ref{6par}.}
{When} $G = \rr_+^d$, we can easily derive results analogous to Propositions \ref{12291}, \ref{64.2}, and Corollary \ref{6small} by using Proposition \ref{9181}, the weighted Hardy-Littlewood and Fefferman-Stein theorems in the half space, and S. Agmon's idea, respectively. Then, {Lemma} \ref{91566} follows from a partition of unity {argument} in the half space.
\end{proof}

In the following proof, for $\Omega = \rr^d_+$, we denote $\norm{\cdot}_{p,q,w} = \norm{\cdot}_{L_{p,q,w} ( \Omega_T )}$ and $\norm{\cdot}_{p,q,w,(\tau_1,\tau_2)}= \norm{\cdot}_{L_{p,q,w} ( (\tau_1,\tau_2) \times \Omega )}$.
\begin{proof}[Proof of Theorem \ref{main4}]
First, note that the solvability of the simple equation $$\partial_t^\alpha u - \Delta u = D_ig_i +f$$ can be proved by extending $f,g$ as
$$
g_1 (x) = g_1(|x^1|,x'), \quad g_i (x) = g_i(|x^1|,x') \mathrm{sgn} \, x^1 \quad \text{for}\quad  i= 2,\ldots,d,
$$
$$f (x) = f (|x^1|,x') \mathrm{sgn} \, x^1,
$$
and then applying Theorem \ref{6main}.
Therefore, by the method of continuity it suffices to prove the a priori estimate (\ref{9184}) under Assumption \ref{ass2} $(\gamma_0)$, where $\gamma_0$ is taken from Lemma \ref{91566}. By adding $\max (1,\lambda_0) u$ to both sides of the equation \eqref{915}, where $\lambda_0 = \lambda_0(d, \delta, \alpha, p, q, K_1, R_0)$ is derived from Lemma \ref{91566} in the half space, we have
\begin{align*}
\norm{u}_{p,q,w}
+ \norm{Du}_{p,q,w}
\leq N \norm{g}_{p,q,w} + N \norm{f}_{p,q,w}
+ N_1 \norm{u}_{p,q,w}, \stepcounter{equation}\tag{\theequation}\label{9281}
\end{align*}
where $N = N(d, \delta, \alpha, p, K_1)$ and $N_1 = N_1 (d, \delta, \alpha, p, K_1,R_0)$.
Therefore, in order to prove (\ref{9184}), it remains to prove
\[
\norm{u}_{p,q,w} \leq N\norm{g}_{p,q,w} + N \norm{f}_{p,q,w} \stepcounter{equation}\tag{\theequation}\label{919}
\]
for some $N = N(d, \delta, \alpha, p, K_1,R_0,T)$. We follow the proof of (\ref{9152}). The difference is that we need to take extensions before taking the mollification. For a positive integer $m$ to be specified below, we take $s_k = kT/m$ for $k = -1,0,1,\ldots, m-1$, and define the cutoff functions $\eta_k$ as in \eqref{dd1} for $k\ge 0$. Then (\ref{688}) is satisfied in the half space, i.e.,
\begin{align*}
 \partial_t^\alpha (u \eta_k) &=  D_i (g_i \eta_k  {+} a^{ij}  D_j (u \eta_k) {+} a^i (u \eta_k))) + N_4 D_i(\widetilde{{g_i}} \eta_k')  {+} c(u \eta_k) \\
 & \quad {+} b^i D_i (u \eta_k) + f\eta_k + N_4 \widetilde{f} \eta_k'(t)  \\
& \quad + N_3  \int_{-\infty}^t (t-s)^{-\alpha - 1}
[ \eta_k(s) - \eta_k(t) - \eta_k'(t) (s-t) ] \chi_{s\le s_k} u(s,x) \, ds,
\end{align*}
where $N_3$ and $N_4$ depend only on $\alpha$. Therefore, by Lemma \ref{l6} and (\ref{9281}), we conclude
\begin{align*}
&\norm{\partial_t^\alpha\overline{u\eta_k}^\ep}_{L_{p,q,\overline{w}} ( (s_{k-1},s_{k+1}) \times \rr^d )}
\\
&\le \frac{N}{\ep} \big\{ m\norm{f}_{p,q,w,(0,T)}
+ m\norm{g}_{p,q,w,(0,T)} + m\norm{u}_{p,q,w,(0,s_{k})} \\
&\quad \quad + \norm{u}_{p,q,w,(s_{k-1},s_{k+1})} \big\} + N  m^2T^{-\alpha}\norm{u}_{p,q,w,(0,s_{k})},\stepcounter{equation}\tag{\theequation}\label{uu2}
\end{align*}
where $N= N(d,\delta,\alpha,p,q,K_1,R_0)$, and $\overline{u\eta_k} \in \chh^{\alpha, 1}_{p,q,\overline{w},0} (\rr^d_T )$ denotes the odd extension of $u\eta_k$ in the $x^1$ direction.
Then, by \eqref{9281} and \eqref{uu2},
\begin{align*}
&\norm{u}_{p,q,w,(s_{k},s_{k+1})} \le \norm{\overline{u\eta_k}}_{L_{p,q,\overline{w}} ( (s_{k-1},s_{k+1}) \times \rr^d )}\\
& \leq \norm{\overline{u\eta_k}^\ep - \overline{u\eta_k}}_{L_{p,q,\overline{w}} ( (s_{k-1},s_{k+1}) \times \rr^d )}  + \norm{\overline{u\eta_k}^\ep}_{L_{p,q,\overline{w}} ( (s_{k-1},s_{k+1}) \times \rr^d )} \\
&\le N_1 \ep  \norm{D(\overline{u\eta_k})}_{L_{p,q,\overline{w}} ( (s_{k-1},s_{k+1}) \times \rr^d )}+
N_2(\frac{T}{m})^\alpha\norm{\partial_t^\alpha\overline{u\eta_k}^\ep}_{L_{p,q,\overline{w}} ( (s_{k-1},s_{k+1}) \times \rr^d )}\\
&\le  N_1 \ep\norm{D(u\eta_k)}_{p,q,w,(s_{k-1},s_{k+1})}\\
&\quad \quad + ( \frac{T}{m})^\alpha \frac{N}{\ep} \big\{ m\norm{f}_{p,q,w,(0,T)} + m\norm{g}_{p,q,w,(0,T)} +
m\norm{u}_{p,q,w,(0,s_{k})}\\
&\quad \quad \quad + \norm{u}_{p,q,w,(s_{k-1},s_{k+1})} \big\}
+ ( \frac{T}{m})^\alpha N m^2T^{-\alpha}\norm{u}_{p,q,w,(0,s_{k})}\\
&\le N \ep  (\norm{g}_{p,q,w,(0,T)}+ \norm{f}_{p,q,w,(0,T)} + \norm{u}_{p,q,w,(0,s_k)}+
\norm{u}_{p,q,w,(s_k,s_{k+1})}
)\\
&\quad \quad + ( \frac{T}{m})^\alpha \frac{N}{\ep}  \big\{ m\norm{f}_{p,q,w,(0,T)} + m\norm{g}_{p,q,w,(0,T)} +
m\norm{u}_{p,q,w,(0,s_{k})}\\
&\quad \quad \quad + \norm{u}_{p,q,w,(s_{k-1},s_{k+1})} \big\}
+ ( \frac{T}{m})^\alpha N m^2T^{-\alpha}\norm{u}_{p,q,w,(0,s_{k})},\stepcounter{equation}\tag{\theequation}\label{1402}
\end{align*}
where $N_1 = N_1(d,q,K_1)$, $N_2 = N_2(\alpha,p,q,K_1)$, $N= N(d,\delta,\alpha,p,q,K_1,R_0)$, and $\overline{w}$ denotes the even extension of $w$. By first picking $\ep$ sufficiently small, and then picking $m$ sufficiently large so that
$\norm{u}_{p,q,w,(s_{k},s_{k+1})}$ can be absorbed to the left-hand side of \eqref{1402}, we conclude
\begin{align*}
\norm{u}_{p,q,w,(s_k, s_{k+1})}
\leq N \norm{f}_{p,q,w,(0, T)}
+ N\norm{g}_{p,q,w,(0, T)} + N\norm{u}_{p,q,w,(0,s_{k})},
\end{align*}
where $N = N(d,\delta,\alpha,p,q,K_1,R_0,T)$.
Therefore, \eqref{919} follows from an induction on $k$, and Theorem \ref{main4} is proved.
\end{proof}

\subsection{The domain case}
In this subsection, we denote $$Lu = \partial_t^\alpha u - D_i (a^{ij} D_ju  + a^i u) -b^i D_iu - cu.$$
To prove Theorem \ref{divdomain}, we first deal with $L + \lambda$ for $\lambda$ sufficiently large (cf. Proposition \ref{5.4}).

\begin{proposition}\label{612}
Let $\Omega$ be a domain, $\alpha \in (1,2)$, $T \in (0, \infty)$, $p,q \in (1, \infty)$, $K_1 \in [1, \infty)$, and $[w]_{p,q,{\Omega}} \le K_1$.
There exist $\gamma_0 = \gamma_0 (d, \delta, \alpha, p, q, K_1 )\in(0,1)$ and $\theta = \theta(d, \delta, \alpha, p, q, K_1) \in (0,{1/3})$ such that under Assumption \ref{ass2} $(\gamma_0)$ and Assumption \ref{ass3} $(\theta)$, the following holds.
There exists $\lambda_1 = \lambda_1(d, \delta, \alpha, p, q, K_1, R_0,R_2) \ge 1$ such that for any $\lambda \ge \lambda_1$ and $f, g_i \in L_{p,q,w} (\Omega_T)$, there exists a unique solution $u \in \mathring{\chh}_{p,q,w,0}^{\alpha, 1} ( \Omega_T)$ to
\[
Lu +\lambda u = D_ig_i + f  \quad \text{in}\quad  \Omega_T, \stepcounter{equation}\tag{\theequation}\label{9158}
\]
and $u$ satisfies
$$
\norm{u}_{\chh_{p,q,w}^{\alpha,1} (\Omega_T )} \le N(\norm{g}_{p,q,w}+ \norm{f}_{p,q,w}),
$$
where $N = N(d,\delta,\alpha, p,q,K_1,R_0,R_2)$ and $\norm{\cdot}_{p,q,w} = \norm{\cdot}_{L_{p,q,w} ( \Omega_T )}$. Moreover, for any $\lambda \ge 0$, $u \in \mathring{\chh}_{p,q,w,0}^{\alpha, 1} ( \Omega_T)$ satisfying (\ref{9158}), we have
\begin{align*}
\norm{u}_{\chh_{p,q,w}^{\alpha,1} (\Omega_T )} \le N( \norm{f}_{p,q,w}+ \norm{g}_{p,q,w}
+ \norm{u}_{p,q,w}), \stepcounter{equation}\tag{\theequation}\label{9291}
\end{align*}
where $N = N(d, \delta, \alpha, p, q, K_1, R_0,R_2)$.
\end{proposition}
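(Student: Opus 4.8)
The plan is to follow the scheme used for the non-divergence case in Proposition \ref{5.4}, combining a partition of unity in the spatial variables with the whole-space result (Corollary \ref{6par}) and the half-space result (Lemma \ref{91566}), and to promote the resulting a priori estimate to solvability via the method of continuity. First I would reduce everything to the case $p=q$ by invoking the weighted extrapolation theorem \cite[Theorem 2.5]{Ap}, exactly as in Case 3 of Corollary \ref{6par}; from now on assume $p=q$ and write $\norm{\cdot}_{p,w}=\norm{\cdot}_{L_{p,w}(\Omega_T)}$. Also note that for any $u\in\mathring{\chh}^{\alpha,1}_{p,q,w,0}(\Omega_T)$ satisfying \eqref{9158} the bound on $\norm{\partial_t^\alpha u}_{\hh^{-1}_{p,w}}$ is automatic from the equation once $\norm{Du}_{p,w}$ and $\norm{u}_{p,w}$ are controlled, and that for general $\lambda\ge0$ the estimate \eqref{9291} reduces to the case $\lambda=\lambda_1$ by moving $(\lambda_1-\lambda)u$ to the right-hand side; so it suffices to treat $\lambda\ge\lambda_1$ and to prove the a priori bound
$$
\sqrt{\lambda}\,\norm{u}_{p,w}+\norm{Du}_{p,w}\le N\norm{g}_{p,w}+N\norm{f}_{p,w}+N_1\norm{u}_{p,w}.
$$

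For the a priori estimate, fix $\theta$ (small, depending on $d,\delta,\alpha,p,q,K_1$) and $\gamma_0$ from Corollary \ref{6par} and Lemma \ref{91566}. Cover $\overline{\Omega}$ by balls of radius comparable to $R_2$: interior balls whose closures lie in $\Omega$, and boundary balls centered at points of $\partial\Omega$, with a subordinate partition of unity $\{\zeta^k\}$ with the usual bound on $|D\zeta^k|$ (only first derivatives are needed, since the equation is in divergence form and Assumption \ref{ass3} only provides Lipschitz regularity). For an interior cutoff, $u\zeta^k$ solves a divergence-form equation in $\rr^d_T$ with the same leading coefficients plus the commutator terms $D_i(g_i\zeta^k-a^{ij}uD_j\zeta^k)$ and $f\zeta^k-a^{ij}D_i\zeta^k D_j u-g_iD_i\zeta^k$, to which Corollary \ref{6par} applies. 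For a boundary cutoff, I would flatten $\partial\Omega$ inside the ball via the Lipschitz map of Assumption \ref{ass3} $(\theta)$; in the new coordinates $u\zeta^k$ solves a divergence-form equation near a half-space boundary, with transformed leading coefficients $\widetilde{a}^{ij}$. The crucial point is that the Jacobian of the flattening differs from the identity by $O(\theta)$, so the transformed coefficients still obey Assumption \ref{ass2} with a constant of order $N(\gamma_0+\theta)$ (using Remark \ref{ass2.3} near the flattened boundary), and so, after first choosing $\theta$ and $\gamma_0$ small, Lemma \ref{91566} in the half space applies. Summing the local estimates, raising to the $p$-th power, using the partition-of-unity bounds, and using the gain of $\sqrt{\lambda}$ in Corollary \ref{6par}/Lemma \ref{91566} to absorb the commutator terms $\norm{D_i\zeta^k D_ju}_{p,w}$ and $\norm{uD_j\zeta^k}_{p,w}$ (at the cost of enlarging $\lambda_1$), I arrive at the displayed inequality, hence \eqref{9291}.

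For existence and uniqueness at $\lambda\ge\lambda_1$: I would first produce a weak solution $u\in\mathring{\hh}_{p,w,0}^{0,1}(\Omega_T)$ — for $p=q=2$, $w\equiv1$ by a Galerkin/energy argument for the bilinear form of $L+\lambda$, which is coercive for $\lambda$ large in view of the monotonicity properties of the relevant truncated fractional integral; this is exactly the content of the appendix lemmas cited in Proposition \ref{5.4}, Lemma \ref{A3}--\ref{A5}. Then one upgrades this weak solution to a strong solution $u\in\mathring{\chh}_{p,q,w,0}^{\alpha,1}(\Omega_T)$ with the stated bound by the localized argument above together with a mollification in the spatial variables (as in the proof of Theorem \ref{6main}); the general weighted mixed-norm case follows by density and extrapolation, and uniqueness is immediate from \eqref{9291}. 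The main obstacle I anticipate is the boundary step: checking that after a merely Lipschitz change of variables with small Lipschitz constant the divergence-form structure is preserved with leading coefficients still of small mean oscillation and with only harmless (lower-order, $L_{p,w}$-controlled) extra terms, so that Lemma \ref{91566} genuinely applies — unlike the non-divergence case one cannot differentiate the coefficients, so one must track carefully how the VMO modulus of $\widetilde{a}^{ij}$ degrades under the flattening.
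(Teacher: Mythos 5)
Your overall skeleton — reduce to $p=q$ by extrapolation, localize with a partition of unity, apply Corollary~\ref{6par} and Lemma~\ref{91566} in interior and flattened boundary charts respectively, quantify the degradation of the VMO modulus under the Lipschitz flattening and choose $\theta,\gamma_0$ small, then absorb the first-order commutator terms for $\lambda$ large — is the right structure for the a~priori estimate, and it matches what the paper does (via \eqref{12281} and the partition of unity \eqref{partition}).

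The genuine gap is in your existence step. You propose to produce the starting weak solution in $\mathring{\hh}^{0,1}_{p,w,0}(\Omega_T)$ by ``a Galerkin/energy argument for the bilinear form of $L+\lambda$, which is coercive for $\lambda$ large in view of the monotonicity properties of the relevant truncated fractional integral,'' citing Lemmas~\ref{A3}--\ref{A5} as containing this content. This does not work for $\alpha\in(1,2)$ and is not what those lemmas do. The required coercivity relies on sign information for $\int_0^t(\partial_t^\alpha u)\,u\,ds$, and the paper expressly flags in the introduction that for $\alpha>1$ the quantity $\int_0^t(\partial_t^\alpha u)|u|^{p-2}u\,ds$ (including $p=2$) may fail to be nonnegative; this is precisely why the energy/bootstrap machinery of the $\alpha\in(0,1)$ theory breaks down for the fractional wave range. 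Lemmas~\ref{A3}--\ref{A5}, and their divergence-form counterparts Lemmas~\ref{l1}--\ref{l3}, contain no Galerkin or energy argument: they define the local right inverses $R_\lambda^k$ from the already-established whole-space and half-space solvability, write the would-be solution as $u=\sum_k\zeta^k R_\lambda^k(\cdot-L^k u)$, and prove that this fixed-point map is a contraction on $\mathring{\hh}^{0,1}_{p,w,0}(\Omega_T)$ once $\lambda$ is large, using only the estimate
\[
\sqrt{\lambda}\,\norm{R_\lambda^k h}_{p,w}+\norm{D R_\lambda^k h}_{p,w}\le N\big(\norm{g}_{p,w}+\tfrac{1}{\sqrt{\lambda}}\norm{f}_{p,w}\big)
\]
that comes from Corollary~\ref{6par}/Lemma~\ref{91566}, and the fact that $L^k$ is of order $\le 1$. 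That fixed-point output is then upgraded to a strong solution of \eqref{9158} (Lemma~\ref{l2}), with uniqueness from Lemma~\ref{l1}. You should replace the Galerkin plan by this contraction construction; otherwise no weak solution is available to start from in the wave range. The remainder of your argument (the $\lambda\in[0,\lambda_1)$ reduction and the extrapolation step) is consistent with the paper.
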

\begin{proof}
The proof is similar to that of Proposition \ref{5.4}. When $p=q$ and $\lambda$ is sufficiently large, we construct a weak solution to (\ref{9158}). See Lemma \ref{l1} - \ref{l3}  for details. When $p\neq q$, we use the extrapolation theorem in \cite[Theorem 2.5]{Ap}.
\end{proof}

We denote
$$
\norm{\cdot}_{p,q,w} = \norm{\cdot}_{L_{p,q,w} ( \Omega_T )} \quad \text{and}\quad \norm{\cdot}_{p,q,w,(\tau_1,\tau_2)}= \norm{\cdot}_{L_{p,q,w} ( (\tau_1,\tau_2) \times \Omega )}
$$
in the following proof.

\begin{proof}[Proof of Theorem \ref{main4}]
By the solvability of
$$
Lu +\lambda u = D_ig_i + f
$$
for $\lambda$ sufficiently large given in Proposition \ref{612} together with the method of continuity, it remains to prove the a priori estimate (\ref{9184}) under Assumption \ref{ass2} $(\gamma_0)$ and Assumption \ref{ass3} $(\theta)$, where $\gamma_0$ and $\theta$ are taken from  Proposition \ref{612}. By (\ref{9291}), it suffices to prove
\[
\norm{u}_{p,q,w} \leq N\norm{g}_{p,q,w} + N \norm{f}_{p,q,w}.\stepcounter{equation}\tag{\theequation}\label{9302}
\]
We follow the proof of (\ref{9152}) together with a partition of unity {argument} in the spatial variables. For a positive integer $m$ to be specified below, we take $s_k = kT/m$ for $k = -1,0,1,\ldots, m-1$, and define the cutoff functions $\eta_k$ as in \eqref{dd1} for $k\ge0$. Furthermore, we take
${\{\xi^l\}}$ to be a partition of unity in $\Omega$ defined in (\ref{partition}). Note that $\xi^l$ is either supported in $B_{{r_0}/{40}}(x_l) $ for some $x_l \in  \Omega$ or supported in $B_{{r_0}/{12}}(x_l) $ for some $x_l \in \partial \Omega$, where $r_0 =  \min (R_0, R_2)$.
Then, for all $l$, $\xi^lu$ satisfies $$L(\xi^lu):= D_ig^l_i + f^l \quad \text{in}\quad  \Omega \cap \supp(\xi^l), $$ where
\begin{align*}
    g^l_i =  \xi^l g_i - a^{ij}uD_j\xi^l \quad \text{and}\quad
    f^l = \xi^lf -g_iD_i\xi^l - (a^{ij}D_ju+a^iu)D_i\xi^l - b^iuD_i\xi^l
\end{align*}
{such that}
$$
|g^l| + |f^l| \le N( |\xi^lg| + |u D\xi^l| + |gD\xi^l| + |\xi^lf| + |DuD\xi^l|).
$$
It follows that for {any} fixed $k$ and $l$,
\begin{align*}
&\norm{\eta_k \xi^l u}_{p,w,(s_{k-1},s_{k+1})}^p\\
&\le N \ep \big(\norm{f^l}_{p,w,(0,T)} +\norm{g^l}_{p,w,(0,T)}+  \norm{\xi^l u}_{p,w,(0,s_k)}+
\norm{\xi^l u}_{p,w,(s_k,s_{k+1})}
\big)\\
&\quad \quad + ( \frac{T}{m})^\alpha \frac{N}{\ep} \big\{ m\norm{f^l}_{p,w,(0,T)} + m\norm{g^l}_{p,w,(0,T)} +
m\norm{\xi^l u}_{p,w,(0,s_{k})}\\
&\quad \quad + \norm{\xi^l u}_{p,w,(s_{k-1},s_{k+1})} \big\}
+ ( \frac{T}{m})^\alpha N m^2T^{-\alpha}\norm{\xi^l u}_{p,w,(0,s_{k})},\stepcounter{equation}\tag{\theequation}\label{3333}
\end{align*}
where $N = N(d,\delta,\alpha,p,K_1,R_0,R_2)$.
Indeed, to prove \eqref{3333}, when $x_l\in \Omega$, we take the extension of weights to the whole space, and then apply \eqref{1401}; when  $x_l\in \partial \Omega$, we apply a change of coordinates followed by the extension of weights to the half space, and then apply \eqref{1402}.
By using \eqref{3333}, \eqref{eq8.01} and \eqref{9291},
we have
\begin{align*}
&\norm{u}_{p,w,(s_{k},s_{k+1})}^p \le \norm{u\eta_k}_{p,w,(s_{k-1},s_{k+1})}^p \le \sum_{l} \norm{\eta_k \xi^l u}_{p,w,(s_{k-1},s_{k+1})}^p\\
&\le N\ep^p \Big(\sum_l\norm{|g^l| + |f^l|}_{p,w,(0,s_{k+1})}^p + \norm{\xi^lu}_{p,w,(0,s_k)}^p+
\norm{\xi^lu}_{p,w,(s_k,s_{k+1})}^p
\Big)\\
 &\quad \quad + \Big[( \frac{T}{m})^\alpha \frac{1}{\ep}\Big]^p N\Big\{\sum_l m\norm{|g^l| + |f^l|}_{p,w,(0,s_{k+1})}^p  +
 m\norm{\xi^lu}_{p,w,(0,s_{k})}^p\\
&\quad \quad \quad + \norm{\xi^lu}_{p,w,(s_{k-1},s_{k+1})}^p \Big\}
+ \Big[( \frac{T}{m})^\alpha  m^2T^{-\alpha}\Big]^pN \sum_l \norm{\xi^l u}_{p,w,(0,s_{k})}^p\\
&\le
 N\ep^p \Big(\norm{g}^p_{p,w,(0,s_{k+1})}+ \norm{f}^p_{p,w,(0,s_{k+1})} +  \norm{u}^p_{p,w,(0,s_{k+1})}+
\norm{Du}^p_{p,w,(0,s_{k+1})}
\Big)\\
&\quad \quad + \Big[( \frac{T}{m})^\alpha \frac{1}{\ep}\Big]^p N \Big\{ m\norm{f}^p_{p,w,(0,s_{k+1})} + m\norm{g}^p_{p,w,(0,s_{k+1})} +
m\norm{u}^p_{p,w,(0,s_{k+1})}\\
&\quad \quad \quad +m\norm{Du}^p_{p,w,(0,s_{k+1})} \Big\}
+ \Big[( \frac{T}{m})^\alpha  m^2T^{-\alpha}\Big]^pN\norm{u}^p_{p,w,(0,s_{k})}\\
&\le
 N\ep^p \Big(\norm{g}^p_{p,w,(0,s_{k+1})}+ \norm{f}^p_{p,w,(0,s_{k+1})} + \norm{u}^p_{p,w,(0,s_k)}+
\norm{u}^p_{p,w,(s_k,s_{k+1})}
\Big)\\
&\quad \quad + \Big[( \frac{T}{m})^\alpha \frac{1}{\ep}\Big]^p N \Big\{ m\norm{f}^p_{p,w,(0,s_{k+1})} + m\norm{g}^p_{p,w,(0,s_{k+1})} +
m\norm{u}^p_{p,w,(0,s_{k})}\\
&\quad \quad \quad +m \norm{u}^p_{p,w,(s_k,s_{k+1})} \Big\}
+ \Big[( \frac{T}{m})^\alpha  m^2T^{-\alpha}\Big]^pN\norm{u}^p_{p,w,(0,s_{k})},\stepcounter{equation}\tag{\theequation}\label{jj7}
\end{align*}
where $N = N(d,\delta,\alpha,p,K_1,R_0,R_2)$. Therefore, using (\ref{9291}) and the fact that $\alpha \in (1,2) $ and by first picking $\ep$ sufficiently small, and then picking $m$ sufficiently large so that
we can absorb $\norm{u}_{p,w,(s_{k},s_{k+1})}^p$ to the left{-hand side} of \eqref{jj7}, we have
\begin{align*}
\norm{u}_{p,w,(s_k, s_{k+1})} \leq N \norm{f}_{p,w,(0, T)} + N\norm{g}_{p,w,(0, T)} + N\norm{u}_{p,w,(0,s_{k})},
\end{align*}
where $N = N(d,\delta,\alpha,p,K_1,R_0,R_2,T)$. Thus, when $p=q$, (\ref{9302}) follows from an induction on $k$. When $p \neq q$, we use the extrapolation theorem in \cite[Theorem 2.5]{Ap} to conclude (\ref{9302}). The theorem is proved.
\end{proof}

\appendix
\section{}\label{A}
In the first part of the appendix, we prove some results mentioned in previous sections.
\begin{lemma}\label{A2}
If $p \in [1, \infty)$ and $u\in C^2 [0, \infty)$ satisfies $u(0)=u'(0)=0$, then for any $\ep > 0$ and $T>0$,
\[
\norm{u'}_{L_p (0,T)}
\leq N\ep^{-1} \norm{u}_{L_p (0,T)} +
\ep\norm{u''}_{L_p(0,T)},
\]
where $N$ is independent of $u,p,\ep,T$.
\end{lemma}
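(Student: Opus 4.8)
The plan is to treat Lemma \ref{A2} as a one-dimensional Landau--Kolmogorov (interpolation) inequality; the only real subtlety is keeping $N$ uniform in $T$, $p$, and $\varepsilon$. I would split the argument according to the size of $\varepsilon$ relative to $T$.

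First, in the regime $\varepsilon \ge T$ I would use the initial condition directly: since $u'(0)=0$, we have $u'(\tau)=\int_0^\tau u''(s)\,ds = I^1_0 u''(\tau)$, so by Young's inequality for the convolution on $(0,T)$ (equivalently, Minkowski's integral inequality), $\norm{u'}_{L_p(0,T)} \le T\norm{u''}_{L_p(0,T)} \le \varepsilon\norm{u''}_{L_p(0,T)}$, which is already of the desired form (indeed the $\norm{u}$ term is not even needed here).

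In the regime $\varepsilon < T$ I would set $h = \varepsilon/2 \le T/2$ and localize at scale $h$. For $\tau \in (0,T-h)$, Taylor's formula with integral remainder gives
\[
u'(\tau) = \frac{u(\tau+h)-u(\tau)}{h} - \frac{1}{h}\int_\tau^{\tau+h}(\tau+h-s)u''(s)\,ds,
\]
and since $|\tau+h-s|\le h$ on the interval of integration, the last term is bounded in absolute value by $\int_0^h |u''(\tau+r)|\,dr$. Taking $L_p(0,T-h)$ norms, estimating the difference quotient by $\tfrac{2}{h}\norm{u}_{L_p(0,T)}$, and applying Minkowski's integral inequality to the remainder term yields $\norm{u'}_{L_p(0,T-h)} \le \tfrac{2}{h}\norm{u}_{L_p(0,T)} + h\norm{u''}_{L_p(0,T)}$. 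The analogous backward Taylor expansion handles $\tau\in(h,T)$ and gives the same bound for $\norm{u'}_{L_p(h,T)}$. Since $h\le T/2$, the intervals $(0,T-h)$ and $(h,T)$ cover $(0,T)$, so adding the two estimates gives $\norm{u'}_{L_p(0,T)} \le \tfrac{4}{h}\norm{u}_{L_p(0,T)} + 2h\norm{u''}_{L_p(0,T)} = 8\varepsilon^{-1}\norm{u}_{L_p(0,T)} + \varepsilon\norm{u''}_{L_p(0,T)}$. Combining the two regimes yields the claim with $N=8$.

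The main point requiring care is the boundary layer near $t=T$ (and symmetrically near $t=0$): a naive one-sided difference quotient only controls $u'$ away from the right endpoint, and when $\varepsilon$ is comparable to or larger than $T$ there is no room to localize at scale $\varepsilon$ while keeping the coefficient of $\norm{u}$ of order $\varepsilon^{-1}$. This is exactly why the argument is split --- the backward expansion recovers control near $t=T$, and the case $\varepsilon\ge T$ is dispatched separately using $u'(0)=0$ rather than by interpolation.
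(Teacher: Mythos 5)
Your proof is correct, and it takes a genuinely different route from the one in the paper. The paper extends $u$ by zero to $t<0$ (which is where $u(0)=u'(0)=0$ is used, to make the extension $C^1$), writes $u'(t)=\int_{-\infty}^0\bigl(u'(t+s)\xi(s)\bigr)_s\,ds$ for a fixed cutoff $\xi$ with $\xi(0)=1$, $\xi'(0)=0$, integrates by parts to trade the $u'$ under the integral for $u''\xi$ and $u\,\xi''$, and then obtains the $\ep$-dependence by the scaling $v(\cdot)=u(\ep\cdot)$; the constant $N$ is $\int(|\xi|+|\xi''|)$ and is manifestly independent of $T$, $p$, $\ep$. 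You instead prove a genuine Landau--Kolmogorov inequality by forward/backward Taylor expansion at scale $h=\ep/2$, with a separate treatment of the regime $\ep\ge T$ via $u'=I_0^1u''$. Both arguments are complete; the bookkeeping in yours checks out (the covering $(0,T-h)\cup(h,T)\supset(0,T)$ with $h\le T/2$, the Minkowski estimates, and the final constant $N=8$). What your version buys is an explicit, $p$-uniform constant and the observation that the interpolation inequality in the regime $\ep<T$ needs no initial conditions at all --- only the case $\ep\ge T$ uses $u'(0)=0$ --- whereas the paper's version is shorter and delegates the uniformity in $\ep$ to scaling rather than to a case split.
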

\begin{proof}
Take $\xi \in C_0^\infty (\rr)$ such that $\xi(0) = 1, \xi'(0) = 0$, and take the zero extension of $u$ on $\rr$. Then, by the fundamental theorem of calculus,
\begin{align*}
(u'(t+s) \xi(s) )_s
&= u''(t+s) \xi (s)
+ ( u(t+s) \xi'(s) )_s
-u(t+s) \xi''(s),
\end{align*}
and
$$
u'(t)
= \int_{-\infty}^0  ( u'(t+s) \xi(s) )_s \, ds
= \int_{-\infty}^0  u''(t+s) \xi(s) \, ds
- \int_{-\infty}^0 u(t+s) \xi''(s) \, ds.
$$
By the Minkowski inequality and the fact that $u$ is supported in $[0, \infty)$,
\begin{align*}
\norm{u'}_{L_p (0,T)}
&\leq \int_{-\infty}^0
(|\xi(s)| + |\xi''(s)| )
(
\norm{u'' (\cdot + s)}_{L_p (0,T)}
+ \norm{u (\cdot + s)}_{L_p (0,T)}
) \\
&\leq N \big[
\norm{u''}_{L_p (0,T)}
+ \norm{u}_{L_p (0,T)}
\big].\stepcounter{equation}\tag{\theequation}\label{tt1}
\end{align*}
For any $\ep >0$, let $v ( \cdot):= u( \ep \cdot)$. Because $N$ in \eqref{tt1} is independent of $T$, we replace $u$ with $v$ to get
$$
\norm{v'}_{L_p (0,T/\ep)}
\leq N \big[
\norm{v''}_{L_p (0,T/\ep)}
+ \norm{v}_{L_p (0,T/\ep)}
\big],
$$
which implies
$$
\ep \norm{u'}_{L_p (0,T)}\leq
N \big[
\ep^2 \norm{u''}_{L_p (0,T)}
+ \norm{u}_{L_p (0,T)}
\big].
$$
Therefore, the desired result follows.
\end{proof}

In Lemmas \ref{l5} and \ref{l6} below, let $\zeta \in C_0^\infty(\rr^d)$ satisfy $\supp(\zeta)\subset B_1$ and $\int_{\rr^d}\zeta =1${,} and let $\zeta^\ep(\cdot) := \ep^{-d}\zeta(\cdot/\ep)$.
\begin{lemma}\label{l5}
For $q\in [1,\infty)$, $w_2\in A_q$ with $[w_2]_{A_q} \le K_1$, $v \in W_{q,w_2}^1$, and $v^\ep := v \ast \zeta^\ep $, we have
$$
\norm{v^\ep - v}_{q, w_2}
\leq  N \ep \norm{Dv}_{q, w_2},
$$
where $N = N (d,q,K_1)$.
\end{lemma}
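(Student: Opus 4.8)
The plan is to reduce the estimate to a pointwise bound on $v^\ep-v$ and then invoke (weighted) maximal function estimates. Since $w_2\in A_q$, H\"older's inequality gives $W^1_{q,w_2}\hookrightarrow W^1_{1,\mathrm{loc}}(\rr^d)$, so $v$ has a representative that is absolutely continuous on almost every line; fixing such a representative and using $\int_{\rr^d}\zeta^\ep=1$ together with $\supp\zeta^\ep\subset B_\ep$, we write for a.e.\ $x$
\begin{align*}
v^\ep(x)-v(x)=\int_{B_\ep}\zeta^\ep(y)\bigl(v(x-y)-v(x)\bigr)\,dy=-\int_{B_\ep}\zeta^\ep(y)\int_0^1 y\cdot Dv(x-ty)\,dt\,dy .
\end{align*}
Estimating $|\zeta^\ep|\le\ep^{-d}\norm{\zeta}_{L_\infty}$ and $|y|\le\ep$, passing to polar coordinates $y=\rho\omega$ in the outer integral, substituting $s=t\rho$, and interchanging the $\rho$- and $s$-integrals, one obtains after a routine computation
\begin{align*}
|v^\ep(x)-v(x)|\le N\int_{B_\ep(x)}\frac{|Dv(y)|}{|x-y|^{d-1}}\,dy\le N\ep\, Mg(x),\qquad g:=|Dv|,
\end{align*}
where $M$ denotes the Hardy--Littlewood maximal operator, the second inequality follows by splitting $B_\ep(x)$ into dyadic annuli $2^{-k-1}\ep\le|x-y|<2^{-k}\ep$, and $N=N(d)$ since $\zeta$ is fixed.

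For $q\in(1,\infty)$ the conclusion is then immediate from the weighted (Muckenhoupt) maximal function theorem: since $[w_2]_{A_q}\le K_1$, one has $\norm{Mg}_{q,w_2}\le N(d,q,K_1)\norm{g}_{q,w_2}$, hence $\norm{v^\ep-v}_{q,w_2}\le N\ep\norm{Dv}_{q,w_2}$.

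For $q=1$ the maximal theorem only yields a weak bound, so instead I would integrate the first (annulus) form of the pointwise bound directly against $w_2\,dx$ and apply Tonelli,
\begin{align*}
\int_{\rr^d}|v^\ep-v|\,w_2\,dx\le N\int_{\rr^d}|Dv(y)|\Bigl(\int_{B_\ep(y)}\frac{w_2(x)}{|x-y|^{d-1}}\,dx\Bigr)dy ,
\end{align*}
and estimate the inner integral by dyadic annuli using the pointwise $A_1$ inequality $\fint_{B_r(y)}w_2\le[w_2]_{A_1}\,w_2(y)$ (valid for a.e.\ $y$ and every $r>0$), which gives $\int_{B_\ep(y)}w_2(x)|x-y|^{1-d}\,dx\le N(d,K_1)\,\ep\, w_2(y)$ and therefore the desired bound with $N=N(d,K_1)$.

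The only genuinely technical point is the first pointwise estimate: carrying out the polar-coordinate computation and the interchange of integrals while keeping the constant depending only on $d$ and the fixed $\zeta$ (in dimension $d=1$ the exponent $d-1$ vanishes and a harmless logarithmic factor appears, absorbed by a convergent geometric-type series). Everything afterward is a dyadic decomposition combined with a standard application of the weighted maximal function theorem, so I expect no real obstacle.
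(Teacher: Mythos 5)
Your proof is correct, and the core mechanism coincides with the paper's: a fundamental-theorem-of-calculus representation of $v^\ep-v$, a pointwise bound by $\ep$ times a maximal function of $Dv$, and the weighted Hardy--Littlewood maximal theorem. The one place you take a genuinely more circuitous route is the pointwise step. You pass through the Riesz-potential bound
$|v^\ep(x)-v(x)|\le N\int_{B_\ep(x)}|Dv(y)|\,|x-y|^{1-d}\,dy$
via polar coordinates, an interchange of integrals, and a dyadic-annulus decomposition; the paper instead observes directly that
$\int_0^1\fint_{B_1}|Dv(x-s\ep y)|\,dy\,ds\le \cmm |Dv|(x)$
after the change of variables $z=s\ep y$ (for each fixed $s$ one gets an average over $B_{s\ep}(x)$), which eliminates the polar-coordinate detour entirely. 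Both give the same $N\ep\,\cmm|Dv|(x)$ bound with $N=N(d)$, so the extra machinery doesn't buy you anything here.

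One place where your version is actually more careful than the paper's: the statement allows $q\in[1,\infty)$, yet the paper's proof just applies the strong maximal theorem, which is only valid for $q>1$. You correctly flag this and propose a separate argument for $q=1$ via Tonelli and the pointwise $A_1$ bound $\fint_{B_r(y)}w_2\le[w_2]_{A_1}w_2(y)$. That does close the gap (the lemma as stated literally requires it), though in the body of the paper the lemma is only invoked with $q\in(1,\infty)$, so the authors presumably never needed the endpoint case. Either way: no flaw in your proposal, just a more elaborate path to the same pointwise estimate.
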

\begin{proof}
By the definition of $v^\ep$ and the Fubini theorem,
\begin{align*}
|v^\ep (x) - v(x) |
&= | \int_{B_1} [v(x-\ep y) - v(x)] \zeta(y) \, dy | \\
&= | \int_{B_1} \ep \int_0^1 y \cdot Dv (x - s \ep y) \, ds \, \zeta(y) \, dy| \\
&= \ep | \int_0^1 \int_{B_1} \zeta (y) [ y \cdot Dv (x- s \ep y) ] \,dy\,ds  | \\
&\leq \ep \cdot N \int_0^1 \fint_{B_1} |Dv (x - s \ep y )| \, dy \, ds \\
&\leq  N\ep \cmm Dv (x),
\end{align*}
where $\cmm $ denotes the maximal function.
Therefore, by the weighted maximal function theorem, we have
$$
\norm{v^\ep - v}_{q, w_2}
\leq N\ep  \norm{\cmm Dv (x)}_{q, w_2}
\leq N\ep  \norm{Dv}_{q, w_2}.
$$
The lemma is proved.
\end{proof}

\begin{lemma}\label{l6}
If $[w]_{p,q} \le K_1 $ and $u \in \chh_{p,q,w,0}^{\alpha, 1} (\rr^d_T )$ satisfies
\[
\partial_t^\alpha u = D_i g_i + f \quad \text{in}\quad  \rr^d_T \stepcounter{equation}\tag{\theequation}\label{al66}
\]
for some $f,g_i \in L_{p,q,w}(\Omega_T)$. Then $u^\ep := u \ast_x \zeta^\ep \in \hh_{p,q,w,0}^{\alpha, 2} ( \rr^d_T )$, and for any $\ep <1$, we have
 \begin{align*}
\norm{\partial_t^\alpha u^\ep}_{L_{p,q,w}(\rr^d_T)}
&\le \frac{N }{\ep} \norm{g}_{L_{p,q,w}(\rr^d_T)}  + N \norm{f}_{L_{p,q,w}(\rr^d_T)}, \stepcounter{equation}\tag{\theequation}\label{al6}
\end{align*}
where $N = N(d,\delta,q,K_1)$. Moreover, let $\Omega = \rr_+^d$, $u \in \mathring{\chh}^{\alpha, 1}_{p,q,w,0} (\Omega_T )$, $[w]_{p,q,\Omega} \le K_1 $, $\overline{u}$ be the odd extension of $u$ in the $x^1$ direction, and $\overline{w}$ be the even extension of $w$ in the $x^1$ direction. If \eqref{al66} is satisfied in $\Omega_T$ for some $f,g_i \in L_{p,q,w}(\Omega_T)$, then \begin{align*}
\norm{\partial_t^\alpha \overline{u}^\ep}_{L_{p,q,w}(\rr^d_T)}
&\le \frac{N }{\ep} \norm{g}_{L_{p,q,w}(\Omega_T)}  + N \norm{f}_{L_{p,q,w}(\Omega_T)}, \stepcounter{equation}\tag{\theequation}\label{al666}
\end{align*}
where $N = N(d,\delta,q,K_1)$.
\end{lemma}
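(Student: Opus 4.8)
The plan is to reduce both estimates to two elementary facts: the fractional integral $I_0^{2-\alpha}$ acts only in the time variable and hence commutes with the spatial mollification $u\mapsto u^\ep = u\ast_x\zeta^\ep$, and a convolution of an $L_{p,q,w}$ function against the rescaled kernel $D_i\zeta^\ep$ is pointwise dominated by $N(d)\ep^{-1}$ times the Hardy--Littlewood maximal function in $x$. The first step is to establish
\[
\partial_t^\alpha u^\ep \;=\; g_i\ast_x D_i\zeta^\ep \;+\; f^\ep \quad\text{in}\quad \rr^d_T.
\]
Since $\partial_t^\alpha u\in\hh^{-1}_{p,q,w}$ means precisely that $\int_0^T\!\int_{\rr^d}I_0^{2-\alpha}u\,\partial_t^2\phi = \int_0^T\!\int_{\rr^d}(f\phi - g_iD_i\phi)$ for every $\phi\in C_0^\infty([0,T)\times\rr^d)$ (see \eqref{6def} and the surrounding discussion), I would write $I_0^{2-\alpha}u^\ep = (I_0^{2-\alpha}u)^\ep$ by Fubini, test it against $\partial_t^2\phi$, move the mollification onto the test function (which only replaces $\zeta$ by its reflection and keeps $C_0^\infty([0,T)\times\rr^d)$ invariant), apply \eqref{6def}, and then transfer the mollification back and the spatial derivative onto the kernel. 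Once the right-hand side is shown to lie in $L_{p,q,w}$ this pins down $\partial_t^\alpha u^\ep$ as a genuine function.

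For the quantitative bound I would use $|D_i\zeta^\ep(y)|\le N(d)\,\ep^{-1}|B_\ep|^{-1}\chi_{B_\ep}(y)$, whence for a.e.\ $(t,x)$
\[
\bigl|g_i\ast_x D_i\zeta^\ep(t,x)\bigr|\;\le\;N\ep^{-1}\fint_{B_\ep(x)}|g_i(t,z)|\,dz\;\le\;N\ep^{-1}\,\cmm_x g_i(t,x),
\]
and $|f^\ep(t,x)|\le\cmm_x f(t,x)$, where $\cmm_x$ denotes the maximal operator in the spatial variables. Since $w_2\in A_q$ with $[w_2]_{A_q}\le K_1$, the weighted maximal function theorem gives $\norm{\cmm_x g_i(t,\cdot)}_{L_{q,w_2}}\le N(d,q,K_1)\norm{g_i(t,\cdot)}_{L_{q,w_2}}$ for a.e.\ $t$, and likewise for $f$; raising to the $p$-th power, integrating against $w_1(t)\,dt$, and combining with the identity from the first step yields \eqref{al6}. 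The constant depends only on $d,q,K_1$, not on $p$, because the only harmonic-analysis input is the maximal operator in $x$.

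It remains to verify the membership $u^\ep\in\hh^{\alpha,2}_{p,q,w,0}(\rr^d_T)$. Applying the same mollification bound to $D^2u^\ep = (Du)\ast_x D\zeta^\ep$ and $Du^\ep = (Du)^\ep$ (using $Du\in L_{p,q,w}$) gives $D^iu^\ep\in L_{p,q,w}$ for $i=0,1,2$, while $\partial_t^\alpha u^\ep\in L_{p,q,w}$ by \eqref{al6}. To realize $u^\ep$ as a limit of admissible smooth functions, take a defining sequence $u_n\in C^\infty$ vanishing for large $|x|$ with $u_n(0,\cdot)=\partial_t u_n(0,\cdot)=0$ and $u_n\to u$ in $\chh^{\alpha,1}_{p,q,w}$; then $(u_n)^\ep$ is smooth with the same vanishing properties, and \eqref{al6} together with Lemma~\ref{l5} applied to the differences $u_n-u\in\chh^{\alpha,1}_{p,q,w,0}$ shows $(u_n)^\ep\to u^\ep$ in $\hh^{\alpha,2}_{p,q,w}$ (this is legitimate since \eqref{al6} has just been proved for every such function, and the $\hh^{-1}_{p,q,w}$-norm of $\partial_t^\alpha(u_n-u)$ tends to $0$, so one may choose divergence-form representatives with vanishing total norm).

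The half-space bound \eqref{al666} follows from the odd/even reflection already used in the proofs of Proposition~\ref{9181} and Theorem~\ref{main4}: the odd extension $\overline{u}$ satisfies $\partial_t^\alpha\overline{u} = D_i\overline{g}_i + \overline{f}$ in $\rr^d_T$, where $\overline{g}_1$ and $\overline{f}$ are extended with the appropriate parity, $\overline{u}\in\chh^{\alpha,1}_{p,q,\overline{w},0}(\rr^d_T)$ with $[\overline{w}_2]_{A_q(\rr^d)}\le 2^qK_1$ by Lemma~\ref{9182}, and applying \eqref{al6} to $\overline{u}$ with weight $\overline{w}$ and using that, by the symmetry of $\overline{g},\overline{f}$ and $\overline{w}$, the mixed norms over $\rr^d_T$ are a fixed multiple of those over $\Omega_T$ yields \eqref{al666}. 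I expect the main obstacle to be the rigorous passage in the first and third steps: $u$ is only assumed to carry a time-fractional derivative in the negative-order space $\hh^{-1}_{p,q,w}$ and need not be smooth in $t$, so the whole argument must be run through the distributional identity \eqref{6def}, checking carefully that convolution of a test function with $\zeta^\ep$ stays in $C_0^\infty([0,T)\times\rr^d)$ and that the uses of Fubini and of integration by parts moving $D_i$ onto the mollifier are justified by the $L_{p,q,w}$-integrability of $f$ and $g$. The remaining computations are routine.
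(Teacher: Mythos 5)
Your proof is correct and follows the paper's approach: mollify \eqref{al66} in the spatial variables to identify $\partial_t^\alpha u^\ep = g_i\ast_x D_i\zeta^\ep + f\ast_x\zeta^\ep$, bound both terms pointwise by the spatial Hardy--Littlewood maximal function (with the extra factor $\ep^{-1}$ for the derivative kernel), invoke the weighted maximal function theorem for $w_2\in A_q$, and reduce the half-space case to the whole-space case by odd/even reflection together with Lemma~\ref{9182}. You are somewhat more careful than the published proof about justifying the mollified identity through the distributional formulation \eqref{6def} and about producing an explicit defining sequence for $u^\ep\in\hh^{\alpha,2}_{p,q,w,0}(\rr^d_T)$ via mollified approximants and Lemma~\ref{l5}, steps the paper leaves implicit.
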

\begin{proof}
We first consider the whole space case. Taking the convolution in the spatial variables in both sides of \eqref{al66} yields
\begin{align*}
\partial_t^\alpha u^\ep
&= g_i  \ast_x D_i \zeta^\ep
+ f \ast_x \zeta^\ep. \stepcounter{equation}\tag{\theequation}\label{tt3}
\end{align*}
Note that \begin{align*}
|f \ast \zeta^\ep (t,x) |
&= | \int_{B_1} f(t,x - \ep y)  \zeta (y) \, dy | \\
&\leq \norm{\zeta}_{L^\infty} |B_1| | \fint_{B_1} f(t, x - \ep y) \, dy | \\
&\leq \norm{\zeta}_{L^\infty} |B_1|  \cmm_x f (t,x),
\end{align*}
where $\cmm_x$ denotes the maximal function {in} the spatial variables for fixed $t$.
Thus, by the weighted maximal function theorem,
\[
\norm{f \ast \zeta^\ep(t,\cdot)}_{q, w_2}
\leq \norm{\zeta}_{L^\infty}  N \norm{f(t,\cdot)}_{q, w_2},\stepcounter{equation}\tag{\theequation}\label{tt4}
\]
where $N = N(d,q,K_1) $,
and similarly,
\[
\norm{g_i \ast D_i \zeta^\ep(t,\cdot) }_{q, w_2}
\leq \norm{D_i \zeta}_{L^\infty}
\cdot \frac{N}{\ep}
\norm{g_i (t,\cdot)}_{q, w_2}.\stepcounter{equation}\tag{\theequation}\label{tt5}
\]
Therefore, \eqref{al6} follows from \eqref{tt3}, \eqref{tt4}, and \eqref{tt5}.

To prove \eqref{al666}, we note that $\overline{u}\in \chh_{p,q,\overline{w},0}^{\alpha, 1}(\rr^d_T) $ by Lemma \ref{9182}. By taking proper odd and even extensions of $g_i$ and $f$, the extensions satisfy \eqref{al66} in the whole space, and \eqref{al666} follows from \eqref{al6}.
The lemma is proved.
\end{proof}

\section{}\label{B}
In the second part of the appendix, we prove some results for equations in {a} smooth domain $\Omega$. This section is organized as follows. First, we consider the restrictions and extensions of the weights of the spatial variables. Then, we prove the results for non-divergence form equations in Lemmas \ref{A3}-\ref{A5}. Finally, the results for equations in divergence form are proved in Lemmas \ref{l1}-\ref{l3}.

Let $a^{ij}$'s satisfy Assumption \ref{ass2} $(\gamma_0)$ and let $\Omega$ satisfy Assumption \ref{ass3}  $(\theta)$ {for divergence form equations} {or Assumption \ref{ass4}  $(\theta)$ for non-divergence form equations} with $\gamma_0 \in (0,1)$ and $\theta \in (0,1/3)$ to be specified. We locally flatten the boundary of $\partial \Omega$ {as follows}. For $r_0 = \min (R_0, R_2)$ and any $z\in \partial \Omega$, there exists a coordinate system and a mapping $\phi$ such that
$$
\Omega \cap B_{r_0}(z) = \{x\in B_{r_0}(z) : x^1 > \phi(x')\}.
$$
Let
\begin{align*}
\Phi^1(x) = x^1 - \phi(x'), \quad \Phi^j(x) = x^j \quad \text{for}\quad  j\ge 2, \quad \Psi = \Phi^{-1}  \stepcounter{equation}\tag{\theequation}\label{1030},
\end{align*}
and without loss of generality, assume that $\Phi(z) = 0$. Moreover, let
\begin{align*}
    \eta \in C_{0}^{\infty}(B_{3r_0/32}(z)), \,\, \eta =1 \,\,\text{in}\,\, B_{r_0/12}(z), \,\, \eta^z(\cdot)= \eta(\cdot -z), \,\,\text{and}\,\, \hat{\eta}^z(\cdot) = \eta^z(\Psi(\cdot)) . \stepcounter{equation}\tag{\theequation}\label{10031}
\end{align*}
Since $\theta \le 1/3$, $\hat{\eta}^z \in C_{0}^{\infty} (B_{r_0/8})$ and $\hat{\eta} =1$ in $B_{r_0/16}$,
where we used
$$B_{3r_0/32}(z)\subset B_{\frac{r_0}{8(1+ \theta)}}(z) \subset \Psi(B_{r_0/8}) , \quad \Psi(B_{r_0/16}) \subset B_{r_0(1+ \theta)/16}(z) \subset B_{r_0/12}(z).$$
Furthermore,
 {for} $r_1 := r_0/2 > 0 ${, we have}
$$
B_{r_1} \subset \Phi(B_{r_0}(z))\quad \text{and}\quad  B_{r_1}^+ \subset \Phi(\Omega \cap B_{r_0}(z)).
$$
Indeed, for $(y^1,y') \in B_{r_1}$, we can easily check that $(y^1+\phi(y'),y') \in B_{r_0}(z)$ and $\Phi(y^1+\phi(y'),y') = (y^1,y') $.

\begin{lemma}\label{exten}
(1) If $w\in A_p(\Omega)$, then $w\in A_p(\Omega \cap  B_{r_0}(z))$ and $w(\Psi) \in A_p(\Phi(\Omega \cap B_{r_0}(z)))$.

(2) Let $\delta>0$ and $C^+_\delta= [0,\delta] \times [-\delta/2,\delta/2]^{d-1}$. If $w\in A_p(C^+_\delta)$, then there exists $\overline{w} \in A_p(\rr^d_+)$ such that $\overline{w}= w $ in $C^+_\delta$.
\end{lemma}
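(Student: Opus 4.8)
The plan is to treat the two assertions separately, in each case reducing to a standard stability property of Muckenhoupt weights: invariance under bi-Lipschitz changes of variables for (1), and invariance under even reflections together with periodization for (2). Recall that $\Omega$ satisfies Assumption \ref{ass3} $(\theta)$ (resp. Assumption \ref{ass4} $(\theta)$), so the boundary chart $\phi$ is globally Lipschitz with constant $\le\theta$; hence the maps $\Phi,\Psi=\Phi^{-1}$ in \eqref{1030} are globally bi-Lipschitz with constants depending only on $\theta$ and $d$, and moreover $\det D\Phi\equiv 1$, so $\Phi$ is volume preserving.

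For part (1), I would first show $w\in A_p(\Omega)\Rightarrow w\in A_p(\Omega\cap B_{r_0}(z))$. Fix $x_0\in\Omega\cap B_{r_0}(z)$ and $\rho>0$, and set $E=B_\rho(x_0)\cap\Omega\cap B_{r_0}(z)$. If $\rho>2r_0$, then $B_\rho(x_0)\supseteq B_{r_0}(z)$ (since $|x_0-z|<r_0$), so $E=B_{r_0}(z)\cap\Omega$ and the relevant quotient is exactly one instance of the $A_p(\Omega)$ condition. If $\rho\le 2r_0$, elementary geometry of two overlapping balls produces a point $y_0$ and dimensional constants with $B_{c\rho}(y_0)\subseteq B_\rho(x_0)\cap B_{r_0}(z)$ and $|y_0-x_0|\le\rho$, which may be chosen so that $B_{c\rho}(y_0)\cap\Omega$ occupies a fixed fraction of $B_{c\rho}(y_0)$ (possible since $x_0\in\Omega$ and $\Omega$ has the interior cone property at scales $\le r_0\le R_2$). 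Then $B_{c\rho}(y_0)\cap\Omega\subseteq E\subseteq B_{2\rho}(y_0)\cap\Omega$ with $|E|$ comparable to $|B_{2\rho}(y_0)|$, so two applications of the elementary inequality \eqref{simpl} bound the $A_p$ quotient on $E$ by $N(d,p)$ times the $A_p(\Omega)$ quotient on $B_{4\rho}(\tilde y_0)$ for a suitable center $\tilde y_0\in\Omega$. This gives $[w]_{A_p(\Omega\cap B_{r_0}(z))}\le N(d,p)[w]_{A_p(\Omega)}$. The second claim is then immediate from the bi-Lipschitz invariance recalled above: since $\Psi$ is volume preserving and carries balls to sets comparable to balls, $w(\Psi)\in A_p(\Phi(\Omega\cap B_{r_0}(z)))$ with constant controlled by $[w]_{A_p(\Omega\cap B_{r_0}(z))}$, $\theta$, $d$, $p$; only the Lipschitz bound on $\phi$ enters here.

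For part (2), I would extend $w$ from the box $C^+_\delta=[0,\delta]\times[-\delta/2,\delta/2]^{d-1}$ by successive even reflections across the faces not lying in $\{x^1=0\}$, followed by periodization in each coordinate. Reflect $w$ evenly across $\{x^1=\delta\}$ and across $\{x^j=\delta/2\}$, $j=2,\dots,d$, in turn: each single reflection preserves the Muckenhoupt property of the glued weight on the doubled box at the cost of a factor $N(d,p)$, exactly as in the even-extension computation in the remark preceding Lemma \ref{ext}. After the $d$ reflections one obtains a weight $v$ on $F=(0,2\delta]\times[-\delta/2,3\delta/2]^{d-1}$ with $[v]_{A_p(F)}\le N(d,p)[w]_{A_p(C^+_\delta)}$ and $v=w$ on $C^+_\delta$. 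Let $\overline{w}$ be the $2\delta$-periodic (in every coordinate) extension of $v$; since $F\subset\{x^1>0\}$ and its $2\delta$-shifts in the $x^1$-direction tile $(0,\infty)$, $\overline{w}$ is well defined on $\rr^d_+$ and equals $w$ on $C^+_\delta$. Finally, to see $\overline{w}\in A_p(\rr^d_+)$: for a ball $B_r(x_0)$ with $r\le 2\delta$, $B_r(x_0)\cap\rr^d_+$ meets only $O(1)$ translated copies of $F$, so its $A_p$ quotient is $\le N(d)[v]_{A_p(F)}$; for $r>2\delta$, the averages of $\overline{w}$ and of $\overline{w}^{-1/(p-1)}$ over $B_r(x_0)\cap\rr^d_+$ are each comparable to the averages of $v$ and $v^{-1/(p-1)}$ over $F$ (a ball of radius $r$ contains $\sim(r/\delta)^d$ whole cells, the boundary layer being of lower order), so the $A_p$ quotient is $\le N(d)(\fint_F v)(\fint_F v^{-1/(p-1)})^{p-1}\le N(d)[v]_{A_p(F)}$.

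The main obstacle is the restriction step of part (1): one must keep the $A_p$ constant of $w$ on $\Omega\cap B_{r_0}(z)$ uniform even when $B_\rho(x_0)$ meets this set in a thin lens, which forces the two-regime split and the use of the Lipschitz/fatness property of $\Omega$ to keep all volume ratios bounded. The periodization in part (2) is routine but similarly requires the large-ball regime to be handled by comparison with a single period cell. I would isolate the "two overlapping balls" geometry and the single-reflection $A_p$ estimate as small separate claims to keep the main line of argument clean.
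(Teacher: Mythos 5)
Your proposal follows essentially the same route as the paper: for the restriction step of (1), a volume-ratio comparison split into a small-$r$ case and a case $r>2r_0$ where the intersection saturates; for the second claim of (1), bi-Lipschitz pullback using $\det D\Psi\equiv 1$ and the Lipschitz bound on $\phi$; and for (2), iterated even reflection followed by periodization, with the final $A_p$ verification split into balls comparable to a period cell and balls much larger. One small slip worth flagging: in (1) you invoke \eqref{simpl} to pass from averages over $E=B_\rho(x_0)\cap\Omega\cap B_{r_0}(z)$ to averages over a larger set, but \eqref{simpl} controls a \emph{difference} of averages by a mean oscillation, which is not what the $A_p$ \emph{product} of averages requires. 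What the paper uses, and what you actually need, is the monotonicity bound $\fint_E w\le \tfrac{|F|}{|E|}\fint_F w$ for $E\subset F$, applied simultaneously to $w$ and to $w^{-1/(p-1)}$; that gives the factor $\bigl(|F|/|E|\bigr)^p$ directly. With that replacement your two-overlapping-balls refinement is a fine (if slightly more elaborate) alternative to the paper's one-line ratio estimate.
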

\begin{proof}
For (1), for any $x \in \Omega \cap  B_{r_0}(z), r > 0$, we have
\begin{align*}
&\left[
\fint_{B_r(x) \cap \Omega \cap  B_{r_0}(z) } w
\right]
\left[
\fint_{B_r(x) \cap \Omega \cap  B_{r_0}(z) } w^{-\frac{1}{p-1}}
\right]^{p-1} \\
&\leq \left[ \frac{|B_r(x) \cap \Omega|}{|B_r(x) \cap \Omega \cap  B_{r_0}(z)|}\fint_{B_r(x)\cap \Omega } w \right] \left[\frac{|B_r(x) \cap \Omega|}{|B_r(x) \cap \Omega \cap  B_{r_0}(z)|}\fint_{B_r(x)\cap \Omega } w^{-\frac{1}{p-1}}  \right]^{p-1}\\
&\le N(r_0,{d}, p)[w]_{p,\Omega},
\end{align*}
where when $r$ is small, we used the smoothness of $\Omega$, and when $r> 2r_0$, we used the fact that $B_r(x) \cap \Omega \cap  B_{r_0}(z)=\Omega \cap  B_{r_0}(z) $. Thus, $w\in A_p(\Omega \cap  B_{r_0}(z))$.

Next, for any $\overline{y} \in \Phi(\Omega \cap B_{r_0}(z))$ and $r > 0$, {since} $\det D\Psi = 1$, we have
\begin{align*}
&\left[
\fint_{B_{r( \overline{y} )} \cap \Phi(\Omega \cap B_{r_0}(z)) } w (\Psi (y)) \, dy
\right]
\left[
\fint_{B_{r( \overline{y} )} \cap \Phi(\Omega \cap B_{r_0}(z)) }  [w (\Psi (y))]^{-\frac{1}{p-1}}  \, dy
\right]^{p-1} \\
&=
\left[
\fint_{ \Psi ( \broy) \cap \Omega \cap B_{r_0}(z) }
w (x)  \, dx
\right]
\left[
\fint_{ \Psi ( \broy) \cap \Omega \cap B_{r_0}(z)  }
w(x)^{-\frac{1}{p-1}} \cdot  \, dx
\right]^{p-1} \\
&\leq
\left( \frac{| B_{2r} (\Psi (\overline{y})) \cap \Omega \cap B_{r_0}(z) |}{ |\Psi ( \broy) \cap \Omega \cap B_{r_0}(z)| } \right)^{p}
\left(
\fint_{ B_{2r} (\Psi (\overline{y})) \cap \Omega \cap B_{r_0}(z) } w \, dx
\right) \\
& \quad \cdot
\left(
\fint_{ B_{2r} (\Psi (\overline{y})) \cap \Omega \cap B_{r_0}(z) } w^{ -\frac{1}{p-1}}   \, dx
\right)^{p-1}\\
&\leq
\left( \frac{| B_{2r} (\Psi (\overline{y})) \cap \Omega \cap B_{r_0}(z) |}{ |B_{r/2} (\Psi (\overline{y}))\cap \Omega \cap B_{r_0}(z)| } \right)^{p}
\left(
\fint_{ B_{2r} (\Psi (\overline{y})) \cap \Omega \cap B_{r_0}(z) } w \, dx
\right) \\
& \quad \cdot
\left(
\fint_{ B_{2r} (\Psi (\overline{y})) \cap \Omega \cap B_{r_0}(z) } w^{ -\frac{1}{p-1}}   \, dx
\right)^{p-1} \\
&\le N({r_0},d,p) [w]_{p,\Omega \cap B_{r_0}(z)}.
\end{align*}
Thus, $w(\Psi) \in A_p(\Phi(\Omega \cap B_{r_0}(z)))$, and $(1)$ is proved.

For (2), we define $\overline{w}$ by taking the even extension of $w$ along each side as follows. First, for $x^1 \in (\delta,2\delta]$ and $x' \in [-\delta/2,\delta/2]^{d-1}$, let
$$\overline{w}(x^1,x') = w(2\delta -x^1,x'),$$ and after defining $\overline{w}$ on $(0,2\delta] \times [-\delta/2,\delta/2]^{d-1}$, we take {the} periodic extension, along the $x^1$ direction, of $\overline{w}$ to $(0,\infty) \times [-\delta/2,\delta/2]^{d-1} $ {with} period $2\delta$. Next, for any $x^1\in (0,\infty)$, $x^2 \in (\delta/2,3\delta/2]$, and $x'' = (x^3,\ldots, x^d)\in [-\delta/2,\delta/2]^{d-2}$, let
$$\overline{w}(x^1,x^2, x') = \overline{w}(x^2,\delta - x^2,x'),$$ and extend $\overline{w}$, along the $x^2$ direction, to $(0,\infty) \times \rr \times [-\delta/2,\delta/2]^{d-2} $ with period $2\delta$. By repeating this process, we extend $\overline{w}$ to $\rr^d_+$. For any $x=(x^1,\ldots,x^d) \in \rr^d_+$,  we find its ``preimage'' $x_0 \in C^+_\delta$, i.e., the point {at which} $w(x)$ is defined by. {More precisely,} if $x_1 - \lfloor x_1/2\delta \rfloor 2\delta >\delta$, we take $x_0^1 = (\lfloor x_1/2\delta \rfloor + 1 )2\delta - x_1 $ and if $x_1 - \lfloor x_1/2\delta \rfloor 2\delta \le \delta$, we take $x_0^1 = x_1 - \lfloor x_1/2\delta \rfloor 2\delta$. Similarly, we find $(x_0^2,\ldots,x_0^d)$. When $r < \delta/2$, $B_r(x)$ intersects $l$ cubes $C_1, \ldots, C_l$ for some $l \le 2^d$. Thus, by the symmetry of $\overline{w}$,
$$
\int_{ C^+_\delta \cap B_r(x_0)} w^\beta = \int_{C(x) \cap B_r(x)} \overline{w}^\beta \ge \int_{C_i\cap B_r(x)} \overline{w}^\beta, \quad i= 1,\ldots, l,
$$
where $\beta = 1$ or $ -1/(p-1)$, and $C(x)$ is the cube containing $x$. Therefore, when $r < \delta/2$,
\begin{align*}
&\left[
\fint_{B_r(x) \cap \rr^d_+ } \overline{w}
\right]
\left[
\fint_{B_r(x) \cap \rr^d_+ } \overline{w}^{-\frac{1}{p-1}}
\right]^{p-1}\\
&\le \left[ \frac{|C^+_\delta \cap B_r(x_0)|}{|B_r(x) \cap \rr^d_+|} 2^d \fint_{C^+_\delta \cap B_r(x_0)}w\right]
\left[ \frac{|C^+_\delta \cap B_r(x_0)|}{|B_r(x) \cap \rr^d_+|} 2^d \fint_{C^+_\delta \cap B_r(x_0)}w^{-\frac{1}{p-1}}\right]^{p-1}\\
&\le N(d,p) [w]_{p,C^+_\delta}.
\end{align*}
{On the other hand, w}hen $r \ge \delta/2$, $B_r(x)\cap \rr^d_+ $ can be covered by $N(r,d,\delta)$ cubes, and $N(r,d,\delta)|C^+_\delta|/|B_r(x)\cap \rr^d_+| $
{is} bounded above by some constant independent of $r$ {and $\delta$}.
Therefore, we still have
\begin{align*}
&\left[
\fint_{B_r(x) \cap \rr^d_+ } \overline{w}
\right]
\left[
\fint_{B_r(x) \cap \rr^d_+ } \overline{w}^{-\frac{1}{p-1}}
\right]^{p-1}\\
&\le \left[ \frac{N(r,d,\delta)|C^+_\delta |}{|B_r(x)\cap \rr^d_+  |}  \fint_{C^+_\delta }w\right]
\left[ \frac{N(r,d,\delta)|C^+_\delta |}{|B_r(x)\cap \rr^d_+  |}  \fint_{C^+_\delta }w^{-\frac{1}{p-1}}\right]^{p-1}\\
&\le N(d,p) [w]_{p,C^+_\delta}.
\end{align*}
The lemma is proved.
\end{proof}

\subsection{Results for non-divergence form equations}
For $z\in \partial \Omega$, we denote
$$
L =   \partial_t^{\alpha}   -a^{ij}D_{ij}    {-} b^iD_i   {-} c \quad \text{and}\quad \hat{L}^z =   \partial_t^{\alpha}    {-} \hat{a}^{ij}D_{ij}    {-}\hat{b}^iD_i    {-} \hat{c},
$$
where
\begin{align*}
    &\hat{a}^{ij}(t,y) = a^{rl}(t,\Psi(y))\Phi^i_{x^r}(\Psi(y))\Phi^j_{x^l}(\Psi(y)),\quad \hat{c}(t,y) = c(t, \Psi(y)),\\
&\hat{b}^{i}(t,y) = a^{rl}(t,\Psi(y))\Phi^i_{x^rx^l}(\Psi(y)) + b^r(t,\Psi(y))\Phi^i_{x^r}(\Psi(y))
.
\end{align*}
For $u \in \mathring{\hh}_{p,q,w,0}^{\alpha, 2} ( \Omega_T )$ satisfying $Lu = f$, we have
$$\hat{L}^z \hat{u} =  \hat{f} \quad \text{in}\quad  (0,T) \times B_{r_1}^+,$$
where $\hat{u}(t, \cdot)  = u(t, \Psi(\cdot))$ vanishing on $B_{r_1} \cap \partial \rr^d_+$ and $\hat{f}(t,y) = f(t, \Psi(y))$.
Note that the coefficients $\hat{b}^{i}$'s contain the second-order derivatives of $\Phi$. Therefore, we need a stronger assumption on the smoothness of $\Omega$ for non-divergence form equations.
Next, recall the definition{s} of $\eta$ and $\hat{\eta}^z$ in (\ref{10031}). We extend $\hat{L}^z$ to the half space by letting
$$ \widetilde{L}^z := \hat{L}^z{\eta}^z + \Delta(1-\hat{\eta}^z)  $$
with the leading coefficients
$$\widetilde{a}^{ij} := \Hat{a}^{ij}\hat{\eta}^z + \delta^{ij}(1-\hat{\eta}^z).$$
For $r_2 = \min (1/4,\theta + \gamma_0) r_1$, any $r\le r_2$, and $(t_0,x_0) \in \rr^d_T$, we have
 \[
    \sup_{i,j} \fint_{Q_r (t_0, x_0)} |\widetilde{a}^{ij} - (\widetilde{a}^{ij})_ {Q_r (t_0, x_0)} | \leq N_0(\theta + \gamma_0), \stepcounter{equation}\tag{\theequation}\label{12281}
\]
where $N_0$ is independent of {$\theta,\gamma_0$ and $z$}. Therefore, by picking sufficiently small $\theta$ and $\gamma_0$ depending on $d, \delta, \alpha, p, q, K_1$ such that $N_0(\theta + \gamma_0)$ is less than the $\gamma_0$ in Lemma \ref{5.3}, we can apply the results in the half space.

Then, we take a partition of unity on $\overline{\Omega}$ as {follows}. Let
$$
\Omega^{r_0/20}= \{x\in \Omega, \mathrm{dist}\,(x, \partial\Omega) \geq r_0/20 \},
$$
and take
$$
\{x_k\}_{k=1}^\infty \subset \Omega^{r_0/20} \cup \partial \Omega,\quad
\xi^k \geq 0,\, \quad \xi^k \in C_0^\infty (\rr^d)
$$
such that
\begin{align*}
\supp (\xi^k) \subset
\begin{cases}
B_{{r_0}/{40}}(x_k) \quad   x_k \in \Omega^{r_0/20}, \\
B_{{r_0}/{12}}(x_k) \quad   x_k \in \partial \Omega,
\end{cases}\quad {\sum_k \xi^k=1\quad\text{in}\,\,\Omega},
\stepcounter{equation}\tag{\theequation}\label{partition}
\end{align*}
and let
$$
\zeta^k = \frac{\xi^k}{(\sum_k (\xi^k)^{2})^{1/2}},
$$
such that there exists an upper bound on the number of non-vanishing $\zeta^k$'s at each $x\in \Omega$.
It follows by H\"older's inequality that
\begin{align*}
    \ep(d,p) \le \sum_{k=1}^\infty |\zeta^k|^p, \sum_{k=1}^\infty |\xi^k|^p \le 1, \quad \sum_{k=1}^\infty |D_x \xi^k |^p, \sum_{k=1}^\infty |D_x \zeta^{k} |^p\leq N(d,p,r_0).\stepcounter{equation}\tag{\theequation}\label{eq8.01}
\end{align*}
Furthermore, by \eqref{12281}, we pick $\theta$ and $\gamma_0$ sufficiently small such that the assumptions of Lemma \ref{5.3} are satisfied. Then, for $\lambda_0$ defined in Lemma \ref{5.3} and any $\lambda \ge \lambda_0$,  let $$R_\lambda^k(\cdot) =
\begin{cases}
\Phi ( \widetilde{L}^{x_k}+ \lambda)^{-1} \Phi^{-1}[\eta^{x_k} \cdot] \quad \text{for}\quad x_k \in \partial \Omega,\\
 (L+ \lambda)^{-1 }(\cdot) \quad \text{for}\quad x_k \in \Omega^{r_0/20},
\end{cases}
$$ which is essentially the inverse of operator $ L + \lambda$.
See \cite[Section 8.3]{k} for more details. Note that in \cite[Section 8.3]{k}, the weight is taken to be $1$. For general weights, in order to use the results in the whole space and the half space, we need to take restrictions and extensions of the weight given in Lemma \ref{exten}.

\begin{lemma}\label{A3}
For $\lambda \ge \lambda_0$, $u \in \mathring{\hh}^{\alpha,2}_{p,w,0}(\Omega_T)$, and $f = Lu + \lambda u $, we have
$$
u = \sum_{k} \zeta^k R_\lambda^k (\zeta^k f - L^k u ),
$$
where
$$
L^k u := {  \zeta^k(Lu) - L(\zeta^{k}u) } = u(a^{ij}D_{ij}\zeta^k+b^iD_i\zeta^k) + 2a^{ij}D_i{\zeta^k}D_ju.
$$
\end{lemma}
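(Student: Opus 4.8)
The plan is to deduce the identity from the uniqueness theorems already established, by showing that for each index $k$ one has the single-bump reconstruction
\[
\zeta^k u = R_\lambda^k\bigl(\zeta^k f - L^k u\bigr)\qquad\text{in }\Omega_T,
\]
and then summing in $k$. The series $\sum_k\zeta^k R_\lambda^k(\zeta^k f-L^k u)$ is locally finite since only finitely many $\zeta^k$ are nonzero near any point, so no convergence question arises, and $\sum_k (\zeta^k)^2=1$ on $\overline\Omega$ because $\zeta^k=\xi^k/(\sum_j(\xi^j)^2)^{1/2}$ and $\sum_j\xi^j=1$ in $\Omega$ by \eqref{partition}. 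Hence $\sum_k\zeta^k\bigl(\zeta^k u\bigr)=u$, which is the assertion. The first ingredient is the commutator computation displayed in the statement: since $\zeta^k=\zeta^k(x)$ is $t$-independent, a direct differentiation gives $L(\zeta^k u)=\zeta^k(Lu)-L^k u$ with $L^k u=u\bigl(a^{ij}D_{ij}\zeta^k+b^iD_i\zeta^k\bigr)+2a^{ij}D_i\zeta^k D_j u$, so that, using $f=Lu+\lambda u$,
\[
(L+\lambda)(\zeta^k u)=\zeta^k f-L^k u\qquad\text{in }\Omega_T,
\]
where $L^k u\in L_{p,w}(\Omega_T)$ is supported in $\supp\zeta^k$ and $\zeta^k u\in\mathring{\hh}^{\alpha,2}_{p,w,0}(\Omega_T)$.

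For an interior index ($x_k\in\Omega^{r_0/20}$), $\supp(\zeta^k u)\subset B_{r_0/40}(x_k)$ is compactly contained in $\Omega$. Extending $\zeta^k u$ and $\zeta^k f-L^k u$ by zero, the coefficients off a neighborhood of $\supp\zeta^k$ arbitrarily, and $w$ by an $A_q$-extension as in Lemma \ref{exten}, we obtain an element of $\hh^{\alpha,2}_{p,w,0}(\rr^d_T)$ solving $(L+\lambda)(\zeta^k u)=\zeta^k f-L^k u$ in $\rr^d_T$; by the uniqueness part of the whole-space theory for $\lambda\ge\lambda_0$ (Lemma \ref{5.3} with $G=\rr^d$, cf.\ Theorem \ref{main}) this forces $\zeta^k u=(L+\lambda)^{-1}(\zeta^k f-L^k u)=R_\lambda^k(\zeta^k f-L^k u)$.

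For a boundary index ($x_k=z\in\partial\Omega$), $\supp(\zeta^k u)\subset B_{r_0/12}(z)$, where $\eta^z\equiv1$. Transferring through the flattening map $\Psi$, the function $\widehat{\zeta^k u}(t,\cdot)=(\zeta^k u)(t,\Psi(\cdot))$ is supported in the set where $\hat\eta^z\equiv1$, hence where $\widetilde L^z$ and $\hat L^z$ have the same coefficients, and after extension by zero it belongs to $\mathring{\hh}^{\alpha,2}_{p,\overline w,0}((0,T)\times\rr^d_+)$, with $\overline w$ the $A_q$-extension of $w(\Psi)$ furnished by Lemma \ref{exten}. By the change-of-variables identity $\hat L^z\hat v=\widehat{Lv}$ in $(0,T)\times B_{r_1}^+$ applied to $v=\zeta^k u$, together with $\widetilde L^z=\hat L^z$ on $\supp\widehat{\zeta^k u}$, we get
\[
(\widetilde L^z+\lambda)\widehat{\zeta^k u}=\widehat{(L+\lambda)(\zeta^k u)}=\widehat{\zeta^k f-L^k u}\qquad\text{in }(0,T)\times\rr^d_+ .
\]
Since $\theta,\gamma_0$ were chosen so small that $\widetilde L^z$ satisfies the small mean oscillation bound \eqref{12281} required by Lemma \ref{5.3}, the uniqueness part of that lemma gives $\widehat{\zeta^k u}=(\widetilde L^z+\lambda)^{-1}\widehat{\zeta^k f-L^k u}$, and transferring back through $\Phi$, using that $\eta^z(\zeta^k f-L^k u)=\zeta^k f-L^k u$, we conclude $\zeta^k u=\Phi\,(\widetilde L^{x_k}+\lambda)^{-1}\Phi^{-1}[\eta^{x_k}(\zeta^k f-L^k u)]=R_\lambda^k(\zeta^k f-L^k u)$.

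The analytic content is light; the main obstacle is the bookkeeping across the coordinate change. One must verify carefully that (i) after composing with $\Psi$ and extending by zero, $\widehat{\zeta^k u}$ is an admissible element of the weighted space on the whole half space --- this is precisely why the restriction/extension statements for $A_p$ weights in Lemma \ref{exten} are needed --- and (ii) the globalized operator $\widetilde L^z$ agrees with $\hat L^z$ on the support of $\widehat{\zeta^k u}$ and satisfies the small-oscillation hypothesis uniformly in $z$, so that Lemma \ref{5.3} applies for all $\lambda\ge\lambda_0$ simultaneously. With these points checked, the per-bump identity is immediate, and summation over the locally finite family with $\sum_k(\zeta^k)^2=1$ finishes the proof.
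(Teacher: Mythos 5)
Your proof is correct and takes essentially the same approach the paper sketches (the paper states the per-bump identity $\zeta^k u = R_\lambda^k((L+\lambda)(\zeta^k u))$ by analogy with \cite[Theorem 8.3.7]{k} and omits the details); you supply exactly those details — the commutator computation, the extension-by-zero plus whole-space uniqueness for interior bumps, the boundary flattening with $\hat\eta^z\equiv1$ on the support and the half-space uniqueness from Lemma \ref{5.3}, and the reconstruction $\sum_k(\zeta^k)^2=1$.
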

\begin{proof}
Similar to \cite[Theorem 8.3.7]{k}, we have $\zeta^k u = R_\lambda^k((L+\lambda)(\zeta^k u))$ for any $k$. Thus,
$$u = \sum_{k} \zeta^k \zeta^k u =  \sum_{k} \zeta^k R_\lambda^k((L+\lambda)(\zeta^k u))= \sum_{k} \zeta^k R_\lambda^k (\zeta^k f - L^k u).$$
{We omit the details.}
\end{proof}

\begin{lemma}\label{A4}
Let $\lambda \ge \lambda_0$, $f\in L_{p,w}(\Omega_T)$, and $u\in\mathring{\hh}^{0,1}_{p,w,0}(\Omega_T)$ such that
$$
u = \sum_{k} \zeta^k R_\lambda^k (\zeta^k f - L^k u ).
$$
Then, $u \in \mathring{\hh}^{\alpha,2}_{p,w,0}(\Omega_T)$, and
there exists $\lambda_1(d, \delta, \alpha, p, q, K_1, R_0, R_2)\ge 1$ such that if $\lambda \ge \lambda_1$, then
$$ Lu + \lambda u = f.
$$
\end{lemma}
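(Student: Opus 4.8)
The plan is to establish the two assertions in order: first upgrade $u$ from $\mathring{\hh}^{0,1}_{p,w,0}(\Omega_T)$ to $\mathring{\hh}^{\alpha,2}_{p,w,0}(\Omega_T)$ using the mapping properties of the localized inverse operators $R_\lambda^k$, and then identify $Lu+\lambda u$ with $f$ by a first‑order commutator computation together with an absorption that forces $\lambda_1$ to be large.

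\emph{Regularity.} Since $u\in\mathring{\hh}^{0,1}_{p,w,0}(\Omega_T)$, both $u$ and $Du$ lie in $L_{p,w}(\Omega_T)$, so for each $k$ the function $\zeta^k f-L^k u$ (recall $L^k u=2a^{ij}D_i\zeta^k D_j u+u(a^{ij}D_{ij}\zeta^k+b^iD_i\zeta^k)$ from Lemma~\ref{A3}) belongs to $L_{p,w}$ and is supported in the ball on which $\zeta^k$ and its derivatives live. For an interior index $x_k\in\Omega^{r_0/20}$, $v_k:=R_\lambda^k(\zeta^k f-L^k u)=(L+\lambda)^{-1}(\zeta^k f-L^k u)$ lies in $\hh^{\alpha,2}_{p,w,0}(\rr^d_T)$ by Lemma~\ref{5.3} (whole space case); for a boundary index $x_k\in\partial\Omega$ we conjugate by the flattening map $\Phi$, use that the glued operator $\widetilde L^{x_k}$ has sufficiently small mean oscillation by \eqref{12281}, and apply Lemma~\ref{5.3} in the half space with the even extension of the pulled‑back weight, which is a Muckenhoupt weight with a uniformly controlled characteristic by Lemma~\ref{exten}. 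In either case $\norm{v_k}_{\hh^{\alpha,2}_{p,w}}\le N\norm{\zeta^k f-L^k u}_{L_{p,w}}$ with $N$ independent of $k$ and of $\lambda\ge\lambda_0$. Multiplying by the time‑independent cutoff $\zeta^k$, whose support lies strictly inside the corresponding coordinate patch, keeps us in $\mathring{\hh}^{\alpha,2}_{p,w,0}(\Omega_T)$, and since the $\zeta^k$ have a uniformly bounded overlap we obtain
\[
\sum_k\norm{\zeta^k v_k}^p_{\hh^{\alpha,2}_{p,w}(\Omega_T)}\le N\sum_k\norm{\zeta^k f-L^k u}^p_{L_{p,w}}\le N\bigl(\norm{f}^p_{L_{p,w}}+\norm{u}^p_{L_{p,w}}+\norm{Du}^p_{L_{p,w}}\bigr)<\infty
\]
by \eqref{eq8.01}. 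Hence $\sum_k\zeta^k v_k$ converges in $\mathring{\hh}^{\alpha,2}_{p,w,0}(\Omega_T)$; by hypothesis it equals $u$, so $u\in\mathring{\hh}^{\alpha,2}_{p,w,0}(\Omega_T)$.

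\emph{The equation.} Now set $g:=Lu+\lambda u\in L_{p,w}(\Omega_T)$. Since $u\in\mathring{\hh}^{\alpha,2}_{p,w,0}(\Omega_T)$, Lemma~\ref{A3} applied to $u$ gives $u=\sum_k\zeta^k R_\lambda^k(\zeta^k g-L^k u)$, while the hypothesis gives $u=\sum_k\zeta^k R_\lambda^k(\zeta^k f-L^k u)$; subtracting and using linearity of $R_\lambda^k$,
\[
\sum_k\zeta^k w_k=0,\qquad w_k:=R_\lambda^k\bigl(\zeta^k(g-f)\bigr).
\]
On $\supp\zeta^k$ — where $\eta^{x_k}\equiv1$ (resp.\ $\hat\eta^{x_k}\equiv1$ after flattening), so that $\widetilde L^{x_k}+\lambda$ agrees with the pullback of $L+\lambda$ — one has $(L+\lambda)w_k=\zeta^k(g-f)$. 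Using $\sum_k(\zeta^k)^2\equiv1$ on $\Omega$ and the first‑order commutator identity $[(L+\lambda),\zeta^k]w_k=L(\zeta^k w_k)-\zeta^k Lw_k=-L^k w_k$, we get
\[
0=(L+\lambda)\Bigl(\sum_k\zeta^k w_k\Bigr)=\sum_k\bigl[(\zeta^k)^2(g-f)-L^k w_k\bigr]=(g-f)-\sum_k L^k w_k .
\]
By Lemma~\ref{5.3}, $\norm{w_k}_{L_{p,w}}\le N\lambda^{-1}\norm{\zeta^k(g-f)}_{L_{p,w}}$ and $\norm{w_k}_{\hh^{\alpha,2}_{p,w}}\le N\norm{\zeta^k(g-f)}_{L_{p,w}}$, so the weighted interpolation inequality for the spatial derivatives (with parameter $\lambda^{-1/2}$) yields $\norm{Dw_k}_{L_{p,w}}\le N\lambda^{-1/2}\norm{\zeta^k(g-f)}_{L_{p,w}}$; since $L^k$ is first order with coefficients bounded in terms of $d,\delta,r_0$, we get $\norm{L^k w_k}_{L_{p,w}}\le N\lambda^{-1/2}\norm{\zeta^k(g-f)}_{L_{p,w}}$ for $\lambda\ge1$. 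Summing in $\ell_p$ over $k$ and using the bounded overlap together with \eqref{eq8.01} gives $\norm{\sum_k L^k w_k}_{L_{p,w}}\le N\lambda^{-1/2}\norm{g-f}_{L_{p,w}}$, so the identity above forces $\norm{g-f}_{L_{p,w}}\le N\lambda^{-1/2}\norm{g-f}_{L_{p,w}}$. Choosing $\lambda_1\ge\lambda_0$ with $N\lambda_1^{-1/2}\le1/2$, we conclude $g=f$, i.e.\ $Lu+\lambda u=f$ for all $\lambda\ge\lambda_1$.

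\emph{The main obstacle} is the bookkeeping in the regularity step for the boundary patches: one must verify that conjugating $R_\lambda^k$ by the flattening diffeomorphism $\Phi$ and extending the transformed weight evenly keeps $R_\lambda^k$ bounded from $L_{p,w}$ into the half‑space space $\mathring{\hh}^{\alpha,2}_{p,\overline{w},0}$ with constants uniform in $k$ — which is exactly what \eqref{12281} and Lemma~\ref{exten} are designed to supply — and that on $\supp\zeta^k$ the glued operator $\widetilde L^{x_k}$ genuinely coincides with the pullback of $L$, so that $(L+\lambda)w_k=\zeta^k(g-f)$ holds there. Once this is in place, the commutator‑and‑absorption mechanism used to derive the equation is routine, as in \cite[Section~8.3]{k}.
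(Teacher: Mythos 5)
Your proposal is correct and follows essentially the same route as the paper's: the paper's terse proof defers to \cite[Lemma~8.5.1]{k} and to Lemma~\ref{l2} (the divergence-form analogue given just below), both of which encode exactly the mechanism you spell out — subtract the two representations of $u$, apply $(L+\lambda)$ with the commutator identity to obtain $(g-f)=\sum_k L^k w_k$, and absorb via the Lemma~\ref{5.3} estimates together with interpolation and bounded overlap of the $\zeta^k$. The details you supply (the half-space case via $\Phi$, weight extension via Lemma~\ref{exten}, and the $\lambda^{-1/2}$ gain for the first-order operator $L^k$) are the right ones and match \eqref{12281}, \eqref{eq8.01} in the paper.
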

\begin{proof}
The first assertion follows from the definition of $ R_\lambda^k$. For the second assertion, we use the fact that the mapping
$$\sum_{k} L^k R_\lambda^k (\zeta^k\cdot): L_{p,w}(\Omega_T) \longrightarrow L_{p,w}(\Omega_T)$$ is a contraction when $\lambda$ is sufficiently large, which follows from the a priori estimate in Lemma \ref{5.3} and the observation that $L^k$ contains only the zeroth and the first order terms of $u$. See \cite[Lemma 8.5.1]{k} for details. A similar proof for equations in divergence form is given {below} in Lemma \ref{l2}.
\end{proof}
\begin{lemma}\label{A5}
There exists a constant $\lambda_1(d, \delta, \alpha, p, q, K_1, R_0, R_2)\ge 1$ such that for any $\lambda \ge \lambda_1$ and $f\in L_{p,w}(\Omega_T) $, there exists a unique $u\in \mathring{\hh}^{0,1}_{p,w,0}(\Omega_T)$ to
$$
u = \sum_{k} \zeta^k R_\lambda^k (\zeta^k f - L^k u ),
$$
and $u$ satisfies
\begin{align*}
\lambda \norm{u }_{L_{p,w}(\Omega_T)} + \lambda^{1/2}\norm{Du }_{L_{p,w}(\Omega_T)} \le  N\norm{f }_{L_{p,w}(\Omega_T)}. \stepcounter{equation}\tag{\theequation}\label{oo}
\end{align*}
\end{lemma}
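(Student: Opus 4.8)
The plan is to realize $u$ as the unique fixed point of the affine map $T_\lambda v := \sum_k \zeta^k R_\lambda^k(\zeta^k f) - \sum_k \zeta^k R_\lambda^k(L^k v)$ on the Banach space $\mathring{\hh}^{0,1}_{p,w,0}(\Omega_T)$, equipped with the $\lambda$-dependent norm $\norm{v}_\lambda := \lambda\norm{v}_{L_{p,w}(\Omega_T)} + \lambda^{1/2}\norm{Dv}_{L_{p,w}(\Omega_T)}$ (equivalent, for fixed $\lambda$, to the usual one), so that \eqref{oo} falls out of the fixed-point identity and the contraction estimate. The first ingredient is a uniform-in-$k$ mapping property of the operators $R_\lambda^k$. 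For $x_k \in \Omega^{r_0/20}$ this is Lemma \ref{5.3} in the whole space applied to $L$; for $x_k \in \partial\Omega$ one flattens the boundary by $\Phi = \Phi^{x_k}$, passes to the extended operator $\widetilde{L}^{x_k}$ on $\rr^d_+$, and uses \eqref{12281} to see that once $\theta$ and $\gamma_0$ are taken small (depending only on $d,\delta,\alpha,p,q,K_1$) the leading coefficients $\widetilde{a}^{ij}$ satisfy Assumption \ref{ass2}$(\gamma_0)$ with the threshold $\gamma_0$ of Lemma \ref{5.3}, so that lemma applies in the half space as well. Transporting the weight through the restriction to $B_{r_0}(x_k)$, the change of variables $\Phi$, and an extension to $\rr^d_+$ via Lemma \ref{exten}, and combining with the weighted spatial interpolation inequality $\norm{Dv}_{L_{p,w}} \le N(\ep^{-1}\norm{v}_{L_{p,w}} + \ep\norm{D^2v}_{L_{p,w}})$ with $\ep = \lambda^{-1/2}$, one obtains, uniformly in $k$ and for all $\lambda \ge \lambda_0$,
\[
\lambda\norm{R_\lambda^k h}_{L_{p,w}} + \lambda^{1/2}\norm{DR_\lambda^k h}_{L_{p,w}} \le N\norm{\eta^{x_k}h}_{L_{p,w}}.
\]

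Next I would carry out the summation over $k$. Since $L^k v = v(a^{ij}D_{ij}\zeta^k + b^iD_i\zeta^k) + 2a^{ij}D_i\zeta^k D_j v$ involves only $v$ and $Dv$ and is supported in $\supp\zeta^k$, and since $\eta^{x_k}\equiv 1$ there, the bounds $\sum_k|D\zeta^k|^p + \sum_k|D^2\zeta^k|^p \le N(d,p,r_0)$ from \eqref{eq8.01} give $\sum_k\norm{L^k v}_{L_{p,w}}^p \le N(\norm{v}_{L_{p,w}}^p + \norm{Dv}_{L_{p,w}}^p)$ and likewise $\sum_k\norm{\zeta^k f}_{L_{p,w}}^p \le \norm{f}_{L_{p,w}}^p$. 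Using in addition the finite overlap of the supports $\supp\zeta^k$ (so that $\big|\sum_k\zeta^k g_k\big|^p + \big|\sum_k D\zeta^k g_k\big|^p \le N\sum_k|g_k|^p\chi_{\supp\zeta^k}$ pointwise, as $|\zeta^k|\le 1$ and $|D\zeta^k|\le N(r_0)$) together with the displayed bound on $R_\lambda^k$, one gets $\norm{\sum_k\zeta^k R_\lambda^k(L^k v)}_\lambda \le N(\lambda^{-1/2} + \lambda^{-1})\norm{v}_\lambda$ and $\norm{\sum_k\zeta^k R_\lambda^k(\zeta^k f)}_\lambda \le N\norm{f}_{L_{p,w}(\Omega_T)}$, with $N$ independent of $\lambda$ and of $v$. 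Choosing $\lambda_1 \ge \lambda_0$ so that $N(\lambda_1^{-1/2} + \lambda_1^{-1}) \le 1/2$ makes $T_\lambda$ an affine contraction for every $\lambda \ge \lambda_1$; it maps $\mathring{\hh}^{0,1}_{p,w,0}(\Omega_T)$ into itself because each $R_\lambda^k(\cdot)$ lies in $\mathring{\hh}^{\alpha,2}_{p,w,0}$ of the relevant whole or half space (hence has zero initial trace) and the spatial cutoff $\zeta^k$ forces $\zeta^k R_\lambda^k(\cdot)$ to vanish on $\partial\Omega$. The contraction mapping principle then yields a unique $u \in \mathring{\hh}^{0,1}_{p,w,0}(\Omega_T)$ with $T_\lambda u = u$, and $\norm{u}_\lambda \le 2\norm{\sum_k\zeta^k R_\lambda^k(\zeta^k f)}_\lambda \le N\norm{f}_{L_{p,w}(\Omega_T)}$, which is precisely \eqref{oo}. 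This is the scheme of \cite[Lemma 8.5.1]{k}, to which I refer for the organizational details.

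The main obstacle is not the contraction argument itself, which is routine once the pieces are assembled, but keeping every constant uniform in $k$ and independent of $\lambda$ while the Muckenhoupt weight is dragged through the boundary-flattening diffeomorphisms $\Phi^{x_k}$ and the subsequent extensions to $\rr^d_+$; this is exactly what Lemma \ref{exten} and the uniform oscillation bound \eqref{12281} are designed to supply, and invoking them correctly is the only delicate point.
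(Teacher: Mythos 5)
Your proposal is correct and takes essentially the same route as the paper: a Banach fixed-point argument where the key ingredient is the uniform-in-$k$ estimate coming from Lemma \ref{5.3} (transported to the boundary charts via \eqref{12281} and Lemma \ref{exten}), combined with the finite-overlap bounds \eqref{eq8.01}, and then contraction is obtained by making $\lambda$ large to beat the constant. The only stylistic difference is that you carry out the contraction in a $\lambda$-weighted norm $\norm{\cdot}_\lambda$, whereas the paper proves the equivalent estimate \eqref{qq1} in the plain $\hh^{0,1}_{p,w}$ norm and lets the reader see that the $v$-term on the right carries the gain $\lambda^{-1/2}$; the content is identical.
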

\begin{proof}
It suffices to prove that $$\sum_{k} \zeta^k R_\lambda^k (\zeta^k f - L^k \cdot) : \mathring{\hh}^{0,1}_{p,w,0}(\Omega_T)
\longrightarrow \mathring{\hh}^{0,1}_{p,w,0}(\Omega_T)$$ is a contraction. For this, we prove
\begin{align*}
\lambda \norm{u }_{L_{p,w}(\Omega_T)} + \lambda^{1/2}\norm{Du }_{L_{p,w}(\Omega_T)}\le N(  \norm{f }_{L_{p,w}(\Omega_T)} + \norm{v }_{\hh^{0,1}_{p,w}(\Omega_T)}),\stepcounter{equation}\tag{\theequation}\label{qq1}
\end{align*}
where $$ u = \sum_{k} \zeta^k R_\lambda^k (\zeta^k f - L^k v).$$
Indeed, by the definition of $R_\lambda^k$ and Lemma \ref{5.3}, we have
\begin{align*}
\lambda^p \norm{u }_{L_{p,w}(\Omega_T)}^p &\le N \sum_{k} \lambda^p \norm{\zeta^k R_\lambda^k (\zeta^k f - L^k v)}_{L_{p,w}(\Omega_T)}^p\\
&\le N \sum_{k} \norm{\zeta^k f - L^k v}_{L_{p,w}(\Omega_T)}^p\\
&\le N (\norm{f }_{L_{p,w}(\Omega_T)}^p + \norm{v }_{\hh^{0,1}_{p,w}(\Omega_T)}^p),
\end{align*}
{where we used \eqref{eq8.01} in the last inequality.}
Similarly,
$$ \lambda^{1/2}\norm{Du }_{L_{p,w}(\Omega_T)} \le N (\norm{f }_{L_{p,w}(\Omega_T)} + \norm{v }_{\hh^{0,1}_{p,w}(\Omega_T)}).$$
Thus, \eqref{qq1} follows, and the lemma is proved.
\end{proof}

Combining Lemmas \ref{A4} and \ref{A5}, we conclude that there exists $\lambda_1$ as in Lemma \ref{A5} such that for any $\lambda \ge \lambda_1$ {and} $g \in L_{p,w} (\Omega_T)$, there exists a unique solution
$u \in \mathring{\hh}_{p,w,0}^{\alpha, 2} ( \Omega_T)$ satisfying
\[
\partial_t^\alpha u - a^{ij} D_{ij} u - b^i D_i u - cu + \lambda u  = g.
\]
If $\lambda \ge \lambda_1$, (\ref{729}) follows from (\ref{oo}). If $\lambda \in [0,\lambda_1)$, let $h := Lu + \lambda_1 u $. We have
\begin{align*}
\norm{u}_{\hh_{p,w}^{\alpha,2} (\Omega_T )}^p &\le N \sum_{k} \norm{\zeta^k R_{\lambda_1}^k (\zeta^k {h} - L^k u)}_{\hh_{p,w}^{\alpha,2} (\Omega_T )}^p\\
&\le N \sum_{k} \norm{\zeta^k h - L^k u}_{L_{p,w}(\Omega_T)}^p\\
&\le N (\norm{h}_{L_{p,w}(\Omega_T)}^p + \norm{u}_{L_{p,w}(\Omega_T)}^p + \norm{Du}_{L_{p,w}(\Omega_T)}^p).
\end{align*}
Thus, by {the interpolation inequality},
\begin{align*}
\norm{u}_{\hh_{p,w}^{\alpha,2} (\Omega_T )} &\le N (\norm{Lu + \lambda u  }_{L_{p,w}(\Omega_T)} + |\lambda_1 - \lambda|\norm{u}_{L_{p,w}(\Omega_T)} +\norm{u}_{L_{p,w}(\Omega_T)} )\\
&\le N (\norm{Lu + \lambda u  }_{L_{p,w}(\Omega_T)} + \norm{u}_{L_{p,w}(\Omega_T)}).
\end{align*}

\subsection{Results for divergence form equations}
Next, we use a similar process to prove the results for equations in divergence form. For $z \in \partial \Omega$, we
denote
\begin{align*}
   & Lu = \partial_t^\alpha u - D_i (a^{ij} D_ju  + a^i u) - b^i D_iu - cu, \\
    &\hat{L}^z \hat{u} = \partial_t^\alpha \hat{u} - D_i (\hat{a}^{ij} D_j\hat{u}  - \hat{a}^i \hat{u}) - \hat{b}^i D_i\hat{u} - \hat{c}\hat{u},
\end{align*}
where
\begin{align*}
&\hat{a}^{ij}(t,y) = a^{rl}(t,\Psi(y))\Phi^i_{x^r}(\Psi(y))\Phi^j_{x^l}(\Psi(y)),\quad \hat{c}(t,y) = c(t, \Psi(y)),\\
&\hat{b}^{i}(t,y) = b^r(t,\Psi(y))\Phi^i_{x^r}(\Psi(y)), \quad \hat{a}^{i}(t,y) = a^r(t,\Psi(y))\Phi^i_{x^r}(\Psi(y)),
\end{align*}
where $\Phi$ and $\Psi$ are defined in \eqref{1030}. For $u \in \mathring{\chh}_{p,q,w,0}^{\alpha, 1} ( \Omega_T )$ satisfying $Lu = D_ig_i + f$, we have
$$\hat{L}^z \hat{u} =  D_i\hat{g}_i + \hat{f} \quad \text{in}\quad  (0,T) \times B_{r_1}^+,$$
where $\hat{u}(t, \cdot)  = u(t, \Psi(\cdot))$ vanishing on $B_{r_1} \cap \partial \rr^d_+$,
$$
\hat{g}_i = g^r(t,\Psi(y))\Phi^i_{x^r}(\Psi(y)), \quad \text{and}\quad  \hat{f}(t,y) = f(t, \Psi(y)).
$$

Next, similar to Section B.1, we define $ \widetilde{L}^z$ and $\widetilde{a}^{ij}$ {with the same $r_2$} {so that} \eqref{12281} {still holds}. Furthermore, we pick $\theta$ and $\gamma_0$ sufficiently small such that the assumptions of Lemma \ref{91566} are satisfied. Then, for $\lambda_0$ defined in Lemma \ref{5.3} and any $\lambda \ge \lambda_0$, let $$R_\lambda^k(\cdot) =
\begin{cases}
\Phi ( \widetilde{L}^{x_k} + \lambda)^{-1} \Phi^{-1}[\eta^{x_k} \cdot] \quad \text{for}\quad x_k \in \partial \Omega,\\
(L + \lambda)^{-1 }(\cdot) \quad \text{for}\quad x_k \in \Omega^{r_0/20},
\end{cases}
$$ which is essentially the inverse of operator $L+\lambda $.
\begin{lemma}\label{l1}
For $\lambda \ge \lambda_0$, $u \in \mathring{\chh}^{\alpha,1}_{p,w,0}(\Omega_T)$, and $D_ig_i + f = Lu + \lambda u $, we have
$$u = \sum_{k} \zeta^k R_\lambda^k (D_i(\zeta^kg_i)-g_iD_i\zeta^k  + \zeta^k f - L^k u ),$$
where
\[
L^k u {:=\zeta^k L u- L (\zeta^k u)}= D_i(a^{ij}uD_j\zeta^k)+ (a^{ij}D_ju + a^iu+b^iu)D_i\zeta^k. \stepcounter{equation}\tag{\theequation}\label{ee2}
\]
\end{lemma}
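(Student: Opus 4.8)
The plan is to prove the decomposition formula for $u$ by following the same template as Lemma \ref{A3}, namely: first establish the single-cutoff identity $\zeta^k u = R_\lambda^k\big((L+\lambda)(\zeta^k u)\big)$ for each $k$, then sum against the partition of unity $\sum_k \zeta^k \xi^k = \sum_k (\xi^k)^2/(\sum_j (\xi^j)^2) $. Wait — more precisely, since $R_\lambda^k$ already incorporates a factor $\eta^{x_k}$ (or is a global inverse in the interior case) and the $\zeta^k$ are built from the $\xi^k$, I would verify that $\sum_k \zeta^k \zeta^k = \sum_k (\xi^k)^2/\sum_j(\xi^j)^2 = 1$ on $\Omega$, so that $u = \sum_k \zeta^k(\zeta^k u)$, and then substitute $\zeta^k u = R_\lambda^k((L+\lambda)(\zeta^k u))$ into each summand.

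The key computational step is the Leibniz-type identity for the divergence-form operator: one must expand $(L+\lambda)(\zeta^k u)$ in terms of $\zeta^k(L+\lambda)u$ plus lower-order commutator terms. Writing $L u = \partial_t^\alpha u - D_i(a^{ij}D_j u + a^i u) - b^i D_i u - cu$, I would compute
\[
L(\zeta^k u) = \zeta^k L u - L^k u,
\]
where the commutator $L^k u = \zeta^k L u - L(\zeta^k u)$ must be shown to equal the right-hand side of \eqref{ee2}. This follows by the product rule applied to each term: $D_i(a^{ij}D_j(\zeta^k u)) = D_i(a^{ij}\zeta^k D_j u) + D_i(a^{ij} u D_j\zeta^k)$ and $D_i(a^i \zeta^k u) = \zeta^k D_i(a^i u) + a^i u D_i \zeta^k$, so that after regrouping, the second-order part contributes $D_i(a^{ij}u D_j\zeta^k) + a^{ij}D_j u\, D_i\zeta^k$, the $a^i$ part contributes $a^i u\, D_i\zeta^k$, and the $b^i$ part contributes $b^i u\, D_i\zeta^k$; note the $\partial_t^\alpha$ and $-cu$ terms commute with multiplication by $\zeta^k$ (which is time-independent) and cancel. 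This gives exactly \eqref{ee2}. Then since $(L+\lambda)(\zeta^k u) = \zeta^k(L+\lambda)u - L^k u = \zeta^k(D_i g_i + f) - L^k u$, and $\zeta^k D_i g_i = D_i(\zeta^k g_i) - g_i D_i \zeta^k$, I substitute to obtain $u = \sum_k \zeta^k R_\lambda^k\big(D_i(\zeta^k g_i) - g_i D_i\zeta^k + \zeta^k f - L^k u\big)$.

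The main obstacle, as in the non-divergence case, is justifying the single-cutoff identity $\zeta^k u = R_\lambda^k((L+\lambda)(\zeta^k u))$ when $x_k \in \partial\Omega$: here $R_\lambda^k = \Phi(\widetilde L^{x_k}+\lambda)^{-1}\Phi^{-1}[\eta^{x_k}\,\cdot\,]$ involves flattening the boundary via $\Phi,\Psi$, extending to the half space through $\widetilde L^{x_k}$, and the support of $\zeta^k u$ must lie inside the region where the flattening and the extension agree with the original operator. One needs that $\zeta^k u$ is supported in $B_{r_0/12}(x_k) \cap \Omega$, that $\eta^{x_k}\equiv 1$ there, that $\hat\eta^{x_k}\equiv 1$ on the image, so that $\widetilde L^{x_k}$ acts as $\hat L^{x_k}$ on $\Phi^{-1}(\zeta^k u)$, and hence the half-space solvability from Proposition \ref{612} (or Lemma \ref{91566}) applies. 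This is precisely the content referenced in \cite[Theorem 8.3.7]{k}, so I would cite that and the analogous argument there, remarking that for general weights one additionally invokes Lemma \ref{exten} to restrict and extend $w$ to $\Phi(\Omega\cap B_{r_0}(z))$ and then to $\rr^d_+$. The remaining verification (that the $\zeta^k$ form a genuine partition of unity with the stated properties \eqref{eq8.01}, and that all the regularizing/support conditions hold) is routine and can be deferred with "we omit the details," exactly as done for Lemma \ref{A3}.
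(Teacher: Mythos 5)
Your proposal is correct and follows essentially the same route as the paper: establish $\zeta^k u = R_\lambda^k\big((L+\lambda)(\zeta^k u)\big)$ from the localization as in \cite[Theorem 8.3.7]{k}, use $\sum_k(\zeta^k)^2=1$ to write $u=\sum_k\zeta^k(\zeta^k u)$, and rewrite $(L+\lambda)(\zeta^k u)=\zeta^k(D_ig_i+f)-L^k u$ via the product rule, with $\zeta^k D_ig_i = D_i(\zeta^k g_i)-g_iD_i\zeta^k$. Your explicit commutator computation for $L^k u$ (noting that $\partial_t^\alpha$ and the zeroth-order term commute with the spatial cutoff) reproduces \eqref{ee2} exactly, which the paper takes for granted.
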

\begin{proof}
Since $\zeta^k u = R_\lambda^k((L+\lambda)(\zeta^k u))$ for any $k$,
\begin{align*}
u &= \sum_{k} \zeta^k \zeta^k u =  \sum_{k} \zeta^k R_\lambda^k((L+\lambda)(\zeta^k u))\\
&= \sum_{k} \zeta^k R_\lambda^k (D_i(\zeta^kg_i)-g_iD_i\zeta^k  + \zeta^k f - L^k u ).
\end{align*}
{The lemma is proved.}
\end{proof}
\begin{lemma}\label{l2}
Let $\lambda \ge \lambda_0$, $f,g_i\in L_{p,w}(\Omega_T)$, and $u\in\mathring{\hh}^{0,1}_{p,w,0}(\Omega_T)$ such that
$$ u = \sum_{k} \zeta^k R_\lambda^k (D_i(\zeta^kg_i)-g_iD_i\zeta^k + \zeta^k f - L^k u ).$$
Then, $u \in \mathring{\chh}^{\alpha,1}_{p,w,0}(\Omega_T)$, and there exists $\lambda_1(d, \delta, \alpha, p, q, K_1, R_0, R_2)$  such that if $\lambda \ge \lambda_1$, then
$$
Lu + \lambda u = D_ig_i + f.
$$
\end{lemma}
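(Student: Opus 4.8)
The plan is to follow the proof of the non-divergence counterpart Lemma~\ref{A4}, which itself mimics \cite[Lemma 8.5.1]{k}, with Lemma~\ref{5.3} replaced by its divergence-form analogue Lemma~\ref{91566}; the crucial feature of the latter is that the zeroth-order part of the data is recovered with a gain of $\lambda^{-1/2}$.

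\emph{Regularity.} Since $u\in\mathring{\hh}^{0,1}_{p,w,0}(\Omega_T)$, the commutator $L^ku$ from \eqref{ee2} and the source $\zeta^k(D_ig_i+f)=D_i(\zeta^kg_i)-g_iD_i\zeta^k+\zeta^kf$ are both of the form $D_i(L_{p,w})+L_{p,w}$, so the argument of each $R_\lambda^k$ lies in $\hh^{-1}_{p,w}$. For $\lambda\ge\lambda_0$, $R_\lambda^k$ is the genuine inverse of $L+\lambda$ on $\rr^d_T$ (interior charts) or of the flattened, extended operator $\widetilde L^{x_k}+\lambda$ on the half space (boundary charts); by the solvability in Theorems~\ref{6main} and \ref{main4}, transported through the change of variables $\Phi,\Psi$ of \eqref{1030} and the weight restriction/extension in Lemma~\ref{exten}, it maps $\hh^{-1}_{p,w}$ boundedly into the corresponding $\mathring{\chh}^{\alpha,1}_{p,w,0}$-space. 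Multiplication by $\zeta^k\in C_0^\infty$ preserves this class (a product rule as in Lemma~\ref{6.666}), and the locally finite sum is again in it, so $u\in\mathring{\chh}^{\alpha,1}_{p,w,0}(\Omega_T)$.

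\emph{The equation.} Differentiating the fixed-point identity termwise does not work (the commutators do not telescope), so instead I would build an independent genuine solution and invoke uniqueness of the fixed point. Put $\mathcal F=D_ig_i+f$, $P\mathcal F:=\sum_k\zeta^kR_\lambda^k(\zeta^k\mathcal F)$ and $\mathcal K\mathcal F:=\sum_kL^kR_\lambda^k(\zeta^k\mathcal F)$. From $(L+\lambda)(\zeta^kv)=\zeta^k(L+\lambda)v-L^kv$, the fact that $\zeta^k\mathcal F$ is supported in $\supp\zeta^k$, where $\eta^{x_k}\equiv1$ and (after pulling back) $\hat\eta^{x_k}\equiv1$ so that $\widetilde L^{x_k}$ pushes forward to $L$, whence $(L+\lambda)R_\lambda^k(\zeta^k\mathcal F)=\zeta^k\mathcal F$ there, and from $\sum_k(\zeta^k)^2\equiv1$, one obtains the parametrix identity $(L+\lambda)P\mathcal F=\mathcal F-\mathcal K\mathcal F$. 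Since $L^k$ is first order while $R_\lambda^k$ obeys the gain $\sqrt\lambda\|R_\lambda^kh\|_{L_{p,w}}+\|DR_\lambda^kh\|_{L_{p,w}}\le N(\|h'\|_{L_{p,w}}+\lambda^{-1/2}\|h''\|_{L_{p,w}})$ for $h=D_ih_i'+h''$ (Lemma~\ref{91566}), the partition bounds of \eqref{eq8.01} show that $\mathcal K$ is a contraction on $\hh^{-1}_{p,w}(\Omega_T)$ in the $\lambda$-weighted norm $\inf\{\|h'\|_{L_{p,w}}+\lambda^{-1/2}\|h''\|_{L_{p,w}}:\,\cdot\,=D_ih_i'+h''\}$, with constant $\le N\lambda^{-1/2}<1$ once $\lambda\ge\lambda_1$. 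Then $u^*:=P(I-\mathcal K)^{-1}\mathcal F\in\mathring{\chh}^{\alpha,1}_{p,w,0}(\Omega_T)$ satisfies $(L+\lambda)u^*=\mathcal F$. By Lemma~\ref{l1}, $u^*$ is a fixed point of the map $v\mapsto\sum_k\zeta^kR_\lambda^k(\zeta^k\mathcal F-L^kv)$; this map is a contraction on $\mathring{\hh}^{0,1}_{p,w,0}(\Omega_T)$ in the $\lambda$-weighted norm $\sqrt\lambda\|v\|_{L_{p,w}}+\|Dv\|_{L_{p,w}}$ when $\lambda\ge\lambda_1$ — its increments are $\sum_k\zeta^kR_\lambda^k(L^k(v_1-v_2))$, bounded by $N\lambda^{-1/2}$ times the $\lambda$-weighted norm of $v_1-v_2$, which is also the estimate needed in the next lemma — so its fixed point is unique, $u=u^*$, and therefore $Lu+\lambda u=D_ig_i+f$.

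\emph{Main obstacle.} The delicate part is the bookkeeping near $\partial\Omega$: checking that after flattening the boundary and cutting off with $\eta^{x_k},\hat\eta^{x_k}$ the operator $\widetilde L^{x_k}+\lambda$ genuinely pushes forward to $L+\lambda$ on $\supp\zeta^k$ (so $R_\lambda^k$ acts as a true local inverse there), that the commutators $L^k$ stay purely lower order after the coordinate change, and — quantitatively — that the sharp $\lambda$-scaling of Lemma~\ref{91566} propagates through the partition of unity when measured in $\hh^{-1}_{p,w}$ rather than $L_{p,w}$: the divergence part of the data does not decay in $\lambda$, which forces one to carry the $\lambda$-weighted norms throughout.
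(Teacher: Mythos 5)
Your proof is correct, and it takes a recognizably different functional-analytic route to the same conclusion, built on the same core contraction estimate. The paper's proof proceeds by subtraction: it sets $Lu+\lambda u =: D_i\overline{g}_i + \overline{f}$, invokes Lemma~\ref{l1} to get a second fixed-point identity for $u$ with the data $(\overline{g},\overline{f})$, and subtracts to obtain $T_\lambda(D_i(g_i-\overline{g}_i)+(f-\overline{f}))=0$; it then applies $(L+\lambda)$ to derive the parametrix identity $h=\sum_kL^kR_\lambda^k(\zeta^k h)$ and concludes $h=0$ once $\mathcal K h:=\sum_kL^kR_\lambda^k(\zeta^k h)$ is a contraction on $\hh^{-1}_{p,w}$ in the $\lambda$-weighted norm $\inf\{\|g\|_{L_{p,w}}+\lambda^{-1/2}\|f\|_{L_{p,w}}\}$. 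You instead build an independent genuine solution $u^*=P(I-\mathcal K)^{-1}\mathcal F$ via the parametrix and Neumann series, use Lemma~\ref{l1} to record that $u^*$ is also a fixed point, and appeal to uniqueness of the fixed point on $\mathring{\hh}^{0,1}_{p,w,0}$ (in the $\lambda$-weighted norm $\sqrt\lambda\|\cdot\|_{L_{p,w}}+\|D\cdot\|_{L_{p,w}}$) to conclude $u=u^*$. Both arguments hinge on exactly the same parametrix identity and the same $\lambda^{-1/2}$-gain from Lemma~\ref{91566}; the paper's version is more economical because it only needs injectivity of $T_\lambda$ (equivalently, the contraction of $\mathcal K$ on $\hh^{-1}_{p,w}$), whereas you additionally invoke a fixed-point uniqueness that the paper defers to the contraction in Lemma~\ref{l3}, effectively running the contraction estimate in two norms. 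Your route is not circular—both contractions are self-contained—but it duplicates work that the chosen factoring of Lemmas~\ref{l1}–\ref{l3} is designed to avoid. One small housekeeping point you correctly flag but should make explicit in a full write-up: the identity $(L+\lambda)R_\lambda^k(\zeta^k\cdot)=\zeta^k\cdot$ on $\supp\zeta^k$ requires checking that $\hat\eta^{x_k}\equiv1$ (hence $\widetilde L^{x_k}=\hat L^{x_k}$, which pushes forward to $L$) on the image of $\supp\zeta^k$ under $\Phi$, which is why $\supp\xi^k\subset B_{r_0/12}(x_k)$ was chosen smaller than the set where $\hat\eta=1$.
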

\begin{proof}
The first assertion follows from the definition of $ R_\lambda^k$. For the second assertion, let
$$ u := \sum_{k} \zeta^k R_\lambda^k (D_i(\zeta^kg_i)-g_iD_i\zeta^k + \zeta^k f - L^k u )
$$
and
$$ Lu + \lambda u =: D_i \overline{g_i} + \overline{f} .$$
Then, by Lemma \ref{l1},
$$ u = \sum_{k} \zeta^k R_\lambda^k (D_i(\zeta^k\overline{g_i})-\overline{g_i}D_i\zeta^k + \zeta^k \overline{f} - L^k u ).$$ It is sufficient to prove if $\lambda$ is sufficiently large, and
$$ T_\lambda(D_ig_i +f):= \sum_{k} \zeta^k R_\lambda^k (D_i(\zeta^kg_i)-g_iD_i\zeta^k + \zeta^k f) = 0,$$
then ${D_ig_i+}f = 0$. Given $T_\lambda(D_ig_i +f)=0$, we have
\begin{align*}
0 = (\lambda + L ) T_\lambda(D_ig_i +f) = D_ig_i +f - \sum_{k} L^k R_\lambda^k (D_i(\zeta^kg_i)-g_iD_i\zeta^k  + \zeta^k f).
\end{align*}
To finish the proof, it suffices to prove the mapping
\[\sum_{k} L^k R_\lambda^k (\zeta^k\cdot): \hh^{-1}_{p,w}(\Omega_T) \longrightarrow \hh^{-1}_{p,w}(\Omega_T)
\stepcounter{equation}\tag{\theequation}\label{ee1}\]
is a contraction when $\lambda$ is  sufficiently large,
where for $h = D_ig_i + f \in \hh^{-1}_{p,w}(\Omega_T)$ {with} some $f,g_i \in L_{p,w}(\Omega_T)$ in the distribution sense, we write
$$\zeta^k h := D_i(\zeta^kg_i)-g_iD_i\zeta^k + \zeta^k f \in \hh^{-1}_{p,w}(\Omega_T).$$
To this end, we take the following equivalent norm in $\hh^{-1}_{p,w}(\Omega_T)$:
$$ \norm{h}_{\hh^{-1}_{p,w}(\Omega_T)} :=  \inf \Big\{ \sum_{i=1}^d \norm{g_i}_{L_{p,w}} + \frac{1}{\sqrt{\lambda}}\norm{f}_{L_{p,w}}: h =D_ig_i + f \Big\},$$
where $\norm{\cdot}_{p,w} = \norm{\cdot}_{L_{p,w}(\Omega_T)}$.

For any $f,g_i \in L_{p,w}(\Omega_T)$ and $h = D_ig_i + f$, by \eqref{ee2}, we have
$$L^k { U_{k}}  = D_i(a^{ij}{ U_{k}}D_j\zeta^k)+ (a^{ij}D_j{ U_{k}} + a^i{ U_{k}}+b^i{ U_{k}})D_i\zeta^k,$$
where ${ U_{k}} = R_\lambda^k (D_i(\zeta^kg_i)-g_iD_i\zeta^k + \zeta^k f)$. Then, by \eqref{eq8.01} and Lemma \ref{91566},
\begin{align*}
&\sum_{k}  \norm{a^{ij}U_{k}D_j\zeta^k)}^{p}_{{p,w}} + \frac{1}{\lambda^{p/2}} \norm{(a^{ij}D_jU_{k} + a^iU_{k}+b^iU_{k})D_i\zeta^k}^{p}_{p,w} \\
&\le N\sum_{k} \big\{ \norm{R_\lambda^k (D_i(\zeta^kg_i)-g_iD_i\zeta^k + \zeta^k f)}^{p}_{{p,w}}\\
& \quad \quad + \frac{1}{\lambda^{p/2}} \norm{{D}R_\lambda^k (D_i(\zeta^kg_i)-g_iD_i\zeta^k + \zeta^k f)}^{p}_{p,w} \big\}\\
& \le N  \sum_{k}   \frac{1}{\lambda^{p/2}}\norm{\zeta^k g_i}^{p}_{{p,w}} + \frac{1}{\lambda^{p}}\norm{\zeta^k f}^{p}_{{p,w}} + \frac{1}{\lambda^{p}}\norm{g_iD_i\zeta^k}^{p}_{{p,w}}
\\
&\le N  \Big(\frac{1}{\lambda^{p/2}}\norm{ g_i}^{p}_{{p,w}} + \frac{1}{\lambda^{p}}\norm{f}^{p}_{{p,w}}\Big)
= \frac{N}{\lambda^{p/2}} \Big(\norm{ g_i}^{p}_{{p,w}} + \frac{1}{\lambda^{p/2}}\norm{f}^{p}_{{p,w}}\Big),
\end{align*}
where $N = N(d,\delta,p,K_1,R_0,R_2)$.
Thus, by the Minkowski inequality,
$$
\norm{\sum_{k} L^k R_\lambda^k ( \zeta^k {h})}^{p}_{\hh^{-1}_{p,w}(\Omega_T)} \le \frac{N}{\lambda^{p/2}} \norm{{h}}^{p}_{\hh^{-1}_{p,w}(\Omega_T)}.
$$
Therefore, by picking $\lambda$ sufficiently large, this mapping \eqref{ee1} {is} a contraction, and the lemma is proved.
\end{proof}
\begin{lemma}\label{l3}
There exists a constant $\lambda_1(d, \delta, \alpha, p, q, K_1, R_0, R_2)\ge 1$ such that for any $\lambda \ge \lambda_1$ and $f,g_i\in L_{p,w}(\Omega_T) $, there exists a unique $u\in \mathring{\hh}^{0,1}_{p,w,0}(\Omega_T)$ to
$$
u = \sum_{k} \zeta^k R_\lambda^k (D_i(\zeta^kg_i)-g_iD_i\zeta^k + \zeta^k f - L^k u ),
$$
and $u$ satisfies
\begin{align*}
\sqrt{\lambda} \norm{u }_{p,w} +\norm{Du }_{p,w} \le  N \norm{g }_{p,w} + \frac{N}{\sqrt{\lambda}}\norm{f }_{p,w},
\end{align*}
where $\norm{\cdot}_{p,w} = \norm{\cdot}_{L_{p,w}(\Omega_T)} $.
\end{lemma}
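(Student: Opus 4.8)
The plan is to follow the proof of Lemma \ref{A5}, adapting it to divergence form exactly as Lemma \ref{l2} adapts the non-divergence argument. Fix $f,g_i\in L_{p,w}(\Omega_T)$ and, for $\lambda\ge\lambda_0$ (the value from the definition of $R_\lambda^k$), consider the affine map
\[
\mathcal{T}_\lambda v := \sum_{k} \zeta^k R_\lambda^k \bigl(D_i(\zeta^kg_i)-g_iD_i\zeta^k + \zeta^k f - L^k v\bigr)
\]
on $\mathring{\hh}^{0,1}_{p,w,0}(\Omega_T)$, which I equip with the norm $\|v\|_\lambda := \sqrt{\lambda}\,\|v\|_{p,w}+\|Dv\|_{p,w}$; since $\lambda\ge 1$ this is equivalent to the standard $\hh^{0,1}_{p,w}$ norm, so the space is still a Banach space. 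By the definition of $R_\lambda^k$ (the resolvent of $L+\lambda$ in the whole space when $x_k\in\Omega^{r_0/20}$, and the resolvent of the flattened, extended operator $\widetilde L^{x_k}+\lambda$ in the half space when $x_k\in\partial\Omega$), $\mathcal{T}_\lambda$ maps $\mathring{\hh}^{0,1}_{p,w,0}(\Omega_T)$ into itself, and a solution of the fixed-point equation in the statement is exactly a fixed point of $\mathcal{T}_\lambda$.

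The key step is the contraction estimate $\|\mathcal{T}_\lambda v_1-\mathcal{T}_\lambda v_2\|_\lambda \le N\lambda^{-1/2}\|v_1-v_2\|_\lambda$ with $N=N(d,\delta,p,K_1,R_0,R_2)$. Writing $v=v_1-v_2$, we have $\mathcal{T}_\lambda v_1-\mathcal{T}_\lambda v_2=-\sum_k\zeta^k R_\lambda^k(L^k v)$, and by \eqref{ee2} the datum $L^k v=D_i(a^{ij}vD_j\zeta^k)+(a^{ij}D_jv+a^iv+b^iv)D_i\zeta^k$ has divergence-form right-hand side $g^k_i:=a^{ij}vD_j\zeta^k$ and $f^k:=(a^{ij}D_jv+a^iv+b^iv)D_i\zeta^k$. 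For each $k$ I apply Lemma \ref{91566} to $R_\lambda^k(D_ig^k_i+f^k)$ — in the whole space for interior $x_k$, and, after the boundary-flattening change of variables $\Phi$ together with the restriction and even extension of the Muckenhoupt weight furnished by Lemma \ref{exten}, in the half space for boundary $x_k$ — to obtain
\[
\sqrt{\lambda}\,\|R_\lambda^k(D_ig^k_i+f^k)\|_{p,w}+\|DR_\lambda^k(D_ig^k_i+f^k)\|_{p,w} \le N\Bigl(\|g^k\|_{p,w}+\tfrac{1}{\sqrt\lambda}\|f^k\|_{p,w}\Bigr).
\]
Since $|D\zeta^k|\le N$ and $|a^{ij}|,|a^i|,|b^i|\le\delta^{-1}$, the right side is bounded by $N\lambda^{-1/2}\bigl(\sqrt\lambda\,\|vD\zeta^k\|_{p,w}+\|Dv\,D\zeta^k\|_{p,w}\bigr)$; this is where the gain $\lambda^{-1/2}$ appears, because $L^k$ contains only the zeroth- and first-order parts of $v$. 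Multiplying by $\zeta^k$, raising to the $p$-th power, summing in $k$, and using the finite overlap of the $\zeta^k$ together with \eqref{eq8.01} gives $\|\mathcal{T}_\lambda v_1-\mathcal{T}_\lambda v_2\|_\lambda^p\le N\lambda^{-p/2}\|v\|_\lambda^p$.

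Choosing $\lambda_1\ge\max(1,\lambda_0)$ so that $N\lambda_1^{-1/2}<1$, the Banach fixed-point theorem yields, for every $\lambda\ge\lambda_1$, a unique $u\in\mathring{\hh}^{0,1}_{p,w,0}(\Omega_T)$ with $\mathcal{T}_\lambda u=u$. For the quantitative bound, run the same Lemma \ref{91566}/\eqref{eq8.01} argument on $u=\mathcal{T}_\lambda u=\sum_k\zeta^k R_\lambda^k(\zeta^k h-L^k u)$, where $\zeta^k h:=D_i(\zeta^kg_i)-g_iD_i\zeta^k+\zeta^k f$ and $h=D_ig_i+f$; the term $\zeta^k h$ contributes $\|g\|_{p,w}+\lambda^{-1/2}\|f\|_{p,w}$ and the term $L^k u$ contributes the absorbable $\lambda^{-1/2}\|u\|_\lambda$, so
\[
\|u\|_\lambda^p \le N\Bigl(\|g\|_{p,w}^p+\tfrac{1}{\lambda^{p/2}}\|f\|_{p,w}^p+\tfrac{1}{\lambda^{p/2}}\|u\|_\lambda^p\Bigr),
\]
and absorbing the last term into the left-hand side for $\lambda\ge\lambda_1$ (enlarging $\lambda_1$ if necessary) gives $\|u\|_\lambda\le N(\|g\|_{p,w}+\lambda^{-1/2}\|f\|_{p,w})$, which is precisely the claimed estimate since $\|u\|_\lambda=\sqrt\lambda\,\|u\|_{p,w}+\|Du\|_{p,w}$. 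The main obstacle is bookkeeping rather than conceptual: one must carefully track the boundary change of variables and the associated weight restriction/extension through Lemma \ref{exten}, and verify that in every term of $L^kR_\lambda^k$ the loss of one derivative of $\zeta^k$ is compensated by a factor $\lambda^{-1/2}$ coming from the resolvent estimate of Lemma \ref{91566} — this is the divergence-form counterpart of the contraction established in Lemma \ref{l2}.
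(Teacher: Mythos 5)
Your proof is correct and follows essentially the same approach as the paper: you show the affine self-map $\mathcal{T}_\lambda$ is a contraction by applying Lemma \ref{91566} (localized over each $\zeta^k$, after flattening and the weight restriction/extension of Lemma \ref{exten}) together with the partition-of-unity bounds \eqref{eq8.01}, with the $\lambda^{-1/2}$ gain coming from the fact that $L^k$ is a first-order commutator in $v$. The only packaging difference is that you make the $\lambda$-weighted norm $\sqrt{\lambda}\norm{\cdot}_{p,w}+\norm{D\cdot}_{p,w}$ explicit, whereas the paper records the equivalent estimate \eqref{eq10.17} directly.
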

\begin{proof}
In order to prove the existence of solutions, it suffices show $$\sum_{k} \zeta^k R_\lambda^k (D_i(\zeta^kg_i)-g_iD_i\zeta^k + \zeta^k f  - L^k \cdot) : \mathring{\hh}^{0,1}_{p,w,0}(\Omega_T)
\longrightarrow \mathring{\hh}^{0,1}_{p,w,0}(\Omega_T)$$ is a contraction when $\lambda$ is  sufficiently large. To this end, we prove
\begin{equation}\label{eq10.17}
\lambda \norm{u }_{p,w} + \sqrt{\lambda}\norm{Du }_{p,w} \le N(   \sqrt{\lambda} \norm{g}_{L_{p,w}} + \norm{f}_{L_{p,w}}+ \norm{v }_{\hh^{0,1}_{p,w}(\Omega_T)}),
\end{equation}
where $$ u = \sum_{k} \zeta^k R_\lambda^k (D_i(\zeta^kg_i)-g_iD_i\zeta^k + \zeta^k f  - L^k v),$$ and $N$ is independent of $\lambda$.
Indeed, by the definition of $R_\lambda^k$, {\eqref{eq8.01},} and Lemma \ref{91566}, we have
\begin{align*}
\lambda^{p/2} \norm{u }_{p,w}^p
&\le N \sum_{k} \lambda^{p/2} \norm{\zeta^k R_\lambda^k (D_i(\zeta^kg_i)-g_iD_i\zeta^k + \zeta^k f  - L^k v)}_{p,w}^p\\
&\le N \sum_{k} \norm{\zeta^k g}_{p,w}^p + \frac{1}{\lambda^{p/2}}\norm{\zeta^k f }_{p,w}^p +  \frac{1}{\lambda^{p/2}}\norm{L^k v}_{p,w}^p \\
&\le N \Big(\norm{g}_{p,w}^p + \frac{1}{\lambda^{p/2}}\norm{f }_{p,w}^p + \frac{1}{\lambda^{p/2}}\norm{v }_{\hh^{0,1}_{p,w}(\Omega_T)}^p\Big),
\end{align*}
and similarly,
\begin{align*}
\norm{Du }_{p,w}^p
&\le N \sum_{k} \norm{D(\zeta^k R_\lambda^k (D_i(\zeta^kg_i)-g_iD_i\zeta^k + \zeta^k f  - L^k v))}_{p,w}^p\\
&\le N \Big(\norm{g}_{p,w}^p + \frac{1}{\lambda^{p/2}}\norm{f }_{p,w}^p + \frac{1}{\lambda^{p/2}}\norm{v }_{\hh^{0,1}_{p,w}(\Omega_T)}^p\Big),
\end{align*}
which imply \eqref{eq10.17}.
The lemma is proved.
\end{proof}

\section{}\label{C}
In this section, we prove Corollary \ref{initial} by applying Theorem \ref{main} to
$$\Tilde{f} := f + (a^{ij} D_{ij} U + b^i D_i U + cU) - D_t^\alpha (U - u_0 - tu_1),$$
where $U$ is given in Lemma \ref{ioi}. Indeed, by Theorem \ref{main} there exists a unique $w \in \hh_{p,q,w,0}^{\alpha, 2} ( {\rr^d_T} ) $ satisfying
\[
\partial_t^\alpha w - (a^{ij}  D_{ij} w + b^i D_i w + cw ) = \Tilde{f} \quad \text{in}\quad  {\rr^d_T}.
\]
Then, it follows that $u := w + U$ is a solution to the equation \eqref{cong}, and the estimates \eqref{88998} follows {from} \eqref{99999} and \eqref{jjj}.

{Similarly}, Corollary \ref{initial2} follows by taking the derivative{s} of $U$ in the spatial variables and repeating the above process together with Theorem \ref{6main}. Indeed, by the following lemma, we have $D_t^\alpha (U - u_0 - tu_1)\in L_{p,q,w} ( {\rr^d_T} )$, which implies $D_t^\alpha (DU - Du_0 - tDu_1)\in \hh^{-1}_{p,q,w} ( {\rr^d_T} )$.

\begin{lemma}\label{ioi}
Under the assumptions of Theorem \ref{initial}, for any $u_0 \in \cxx_0$ and $u_1 \in \cxx_1$, there exists a function $U$ in ${\rr^d_T} $ such that $U(0,\cdot) = u_0 (\cdot)$ and
$U_{t}(0,\cdot) = u_1 (\cdot)$
in the trace sense and
\[
\norm{ \partial_t^\alpha U}_{p,q,w}
+ \norm{U}_{p,q,w}
+\norm{DU}_{p,q,w}
+\norm{D^2 U}_{p,q,w}
\leq N \norm{u_0}_{\cxx_0} + N \norm{u_1}_{\cxx_1}, \stepcounter{equation}\tag{\theequation}\label{jjj}
\]
where $\norm{\cdot}_{p,q,w} = \norm{\cdot}_{L_{p,q,w} ( {\rr^d_T} )} $ and $N = N (d, \alpha, p, q, K_1, T, \mu )$.
\end{lemma}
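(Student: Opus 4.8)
The plan is to build the extension by solving, for each datum separately, the homogeneous equation $\partial_t^\alpha v-\Delta v=0$ with the prescribed Cauchy data, and then to reduce all four norms on the left of \eqref{jjj} to a single spatial estimate. By linearity I would set $U=V+W$, where $V:=E_\alpha(-t^\alpha(-\Delta))u_0$ solves $\partial_t^\alpha V-\Delta V=0$ with $V(0,\cdot)=u_0$, $\partial_tV(0,\cdot)=0$, and $W:=tE_{\alpha,2}(-t^\alpha(-\Delta))u_1$ solves $\partial_t^\alpha W-\Delta W=0$ with $W(0,\cdot)=0$, $\partial_tW(0,\cdot)=u_1$ ($E_\alpha$, $E_{\alpha,2}$ being the Mittag-Leffler functions). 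These are convolution operators in $x$ whose Fourier multipliers, after the rescaling $\eta=t^{\alpha/2}\xi$, are $m_0(t^{\alpha/2}\xi)$ and $t\,m_1(t^{\alpha/2}\xi)$ with $m_0,m_1$ smooth and bounded; the trace identities hold because both Mittag-Leffler families act as strongly continuous approximate identities in $x$. The constraints $\mu\in(-1,p-1)$, $\alpha\in(1,2)$ force $\theta_0\in(2-2/\alpha,2)$, so $k:=\lfloor\theta_0\rfloor\in\{0,1\}$, while $\theta_1=\theta_0-2/\alpha<1$ may be $\le0$; the integer and non-positive cases are absorbed into the convention for $\cxx_0$, $\cxx_1$ in the statement. (A fully elementary alternative is to take $V=\phi_{t^{\alpha/2}}\ast u_0$, $W=t\,\phi_{t^{\alpha/2}}\ast u_1$ with $\phi\in C_0^\infty$ radial, $\int\phi=1$, the radial symmetry producing the vanishing first moment that makes $\partial_tV(0,\cdot)=0$.)

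The key reduction is that, since $V$ and $W$ solve the homogeneous equation, $\partial_t^\alpha V=\Delta V$ and $\partial_t^\alpha W=\Delta W$, so $\norm{\partial_t^\alpha V}_{p,q,w}+\norm{\partial_t^\alpha W}_{p,q,w}\le N(\norm{D^2V}_{p,q,w}+\norm{D^2W}_{p,q,w})$. Next I would get $\norm{V(t,\cdot)}_{L_{q,w_2}}\le N\norm{u_0}_{L_{q,w_2}}$ uniformly in $t$ by dominating the convolution with the Hardy--Littlewood maximal function in $x$ and invoking the weighted maximal function theorem (exactly as in Lemmas \ref{l5} and \ref{l6}), which gives $\norm{V}_{p,q,w}\le NT^{(1+\mu)/p}\norm{u_0}_{\cxx_0}$; the term $\norm{DV}_{p,q,w}$ then follows from the weighted spatial interpolation inequality used already in Section \ref{4}, applied for a.e.\ $t$, together with H\"older in $t$, and likewise for $W$ (the extra factor $t$ only helps). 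Thus everything reduces to
\[
\norm{D^2 V}_{p,q,w}\le N\norm{u_0}_{\cxx_0}\qquad\text{and}\qquad\norm{D^2 W}_{p,q,w}\le N\norm{u_1}_{\cxx_1}.
\]

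For the first of these I would argue frequency-by-frequency. The multiplier of $D_{ij}V(t,\cdot)$ is $-\xi_i\xi_j E_\alpha(-t^\alpha|\xi|^2)=t^{-\alpha}n(t^{\alpha/2}\xi)$ with $n(\eta)=-\eta_i\eta_j E_\alpha(-|\eta|^2)$ smooth, $O(|\eta|^2)$ near $\eta=0$, and bounded at infinity; the vanishing to order two at the origin means the operator annihilates affine functions. Decomposing $u_0=\sum_m\Delta_m u_0$ into dyadic spatial frequency blocks, the weighted Mikhlin--H\"ormander multiplier theorem for $A_q$ weights gives $\norm{D^2V(t,\cdot)\Delta_m u_0}_{L_{q,w_2}}\le N\min\{2^{2m},t^{-\alpha}\}\norm{\Delta_m u_0}_{L_{q,w_2}}$. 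Summing over $m$ and bounding $\int_0^T\big(\sum_m\min\{2^{2m},t^{-\alpha}\}a_m\big)^p t^\mu\,dt$ by $\sum_m 2^{m\theta_0 p}a_m^p$ (a discrete--continuous Hardy inequality), together with the equivalence of the finite-difference seminorm $[\,\cdot\,]_{\cbb^{\theta_0}_{p,q,w_2}}$ with its Littlewood--Paley version for $A_q$ weights, yields $\norm{D^2 V}_{p,q,w}\le N\norm{u_0}_{\cxx_0}$; the exponents match precisely because $\theta_0=2-2(1+\mu)/(\alpha p)$. The estimate for $W$ is the same with multiplier $-t\xi_i\xi_j E_{\alpha,2}(-t^\alpha|\xi|^2)=t^{1-\alpha}n_1(t^{\alpha/2}\xi)$ and $\theta_0$ replaced by $\theta_1$; when $\theta_1\le0$ the crude bound $\norm{D^2W(t,\cdot)}_{L_{q,w_2}}\le Nt^{1-\alpha}\norm{u_1}_{L_{q,w_2}}$ already suffices, since $\int_0^T t^{(1-\alpha)p}t^\mu\,dt<\infty\iff\theta_1<0$.

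The hard part will be the harmonic-analytic interface: justifying the weighted Mikhlin multiplier theorem on dyadic blocks and the equivalence of the finite-difference and Littlewood--Paley characterizations of the weighted Besov space $\cbb^{\theta}_{p,q,w_2}$, and then carrying out the bookkeeping in the Hardy inequality that ties the self-similar time scale $t^{\alpha/2}$ to the smoothness $\theta_0$. If one wishes to stay entirely within the paper's toolkit via the mollifier construction, the spatial side becomes the elementary bound $\norm{D^2V(t,\cdot)}_{L_{q,w_2}}\le N t^{-\alpha}(t^{\alpha/2})^{-d}\int_{|z|\le Ct^{\alpha/2}}|z|^k\sup_{|h|\le|z|}\norm{D^k u_0(\cdot+h)-D^k u_0}_{L_{q,w_2}}\,dz$ (the evenness of $\phi$ supplying the needed vanishing moments, as $k\le1$), but then $\partial_t^\alpha V$ must be handled by hand: one computes $\partial_tV(t,\cdot)=-\tfrac{\alpha}{2}t^{-1}(\psi_{t^{\alpha/2}}\ast u_0)$ and $\partial_t^2V(t,\cdot)=t^{-2}(\Psi_{t^{\alpha/2}}\ast u_0)$ with $\psi,\Psi$ mean zero, and controls $\partial_t^\alpha V=\tfrac1{\Gamma(2-\alpha)}\int_0^t(t-s)^{1-\alpha}\partial_s^2V\,ds$ through a weighted mapping property of the Riemann--Liouville fractional integral (a Hardy-type inequality, of which \cite[Lemma 5.5]{dong20} is a weaker instance) --- leading again to the same Hardy estimate governed by $\theta_0=2-2(1+\mu)/(\alpha p)$.
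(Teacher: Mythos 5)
Your construction of $U$ is exactly the paper's: the multipliers $E_\alpha(-t^\alpha(-\Delta))$ and $tE_{\alpha,2}(-t^\alpha(-\Delta))$ are precisely the Fourier transforms of the kernels $p(t,\cdot)$ and $\tilde p(t,\cdot)=I_0p(t,\cdot)$ that the paper convolves with $u_0$ and $u_1$, and the reduction $\partial_t^\alpha U=\Delta U$ is used by the paper too (last sentence of its proof). The reduction of $\norm{U}_{p,q,w}$ and $\norm{DU}_{p,q,w}$ to $\norm{D^2U}_{p,q,w}$ via the Hardy--Littlewood maximal function and spatial interpolation is also shared. What genuinely differs is the heart of the proof, the estimate of $\norm{D^2U}_{p,q,w}$: the paper works on the physical side with the pointwise kernel decay \eqref{es}--\eqref{lap}, H\"older with an auxiliary exponent $\beta$, and the Fubini split $J_1,J_2$ over $t\lessgtr|y|^{2/\alpha}$, with four cases according to the sign and integrality of $\theta_0$, $\theta_1$; you work on the frequency side with a dyadic Littlewood--Paley decomposition, a weighted Mikhlin multiplier bound giving the operator norm $\min\{2^{2m},t^{-\alpha}\}$ on the $m$-th block, and a discrete--continuous Hardy inequality in $t$ whose crossover at $t\sim 2^{-2m/\alpha}$ reproduces $\theta_0$. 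Your route is conceptually cleaner and unifies the paper's four cases (in particular the boundary case $\theta_1=0$, where your crude $t^{1-\alpha}$ bound does fail but the Littlewood--Paley argument with the $\cxx_1=\cbb^{0+\varepsilon}$ convention still goes through, absorbing the logarithmic divergence), but it imports machinery --- weighted Mikhlin--H\"ormander and the equivalence of the difference and Littlewood--Paley Besov seminorms for $A_q$ weights --- that the paper otherwise avoids and that would require its own justification. The paper's approach stays within its toolkit (kernel decay from \cite{P1,Kochubei} plus elementary integration) at the price of case-splitting. Your mollifier alternative would be a further departure: $\phi_{t^{\alpha/2}}\ast u_0$ does not solve the homogeneous equation, so the reduction $\partial_t^\alpha U=\Delta U$ is lost and the Caputo derivative must be estimated directly, which is noticeably more delicate than either the paper's or your primary route.
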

\begin{proof}
We can assume that $u_0 (x)$ and $u_1 (x)$ are infinitely differentiable with compact support.
Then, let $p = p(t,x)$ be the fundamental solution of $\partial_t^\alpha - \Delta$, and let $\Tilde{p}(t,x) := I_0p(t,x)$. See \cite[Section 6.2]{P1} for the construction of $p$. For the existence of $U$, we define $$U(t, \cdot) := p(t, \cdot) \ast u_0 (\cdot) + \Tilde{p}(t, \cdot) \ast u_1 (\cdot).$$ Then by \cite[Section 1]{Kochubei}, $U$ is a solution to
\begin{align*}
    \begin{cases}
    \partial_t^\alpha U - \Delta U = 0 &\quad \text{in}\quad  {\rr^d_T} \\
    U(0, \cdot) = u_0 (\cdot)&\quad \text{on}\quad  \rr^d\\
    U_t(0, \cdot) = u_1 (\cdot) &\quad \text{on}\quad  \rr^d
    \end{cases} .
\end{align*}

Next, we prove the estimate \eqref{jjj} following the method in \cite[Lemma 5.7]{dong20}. Without loss of generality, we assume that $u_0 =0$, i.e., $ U = \Tilde{p}(t, \cdot) \ast u_1 (\cdot)$. Indeed, in order to estimate $U$, we estimate $p(t, \cdot) \ast u_0 (\cdot)$ and $ \Tilde{p}(t, \cdot) \ast u_1 (\cdot)$ separately. The estimate of the first term is similar to that of the second term, and it is given in \cite[Lemma 5.7]{dong20}.

By \cite[Section 6.2]{P1} and \cite[Section 2.2]{Kochubei}, we have the following normalization property and estimates of $\Tilde{p}$
\[
\Tilde{p}(t,x) = t^{-\alpha d / 2 + 1}  \Tilde{p}(1, t^{-\alpha / 2} x), \stepcounter{equation}\tag{\theequation}\label{normal}
\]
for any $m = 0,1,2, 3$ and $x \neq 0$,
\begin{align*}
&
|D^m \Tilde{p} (1,x) |\leq N 1_{|x| \geq 1} e^{- \sigma |x|^{\frac{2}{2-\alpha}}} \\
&\qquad+N 1_{|x| < 1} |x|^{-d - m}
( |x|^2 + |x|^2 | \log |x| | 1_{d = 2, m=0} + |x| 1_{d=1, m=0} ), \stepcounter{equation}\tag{\theequation}\label{es}
\end{align*}
and
\[
|\Delta \Tilde{p}(1,x) |
\leq N 1_{|x| \geq 1} e^{-\sigma |x|^{\frac{2}{2-\alpha} } }
+ N 1_{|x| < 1} (
|x|^{-d + 2} 1_{d \geq 3} + | \log |x| | 1_{d=2} + 1
) ,
 \stepcounter{equation}\tag{\theequation}\label{lap}
\]
where $N = N(d, \alpha, m)$ and $\sigma = \sigma (d, \alpha, m)>0$.

To prove \eqref{jjj}, we start with the estimate of $U$.
From \eqref{normal} and  \eqref{es} together with a {suitable} dyadic decomposition {of $\rr^d$}, it follows that for any $t > 0$ and $x \in \rr^d$,
\[
|U(t,x)| \leq   t N \cmm_x u_1(x),
\]
where $\cmm_x$ is the maximal operator with respect to $x$ and $N$ is independent of $t$. Therefore, by the Hardy-Littlewood maximal function theorem with $A_p$ weights,
\[
\norm{U (t, \cdot) }_{L_{q, w_2} (\rr^d)}
\leq  tN \norm{u_1}_{L_{q, w_2} (\rr^d)},
\]
where $N = N(d, \alpha, q, K_1 )$, and
\[
\norm{U}_{ L_{p,q,w,} ({\rr^d_T}  )  } \leq   N \norm{u_1}_{L_{q, w_2} (\rr^d)},\stepcounter{equation}\tag{\theequation}\label{ppp}
\]
where $N = N(d, \alpha, q, K_1, T, \mu)$.

Next, we estimate $D^2 U$. By \eqref{normal} and  \eqref{es}, we have $ \Tilde{p}(t, \cdot) , D\Tilde{p}(t, \cdot) \in L^1 (\rr^d)$, which implies
\[
\int_{\rr^d} D\Tilde{p} (t,y) \, dy = 0.
\]
Then, by integration by parts,
\begin{align*}
D^2 U (t,x) &=   \int_{\rr^d} \Tilde{p} (t,y) D^2 u_1 (x-y) \, dy =  \int_{\rr^d} D\Tilde{p}(t,y) D u_1 (x- y) \, dy \\
&=   \int_{\rr^d}  D\Tilde{p}(t,y) (t,y) (Du_1(x-y) - D u_1 (x)) \, dy . \stepcounter{equation}\tag{\theequation}\label{2orderes}
\end{align*}
Moreover, we observe that
\[
\Delta U (t,x) =  \int_{\rr^d}
\Delta \Tilde{p} (t,y) (u_1 (x-y) - u_1 (x)) \, dy. \stepcounter{equation}\tag{\theequation}\label{laprewrite}
\]
Indeed, from \eqref{2orderes} we have
\begin{align*}
\Delta U (1,x) &= - \int_{\rr^d} \nabla \Tilde{p}(1,y) \cdot \nabla_y (u_1 (x-y) - u_1 (x)) \, dy \\
&=  \lim_{\ep \searrow 0} \int_{\rr^d \setminus B_\ep } \nabla \Tilde{p} (1,y) \cdot \nabla_y (u_1 (x) - u_1 (x-y)) \, dy\\
&=: \lim_{\ep \searrow 0}  L_\ep.
\end{align*}
An integration by parts yields
\begin{align*}
L_\ep = \int_{\rr^d \setminus B_\ep}
\Delta \Tilde{p} (1,y) (u_1 (x-y) - u_1(x)) \, dy
- \int_{\partial B_\ep} \frac{\partial \Tilde{p}}{\partial \nu} (1, y) (u_1(x-y) -u_1 (x)) \, dS.
\end{align*}
By the continuity of $u_1$ and the estimates in \eqref{es}, it is seen that the second term in the above expression vanishes
as $\ep \to 0$. Furthermore, \eqref{lap} implies that $ \Delta\Tilde{p}(1,\cdot) \in L_1 (\rr^d)$. Thus, using the dominated convergence theorem, we have
\begin{align*}
\Delta U(1,x) &=   \int_{\rr^d} \Delta \Tilde{p} (1,y) (u_1 (x-y) - u_1 (x)) \, dy. \stepcounter{equation}\tag{\theequation}\label{apple}
\end{align*}
Therefore, \eqref{laprewrite} follows {from} \eqref{normal}.

Then, we consider the following four cases.

{\em Case 1.} $\theta_1 \in (0,1)$. In this case,
\begin{align*}
     \alpha p \in (1 + \mu + p, 2 (1 + \mu + p)). \stepcounter{equation}\tag{\theequation}\label{case1}
\end{align*}
By the weighted Calder\'on-Zygmund estimate and the Minkowski inequality applied to \eqref{laprewrite}, we have
\begin{align*}
\norm{D^2 U (t, \cdot) }_{L_{q, w_2}}
&\leq N \norm{\Delta U (t, \cdot)}_{L_{q, w_2}} \\
&\leq \int_{\rr^d} | \Delta \Tilde{p} (t,y) | \norm{u_1 ( \cdot - y) - u_1 (\cdot) }_{ L_{q, w_2}} \, dy.
\end{align*}
Therefore, by H\"older's inequality, for any $\beta \in (0,1)$, we have
\begin{align*}
&\norm{D^2 U }^p_{L_{p,q, w}}\le \int_0^T \left(
\int_{\rr^d} | \Delta \Tilde{p} (t,y) | \norm{u_1 ( \cdot - y) - u_1 (\cdot) }_{ L_{q, w_2}} \, dy
\right)^p t^\mu \, dt\\
&\leq N \int_0^T \left(
\int_{\rr^d} | \Delta  \Tilde{p}(t,y) |^{p (1 - \beta) }
\norm{u_1 ( \cdot - y) - u_1 (\cdot) }^p_{ L_{q, w_2}} \, dy
\right) \cdot J_1 t^\mu \, dt,\stepcounter{equation}\tag{\theequation}\label{ho}
\end{align*}
where
\begin{align*}
 J_1 := \left(
\int_{\rr^d}  | \Delta \Tilde{p} (t,y) |^{ \beta p/ (p- 1) } \, dy
\right)^{p-1}.
\end{align*}
By taking $\beta < (p-1) / p$, \eqref{normal} and  \eqref{es} {imply} that
\begin{align*}
 J_1 \leq Nt^{-\alpha \beta (d+2) p / 2 + \alpha d (p-1) /2+ \beta p}.
\end{align*}
By \eqref{ho} with the Fubini theorem, we have
\begin{align*}
\norm{D^2 U }^p_{L_{p, q, w}}
&\le N  \int_{\rr^d} J_2  \norm{u_1 ( \cdot - y) - u_1 (\cdot) }^p_{ L_{q, w_2}},
\end{align*}
where
$$J_2 := \int_0^T | \Delta \Tilde{p} (t,y)|^{ p(1- \beta) }
t^{-\alpha \beta (d+2) p / 2 + \alpha d (p-1) /2 + \mu + \beta p} \, dt.
$$
To estimate $J_2$, we first observe that by using the assumption \eqref{case1} and picking $\beta = (p-1)/ p - \ep$ for a sufficiently small $\ep> 0$, the following inequality holds,
\begin{align*}
\mu + 1 + p  + \alpha d ( p (1 - \beta )  - 1) / 2  < \alpha p.
\end{align*}
Then, by \eqref{normal}, \eqref{es}, and splitting the integral $J_2$ into two integrals over $t\le|y|^{2 / \alpha}$ and $t> |y|^{2 / \alpha}$,
we see that
\[
 J_2 \leq N|y|^{-d - p \theta_1},
\]
where $N$ is independent of $T$.
Therefore, it follows that
\begin{align*}
\norm{D^2 U }_{L_{p,q, w}  ( (0,T ) \times \rr^d  }
\leq N \norm{u_1}_{\cxx_1}  \stepcounter{equation}\tag{\theequation}\label{uuu}.
\end{align*}

{\em Case 2.} $ \theta_1 \in (1,2)$. In this case,
\begin{align*}
\alpha p \in (2(1+ \mu+p), \infty). \stepcounter{equation}\tag{\theequation}\label{case22}
\end{align*}
By using \eqref{2orderes} and H\"older's inequality, for any $\beta \in (0,1)$, we have
\begin{align*}
&\norm{D^2 U }^p_{L_{p, q, w}}\\
&\leq \int_0^T \left(
\int_{\rr^d}
|D\Tilde{p} (t,y)|^{p (1 - \beta)}
\norm{ D u_1 ( \cdot - y) - D u_1( \cdot) }^p_{ L_{q, w_2}} \, dy
\right) \cdot J_1 t^\mu \, dt, \stepcounter{equation}\tag{\theequation}\label{case2}
\end{align*}
where
\[
 J_1:= \left( \int_{\rr^d}
|D\Tilde{p} (t,y)|^{\beta p / (p-1)} \, dy
\right)^{p-1}.
\]
By taking $\beta < (p-1) d / (p (d-1))$, from \eqref{normal} and \eqref{es} we get
\[
J_1 \leq N t^{\alpha \beta (d+1) p / 2 + \alpha d (p-1) / 2 + \beta p}.
\]
This together with \eqref{case2} and the Fubini theorem yield{s}
\begin{align*}
    \norm{D^2 U }^p_{L_{p,q, w}}
&\leq
 N
\int_{\rr^d}
J_2
\norm{ D u_1 ( \cdot - y) - D u_1( \cdot) }^p_{ L_{q, w_2}} \, dy,
\end{align*}
where
\[
J_2 := \int_0^T |D\Tilde{p}(t,y)|^{p (1 - \beta)}
t^{-\alpha \beta (d+1) p/2 + \alpha d (p-1)/2 + \mu + \beta p } \, dt.
\]
To estimate $J_2$, we observe that by using the assumption \eqref{case22} and picking $\beta = \min \{ 1, (p-1)d/(p(d-1)) \} - \ep$ for a sufficiently small $\ep> 0$, the following inequality holds
\[
\mu + 1 + p + \alpha (( d-1) p (1-\beta) - d)/2 < \alpha p / 2.
\]
Then, by \eqref{normal}, \eqref{es}, and splitting the integral $J_2$ into two integrals as in Case 1, we see that
\[
 J_2 \leq N |y|^{-d - p(\theta_1 -1)},
\]
which implies \eqref{uuu}.

{\em Case 3.} $\theta_1 < 0$. In this case,
\begin{align*}
 \alpha p \in (0, 1+ \mu + p) \stepcounter{equation}\tag{\theequation}\label{case33}.
\end{align*}
By \eqref{apple}, \eqref{normal}, and \eqref{es}, we have
\[
| \Delta U(t,x) | \leq  N t^{-\alpha+1} \cmm_x u_1 (x) + N t^{-\alpha+1} |u_1 (x) |  .
\]
Then, by applying the weighted Calder\'on-Zygmund estimate and the weighted Hardy-Littlewood maximal function theorem,
\[
\norm{D^2 U (t, \cdot)}_{L_{q, w_2}}
\leq N \norm{\Delta U (t, \cdot)}_{L_{q,w_2}}
\leq  N t^{-\alpha+1} \norm{u_1}_{L_{q, w_2}}.
\]
Therefore, with the assumption \eqref{case33},
\begin{align*}
\norm{D^2 U }^p_{L_{p,q, w}}
&\leq  \int_0^T
\norm{u_1}^p_{L_{q, w_2}}
t^{-\alpha p + p + \mu} \, dt \leq   N(T) \norm{u_1}^p_{L_{q, w_2}},
\end{align*}
which implies \eqref{uuu}.

{\em Case 4.} $\theta_1 = 0$ or $1$. We only consider the case when $\theta_1 = 1$ since the other case is similar.
Observe that in this case $\alpha p = 2(\mu + 1 + p)$. We take $\mu' \in (-1, \mu)$ such that $\alpha p \in ((\mu' + 1 + p),2(\mu' + 1 + p))$, and let $\Tilde{w} = \Tilde{w}_1 (t) w_2 (x)${,} where $\Tilde{w}_1 (t) = t^{\mu'}$.
By Case 2, we have
\[
\norm{D^2 U }_{L_{p, q, \Tilde{w} }({\rr^d_T} )  } \leq N \norm{u_1}_{\cxx_1}.
\]
Since $t^\mu \leq N t^{\mu'}$ for $t \in (0,T)$, \eqref{uuu} follows.

To prove \eqref{jjj}, we estimate $DU$ using the interpolation inequality \cite[Lemma {3.8}]{K&D} together with \eqref{uuu} and \eqref{ppp}. Furthermore, the estimate of $\partial_t^\alpha U$ follows from the equation $\partial_t^\alpha U = \Delta U$. The lemma is proved.
\end{proof}

\bibliographystyle{plain}

\begin{thebibliography}{10}

\bibitem{extrapolation}
David V. Cruz-Uribe, José Maria Martell, and Carlos Pérez.
\newblock {\em Weights, extrapolation and the theory of Rubio de Francia}, Operator Theory: Advances and Applications, vol. 215,
Birkhäuser/Springer Basel AG, Basel, 2011.

\bibitem{Ap}
Hongjie Dong and Doyoon Kim.
\newblock On $L_p$-estimates for elliptic and parabolic equations with $A_p$ weights.
\newblock {\em Trans. Amer. Math. Soc.}, 370(7):5081--5130, 2018.

\bibitem{dong19}
Hongjie Dong and Doyoon Kim.
\newblock {$L_p$}-estimates for time fractional parabolic equations with coefficients measurable in time.
\newblock {\em Adv. Math.}, 345:289--345, 2019.

\bibitem{dong20a}
Hongjie Dong and Doyoon Kim.
\newblock $L_p$-estimates for time fractional parabolic equations in divergence form with measurable coefficients.
\newblock {\em J. Funct. Anal.}, 278(3):108338, 66 pp, 2020.

\bibitem{dong20}
Hongjie Dong and Doyoon Kim.
\newblock An approach for weighted mixed-norm estimates for parabolic equations with local and non-local time derivatives.
\newblock {\em Adv. Math.}, 377, 107494, 2021.

\bibitem{Han}
Beom-Seok Han, Kyeong-Hun Kim, and Daehan Park.
\newblock Weighted $L_q(L_p)$-estimate with Muckenhoupt weights for the diffusion-wave equations with time-fractional derivatives.
\newblock {\em J. Differential Equations}, 269(4):3515--3550, 2020.


\bibitem{P1}
Ildoo Kim, Kyeong-Hun Kim, and Sungbin Lim.
\newblock An $L_q(L_p)$-theory for the time fractional evolution equations with variable coefficients.
\newblock {\em Adv. Math.}, 306:123--176, 2017.

\bibitem{Kochubei}
Anatoly N. Kochubei.
\newblock Cauchy problem for fractional diffusion-wave equations with variable coefficients.
\newblock {\em Appl. Anal.} 93(10):2211--2242, 2014.

\bibitem{k}
N. V. Krylov.
\newblock {\em Lectures on elliptic and parabolic equations in Sobolev spaces}, volume 96 of {\em Graduate Studies in Mathematics}.
\newblock American Mathematical Society, Providence, RI, 2008.

\bibitem{k2}
N. V. Krylov.
\newblock Parabolic and elliptic equations with VMO coefficients.
\newblock {\em Comm. Partial Differential Equations}, 32(1-3):453--475, 2007.

\bibitem{K&D}
N. V. Krylov and Hongjie Dong.
\newblock Fully nonlinear elliptic and parabolic equations in weighted and mixed-norm sobolev spaces.
\newblock {\em Calc. Var. Partial Differential Equations}, 58(4):Art. 145, 32 pp, 2019.


\bibitem{Mainardi}
F. Mainardi.
\newblock Fractional diffusive waves in viscoelastic solids.
\newblock {\em Nonlinear Waves in Solids}, Fairfield, ASME/AMR, 93--97, 1995.

\bibitem{Metzler}
R. Metzler and J. Klafter.
\newblock Boundary value problems for fractional diffusion equations.
\newblock {\em Physica A: Statistical Mechanics and its Applications}, 278(1):107–125, 2000.

\bibitem{simple}
Daehan Park, Beom-Seok Han, and Kyeong-Hun Kim.
\newblock Weighted $L_q(L_p)$-estimate with muckenhoupt weights for the diffusion-wave equations with time-fractional derivatives.
\newblock {\em J. Differential Equations}, 269(4):3515--3550, 2020.

\bibitem{ML}
Harry Pollard.
\newblock The completely monotonic character of the Mittag-Leffler function $E_{\alpha}(-x)$.
\newblock {\em Bull. Amer. Math. Soc.}, 54:1115--1116, 1948.

\bibitem{Pruss}
J. Prüss.
\newblock  {\em Evolutionary integral equations and applications. }
\newblock Basel: Birkhäuser; 1993.

\bibitem{Pskhu}
A. V. Pskhu.
\newblock The fundamental solution of a diffusion-wave equation of fractional order.
\newblock {\em Izv. Ross. Akad. Nauk Ser. Mat.}, 73(2):141--182, 2009;
\newblock translation in {\em Izv. Math.}, 73(2):351--392, 2009.


\bibitem{Zacher}
Rico Zacher.
\newblock Maximal regularity of type {$L_p$} for abstract parabolic Volterra equations.
\newblock {\em J. Evol. Equ.}, 5(1):79--103, 2005.

\end{thebibliography}

\def\cprime{$'$}

\end{document}